\long\def\@makecaption#1#2{%
  \vskip\abovecaptionskip\footnotesize
  \sbox\@tempboxa{#1. #2}%
  \ifdim \wd\@tempboxa >\hsize
    #1. #2\par
  \else
    \global \@minipagefalse
    \hb@xt@\hsize{\hfil\box\@tempboxa\hfil}%
  \fi
  \vskip\belowcaptionskip}
\newcommand{\todo}[1][\null]{\ensuremath{\clubsuit}}
\newcommand{\noprint}[1]{}
\newcommand{\checked}[1][\null]{\ensuremath{\boldsymbol{\surd}}}
\newcommand{\p}{\partial}
\newcommand{\sgn}{\mathop{\rm sgn}\nolimits}
\newcommand{\lsemioplus}{\mathbin{\mbox{$\lefteqn{\hspace{.72ex}\rule{.4pt}{1.2ex}}{\in}$}}}
\newtheorem{theorem}{Theorem}
\newtheorem{lemma}[theorem]{Lemma}
\newtheorem{corollary}[theorem]{Corollary}
\newtheorem{proposition}[theorem]{Proposition}
\newtheorem*{problem*}{Problem}
{\theoremstyle{definition}
\newtheorem{definition}[theorem]{Definition}

\newtheorem{remark}[theorem]{Remark}
\newtheorem*{remark*}{Remark}
\newtheorem*{definition*}{Definition}
}
\begin{document}

\par\noindent {\LARGE\bf
Mapping method of group classification
\par}

\vspace{4mm}\par\noindent {\large Stanislav Opanasenko$^\dag$ and Roman O.\ Popovych$^\ddag$}

\vspace{4mm}\par\noindent {\it
$^\dag$\,Department of Mathematics and Statistics, Memorial University of Newfoundland,\\
$\phantom{^\dag}$~St.\ John's (NL) A1C 5S7, Canada}

\vspace{3mm}\par\noindent {\it
$^\ddag$\,Mathematical Institute, Silesian University in Opava, Na Rybn\'\i{}\v{c}ku 1, 746 01 Opava,\\
$\phantom{^\ddag}$\,Czech Republic\\
$\phantom{^\ddag}$\,Fakult\"at f\"ur Mathematik, Universit\"at Wien, Oskar-Morgenstern-Platz 1, A-1090 Wien, Austria\\
$\phantom{^\ddag}$\,Institute of Mathematics of NAS of Ukraine, 3 Tereshchenkivska Str., 01024 Kyiv, Ukraine}

{\vspace{3mm}\par\noindent {\it
\textup{E-mail:} sopanasenko@mun.ca, rop@imath.kiev.ua
}\par}

\vspace{8mm}\par\noindent\hspace*{10mm}\parbox{140mm}{\small
We revisit the entire framework of group classification of differential equations.
After introducing the notion of weakly similar classes of differential equations,
we develop the mapping method of group classification for such classes,
which generalizes all the versions of this method that have been presented in the literature.
The mapping method is applied to group classification of various classes of Kolmogorov equations and of Fokker--Planck equations in the case of space dimension one.
The equivalence groupoids and the equivalence groups of these classes are computed.
The group classification problems for these classes with respect to the corresponding equivalence groups
are reduced to finding all inequivalent solutions of heat equations with inequivalent potentials
admitting Lie-symmetry extensions.
This reduction allows us to exhaustively solve the group classification problems
for the classes of Kolmogorov and Fokker--Planck equations with time-independent coefficients.
}\par\vspace{5mm}

\noprint{
MSC: 35A30, 35B06, 35K10
35-XX   Partial differential equations
 35Axx  General topics
  35A30   Geometric theory, characteristics, transformations [See also 58J70, 58J72]
 35Bxx  Qualitative properties of solutions
  35B06   Symmetries, invariants, etc.
 35Kxx	Parabolic equations and parabolic systems {For global analysis, analysis on manifolds, see 58J35}
  35K10  	Second-order parabolic equations
}

\section{Introduction}

Problems of group classification of classes of differential equations were first considered by Sophus Lie himself.
Thus, in~\cite{Lie1881} he solved the group classification problem
for the entire class of second-order linear partial differential equations in two complex independent variables.
The research interest in such classifications was renewed by Ovsiannikov late 1950s,
who in particular revisited the above Lie's classification in invariant terms~\cite{Ovsiannikov1982}.
In the past decades the field of group classification has been steadily developed,
but until this very day the consideration of complicated classes of differential equations,
e.g., of those appearing in problems of invariant parameterization~\cite{PopovychBihlo2010},
within the framework of group classification is mostly avoided.
The reason is partly in a lack of methods to handle such classes.
The applicability of the powerful algebraic method of group classification \cite{basa2001a,gaze1992a,gung2004a,popo2010a}
is rather limited to classes with nice transformational properties,
which includes various kinds of normalized classes \cite{KurujyibwamiBasarabHorwathPopovych2018,kuru2020a,popo2010a},
see~\cite{VaneevaBihloPopovych2020} for a recent breakthrough in extending the algebraic method to specific non-normalized classes.
(Definitions of related notions are presented in Section~\ref{FP:sec:TheorBack} below.)
For generic classes, the simplest version of the direct method of group classification is still heavily used
although there are better alternatives such as
the partition of the class under study into its normalized subclasses~\cite{kuru2020a,popo2010a,VaneevaPosta2017},
the furcate splitting method~\cite{bihl2020a,NikitinPopovych2001,OpanasenkoBoykoPopovych2020}
and the mapping method~\cite{VaneevaPopovychSophocleous2009,OpanasenkoBihloPopovych2020}.

The generalization of the mapping method of group classification is the primary objective of the present paper.
For this purpose, we update basic notions of the field of group classification
and formalize the generalized group classification problem for a general class of differential equations.
The efficiency of the developed method is demonstrated by means of its application to group classification of various classes
of (1+1)-dimensional second-order linear evolution equations, including classes of Kolmogorov and Fokker--Planck equations.

The history of the group analysis of (1+1)-dimensional second-order linear evolution equations dates
back to the aforementioned seminal paper~\cite{Lie1881} of Sophus Lie,
where the group classification problem for these equations was solved
as a part of the group classification problem for the entire class of second-order linear partial differential equations
in two complex independent variables.
After Ovsiannikov's revision~\cite{Ovsiannikov1982},
there were a number of papers restating, specifying or developing the above Lie's result.
We refer the reader to~\cite{PopovychKunzingerIvanova2008} for
a list of these papers and a modern treatment of the problem.
The equivalence problem for (1+1)-dimensional second-order linear evolution equations
was initiated in~\cite{Bluman1980,Cherkasov1957,Lie1881} and
further considered in~\cite{Gungor2018,JohnpillaiMahomed2001,Morozov2003}.
Darboux transformations between such equations were studied, e.g.,
in \cite{BlumanShtelen2004,BlumanYuzbasi2020,MatveevSalle1991,PopovychKunzingerIvanova2008}.

In general, a group classification list for a class~$\mathcal L$ of differential equations
may not be contained in a reasonable group classification list for its superclass~$\bar{\mathcal L}$.
Thus, although the general point equivalence within the class~$\bar{\mathcal L}$
is obviously consistent with that within the subclass~$\mathcal L$ with respect to the inclusion $\mathcal L\subseteq\bar{\mathcal L}$,
no system from~$\mathcal L$ may belong to the known classification list for~$\bar{\mathcal L}$ at all.
If the group classification problem for the class~$\bar{\mathcal L}$ is solved up to the equivalence generated by its equivalence group or a subgroup thereof,
then this equivalence may even be inconsistent with the analogous equivalence in~$\mathcal L$ with respect to the inclusion $\mathcal L\subseteq\bar{\mathcal L}$,
and the known classification list for~$\bar{\mathcal L}$ cannot be easily adjusted to obtain a group classification list for the class~$\mathcal L$.
For example, the group classification problem for the class~$\mathcal E$ of (1+1)-dimensional second-order linear evolution equations
\begin{gather}\label{eq:FPInhomoLinearEquation}
u_t=A(t,x)u_{xx}+B(t,x)u_x+C(t,x)u+D(t,x)
\end{gather}
 with $A\ne0$ is traditionally solved by reducing it to that for the class~$\mathcal P$ of heat equations with potentials,
\begin{gather*}
u_t=u_{xx}+C(t,x)u,
\end{gather*}
which is actually a simple incarnation of the mapping method of group classification.
Then all the listed equations representing Lie-symmetry extensions in the class~$\mathcal E$ belong to~$\mathcal P$
and thus, this classification list is not appropriate for various classes of Kolmogorov or Fokker--Planck equations.
Moreover, the equivalence groups of these classes
are not consistent with the equivalence groups of the classes~$\mathcal E$ and~$\mathcal P$ in several senses,
e.g., with respect to the relations of inclusion and of weak similarity.

The Fokker--Planck equations are prominent equations in stochastic analysis
by virtue of the fact that they describe the evolution of the transition probability density in diffusion processes~\cite{Feller1968}.
In the case of one spatial variable, the entire class~$\bar{\mathcal F}$ of the Fokker--Planck equations consists of equations of the form
\begin{gather*}
u_t=(A(t,x)u)_{xx}+(B(t,x)u)_x,
\end{gather*}
where~$A$ and~$B$ are functions of~$(t,x)$, called the diffusion coefficient and the drift, respectively, with $A\ne0$.
Besides stochastic analysis, Fokker--Planck equations play an important role in various natural sciences,
including biology and physics~\cite{Risken1989}.

The study of equations from the class~$\bar{\mathcal F}$ within the framework of the group analysis of differential equations
commenced in the middle of the 1980s,
when the Fokker--Planck equations describing the Ornstein--Uhlenbeck and the Rayleigh processes
were considered in~\cite{AnChenGuo1984} and~\cite{SastriDunn1985}, respectively.
Point transformations that reduce these and a number of other physically important Fokker--Planck equations
to the heat equation were constructed in~\cite{ShtelenStogny1989}.
Lie symmetries of general (1+1)-dimensional Fokker--Planck equations
were discussed in~\cite{CicognaVitoli1990,Kozlov2013,SpichakStognii1999}.

The adjoint class~$\bar{\mathcal K}$ is constituted by Kolmogorov equations,
\begin{gather*}
u_t=A(t,x)u_{xx}+B(t,x)u_x
\end{gather*}
with $A\ne0$, which are formally adjoint to Fokker--Planck equations if $A$ is replaced by~$-A$.
This fact reflects on the similarity of transformational properties of the classes~$\bar{\mathcal F}$ and~$\bar{\mathcal K}$.
In particular, the maximal Lie invariance algebras of Kolmogorov and Fokker--Planck equations
with the arbitrary-element tuples~$(-A,B)$ and $(A,B)$, respectively, are isomorphic.
Furthermore, there is a connection between the equivalence groups of these classes,
see~\cite{PopovychKunzingerIvanova2008} and Section~\ref{sec:FPEquivalenceGroup}.
Thus, we can switch between considerations of these classes whenever we feel this simplifies a problem at hand~\cite[Section~5]{PopovychKunzingerIvanova2008}.

Our personal interest in the class~$\bar{\mathcal K}$ lies
in the fact that reaction--diffusion equations of the form $u_t=f(u_x)u_{xx}+g(u)$ with $f\ne0$
are linearized by the hodograph transformation~$\tilde t=t$, $\tilde x=u$, $\tilde u=x$
to Kolmogorov equations if $f(u_x)=cu_x^{-2}$, where $c$ is an arbitrary nonzero constant.
In~\cite{OpanasenkoBoykoPopovych2020}, the class of equations of the above form with arbitrary $f\ne0$ was represented as a
union of four subclasses, and three of them were classified up to the equivalences generated by their equivalence groups.
At the same time, the fourth subclass, which consists of linearizable equations, was classified only up to the general point equivalence.
The group classification of the last subclass modulo the equivalence generated by its equivalence group
is reduced to the analogous classification of the similar class~$\mathcal K'^{+1}$ of the reduced ($A=1$) Kolmogorov equations with stationary drifts.
Unfortunately, \emph{there are no group classifications of classes of Kolmogorov and Fokker--Planck equations
with respect to the corresponding equivalence groups in the literature}, including the class~$\mathcal K'^{+1}$.
We fill this gap in the present paper.

The classes~$\bar{\mathcal K}$ and~$\bar{\mathcal F}$ are not normalized, and their group classification problems cannot be tackled with the algebraic method.
Moreover, the action groupoids of their equivalence groups constitute only small portions of the corresponding equivalence groupoids.
In other words, within each of the classes~$\bar{\mathcal K}$ and~$\bar{\mathcal F}$ there
are too many admissible transformations that are not generated by equivalence transformations.
Firstly, this suggests us that a partition of any of these classes into normalized subclasses is unlikely,
and secondly, it means that the number of inequivalent cases of Lie-symmetry extensions
in both~$\bar{\mathcal K}$ and~$\bar{\mathcal F}$ modulo the corresponding equivalence groups
is significantly higher than the analogous number up to the general point equivalence.
At the same time, we show that the group classification problems for~$\bar{\mathcal K}$ and~$\bar{\mathcal F}$
modulo the corresponding equivalence groups are equivalent to the analogous problems
for the subclasses~$\mathcal K^\epsilon\subset\bar{\mathcal K}$ and~$\mathcal F^\epsilon\subset\bar{\mathcal F}$ of reduced Kolmogorov and Fokker--Planck equations,
which are singled out by the constraint $A=\epsilon$.
Here and in what follows $\epsilon$ is a fixed number in $\{-1,1\}$.
Moreover, the subclasses~$\mathcal K^\epsilon$ and~$\mathcal F^\epsilon$ are weakly similar to the class~$\mathcal P^\epsilon$ of heat equations with potentials,
see Definition~\ref{FP:def:WeaklySimilarClasses}.
The class~$\mathcal P^\epsilon$ is disjointedly semi-normalized with respect to the linear superposition of solutions~\cite{PopovychKunzingerIvanova2008}.
Hence the equivalences generated by its equivalence group~$G^\sim_{\mathcal P^\epsilon}$ and by its equivalence groupoid are the same.
The group classification of this class is quite simple,
and the well-known classification list contains only three inequivalent cases of essential Lie-symmetry extensions~\cite{Lie1881,Ovsiannikov1982}.
In~\cite{PopovychKunzingerIvanova2008} with the help of the standard version of the mapping method of group classification,
this list was mapped to the group classification lists for the classes~$\mathcal K^\epsilon$ and~$\mathcal F^\epsilon$ up to the general point equivalence,
each of which is comprised of four cases of inequivalent essential Lie-symmetry extensions.

In the present paper, we apply the advanced version of the mapping method to the group classification of the classes~$\mathcal K^\epsilon$ and~$\mathcal F^\epsilon$
modulo the equivalences generated by the corresponding equivalence groups.
Here, the image of any equation from~$\mathcal P^\epsilon$ is a set of equations from~$\mathcal K^\epsilon$ (resp.\ $\mathcal F^\epsilon$)
that necessarily contains equations that are inequivalent with respect to the corresponding equivalence group.
Thus, the mapping method is applied for the first time to classes with so general weak similarity.
It reduces the group classification problems for the classes~$\mathcal K^\epsilon$ and~$\mathcal F^\epsilon$,
and hence for the classes~$\bar{\mathcal K}$ and~$\bar{\mathcal F}$,
to finding all solutions of the $G^\sim_{\mathcal P^\epsilon}$-inequivalent heat equations with potentials
that are inequivalent with respect to the essential point symmetry groups thereof.
This means that these classification problems are in fact no-go problems 
in the sense of explicitly listing inequivalent Lie-symmetry extensions.
Nonetheless, the mapping method allows us to solve, in a closed form,
the group classification problems for the classes~$\bar{\mathcal K}'$ and~$\bar{\mathcal F}'$
of Kolmogorov and Fokker--Planck equations with time-independent coefficients and their reduced subclasses~$\mathcal K'^\epsilon$ and~$\mathcal F'^\epsilon$
modulo the equivalences generated by the corresponding equivalence groups.
The basis for this application of the mapping method is the well-known group classification of the subclass~$\mathcal P'$ of~$\mathcal P$
constituted by the heat equations with time-independent potentials~$C$.
It is necessary to note that Fokker--Planck equations arising in applications,
in particular those describing the Ornstein--Uhlenbeck and the Rayleigh processes,
have time-independent coefficients.

The rest of the paper is organized as follows.
The advanced theory of point transformations between differential equations is given in Section~\ref{FP:sec:TheorBack}.
Thus, Sections~\ref{FP:sec:AdmEqTransf}, \ref{FP:sec:GroupClassProblem} and~\ref{FP:sec:MapMethod}
are devoted to the notions of admissible and equivalence transformations within classes of differential equations,
a revisited formulation of the group classification problem and developing the mapping method of group classification, respectively.
In Section~\ref{sec:FPEquivalenceGroup}, we compute the equivalence groupoids and the equivalence groups
of the class~$\mathcal E$, of its reparameterization~$\breve{\mathcal E}$
and of various subclasses of~$\mathcal E$ and~$\breve{\mathcal E}$.
In particular, we first construct a minimal self-consistent generating set of admissible transformations
for the class~$\mathcal P'$ of the heat equations with time-independent potentials~$C$.
Using the known group classification of the class~$\mathcal P^\epsilon$ and the developed version of the mapping method,
in Section~\ref{sec:FPGCArbitrary} we revisit the solution of the group classification problems
for the classes~$\bar{\mathcal K}$, $\mathcal K^\epsilon$, $\bar{\mathcal F}$ and~$\mathcal F^\epsilon$
up to the general point equivalence
and exhaustively study the analogous problems up to the equivalences
generated by the corresponding equivalence groups.
Applying the same approach, in Section~\ref{sec:FPGCTimeIndependent}
we completely solve the group classification problems, up to the equivalences
generated by the corresponding equivalence groups, for the subclasses of the above classes
that consists of the equations with time-independent coefficients.
The most interesting results of the present paper are discussed in Section~\ref{sec:FPConclusion}.

\section[Point transformations in classes of differential equations and the mapping method of group classification]
{Point transformations in classes of differential equations\\ and the mapping method of group classification}
\label{FP:sec:TheorBack}

Let~$\mathcal L=\{\mathcal L_\theta\mid \theta\in\mathcal S\}$ be a class (of systems) of differential equations, $\mathcal L_\theta$, of order~$r$
in independent variables~$x=(x_1,\dots,x_n)$ and dependent variables $u=(u^1,\dots,u^m)$
with the arbitrary-element tuple $\theta=\big(\theta^1(x,u_{(r)}),\dots,\theta^k(x,u_{(r)})\big)$
running through the solution set~$\mathcal S$ of an auxiliary system~$\mathsf S$ of differential equations and differential inequalities. 
(The notation of variables in this section differs from that in the other sections.)
The systems $\mathcal L_\theta$ have the form $L_\theta(x,u_{(r)}):=L\big(x,u_{(r)},\theta_{(q)}(x,u_{(r)})\big)=0$,
where $L$ is a fixed tuple of $r$th order differential functions in~$u$ parameterized by derivatives of~$\theta$.
The short-hand notations $u_{(r)}$ and $\theta_{(q)}$ are used
for the tuples of derivatives of the corresponding~functions with respect to their arguments up to orders~$r$ and~$q$, respectively.
In the auxiliary system~$\mathsf S$, the jet variables $(x,u_{(r)})$ and the arbitrary elements~$\theta$
play the role of the independent and the dependent variables, respectively.

\subsection{Admissible and equivalence transformations}\label{FP:sec:AdmEqTransf}

Admissible transformations of the class~$\mathcal L$ are triples~$\mathcal T$ of the form~$(\theta_1,\Phi,\theta_2)$,
where $\theta_1,\theta_2\in\mathcal S$ and $\Phi$ is a point transformation%
\footnote{%
In the case of one dependent variable, one can also consider various structures consisting of contact transformations instead of point ones.
}
relating the systems $\mathcal L_{\theta_1}$ and~$\mathcal L_{\theta_2}$,
$\Phi_*\mathcal L_{\theta_1}=\mathcal L_{\theta_2}$.
Here $\Phi_*\mathcal L_{\theta_1}$ is the pushforward of the system~$\mathcal L_{\theta_1}$ by the transformation~$\Phi$,
and the equality \mbox{$\Phi_*\mathcal L_{\theta_1}=\mathcal L_{\theta_2}$} should be understood as
\smash{$(\mathop{\rm pr}_{(r)}\Phi)^* L_{\theta_2}|^{}_{\mathcal L_{\theta_1}}=0$},
where $\mathop{\rm pr}_{(r)}\Phi$ is the $r$th prolongation of the transformation~$\Phi$.
In this case, we also say $\Phi_*\theta_1=\theta_2$.
For an admissible transformation $\mathcal T=(\theta_1,\Phi,\theta_2)$,
$\theta_1$~and~$\theta_2$ are respectively called the source and the target of~$\mathcal T$,
${\rm s}(\mathcal T):=\theta_1$ and ${\rm t}(\mathcal T):=\theta_2$.
The composition~$\mathcal T\star\mathcal T'$ of admissible transformations
$\mathcal T=(\theta_1,\Phi,\theta_2)$ and $\mathcal T'=(\theta_1',\Phi',\theta_2')$
is defined if and only if $\theta_1'={\rm s}(\mathcal T')={\rm t}(\mathcal T)=\theta_2$,
and it is equal to $\mathcal T\star\mathcal T':=(\theta_1,\Phi'\circ\Phi,\theta_2')$,
which gives a partial multiplication for admissible transformations of~$\mathcal L$.
For any $\theta\in\mathcal S$, the unit at~$\theta$ is \smash{${\rm id}_\theta:=(\theta,{\rm id}_{(x,u)},\theta)$},
where \smash{${\rm id}_{(x,u)}$} is the identity transformation of~$(x,u)$.
The inverse of~$\mathcal T$ is $\mathcal T^{-1}:=(\theta_2,\Phi^{-1},\theta_1)$, where
$\Phi^{-1}$ is the inverse of~$\Phi$.
This makes the set
\[
\mathcal G^\sim_{\mathcal L}=\{(\theta_1,\Phi,\theta_2)\mid\theta_1,\theta_2\in\mathcal S,\ \Phi\in\mathrm{Diff}^{\rm loc}_{(x,u)}\colon
\Phi_*\mathcal L_{\theta_1}=\mathcal L_{\theta_2}\}
\]
of admissible transformations of the class~$\mathcal L$
a groupoid, $\mathcal G^\sim_{\mathcal L}\rightrightarrows\mathcal S$, called the \emph{equivalence groupoid} of this class,
where the set of objects is ${\rm s}(\mathcal G^\sim_{\mathcal L})={\rm t}(\mathcal G^\sim_{\mathcal L})=\mathcal S$
and the base groupoid is $\mathcal S\rightrightarrows\mathcal S:=\{{\rm id}_\theta\mid\theta\in\mathcal S\}$.

\emph{Usual equivalence transformations} of the class~$\mathcal L$ 
are the point transformations (i.e., local diffeomorphisms) in the space coordinatized by $(x,u_{(r)},\theta)$
that are projectable to the spaces coordinatized by~$(x,u)$ and~$(x,u_{(r)})$,
are consistent with the contact structure of the jet space $\mathrm J^r(\mathbb R^n_x\times\mathbb R^m_u)$
and leave~$\mathcal L_\theta$ in~$\mathcal L$ for any value of the arbitrary-element tuple~$\theta$ of~$\mathcal L$.
Such transformations constitute the \emph{usual equivalence (pseudo)group}~$G^\sim_{\mathcal L}$ of the class~$\mathcal L$.
In what follows, we omit the attribute ``usual'' for such groups since we do not deal with other types of equivalence transformations in the present paper. 
Most objects that are called groups in the present paper are in fact pseudogroups of local diffeomorphisms 
but we omit the attribute ``pseudo'' as well. 

Let $\pi$ be the natural projection from the space coordinatized by $(x,u_{(r)},\theta)$ onto the space coordinatized by $(x,u)$.
There are two particularly important subgroupoids of~$\mathcal G^\sim_{\mathcal L}$,
the action groupoid of the equivalence group~$G^\sim_{\mathcal L}$ and the fundamental groupoid~$\mathcal G^{\rm f}_{\mathcal L}$ of the class~$\mathcal L$,
\begin{gather*}
\mathcal G^{G^\sim_{\mathcal L}}=\mathcal G(G^\sim_{\mathcal L})
:=\{(\theta,\pi_*\mathscr T,\mathscr T_*\theta)\mid\theta\in\mathcal S,\ \mathscr T\in G^\sim_{\mathcal L}\},
\\
\mathcal G^{\rm f}_{\mathcal L}
:=\{\mathcal T\in\mathcal G^\sim_{\mathcal L}\mid{\rm s}(\mathcal T)={\rm t}(\mathcal T)\}
=\{(\theta,\Phi,\theta)\mid \theta\in\mathcal S, \Phi\in G_\theta\}
=\smash{\bigsqcup_{\theta\in\mathcal S} \mathcal G_\theta}.
\\[-2.2ex]
\end{gather*}
Here for each $\theta\in\mathcal S$, $\mathcal G_\theta$ is
the vertex group of the system~$\mathcal L_\theta$,
$\mathcal G_\theta=\{\mathcal T\in\mathcal G^\sim_{\mathcal L}\mid{\rm s}(\mathcal T)={\rm t}(\mathcal T)=\theta\}$.
It is associated with the complete point symmetry group~$G_\theta$ of~$\mathcal L_\theta$:
$(\theta,\Phi,\theta)\in\mathcal G_\theta$ if and only if $\Phi\in G_\theta$.
The fundamental groupoid~$\mathcal G^{\rm f}_{\mathcal L}$ of~$\mathcal L$ is a normal subgroupoid of~$\mathcal G^\sim_{\mathcal L}$.

Any wide%
\footnote{%
A subgroupoid~$\mathcal U$ of a groupoid~$\mathcal G$ is called \emph{wide}
if the sets of objects of~$\mathcal U$ and~$\mathcal G$ coincide, ${\rm s}(\mathcal U)={\rm s}(\mathcal G)$.
More generally, a subset~$\mathcal M$ of a groupoid~$\mathcal G$ is called \emph{wide}
if the set ${\rm s}(\mathcal M)$ coincides with the set of objects of~$\mathcal G$, ${\rm s}(\mathcal M)={\rm s}(\mathcal G)$.
}
subgroupoid~$\mathcal U$ of the equivalence groupoid~$\mathcal G^\sim_{\mathcal L}$ of the class~$\mathcal L$
generates an equivalence relation for elements of~$\mathcal G^\sim_{\mathcal L}$.
More specifically, $\mathcal T,\tilde{\mathcal T}\in\mathcal G^\sim_{\mathcal L}$ are called \emph{$\mathcal U$-equivalent}
if there exist $\mathcal T_1,\mathcal T_2\in\mathcal U$ such that $\tilde{\mathcal T}=\mathcal T_1\star\mathcal T\star\mathcal T_2$.%
\footnote{%
One can also introduce other kinds of equivalence with respect to~$\mathcal U$ for elements of~$\mathcal G^\sim_{\mathcal L}$,
see~\cite{VaneevaBihloPopovych2020}.
}
We call $\mathcal T\in\mathcal G^\sim_{\mathcal L}$ composable with $\mathcal T'\in\mathcal G^\sim_{\mathcal L}$
up to the $\mathcal U$-equivalence
if an admissible transformation that is $\mathcal U$-equivalent to~$\mathcal T$ is composable with~$\mathcal T'$ or,
equivalently, if $\mathcal T$ is composable with an admissible transformation that is $\mathcal U$-equivalent to~$\mathcal T'$.
A set $\mathcal B=\{\mathcal T_\gamma\in\mathcal G^\sim_{\mathcal L}\mid\gamma\in\Gamma\}$,
where $\Gamma$ is an index set, is called
a \emph{generating set of admissible transformations for the class~$\mathcal L$ up to the $\mathcal U$-equivalence}
if any admissible transformation of this class can be represented
as the composition of a finite number of elements of the set~$\mathcal B\cup\hat{\mathcal B}\cup\mathcal U$.
Here \smash{$\hat{\mathcal B}=\{\mathcal T^{-1}\mid \mathcal T\in\mathcal B\}$}.
A subset~$\mathcal B$ of $\mathcal G^\sim_{\mathcal L}$ is \emph{self-consistent with respect to the $\mathcal U$-equivalence}
if the composability of elements of \smash{$\mathcal B \cup \hat{\mathcal B}$} up to the $\mathcal U$-equivalence implies their usual composability.
The standard choice for~$\mathcal U$ is $\mathcal U=\mathcal G^{G^\sim_{\mathcal L}}$~\cite{VaneevaBihloPopovych2020},
and then one can use the term `$G^\sim_{\mathcal L}$-equivalence' instead of `$\mathcal G^{G^\sim_{\mathcal L}}$-equivalence'.
For specific classes of differential equations, e.g., for classes of linear homogeneous systems of differential equations,
other choices of~$\mathcal U$ can be more convenient, see Section~\ref{sec:FP:HeatEqsWithPotsEquivAndAdmTrans}.

The class~$\mathcal L$ is called \emph{normalized} in the usual sense if $\mathcal G^\sim_{\mathcal L}=\mathcal G^{G^\sim_\mathcal L}$
and \emph{semi-normalized} if the groupoid~$\mathcal G^\sim_{\mathcal L}$
coincides with the Frobenius product of~$\mathcal G^{G^\sim_{\mathcal L}}$ and~$\mathcal G^{\rm f}_{\mathcal L}$,
\[
\mathcal G^\sim_{\mathcal L}=\mathcal G^{G^\sim_{\mathcal L}}\star\mathcal G^{\rm f}_{\mathcal L}
:=\big\{\mathcal T\star\mathcal T'\mid
\mathcal T\in\mathcal G^{G^\sim_{\mathcal L}},\,\mathcal T'\in\mathcal G^{\rm f}_{\mathcal L},\,{\rm t}(\mathcal T)={\rm s}(\mathcal T')\big\}.
\]

More generally, consider a subgroup $H$ of~$G^\sim_{\mathcal L}$
and a family $N_{\mathcal S}:=\{N_\theta<G_\theta\mid\theta\in\mathcal S\}$ of subgroups of the groups~$G_\theta$
that is \emph{uniform} with respect to~$H$,
i.e., $\mathcal N_{{\rm s}(\mathcal T)}\star\mathcal T=\mathcal T\star\mathcal N_{{\rm t}(\mathcal T)}$
for any $\mathcal T\in\mathcal G^H$.
Here $\mathcal G^H:=\{(\theta,\pi_*\mathscr T,\mathscr T_*\theta)\mid\theta\in\mathcal S,\ \mathscr T\in H\}$ is the action groupoid of~$H$,
and for any $\theta\in\mathcal S$,
$\mathcal N_\theta:=\{(\theta,\Phi,\theta)\mid\theta\in\mathcal S,\,\Phi\in N_\theta\}$ is the subgroup of the vertex group~$\mathcal G_\theta$
that is associated with~$N_\theta$.
Denote $\mathcal N^{\rm f}:=\sqcup_{\theta\in\mathcal S}\mathcal N_\theta$.
Then the Frobenius product
\[
\mathcal N^{\rm f}\star\mathcal G^H:=\big\{\mathcal T\star\mathcal T'\mid
\mathcal T\in\mathcal N^{\rm f},\,\mathcal T'\in\mathcal G^H,\,{\rm t}(\mathcal T)={\rm s}(\mathcal T')\big\},
\]
is a subgroupoid of~$\mathcal G^\sim_{\mathcal L}$.
We call the class~$\mathcal L|_{\mathcal S}$
\emph{semi-normalized with respect to the subgroup $H$ of~$G^\sim_{\mathcal L}$ and the family $N_{\mathcal S}$ of subgroups of the point symmetry groups}
if $\mathcal N^{\rm f}\star\mathcal G^H=\mathcal G^\sim_{\mathcal L}$,
and we add the modifier ``disjointedly'' to ``semi-normalized'' if additionally $\mathcal N^{\rm f}\cap\mathcal G^H=\mathcal S\rightrightarrows\mathcal S$.
In~particular,
if the class~$\mathcal L$ consists of linear homogeneous systems of differential equations,
$H=G^\sim_{\mathcal L}$,
and  for any $\theta\in\mathcal S$, $N_\theta$ is the subgroup of~$G_\theta$ that is constituted by
the point symmetry transformations of the linear superposition of solutions,
$N_\theta=\{\Phi\in G_\theta\mid \Phi\colon\tilde x=x,\,\tilde u=u+f(x)\}$
with $f(x)=\left(f^1(x),\dots,f^m(x)\right)$ running through the solution set of~$\mathcal L_\theta$,
then the class~$\mathcal L$ is called
\emph{(disjointedly) semi-normalized with respect to the linear superposition of solutions}.

See~\cite{VaneevaBihloPopovych2020} and references therein for more definitions of structures
related to point or contact transformations within classes of differential equations.

A \emph{subclass}~$\mathcal L'$ of the class~$\mathcal L$ is a specific subset of~$\mathcal L$,
$\mathcal L'=\{\mathcal L_\theta\mid\theta\in\mathcal S'\subseteq\mathcal S\}\subseteq\mathcal L$,
where $\mathcal S'$ is the solution set of an auxiliary system~$\mathsf S'$
obtained  by supplementing the system~$\mathsf S$
with the system~$\check{\mathsf S}$ of additional auxiliary differential equations and/or differential inequalities.
We say that subclass~$\mathcal L'$ is singled out from the class~$\mathcal L$
by the system~$\check{\mathsf S}$ of differential constraints on the arbitrary-element tuple~$\theta$.
Among such constraints,
we distinguish those that can be imposed by relating the class~$\mathcal L$ via wide subsets of admissible transformations
(usually, from $\mathcal G^{G^\sim_{\mathcal L}}$) to its proper subclasses.
In contrast to the general constraints on the arbitrary-element tuple, these specific constraints are called \emph{gauges}.
More precisely, let $\mathcal L'$ be a subclass of~$\mathcal L$ that is singled out from~$\mathcal L$
by the additional auxiliary system~$\check{\mathsf S}$ on the arbitrary-element tuple~$\theta$.
We say that a wide subset~$\mathcal M$ of~$\mathcal G^\sim_{\mathcal L}$ gauges
the class~$\mathcal L$ to its subclass~$\mathcal L'$ or, equivalently,
the set~$\mathcal S$ to its subset~$\mathcal S'$ if ${\rm t}(\mathcal M)=\mathcal S'$.
Then the system~$\check{\mathsf S}$ is called the gauge on~$\theta$ associated with~$\mathcal M$.

Suppose that the system~$\mathsf S'$ contains algebraic equations $S^i(x,u_{(r)},\theta)=0$ in~$\theta$,
where the index~$i$ runs through a finite index set~$I$, $|I|\leqslant\#\theta=k$, and
the function tuple $S:=(S^i,i\in I)$ is of the maximal rank with respect to~$\theta$.
Here the attribute ``algebraic'' means ``not being genuinely differential''.
The arbitrary-element tuple~$\theta$ can (locally) be split into
``parametric'' and ``leading'' parts~$\hat\theta$ and~$\check\theta$
with $\#\check\theta=|I|$ and $\det(\p S/\p \check\theta)\ne0$.%
\footnote{%
Throughout the paper, any inequality of the form $f\ne0$ with a function~$f$ means that
this function is nonzero for each point in its domain.
}
Solving the system $S=0$ with respect to~$\check\theta$,
we can assume without loss of generality that \smash{$S=\check\theta-\Theta(x,u_{(r)},\hat\theta)$}.
Excluding $\check\theta$ from the system~$L_\theta$ due to the substitution $\check\theta=\Theta(x,u_{(r)},\hat\theta)$ for~$\check\theta$
and considering~$\hat\theta$ as the only arbitrary elements,
we reparameterize the class~$\mathcal L'$ to a class~$\mathcal L'_{\rm r}$ with a less number of arbitrary elements.
A~benefit of the reparameterization is that it allows one to avoid
the consideration of insignificant equivalence transformations~\cite[Appendix~A]{BoykoLokazyukPopovych2021},
whose presence complicates the computation of equivalence transformations of the class~$\mathcal L'$.
At the same time, in contrast to~$\mathcal L'$,
the reparameterized class~$\mathcal L'_{\rm r}$ is not formally a subclass of~$\mathcal L$,
although it can be embedded in~$\mathcal L$ via naturally identifying with its counterpart~$\mathcal L'$,
$\mathcal L'_{\rm r}\hookrightarrow\mathcal L$.
This is why we can still informally say that the class~$\mathcal L'_{\rm r}$ is a subclass~of~$\mathcal L$.

If the system~$\mathsf S'$ contains genuinely differential constraints that can algebraically be solved with respect to
a subtuple~$\check\theta$ of~$\theta$, \smash{$\check\theta=\Theta(x,u_{(r)},\hat\theta_{(q)})$},
where $\hat\theta$ is the complement of~$\check\theta$ in~$\theta$,
i.e., the subtuple~$\check\theta$ can be expressed in terms of derivatives of~$\hat\theta$,
the class~$\mathcal L'$ can be reparameterized in an analogous way.
Nevertheless, this is not so important in this case as in the previous one
since such constraints are not related to the existence of insignificant equivalence transformations.

\subsection{Revisited statement of group classification problem}\label{FP:sec:GroupClassProblem}

Loosely speaking, the group classification problem for the class~$\mathcal L$
is to classify the maximal Lie invariance algebras~$\mathfrak g_\theta$
of the systems~$\mathcal L_\theta$ from the class~$\mathcal L$ up to an appropriate equivalence
related to point transformations between such systems.
To solve the group classification problem for the class~$\mathcal L$ is to list all inequivalent values of~$\theta$
for each of which the algebra~$\mathfrak g_\theta$ is wider
than a Lie invariance algebra of~$\mathcal L_\theta$
that is in a certain sense common or general for all systems in the class~$\mathcal L$.
Thus, for a rigorous formulation of the group classification problem,
one should precisely specify the equivalence to be used, invariance algebras to be assumed ``common'' or ``general''
and in which sense these algebras are ``common'' or ``general''.

\looseness=1
There are two standard equivalences used in the course of group classification of the class~$\mathcal L$,
the $\mathcal G^\sim_{\mathcal L}$-equivalence and the $G^\sim_{\mathcal L}$-equivalence.
The classification with respect to the equivalence groupoid~$\mathcal G^\sim_{\mathcal L}$ is the classification
up to all point transformations within the class~$\mathcal L$ or, in other words,
up to the general point equivalence called the similarity of systems of differential equations.
The $\mathcal G^\sim_{\mathcal L}$-equivalence coincides with the equivalence with respect to the equivalence group~$G^\sim_{\mathcal L}$
if the class~$\mathcal L$ is semi-normalized
but for a general class of differential equations, the latter equivalence is weaker.
If the class~$\mathcal L$ is not semi-normalized, then one may try to partition it into semi-normalized subclasses
that are invariant under the action of~$\mathcal G^\sim_{\mathcal L}$
and to solve the group classification problem for each of these subclasses with respect to its own equivalence group.
If such a partition is not feasible, one should carry out the group classification of the class~$\mathcal L$ with respect to its equivalence group
and additionally indicate which $G^\sim_{\mathcal L}$-inequivalent cases of Lie-symmetry extensions
are $\mathcal G^\sim_{\mathcal L}$-equivalent.
Such equivalences are called additional, and are realized by admissible transformations of the class~$\mathcal L$
that do not belong to the action groupoid~$\mathcal G^{G^\sim_{\mathcal L}}$ of the equivalence group~$G^\sim_{\mathcal L}$.
One can also use the equivalence with respect to the generalized, the extended or the extended generalized equivalence groups
instead of the usual one~\cite{ivan2010a}.
In some classification problems, e.g.,
in partial preliminary group classification problems~\cite{bihl2012b}
or in group classifications of potential systems or potential equations~\cite{popo2005a},
the corresponding equivalence groups are replaced by their proper subgroups.

Likewise, there are two approaches to assigning which invariance algebras are regarded ``common'' or ``general''
for systems in the class~$\mathcal L$.
The traditional approach is to take the common part
$\mathfrak g^{\cap}_{\mathcal L}=\cap_{\theta\in\mathcal S} \mathfrak g_\theta$
of all the algebras~$\mathfrak g_\theta$ as basic for the further classification of its extensions~\cite{Ovsiannikov1982}.
It is called the kernel of maximal Lie invariance algebras of systems from the class~$\mathcal L$ or, briefly, the \emph{kernel algebra} of~$\mathcal L$.
Although this approach is applicable to a generic class of differential equations, there may be better alternatives for some classes.
In fact, the algebras~$\mathfrak g_\theta$, $\theta\in\mathcal S$,
may have more in common than merely the common part~$\mathfrak g^{\cap}_{\mathcal L}$.
Within the more general second approach,
one considers a family $\{\mathfrak g^{\rm unf}_\theta\subseteq\mathfrak g_\theta\mid\theta\in\mathcal S\}$
of \emph{uniform} subalgebras of the algebras~$\mathfrak g_\theta$, $\theta\in\mathcal S$.%
\footnote{%
For each $\theta\in\mathcal S$, the Lie algebra~$\mathfrak g_\theta$ is defined
by a homogeneous linear system of differential equations $\Theta^\theta[\mathrm v]=0$
in components of vector fields~$\mathrm v$ on the space with coordinates~$(x,u)$.
Here~$\Theta^\theta=\Theta^\theta[\mathrm v]$ is a tuple of homogeneous linear differential functions
in components of~$\mathrm v$ that is parameterized by derivatives of~$\theta$.
In other words, a vector field~$\mathrm v$ belongs to~$\mathfrak g_\theta$ if and only if $\Theta^\theta[\mathrm v]=0$.
We assume that the subalgebra~$\mathfrak g^{\rm unf}_\theta$ of~$\mathfrak g_\theta$ is defined
by an analogous system $\bar\Theta^\theta[\mathrm v]=0$, which contains the system $\Theta^\theta[\mathrm v]=0$ as a subsystem.
}
The uniformity of these subalgebras means
that their parameterization by~$\theta$ is consistent with the equivalence used
in the course of the group classification of the class~$\mathcal L$.
More specifically,
$(\pi_*\mathscr T)_*\mathfrak g^{\rm unf}_\theta=\mathfrak g^{\rm unf}_{\mathscr T_*\theta}$
for any $\theta\in\mathcal S$ and any $\mathscr T\in G^\sim_{\mathcal L}$ in the case of the $G^\sim_{\mathcal L}$-uniformity
and
$\Phi_*\mathfrak g^{\rm unf}_{\theta_1}=\mathfrak g^{\rm unf}_{\theta_2}$
for any $\mathcal T=(\theta_1,\Phi,\theta_2)\in\mathcal G^\sim_{\mathcal L}$ in the case of the $\mathcal G^\sim_{\mathcal L}$-uniformity.
The uniform subalgebras from the same family usually have a similar structure,
e.g., they are simultaneously abelian or their dimensions are the same or, more generally, they have the same arbitrariness of their parameters.
Note that, under classifying up to the $G^\sim_{\mathcal L}$-equivalence,
the first (traditional) approach can be considered as a particular case of the second approach,
where each $G^\sim_{\mathcal L}$-uniform subalgebra coincides with the kernel algebra of~$\mathcal L$,
$\mathfrak g^{\rm unf}_\theta=\mathfrak g^{\cap}_{\mathcal L}$ for any $\theta\in\mathcal S$. 
This is due to the fact that the kernel algebra~$\mathfrak g^\cap_{\mathcal L}$ is invariant under the $G^\sim_{\mathcal L}$-action,
\[
(\pi_*\mathscr T)_*\mathfrak g^\cap_{\mathcal L}=(\pi_*\mathscr T)_*\left(\cap_{\theta\in\mathcal S}\mathfrak g_\theta\right)=
\cap_{\theta\in\mathcal S}\mathfrak g_{\mathscr T_*\theta}=\cap_{\theta\in\mathcal S}\mathfrak g_{\theta}=\mathfrak g^\cap_{\mathcal L}
\quad\mbox{for any}\ \mathscr T\in G^\sim_{\mathcal L}.
\]

\begin{proposition}\label{pro:GroupoidUniformSubalgebras}
Given a family $\{\mathfrak g^{\rm unf}_\theta\subseteq\mathfrak g_\theta\mid\theta\in\mathcal S\}$
of $\mathcal G^\sim_{\mathcal L}$-uniform subalgebras, for each $\theta\in\mathcal S$,
the subalgebra~$\mathfrak g^{\rm unf}_\theta$ is invariant under the adjoint action
of the point symmetry group~$G_\theta$ of the system~$\mathcal L_\theta$ on~$\mathfrak g_\theta$
and, moreover, it is an ideal of~$\mathfrak g_\theta$.
\end{proposition}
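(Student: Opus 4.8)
The plan is to prove the two assertions in sequence, deducing the ideal property as the infinitesimal counterpart of the group-level invariance. For the first assertion, I would specialize the $\mathcal G^\sim_{\mathcal L}$-uniformity condition to the vertex group~$\mathcal G_\theta$. By the identification recalled above, $(\theta,\Phi,\theta)\in\mathcal G_\theta$ holds exactly when $\Phi\in G_\theta$, so each such triple is an admissible transformation of~$\mathcal L$ whose source and target both equal~$\theta$. Feeding $\mathcal T=(\theta,\Phi,\theta)$ into the defining relation $\Phi_*\mathfrak g^{\rm unf}_{\theta_1}=\mathfrak g^{\rm unf}_{\theta_2}$ of a $\mathcal G^\sim_{\mathcal L}$-uniform family, with $\theta_1=\theta_2=\theta$, yields $\Phi_*\mathfrak g^{\rm unf}_\theta=\mathfrak g^{\rm unf}_\theta$ for every $\Phi\in G_\theta$. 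The adjoint action of~$G_\theta$ on~$\mathfrak g_\theta$ is realized precisely by the pushforward $\mathrm v\mapsto\Phi_*\mathrm v$: the pushforward of an infinitesimal symmetry of~$\mathcal L_\theta$ by a finite symmetry $\Phi\in G_\theta$ is again an infinitesimal symmetry, since it generates the conjugated flow $\Phi\circ\exp(s\mathrm v)\circ\Phi^{-1}$. Hence the displayed equality is exactly the $G_\theta$-invariance of the subspace~$\mathfrak g^{\rm unf}_\theta$ of~$\mathfrak g_\theta$.

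For the second assertion, I would differentiate this invariance along the one-parameter subgroups of~$G_\theta$. Fix $\mathrm w\in\mathfrak g_\theta$; for small~$s$ its flow $\Phi^{\mathrm w}_s=\exp(s\mathrm w)$ consists of point symmetries of~$\mathcal L_\theta$, so $\Phi^{\mathrm w}_s\in G_\theta$, and the first assertion gives $(\Phi^{\mathrm w}_s)_*\mathrm v\in\mathfrak g^{\rm unf}_\theta$ for every $\mathrm v\in\mathfrak g^{\rm unf}_\theta$ and all admissible~$s$. Differentiating this $s$-family at $s=0$ and invoking the standard identity $\frac{\mathrm d}{\mathrm d s}\big|_{s=0}(\Phi^{\mathrm w}_s)_*\mathrm v=-[\mathrm w,\mathrm v]$, I conclude that $[\mathrm w,\mathrm v]$ lies in~$\mathfrak g^{\rm unf}_\theta$. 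As $\mathrm w\in\mathfrak g_\theta$ and $\mathrm v\in\mathfrak g^{\rm unf}_\theta$ are arbitrary, this establishes the ideal property $[\mathfrak g_\theta,\mathfrak g^{\rm unf}_\theta]\subseteq\mathfrak g^{\rm unf}_\theta$.

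The step requiring care, which I regard as the main obstacle, is ensuring that the $s$-derivative of the curve $s\mapsto(\Phi^{\mathrm w}_s)_*\mathrm v$ still belongs to~$\mathfrak g^{\rm unf}_\theta$, since this subalgebra need not be finite-dimensional. I would settle this using the description of~$\mathfrak g^{\rm unf}_\theta$ as the solution set of the homogeneous linear system $\bar\Theta^\theta[\mathrm v]=0$ from the footnote above. Because $\bar\Theta^\theta$ is linear in~$\mathrm v$ and its coefficients depend only on the fixed tuple~$\theta$, and not on~$s$, the operator $\bar\Theta^\theta$ commutes with~$\partial_s$; applying it to the family of solutions $(\Phi^{\mathrm w}_s)_*\mathrm v$ and differentiating gives $\bar\Theta^\theta\big[\partial_s(\Phi^{\mathrm w}_s)_*\mathrm v\big]=0$, so the derivative solves the same system and indeed lies in~$\mathfrak g^{\rm unf}_\theta$. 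One minor point worth recording is that the ideal property invokes only the identity component of~$G_\theta$, through the flows $\exp(s\mathrm w)$ with $\mathrm w\in\mathfrak g_\theta$, whereas the first assertion is genuinely stronger, since it also accounts for any discrete point symmetries of~$\mathcal L_\theta$.
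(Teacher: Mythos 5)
Your proposal is correct and follows essentially the same route as the paper: the first assertion is obtained exactly as in the paper by specializing the $\mathcal G^\sim_{\mathcal L}$-uniformity condition to the vertex group~$\mathcal G_\theta$, and the ideal property is derived by differentiating the invariance along one-parameter symmetry subgroups, with the possible infinite-dimensionality of~$\mathfrak g^{\rm unf}_\theta$ handled via its description as the solution set of the linear system $\bar\Theta^\theta[\mathrm v]=0$. The only cosmetic difference is that the paper passes to the limit of difference quotients and invokes the continuity of~$\bar\Theta^\theta$, whereas you commute $\bar\Theta^\theta$ with $\partial_s$; these amount to the same justification.
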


\begin{proof}
We fix an arbitrary $\theta\in\mathcal S$ and an arbitrary $\Phi\in G_\theta$.
Then $\mathcal T:=(\theta,\Phi,\theta)$ belongs to the vertex group $\mathcal G_\theta\subset\mathcal G^\sim_{\mathcal L}$
and thus $\Phi_*\mathfrak g^{\rm unf}_\theta=\mathfrak g^{\rm unf}_\theta$.
In other words, the subalgebra~$\mathfrak g^{\rm unf}_\theta$ is invariant under the adjoint action
of the point symmetry group~$G_\theta$ of the system~$\mathcal L_\theta$ on~$\mathfrak g_\theta$.

To prove the second claim of the proposition, we follow the proof of~\cite[Corollary~25]{OpanasenkoBihloPopovych2020}.
Let $\mathrm v\in\mathfrak g_\theta$ and thus $\{\exp(\varepsilon\mathrm v)\}$,
where the parameter~$\varepsilon$ runs through a neighborhood of zero in~$\mathbb R$,
is a local one-parameter point symmetry group of the system~$\mathcal L_\theta$, $\{\exp(\varepsilon\mathrm v)\}\subseteq G_\theta$.
For any $\mathrm w\in\mathfrak g^{\rm unf}_\theta$ we have $(\exp(\varepsilon\mathrm v))_*\mathrm w\in\mathfrak g^{\rm unf}_\theta$.
Recall that
\[
[\mathrm v,\mathrm w]
=\mathscr L_{\mathrm v}\mathrm w
:=\frac{\mathrm d\mathrm w_\varepsilon}{\mathrm d\varepsilon}\Big|_{\varepsilon=0}
=\lim\limits_{\varepsilon\to0}\frac{\mathrm w_\varepsilon-\mathrm w_0}\varepsilon,
\quad\mbox{where}\quad
\mathrm w_\varepsilon\big|_p:=\big(\exp(-\varepsilon\mathrm v)_*\mathrm w\big)\big|_{\exp(\varepsilon\mathrm v)(p)}
\]
for any point $p$ in the space coordinatized by $(x,u)$.
The vector field~$\mathrm u_\varepsilon=(\mathrm w_\varepsilon-\mathrm w_0)/\varepsilon$
clearly belongs to the algebra~$\mathfrak g^{\rm unf}_\theta$ for any~$\varepsilon\ne0$,
and thus it satisfies the system defining this algebra, $\bar\Theta^\theta[\mathrm u_\varepsilon]=0$.
Recalling that components of $\mathrm u_\varepsilon$ are smooth in $(x,u,\varepsilon)$,
we use the continuity of~$\bar\Theta^\theta$ in components of vector fields to show
\[
\bar\Theta^\theta\big[[\mathrm v,\mathrm w]\big]=\bar\Theta^\theta\left[\,\lim\limits_{\varepsilon\to0}\mathrm u_\varepsilon\right]=
\lim\limits_{\varepsilon\to0}\bar\Theta^\theta[\mathrm u_\varepsilon]=0,
\]
implying that $[\mathrm v,\mathrm w]\in\mathfrak g^{\rm unf}_\theta$.
Therefore, $\mathfrak g^{\rm unf}_\theta$ is an ideal of~$\mathfrak g_\theta$.
\end{proof}

In practice, uniform subalgebras are either known a priori from the very form of the systems~$\mathcal L_\theta$
or can be easily computed at the preliminary step of group classification of~$\mathcal L$.
From a known family $\{\mathfrak g^{\rm unf}_\theta\subseteq\mathfrak g_\theta\mid\theta\in\mathcal S\}$
of $G^\sim_{\mathcal L}$-uniform subalgebras one can easily construct a family of $G^\sim_{\mathcal L}$-uniform subalgebras
each of which contains the kernel algebra~$\mathfrak g^\cap_{\mathcal L}$.

\begin{proposition}\label{pro:AttachingKernelAlgebraToUniformSubAlgebras}
Given a family $\{\mathfrak g^{\rm unf}_\theta\subseteq\mathfrak g_\theta\mid\theta\in\mathcal S\}$
of $G^\sim_{\mathcal L}$-uniform subalgebras,
for an arbitrary $\theta\in\mathcal S$, let $\bar{\mathfrak g}^{\rm unf}_\theta$ be the minimal subalgebra of~$\mathfrak g_\theta$
that contains both~$\mathfrak g^\cap_{\mathcal L}$ and~$\mathfrak g^{\rm unf}_\theta$.
Then $\{\bar{\mathfrak g}^{\rm unf}_\theta\mid\theta\in\mathcal S\}$
is a family of $G^\sim_{\mathcal L}$-uniform subalgebras of the algebras~$\mathfrak g_\theta$,~$\theta\in\mathcal S$.
\end{proposition}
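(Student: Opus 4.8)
The plan is to verify the defining property of $G^\sim_{\mathcal L}$-uniformity directly for the enlarged family, that is, to establish $(\pi_*\mathscr T)_*\bar{\mathfrak g}^{\rm unf}_\theta=\bar{\mathfrak g}^{\rm unf}_{\mathscr T_*\theta}$ for every $\theta\in\mathcal S$ and every $\mathscr T\in G^\sim_{\mathcal L}$. The whole argument rests on two observations. First, the pushforward $(\pi_*\mathscr T)_*$ by the point transformation $\pi_*\mathscr T$ is a Lie-algebra isomorphism on vector fields, since it intertwines the Lie bracket; consequently it carries the subalgebra generated by a given set onto the subalgebra generated by the image of that set. Second, both generators of $\bar{\mathfrak g}^{\rm unf}_\theta$, namely $\mathfrak g^\cap_{\mathcal L}$ and $\mathfrak g^{\rm unf}_\theta$, transform in a controlled way under $(\pi_*\mathscr T)_*$.

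First I would record that the kernel algebra is $G^\sim_{\mathcal L}$-invariant, i.e., $(\pi_*\mathscr T)_*\mathfrak g^\cap_{\mathcal L}=\mathfrak g^\cap_{\mathcal L}$ for every $\mathscr T\in G^\sim_{\mathcal L}$. Indeed, an equivalence transformation maps $\mathcal L_\theta$ to $\mathcal L_{\mathscr T_*\theta}$, so its underlying point transformation maps the maximal Lie invariance algebra of the former onto that of the latter, $(\pi_*\mathscr T)_*\mathfrak g_\theta=\mathfrak g_{\mathscr T_*\theta}$. Since $\theta\mapsto\mathscr T_*\theta$ is a bijection of~$\mathcal S$, passing to the intersection over all $\theta\in\mathcal S$ and using that pushforward by a diffeomorphism commutes with intersection gives $(\pi_*\mathscr T)_*\mathfrak g^\cap_{\mathcal L}=\cap_{\theta\in\mathcal S}(\pi_*\mathscr T)_*\mathfrak g_\theta=\cap_{\theta\in\mathcal S}\mathfrak g_{\mathscr T_*\theta}=\mathfrak g^\cap_{\mathcal L}$.

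Next, since $\bar{\mathfrak g}^{\rm unf}_\theta$ is by its definition the minimal subalgebra of~$\mathfrak g_\theta$ containing $\mathfrak g^\cap_{\mathcal L}\cup\mathfrak g^{\rm unf}_\theta$, i.e., the Lie subalgebra they generate, I would apply $(\pi_*\mathscr T)_*$ and invoke the first observation to conclude that $(\pi_*\mathscr T)_*\bar{\mathfrak g}^{\rm unf}_\theta$ is the subalgebra of $\mathfrak g_{\mathscr T_*\theta}$ generated by $(\pi_*\mathscr T)_*\mathfrak g^\cap_{\mathcal L}\cup(\pi_*\mathscr T)_*\mathfrak g^{\rm unf}_\theta$. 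By the invariance of the kernel algebra just established and by the assumed $G^\sim_{\mathcal L}$-uniformity of the original family, these two images are exactly $\mathfrak g^\cap_{\mathcal L}$ and $\mathfrak g^{\rm unf}_{\mathscr T_*\theta}$. Hence the subalgebra they generate inside $\mathfrak g_{\mathscr T_*\theta}$ is precisely $\bar{\mathfrak g}^{\rm unf}_{\mathscr T_*\theta}$, which is the required equality; that each $\bar{\mathfrak g}^{\rm unf}_\theta$ is a well-defined subalgebra of~$\mathfrak g_\theta$ containing $\mathfrak g^\cap_{\mathcal L}$ is immediate from its description as the minimal such subalgebra.

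I expect the only point requiring genuine care to be the invariance of the kernel algebra in the first step. It is essential that $\mathscr T$ lies in the equivalence group $G^\sim_{\mathcal L}$, so that $\mathscr T_*$ acts as a bijection on all of~$\mathcal S$ and the intersection may be reindexed, whereas a single admissible transformation in~$\mathcal G^\sim_{\mathcal L}$ generally need not preserve~$\mathfrak g^\cap_{\mathcal L}$. This is exactly why the statement is phrased in terms of $G^\sim_{\mathcal L}$-uniformity rather than $\mathcal G^\sim_{\mathcal L}$-uniformity, and it is the one place where the global nature of the equivalence group, rather than merely the pointwise action of~$\pi_*\mathscr T$, enters.
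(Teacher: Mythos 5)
Your proof is correct and follows essentially the same route as the paper's: establish the $G^\sim_{\mathcal L}$-invariance of the kernel algebra by reindexing the intersection, invoke the uniformity of the original family, and conclude by minimality. The only cosmetic difference is that you obtain the equality $(\pi_*\mathscr T)_*\bar{\mathfrak g}^{\rm unf}_\theta=\bar{\mathfrak g}^{\rm unf}_{\mathscr T_*\theta}$ in one stroke from the fact that a pushforward by a diffeomorphism maps the generated subalgebra onto the subalgebra generated by the image, whereas the paper derives the two inclusions separately by applying the minimality argument to $\mathscr T$ and then to $\mathscr T^{-1}$.
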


\begin{proof}
We fix an arbitrary $\mathscr T\in G^\sim_{\mathcal L}$ and an arbitrary $\theta\in\mathcal S$
and recall that the kernel algebra~$\mathfrak g^\cap_{\mathcal L}$ is invariant under the $G^\sim_{\mathcal L}$-action,
$(\pi_*\mathscr T)_*\mathfrak g^\cap_{\mathcal L}=\mathfrak g^\cap_{\mathcal L}$.
Since in addition $(\pi_*\mathscr T)_*\mathfrak g^{\rm unf}_\theta=\mathfrak g^{\rm unf}_{\mathscr T_*\theta}$,
we have that $(\pi_*\mathscr T)_*\bar{\mathfrak g}^{\rm unf}_{\theta}\supseteq\mathfrak g^{\rm unf}_{\mathscr T_*\theta}$ and
$(\pi_*\mathscr T)_*\bar{\mathfrak g}^{\rm unf}_{\theta}\supseteq\mathfrak g^\cap_{\mathcal L}$.
The fact that $\bar{\mathfrak g}^{\rm unf}_\theta$ is a subalgebra of $\mathfrak g_\theta$
directly implies that $(\pi_*\mathscr T)_*\bar{\mathfrak g}^{\rm unf}_\theta$ is a subalgebra of $(\pi_*\mathscr T)_*\mathfrak g_\theta=
\mathfrak g_{\mathscr T_*\theta}$.
Therefore, $(\pi_*\mathscr T)_*\bar{\mathfrak g}^{\rm unf}_\theta\supseteq\bar{\mathfrak g}^{\rm unf}_{\mathscr T_*\theta}$.
Analogously, we derive $(\pi_*\mathscr T^{-1})_*\bar{\mathfrak g}^{\rm unf}_{\mathscr T_*\theta}\supseteq\bar{\mathfrak g}^{\rm unf}_\theta$
and thus $\bar{\mathfrak g}^{\rm unf}_{\mathscr T_*\theta}\supseteq(\pi_*\mathscr T)_*\bar{\mathfrak g}^{\rm unf}_\theta$.
Hence $(\pi_*\mathscr T)_*\bar{\mathfrak g}^{\rm unf}_\theta=\bar{\mathfrak g}^{\rm unf}_{\mathscr T_*\theta}$.
\end{proof}

On the other hand, attaching the kernel algebra~$\mathfrak g^\cap_{\mathcal L}$ to each of the chosen uniform subalgebras may backfire.
First of all, Proposition~\ref{pro:AttachingKernelAlgebraToUniformSubAlgebras} does in general not hold
in the case of the $\mathcal G^\sim_{\mathcal L}$-uniformity
since the kernel algebra~$\mathfrak g^\cap_{\mathcal L}$ is not necessarily $\mathcal G^\sim_{\mathcal L}$-invariant,
cf.~\cite[Example~1]{CardosoBihloPopovych2011}.
Moreover, the family $\{\bar{\mathfrak g}^{\rm unf}_\theta\mid\theta\in\mathcal S\}$
may not have important properties of the family $\{\mathfrak g^{\rm unf}_\theta\mid\theta\in\mathcal S\}$.
In a best-case scenario,
the class~$\mathcal L$ is disjointedly semi-normalized
with respect to a family of subgroups~$H_\theta$ of the complete point symmetry groups~$G_\theta$,
and we can choose the family of Lie algebras of the subgroups~$H_\theta$
as a family of uniform subalgebras~$\mathfrak g^{\rm unf}_\theta$ of the algebras~$\mathfrak g_\theta$,
see~\cite[Definition~1]{KurujyibwamiBasarabHorwathPopovych2018}.
In this case, for each $\theta\in\mathcal S$,
the subalgebra~$\mathfrak g^{\rm unf}_\theta$ is an ideal of~$\mathfrak g_\theta$
and has a complement subspace in~$\mathfrak g_\theta$ that is a subalgebra of~$\mathfrak g_\theta$,
called the \emph{essential Lie invariance algebra}~$\mathfrak g^{\rm ess}_\theta$ of~$\mathcal L_\theta$,
$\mathfrak g_\theta=\mathfrak g^{\rm ess}_\theta\lsemioplus\mathfrak g^{\rm unf}_\theta$,
and moreover $\mathfrak g^{\rm ess}_\theta=\mathfrak g_\theta\cap\pi_*\mathfrak g^\sim_{\mathcal L}$.
Therefore, the group classification of~$\mathcal L$ reduces
to the classification of appropriate subalgebras of~$\mathfrak g^\sim_{\mathcal L}$.
Even if the family of uniform subalgebras of~$\mathfrak g_\theta$ cannot be chosen in the above way,
one may want to ensure at least some of the aforementioned properties.

To summarize, the statement of group classification problem can be revisited as follows.

\emph{To carry out the group classification of a class~$\mathcal L$ of systems of differential equations with respect to
its equivalence groupoid~$\mathcal G^\sim_{\mathcal L}$ (resp.\ its equivalence group~$G^\sim_{\mathcal L}$)
is to list all $\mathcal G^\sim_{\mathcal L}$-inequivalent (resp.\ $G^\sim_{\mathcal L}$-inequivalent) values
of its arbitrary-element tuple $\theta\in\mathcal S$
such that the maximal Lie invariance algebras~$\mathfrak g_\theta$ of the systems~$\mathcal L_\theta$
are wider than the corresponding elements of the chosen family
$\{\mathfrak g^{\rm unf}_\theta\mid\theta\in\mathcal S\}$ of $\mathcal G^\sim_{\mathcal L}$-uniform
(resp.\ $G^\sim_{\mathcal L}$-uniform) subalgebras of the algebras~$\mathfrak g_\theta$.}

The problem can be reformulated in the case when the essential Lie invariance algebra~$\mathfrak g^{\rm ess}_\theta$
is defined for each $\theta\in\mathcal S$ in a canonical way.

\emph{Let~$\mathcal L$ be a class of systems of differential equations,
where for any value of its arbitrary-element tuple $\theta\in\mathcal S$
one can canonically define the essential subalgebra~$\mathfrak g^{\rm ess}_\theta$ of~$\mathfrak g_\theta$.
The group classification of this class with respect to
its equivalence groupoid~$\mathcal G^\sim_{\mathcal L}$ (resp.\ its equivalence group~$G^\sim_{\mathcal L}$)
reduces to listing all $\mathcal G^\sim_{\mathcal L}$-inequivalent (resp.\ $G^\sim_{\mathcal L}$-inequivalent) values $\theta\in\mathcal S$
such that the essential Lie invariance algebras~$\mathfrak g^{\rm ess}_\theta$ of the systems~$\mathcal L_\theta$
are wider than the kernel $\mathfrak g^{\cap\rm ess}_{\mathcal L}=\cap_{\theta\in\mathcal S}\mathfrak g^{\rm ess}_\theta$
of the essential Lie invariance algebras of the class~$\mathcal L$.}

As an example, consider a class~$\mathcal L$ of linear homogeneous systems of differential equations.
Any system~$\mathcal L_\theta\in\mathcal L$ admits, in view of the linear superposition of solutions,
the point symmetry transformations of the form $\tilde x=x$, $\tilde u=u+f(x)$,
where~$f(x)=\left(f^1(x),\dots,f^m(x)\right)$ is an arbitrary solution of~$\mathcal L_\theta$.
This reflects on the structure of both the complete point symmetry group~$G_\theta$
and the maximal Lie invariance algebra~$\mathfrak g_\theta$ thereof.
In fact, if the admissible transformations of the class~$\mathcal L$ are fiber-preserving and their $u$-components are affine with respect to~$u$,%
\footnote{%
These properties are quite common for classes of linear homogeneous systems of differential equations.
At the same time, see~\cite[Section~4]{BoykoLokazyukPopovych2021} for an example of a class of such systems,
where admissible transformations are not fiber-preserving,
and for any value of the corresponding arbitrary-element tuple~$\theta$,
$\mathfrak g^{\rm lin}_\theta$ is not an ideal of~$\mathfrak g_\theta$
and any complement subspace to~$\mathfrak g^{\rm lin}_\theta$ in~$\mathfrak g_\theta$ contains 
vector fields of a different form although such a subspace can be chosen to be a subalgebra of~$\mathfrak g_\theta$. 
This is the class~$\mathcal L_0$ in the notation of~\cite{BoykoLokazyukPopovych2021}, 
which consists of the linear homogeneous systems of second-order ordinary differential equations that are similar 
with respect to point transformations to the elementary (free particle) system. 
}
then the maximal Lie invariance algebra~$\mathfrak g_\theta$ of the system~$\mathcal L_\theta$
can be represented as $\mathfrak g_\theta:=\mathfrak g^{\rm ess}_\theta\lsemioplus\mathfrak g^{\rm lin}_\theta$.~Here
\begin{gather*}
\mathfrak g^{\rm lin}_\theta=\Bigg\{\sum_{i=1}^m f^i(x)\p_{u^i}\ \Big|\ f \text{ is a solution of }\mathcal L_\theta\Bigg\},\\
\mathfrak g^{\rm ess}_\theta=\Bigg\{Q=\sum_{a=1}^n\xi^a(x)\p_{x_a}+\sum_{i,j=1}^m\eta^{ij}(x)u^j\p_{u^i}\ \Big|\ Q\in\mathfrak g_\theta\Bigg\}
\end{gather*}
are an ideal of~$\mathfrak g_\theta$ corresponding to the linear superposition of solutions
and the \emph{essential} subalgebra of~$\mathfrak g_\theta$, respectively.
The family $\{\mathfrak g^{\rm lin}_{\theta}\mid\theta\in\mathcal S\}$ is taken here
to be a family of uniform subalgebras of the algebras~$\mathfrak g_\theta$,
and thus it is natural to classify extensions
not of the algebra $\mathfrak g^{\rm \cap}_{\mathcal L}$ but of the algebra~$\mathfrak g^{\cap\rm ess}_{\mathcal L}$.
If additionally the class~$\mathcal L$ is disjointedly semi-normalized with respect to the linear superposition of solutions,
then $\mathfrak g^{\rm ess}_\theta=\mathfrak g_\theta\cap\pi_*\mathfrak g^\sim_{\mathcal L}$ for any $\theta\in\mathcal S$.
Note that every system~$\mathcal L_\theta\in\mathcal L$ admits the vector field $I=\sum_{i=1}^m u^i\p_{u^i}$ as its Lie symmetry,
i.e., $I\in\mathfrak g^{\cap\rm ess}_{\mathcal L}\subseteq\mathfrak g^\cap_{\mathcal L}$.%
\footnote{%
Under the above conditions, $\mathfrak g^\cap_{\mathcal L}=\mathfrak g^{\cap\rm ess}_{\mathcal L}\lsemioplus\mathfrak g^{\cap\rm lin}_{\mathcal L}$,
where $\mathfrak g^{\cap\rm lin}_{\mathcal L}=\{\sum_{i=1}^m f^i(x)\p_{u^i}\mid
f \text{ is a solution of }\mathcal L_\theta\ \text{for any}$ $\theta\in\mathcal S\}$.
In other words, $\mathfrak g^{\cap\rm lin}_{\mathcal L}\ne\{0\}$ if and only if
the systems from the class~$\mathcal L$ have a common nonzero solution.
This is the case, e.g., for the class~$\bar{\mathcal K}$ of Kolmogorov equations
whose set of common solutions consists of the constant functions, and hence  $\mathfrak g^{\cap\rm lin}_{\bar{\mathcal K}}=\langle\p_u\rangle$.
}
Therefore, it may be tempting to consider the family of the wider uniform subalgebras~$\bar{\mathfrak g}^{\rm lin}_{\theta}$
of the algebras~$\mathfrak g_\theta$ than the subalgebras~$\mathfrak g^{\rm lin}_{\theta}$
that are obtained by attaching~$I$ to~$\mathfrak g^{\rm lin}_{\theta}$ for each $\theta\in\mathcal S$,
$\bar{\mathfrak g}^{\rm lin}_{\theta}=\langle I\rangle\lsemioplus\mathfrak g^{\rm lin}_{\theta}$.
Although the subalgebra $\bar{\mathfrak g}^{\rm lin}_{\theta}$ may share, with~$\mathfrak g^{\rm lin}_{\theta}$,
the property to be ideal of~$\mathfrak g_\theta$,
it may not have a complement subspace in~$\mathfrak g_\theta$ that is closed with respect to the Lie bracket of vector fields.
This leads to artificial complications in applying the algebraic method of group classification.
As a result, the family $\{\mathfrak g^{\rm lin}_{\theta}\mid\theta\in\mathcal S\}$
is a natural and convenient choice for a family of uniform subalgebras of the algebras~$\mathfrak g_\theta$
when solving the group classification problem for the class~$\mathcal L$.

\subsection{Extension of the mapping method of group classification}\label{FP:sec:MapMethod}

We develop the mapping method of group classification proposed in~\cite{VaneevaPopovychSophocleous2009}
to solve the group classification problem for the classes~$\bar{\mathcal K}$ and~$\bar{\mathcal F}$ and their subclasses.

Let ${\mathcal L}$ and~$\tilde{\mathcal L}$ be related classes of differential equations
and let the group classification problem for the class~$\mathcal L$ be already solved.
The underlying idea of the mapping method is to derive a group classification list for the class~$\tilde{\mathcal L}$ from
the known group classification list for the class~$\mathcal L$.
A~natural choice for the relation between classes of differential equations
within the framework of the mapping method is their weak similarity
although there are also other options~\cite{VaneevaBihloPopovych2020};
see also~\cite{VaneevaPopovychSophocleous2009} and Definition~\ref{FP:def:Similarity} below
for the more restricted notion of (usual) similarity of classes of differential equations.

\begin{definition}\label{FP:def:WeaklySimilarClasses}
Classes~$\mathcal L$ and~$\tilde{\mathcal L}$ of systems of differential equations
with the same number of independent variables, the same number of dependent variables,
the same number of equations in systems and the same order of systems
are called \emph{weakly similar} if for each system~$\mathcal L_\theta$ from the class~$\mathcal L$
there exists a system~$\tilde{\mathcal L}_{\tilde\theta}$ from the class~$\tilde{\mathcal L}$
that is similar to~$\mathcal L_\theta$, and vice versa.
\end{definition}

Weak similarity is an equivalence relation for classes of differential equations.

The mapping method straightforwardly works for weakly similar classes of differential equations
in the case of group classification up to the general point equivalence
if one looks for extensions of the subalgebras of~$\mathfrak g_\theta$ that are chosen as uniform with respect to this equivalence.
Let~${\rm CL}$ be a known classification list for the class~$\mathcal L$ up to the general point equivalence
with respect to a family $\{\mathfrak g^{\rm unf}_\theta\mid\theta\in\mathcal S\}$
of $\mathcal G^\sim_{\mathcal L}$-uniform subalgebras of~$\mathfrak g_\theta$.
The list~${\rm CL}$ is a complete set of the $\mathcal G^\sim_{\mathcal L}$-inequivalent values $\theta\in\mathcal S$
such that \smash{$\mathfrak g_\theta\varsupsetneq\mathfrak g^{\rm unf}_\theta$}.
Each $\theta\in{\rm CL}$ should be
supplemented with the corresponding maximal Lie invariance algebra~$\mathfrak g_\theta$.
Then for any \smash{$\tilde\theta\in\tilde{\mathcal S}$}, we define the candidate for the uniform subalgebra of~$\mathfrak g_{\tilde\theta}$ as
\smash{$\mathfrak g^{\rm unf}_{\tilde\theta}:=\Phi_*\mathfrak g^{\rm unf}_\theta$},%
\footnote{%
The tildes over~$\theta$ in $\mathfrak g_{\tilde\theta}$ and \smash{$\mathfrak g^{\rm unf}_{\tilde\theta}$} indicate that
these are
the maximal Lie invariance algebra of the system~$\tilde{\mathcal L}_{\tilde\theta}$
and a uniform subalgebra of this algebra, respectively.
}
where the value $\theta\in\mathcal S$ and the point transformation~$\Phi$ are such that \smash{$\tilde{\mathcal L}_{\tilde\theta}=\Phi_*\mathcal L_\theta$}.%
\footnote{%
At the same time, the kernel algebras~$\mathfrak g^\cap_{\mathcal L}$ and~$\mathfrak g^\cap_{\tilde{\mathcal L}}$
of the classes~$\mathcal L$ and~$\tilde{\mathcal L}$ are not directly related in the general case,
and thus the algebra~$\mathfrak g^\cap_{\tilde{\mathcal L}}$ should be computed independently.
}
This candidate is well-defined.
Indeed, if there exist another $\theta'\in\mathcal S$ and a point transformation~$\Phi'$
such that $\tilde{\mathcal L}_{\tilde\theta}=\Phi'_*\mathcal L_{\theta'}$, then
\[
\Phi'_*\mathfrak g^{\rm unf}_{\theta'}=(\Phi\circ\Phi^{-1}\circ\Phi')_*\mathfrak g^{\rm unf}_{\theta'}=
\Phi_*\big((\Phi^{-1}\circ\Phi')_*\mathfrak g^{\rm unf}_{\theta'}\big)=\Phi_*\mathfrak g^{\rm unf}_{\theta}=\mathfrak g^{\rm unf}_{\tilde\theta}
\]
since $(\theta',\Phi^{-1}\circ\Phi',\theta)\in\mathcal G^\sim_{\mathcal L}$ and thus
$(\Phi^{-1}\circ\Phi')_*\mathfrak g^{\rm unf}_{\theta'}=\mathfrak g^{\rm unf}_{\theta}$
in view of the $\mathcal G^\sim_{\mathcal L}$-uniformity of the family $\{\mathfrak g^{\rm unf}_{\theta}\mid\theta\in\mathcal S\}$.
Moreover, the family $\{\mathfrak g^{\rm unf}_{\tilde\theta}\mid\tilde\theta\in\tilde{\mathcal S}\}$ is $\mathcal G^\sim_{\tilde{\mathcal L}}$-uniform.
Indeed, for any \smash{$(\tilde\theta_1,\tilde\Phi,\tilde\theta_2)\in\mathcal G^\sim_{\tilde{\mathcal L}}$}
there exist $\theta_1,\theta_2\in\mathcal S$ and point transformations $\Phi_1$ and $\Phi_2$ such that
$(\Phi_i)_*\mathcal L_{\theta_i}=\tilde{\mathcal L}_{\tilde\theta_i}$, $i=1,2$.
Then the triple $(\theta_1,\Phi,\theta_2)$ with $\Phi=\Phi_2^{-1}\circ\tilde\Phi\circ\Phi_1$
belongs to~$\mathcal G^\sim_{\mathcal L}$.
Representing $\tilde\Phi=\Phi_2\circ\Phi\circ\Phi_1^{-1}$,
in view of the $\mathcal G^\sim_{\mathcal L}$-uniformity of the family $\{\mathfrak g^{\rm unf}_{\theta}\mid\theta\in\mathcal S\}$ we derive
\looseness=-1
\[
\tilde\Phi_*\mathfrak g^{\rm unf}_{\tilde\theta_1}=(\Phi_2\circ\Phi\circ\Phi_1^{-1})_*\big((\Phi_1)_*\mathfrak g^{\rm unf}_{\theta_1}\big)=
(\Phi_2\circ\Phi)_*\mathfrak g^{\rm unf}_{\theta_1}=(\Phi_2)_*\mathfrak g^{\rm unf}_{\theta_2}=\mathfrak g^{\rm unf}_{\tilde\theta_2}.
\]
We call the family \smash{$\{\mathfrak g^{\rm unf}_{\tilde\theta}\mid\tilde\theta\in\tilde{\mathcal S}\}$}
to be associated with the family $\{\mathfrak g^{\rm unf}_\theta\mid\theta\in\mathcal S\}$.
It is obvious that the \smash{$\mathcal G^\sim_{\mathcal L}$}- and \smash{$\mathcal G^\sim_{\tilde{\mathcal L}}$}-equivalences
are consistent with respect to the weak similarity of~$\mathcal L$ and~$\tilde{\mathcal L}$.%
\footnote{%
Given a binary relation~$\mathfrak R$ over sets~$M$ and~$M'$,
equivalence relations~$\mathfrak E$ and~$\mathfrak E'$ defined on~$M$ and~$M'$, respectively,
are called \emph{consistent with respect to~$\mathfrak R$}
if for any $m_1,m_2\in M$ and any $m'_1,m'_2\in M'$ with $m_1\mathfrak Rm'_1$ and $m_2\mathfrak Rm'_2$,
the equivalence $m_1\sim_{\mathfrak E}m_2$ implies $m'_1\sim_{\mathfrak E'}m'_2$, and vice versa.
}
Thus, given a complete list~$\rm CL$ of \smash{$\mathcal G^\sim_{\mathcal L}$}-inequivalent Lie-symmetry extensions
of \smash{$\mathcal G^\sim_{\mathcal L}$}-uniform Lie invariance algebras~\smash{$\mathfrak g^{\rm unf}_\theta$} in the class~$\mathcal L$,
an analogous list~$\widetilde{\rm CL}$ for the class~$\tilde{\mathcal L}$ is obtained by taking,
for each $\theta\in{\rm CL}$, a single value $\tilde\theta\in\tilde{\mathcal S}$
such that the system~$\tilde{\mathcal L}_{\tilde\theta}$ is similar to~$\mathcal L_\theta$.
The list $\widetilde{\rm CL}$ provides \smash{$\mathcal G^\sim_{\tilde{\mathcal L}}$}-inequivalent Lie-symmetry extensions
of the \smash{$\mathcal G^\sim_{\tilde{\mathcal L}}$}-uniform Lie invariance algebras~\smash{$\mathfrak g^{\rm unf}_{\tilde\theta}$}
in the class~$\tilde{\mathcal L}$.
The list~$\widetilde{\rm CL}$ contains only \smash{$\mathcal G^\sim_{\tilde{\mathcal L}}$}-inequivalent values of~$\tilde{\theta}$
since otherwise the list~$\rm CL$ would contain $\mathcal G^\sim_{\mathcal L}$-equivalent values of~$\theta$.
The list~\smash{$\widetilde{\rm CL}$} is exhaustive because if there exists $\tilde\theta_0\in\tilde{\mathcal S}$
that is \smash{$\mathcal G^\sim_{\tilde{\mathcal L}}$}-inequivalent to any of the values in~\smash{$\widetilde{\rm CL}$},
then its associated value $\theta_0\in\mathcal S$
would be $\mathcal G^\sim_{\mathcal L}$-inequivalent to any value in the list~$\rm CL$, which is absurd.

The transition between the group classifications of the weakly similar classes~$\mathcal L$ and~$\tilde{\mathcal L}$
with respect to the corresponding equivalence groups,
except for some special cases where the direct relation between these group classifications exists,
is more challenging.
Firstly, one can require strong normalization properties for such a transition between weakly similar classes.

\begin{proposition}\label{pro:GroupClassificationOfSeminormClassesByMappingMethod}
Let $\mathcal L$ and $\tilde{\mathcal L}$ be weakly similar semi-normalized classes of differential equations,
and there is a family $\{\mathfrak g^{\rm unf}_\theta\mid\theta\in\mathcal S\}$
of $G^\sim_{\mathcal L}$-uniform subalgebras of the algebras~$\mathfrak g_\theta$
that is invariant with respect to the fundamental groupoid~$\mathcal G^{\rm f}_{\mathcal L}$ of~$\mathcal L$.
Then there is an associated family \smash{$\{\mathfrak g^{\rm unf}_{\tilde\theta}\mid\tilde\theta\in\tilde{\mathcal S}\}$}
of \smash{$G^\sim_{\tilde{\mathcal L}}$}-uniform subalgebras of the algebras~$\mathfrak g_{\tilde\theta}$,
and the group classifications of~$\mathcal L$ and~$\tilde{\mathcal L}$ with respect to the corresponding equivalence groups
and the above subalgebra families are equivalent.
\end{proposition}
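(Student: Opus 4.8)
The plan is to reduce the proposition to two facts already available above: the construction of the associated uniform family for a \smash{$\mathcal G^\sim_{\mathcal L}$}-uniform family, and the coincidence of the \smash{$G^\sim$}- and \smash{$\mathcal G^\sim$}-equivalences for a semi-normalized class. The bridge between the two hypotheses (only \smash{$G^\sim_{\mathcal L}$}-uniformity is assumed, together with \smash{$\mathcal G^{\rm f}_{\mathcal L}$}-invariance) is to first \emph{upgrade} the given family to a \smash{$\mathcal G^\sim_{\mathcal L}$}-uniform one. To this end I would fix $\mathcal T=(\theta_1,\Phi,\theta_2)\in\mathcal G^\sim_{\mathcal L}$; by semi-normalization $\mathcal G^\sim_{\mathcal L}=\mathcal G^{G^\sim_{\mathcal L}}\star\mathcal G^{\rm f}_{\mathcal L}$, so $\mathcal T=\mathcal T_g\star\mathcal T_f$ with $\mathcal T_g=(\theta_1,\Phi_g,\theta_2)\in\mathcal G^{G^\sim_{\mathcal L}}$ and $\mathcal T_f=(\theta_2,\Phi_f,\theta_2)\in\mathcal G^{\rm f}_{\mathcal L}$, whence $\Phi=\Phi_f\circ\Phi_g$. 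Writing $\Phi_g=\pi_*\mathscr T$ with $\mathscr T\in G^\sim_{\mathcal L}$, $\mathscr T_*\theta_1=\theta_2$, the \smash{$G^\sim_{\mathcal L}$}-uniformity gives $(\Phi_g)_*\mathfrak g^{\rm unf}_{\theta_1}=\mathfrak g^{\rm unf}_{\theta_2}$, while the \smash{$\mathcal G^{\rm f}_{\mathcal L}$}-invariance gives $(\Phi_f)_*\mathfrak g^{\rm unf}_{\theta_2}=\mathfrak g^{\rm unf}_{\theta_2}$, so that
\[
\Phi_*\mathfrak g^{\rm unf}_{\theta_1}=(\Phi_f)_*(\Phi_g)_*\mathfrak g^{\rm unf}_{\theta_1}=(\Phi_f)_*\mathfrak g^{\rm unf}_{\theta_2}=\mathfrak g^{\rm unf}_{\theta_2}.
\]
Hence the family $\{\mathfrak g^{\rm unf}_\theta\mid\theta\in\mathcal S\}$ is \smash{$\mathcal G^\sim_{\mathcal L}$}-uniform. (Note that the same conclusion follows from the opposite factorization, since the two hypotheses address the two factor types symmetrically.)

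With this upgrade in hand, I would invoke the construction carried out above for \smash{$\mathcal G^\sim$}-uniform families: applied to $\{\mathfrak g^{\rm unf}_\theta\}$ and the weak similarity of $\mathcal L$ and $\tilde{\mathcal L}$, it yields a well-defined associated family \smash{$\{\mathfrak g^{\rm unf}_{\tilde\theta}\mid\tilde\theta\in\tilde{\mathcal S}\}$}, namely \smash{$\mathfrak g^{\rm unf}_{\tilde\theta}:=\Phi_*\mathfrak g^{\rm unf}_\theta$} for any $\theta,\Phi$ with \smash{$\tilde{\mathcal L}_{\tilde\theta}=\Phi_*\mathcal L_\theta$}, and it is shown there to be \smash{$\mathcal G^\sim_{\tilde{\mathcal L}}$}-uniform. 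Since the action groupoid \smash{$\mathcal G^{G^\sim_{\tilde{\mathcal L}}}$} is a subgroupoid of \smash{$\mathcal G^\sim_{\tilde{\mathcal L}}$}, this family is a fortiori \smash{$G^\sim_{\tilde{\mathcal L}}$}-uniform, which is the first assertion of the proposition.

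For the equivalence of the two group classifications I would argue by a chain of three identifications. Both classes are semi-normalized, so for each of them the \smash{$G^\sim$}-equivalence and the \smash{$\mathcal G^\sim$}-equivalence coincide as relations on the respective arbitrary-element spaces; and the extension condition \smash{$\mathfrak g_\theta\varsupsetneq\mathfrak g^{\rm unf}_\theta$} is a pointwise, equivalence-invariant condition, carried along any similarity because \smash{$\mathfrak g_{\tilde\theta}=\Phi_*\mathfrak g_\theta$} and \smash{$\mathfrak g^{\rm unf}_{\tilde\theta}=\Phi_*\mathfrak g^{\rm unf}_\theta$}. Consequently the group classification of $\mathcal L$ with respect to \smash{$G^\sim_{\mathcal L}$} and $\{\mathfrak g^{\rm unf}_\theta\}$ is literally the same list as its classification with respect to \smash{$\mathcal G^\sim_{\mathcal L}$}; the latter is equivalent, by the completeness and exhaustiveness of the passage from $\rm CL$ to $\widetilde{\rm CL}$ established above for weakly similar classes, to the classification of $\tilde{\mathcal L}$ with respect to \smash{$\mathcal G^\sim_{\tilde{\mathcal L}}$} and \smash{$\{\mathfrak g^{\rm unf}_{\tilde\theta}\}$}; and this last classification again coincides, by semi-normalization of $\tilde{\mathcal L}$, with the classification of $\tilde{\mathcal L}$ with respect to \smash{$G^\sim_{\tilde{\mathcal L}}$}. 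Concatenation gives the claimed equivalence.

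I expect the first step, the uniformity upgrade, to be the main obstacle: the delicate point is to ensure that the factorization furnished by semi-normalization lets each hypothesis act precisely on the factor of its own type, so that neither the \smash{$\mathcal G^{\rm f}_{\mathcal L}$}-invariance nor the \smash{$G^\sim_{\mathcal L}$}-uniformity is tacitly extended to all of \smash{$\mathcal G^\sim_{\mathcal L}$}. Once this is secured, the two remaining steps are bookkeeping over results already proven in the excerpt.
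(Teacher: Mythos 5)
Your proof is correct and follows essentially the same route as the paper's: both use the Frobenius-product decomposition $\mathcal G^\sim_{\mathcal L}=\mathcal G^{G^\sim_{\mathcal L}}\star\mathcal G^{\rm f}_{\mathcal L}$ to upgrade the $G^\sim_{\mathcal L}$-uniform, $\mathcal G^{\rm f}_{\mathcal L}$-invariant family to a $\mathcal G^\sim_{\mathcal L}$-uniform one, then invoke the earlier construction of the associated family for weakly similar classes, and use semi-normalization of both classes to identify the equivalence-group equivalences with the groupoid equivalences. The only difference is the order of the two main steps (the paper establishes the consistency of the $G^\sim$-equivalences first and the uniformity upgrade second), which is immaterial.
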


\begin{proof}
Consider arbitrary $\theta_1,\theta_2\in\mathcal S$ and arbitrary $\tilde\theta_1,\tilde\theta_2\in\tilde{\mathcal S}$
such that the system~\smash{$\tilde{\mathcal L}_{\tilde\theta_i}$} is similar to~$\mathcal L_{\theta_i}$, $i=1,2$.
Suppose that the systems~$\mathcal L_{\theta_1}$ and~$\mathcal L_{\theta_2}$ are $G^\sim_{\mathcal L}$-equivalent.
Then the systems~\smash{$\tilde{\mathcal L}_{\tilde\theta_1}$ and~$\tilde{\mathcal L}_{\tilde\theta_2}$} are similar to each other,
and thus they are \smash{$G^\sim_{\tilde{\mathcal L}}$}-equivalent since the class~$\tilde{\mathcal L}$ is semi-normalized.
Switching the roles of the classes~$\mathcal L$ and $\tilde{\mathcal L}$ shows
that the systems~$\mathcal L_{\theta_1}$ and~$\mathcal L_{\theta_2}$ are $G^\sim_{\mathcal L}$-equivalent
if and only if the systems~\smash{$\tilde{\mathcal L}_{\tilde\theta_1}$} and~\smash{$\tilde{\mathcal L}_{\tilde\theta_2}$}
are \smash{$G^\sim_{\tilde{\mathcal L}}$}-equivalent,
i.e., the $G^\sim_{\mathcal L}$- and \smash{$G^\sim_{\tilde{\mathcal L}}$}-equivalences are consistent
with respect to the weak similarity of~$\mathcal L$ and~$\tilde{\mathcal L}$.

Since the family $\{\mathfrak g^{\rm unf}_\theta\mid\theta\in\mathcal S\}$ of $G^\sim_{\mathcal L}$-uniform subalgebras of~$\mathfrak g_\theta$
is in addition invariant with respect to the groupoid~$\mathcal G^{\rm f}_{\mathcal L}$,
then it is a family of $\mathcal G^\sim_{\mathcal L}$-uniform subalgebras of~$\mathfrak g_\theta$ as well
in view of the semi-normalization of~$\mathcal L$,
which means the representation $\mathcal G^\sim_{\mathcal L}=\mathcal G^{G^\sim_{\mathcal L}}\star\mathcal G^{\rm f}_{\mathcal L}$.
Therefore, there is a well-defined family \smash{$\{\mathfrak g^{\rm unf}_{\tilde\theta}\mid\tilde\theta\in\tilde{\mathcal S}\}$}
of \smash{$\mathcal G^\sim_{\tilde{\mathcal L}}$}-uniform subalgebras of~$\mathfrak g_{\tilde\theta}$
that is associated with the family $\{\mathfrak g^{\rm unf}_\theta\mid\theta\in\mathcal S\}$
via the weak similarity of the classes~$\mathcal L$ and~$\tilde{\mathcal L}$.
Any $\mathcal G^\sim_{\tilde{\mathcal L}}$-uniform subalgebra is $G^\sim_{\tilde{\mathcal L}}$-uniform.
\end{proof}

Proposition~\ref{pro:GroupClassificationOfSeminormClassesByMappingMethod}
is particularly important when the class~$\tilde{\mathcal L}$ is a subclass of the class~$\mathcal L$.
In this case, we can reduce the group classification of the class~$\mathcal L$ to that of its subclass~$\tilde{\mathcal L}$.

\begin{corollary}
Group classifications of weakly similar normalized classes~$\mathcal L$ and~$\tilde{\mathcal L}$
with respect to associated families of $G^\sim_{\mathcal L}$- and $G^\sim_{\tilde{\mathcal L}}$-uniform subalgebras
of the algebras~$\mathfrak g_\theta$ and~$\mathfrak g_{\tilde\theta}$, respectively, are equivalent.
\end{corollary}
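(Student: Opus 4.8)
The plan is to show that, once both classes are normalized, the hypotheses of Proposition~\ref{pro:GroupClassificationOfSeminormClassesByMappingMethod} are automatically satisfied, so that the corollary follows by a direct appeal to that proposition. The only genuine work is to translate the word ``normalized'' into the two conditions required there: semi-normalization of each class, and invariance of the chosen $G^\sim_{\mathcal L}$-uniform family with respect to the fundamental groupoid~$\mathcal G^{\rm f}_{\mathcal L}$.

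First I would observe that a normalized class is semi-normalized. By definition, $\mathcal L$ is normalized when $\mathcal G^\sim_{\mathcal L}=\mathcal G^{G^\sim_{\mathcal L}}$. Since the fundamental groupoid~$\mathcal G^{\rm f}_{\mathcal L}$ is always a (normal) subgroupoid of~$\mathcal G^\sim_{\mathcal L}$, normalization gives $\mathcal G^{\rm f}_{\mathcal L}\subseteq\mathcal G^{G^\sim_{\mathcal L}}$, whence the Frobenius product satisfies $\mathcal G^{G^\sim_{\mathcal L}}\star\mathcal G^{\rm f}_{\mathcal L}=\mathcal G^{G^\sim_{\mathcal L}}=\mathcal G^\sim_{\mathcal L}$, which is precisely the defining property of semi-normalization. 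The same argument applies verbatim to~$\tilde{\mathcal L}$.

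Next I would verify that the given $G^\sim_{\mathcal L}$-uniform family is invariant under~$\mathcal G^{\rm f}_{\mathcal L}$. The key point is that $G^\sim_{\mathcal L}$-uniformity, as defined above by $(\pi_*\mathscr T)_*\mathfrak g^{\rm unf}_\theta=\mathfrak g^{\rm unf}_{\mathscr T_*\theta}$ for all $\mathscr T\in G^\sim_{\mathcal L}$, is exactly invariance of the family under the action groupoid~$\mathcal G^{G^\sim_{\mathcal L}}$. For a normalized class we have just seen that $\mathcal G^{\rm f}_{\mathcal L}\subseteq\mathcal G^{G^\sim_{\mathcal L}}$, so for any $(\theta,\Phi,\theta)\in\mathcal G^{\rm f}_{\mathcal L}$ there is $\mathscr T\in G^\sim_{\mathcal L}$ with $\Phi=\pi_*\mathscr T$ and $\mathscr T_*\theta=\theta$, giving $\Phi_*\mathfrak g^{\rm unf}_\theta=(\pi_*\mathscr T)_*\mathfrak g^{\rm unf}_\theta=\mathfrak g^{\rm unf}_{\mathscr T_*\theta}=\mathfrak g^{\rm unf}_\theta$. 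Thus the family is $\mathcal G^{\rm f}_{\mathcal L}$-invariant; in fact $G^\sim_{\mathcal L}$-uniformity coincides with $\mathcal G^\sim_{\mathcal L}$-uniformity here because $\mathcal G^\sim_{\mathcal L}=\mathcal G^{G^\sim_{\mathcal L}}$.

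With both conditions in hand, I would simply invoke Proposition~\ref{pro:GroupClassificationOfSeminormClassesByMappingMethod} for the weakly similar semi-normalized classes~$\mathcal L$ and~$\tilde{\mathcal L}$ equipped with the $\mathcal G^{\rm f}_{\mathcal L}$-invariant $G^\sim_{\mathcal L}$-uniform family: it produces the associated family of $G^\sim_{\tilde{\mathcal L}}$-uniform subalgebras of the algebras~$\mathfrak g_{\tilde\theta}$ and asserts the equivalence of the two group classifications with respect to the corresponding equivalence groups, which is exactly the claim. I expect no real obstacle; the entire content is the bookkeeping observation that, for normalized classes, normalization collapses the distinction between $G^\sim$- and $\mathcal G^\sim$-uniformity (equivalently, between $G^\sim$-equivalence and the general point equivalence), so that the extra fundamental-groupoid hypothesis of the proposition becomes vacuous.
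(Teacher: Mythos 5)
Your proposal is correct and follows exactly the route the paper intends: the corollary is stated without proof as an immediate consequence of Proposition~\ref{pro:GroupClassificationOfSeminormClassesByMappingMethod}, and your two observations --- that normalization implies semi-normalization via $\mathcal G^{G^\sim_{\mathcal L}}\star\mathcal G^{\rm f}_{\mathcal L}=\mathcal G^{G^\sim_{\mathcal L}}=\mathcal G^\sim_{\mathcal L}$, and that $\mathcal G^{\rm f}_{\mathcal L}\subseteq\mathcal G^{G^\sim_{\mathcal L}}$ makes the fundamental-groupoid invariance of the $G^\sim_{\mathcal L}$-uniform family automatic --- are precisely the bookkeeping the paper leaves implicit.
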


Alternatively, instead of considering classes with strong normalization properties,
one can require stronger similarity of classes of differential equations
to ensure the equivalence of their group classifications with respect to the corresponding equivalence groups.
More precisely, we consider similar classes that are related by a fixed point transformation.
Let $\pi_p$ be the projection from the space with coordinates~$(x,u_{(r)},\theta)$
on the space with coordinates $(x,u_{(p)})$, $p=0,\dots,r$,
and thus $\pi_0:=\pi$.

\begin{definition}\label{FP:def:Similarity}
A class $\mathcal L$ of differential equations is called \emph{similar} to a class $\tilde{\mathcal L}$
if these classes have the same number of independent variables, the same number of dependent variables,
the same number of equations in systems and the same order of systems,
and there exists a point transformation $\Psi\colon(x,u_{(r)},\theta)\mapsto(\tilde x,\tilde u_{(r)},\tilde\theta)$
that is projectable on the space of $(x,u_{(p)})$ for any $0\leqslant p\leqslant r$,
whereas $(\pi_p)_*\Psi$ is the $p$th order prolongation of $\pi_*\Psi$,
$\Psi_* \mathcal{S}=\tilde{\mathcal S}$ and $\Psi_*\mathcal L_\theta=\tilde{\mathcal L}_{\Psi_*\theta}$
for any $\theta\in\mathcal S$.
The transformation~$\Psi$ is called a \emph{similarity transformation} between classes~$\mathcal L$ and~$\tilde{\mathcal L}$.
\end{definition}

Since the similarity of classes of differential equations is an equivalence relation,
the classes~$\mathcal L$ and~$\tilde{\mathcal L}$ are said to be \emph{similar}.

\begin{proposition}\label{pro:GroupClassificationOfSimilarClassesByMappingMethod}
Let $\mathcal L$ and $\tilde{\mathcal L}$ be similar classes of differential equations.
Then their group classifications with respect to the corresponding equivalence groups are equivalent.
\end{proposition}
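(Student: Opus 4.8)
The plan is to exploit the similarity transformation $\Psi$ directly, using the fact that $\Psi$ is not merely a single point transformation relating two systems but a \emph{global} map of the whole arbitrary-element space that is prolongation-compatible. The central claim to establish is that $\Psi$ induces an isomorphism between the two equivalence groups $G^\sim_{\mathcal L}$ and $G^\sim_{\tilde{\mathcal L}}$ via conjugation, and that this isomorphism is compatible with the action of these groups on the respective solution sets $\mathcal S$ and $\tilde{\mathcal S}$. Once this is in place, the equivalence of the two group classifications follows because the orbits of $G^\sim_{\mathcal L}$ in $\mathcal S$ are carried bijectively onto the orbits of $G^\sim_{\tilde{\mathcal L}}$ in $\tilde{\mathcal S}$, and the maximal Lie invariance algebras are transported consistently by $\pi_*\Psi$.

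First I would set up the conjugation map. For any $\mathscr T\in G^\sim_{\mathcal L}$, acting on the space coordinatized by $(x,u_{(r)},\theta)$, I would define $\tilde{\mathscr T}:=\Psi\circ\mathscr T\circ\Psi^{-1}$, a point transformation of the space coordinatized by $(\tilde x,\tilde u_{(r)},\tilde\theta)$. I would then verify that $\tilde{\mathscr T}$ satisfies all the defining properties of an element of $G^\sim_{\tilde{\mathcal L}}$: projectability onto the $(\tilde x,\tilde u)$- and $(\tilde x,\tilde u_{(r)})$-spaces, consistency with the contact structure, and the preservation $\tilde{\mathscr T}_*\tilde{\mathcal L}_{\tilde\theta}=\tilde{\mathcal L}_{\tilde{\mathscr T}_*\tilde\theta}$ for every $\tilde\theta\in\tilde{\mathcal S}$. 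The projectability and prolongation-compatibility of $\tilde{\mathscr T}$ follow because $\Psi$ itself is projectable on every $(x,u_{(p)})$-space with $(\pi_p)_*\Psi$ being the $p$th prolongation of $\pi_*\Psi$, so the composition of three such maps again respects the tower of projections. The preservation of the class follows from $\Psi_*\mathcal L_\theta=\tilde{\mathcal L}_{\Psi_*\theta}$ combined with the corresponding property of $\mathscr T$. The assignment $\mathscr T\mapsto\tilde{\mathscr T}$ is a group homomorphism, and its inverse is conjugation by $\Psi^{-1}$ (which is a similarity transformation in the reverse direction, since similarity is symmetric), so it is a group isomorphism.

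Next I would show compatibility of this isomorphism with the group actions on the solution sets. Because $\Psi_*\mathcal S=\tilde{\mathcal S}$, the map $\theta\mapsto\Psi_*\theta$ is a bijection $\mathcal S\to\tilde{\mathcal S}$, and a short diagram chase using $\tilde{\mathscr T}=\Psi\circ\mathscr T\circ\Psi^{-1}$ gives $\tilde{\mathscr T}_*(\Psi_*\theta)=\Psi_*(\mathscr T_*\theta)$ for all $\theta\in\mathcal S$. Hence $\Psi_*$ intertwines the $G^\sim_{\mathcal L}$-action on $\mathcal S$ with the $G^\sim_{\tilde{\mathcal L}}$-action on $\tilde{\mathcal S}$, so two values $\theta_1,\theta_2$ are $G^\sim_{\mathcal L}$-equivalent if and only if $\Psi_*\theta_1,\Psi_*\theta_2$ are $G^\sim_{\tilde{\mathcal L}}$-equivalent. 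Finally, since $\pi_*\Psi$ is a fixed point transformation of the $(x,u)$-space realizing the similarity of $\mathcal L_\theta$ with $\tilde{\mathcal L}_{\Psi_*\theta}$, we have $(\pi_*\Psi)_*\mathfrak g_\theta=\mathfrak g_{\Psi_*\theta}$ for every $\theta$, and the same holds for the associated uniform subalgebras defined via the weak similarity, so Lie-symmetry extensions correspond to Lie-symmetry extensions. This matches inequivalent cases in $\mathcal L$ with inequivalent cases in $\tilde{\mathcal L}$ bijectively, establishing the equivalence of the two group classifications.

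I expect the main obstacle to be the verification in the first step that conjugation by $\Psi$ genuinely lands inside $G^\sim_{\tilde{\mathcal L}}$, specifically the bookkeeping that $\tilde{\mathscr T}$ remains projectable and prolongation-consistent at every order $p\leqslant r$. This is where the precise hypothesis that $(\pi_p)_*\Psi$ is the $p$th prolongation of $\pi_*\Psi$ is indispensable: it guarantees that the jet-space part of $\Psi$ is itself induced by its base component, so conjugation does not destroy the contact structure. The remaining steps are essentially formal consequences of $\Psi$ being an isomorphism of the two arbitrary-element bundles.
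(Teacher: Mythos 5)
Your proposal is correct and follows essentially the same route as the paper's proof: conjugation by $\Psi$ identifies $G^\sim_{\mathcal L}$ with $G^\sim_{\tilde{\mathcal L}}$ (the paper states $\Psi_*G^\sim_{\mathcal L}=G^\sim_{\tilde{\mathcal L}}$ ``by construction'', while you spell out the projectability and prolongation-compatibility checks), the intertwining relation $\tilde{\mathscr T}\circ\Psi=\Psi\circ\mathscr T$ gives the consistency of the two equivalences, and $\pi_*\Psi$ transports the kernel algebra and the uniform subalgebra families. The only point treated more explicitly in the paper is the verification that the transported family $\mathfrak g^{\rm unf}_{\tilde\theta}:=(\pi_*\Psi)_*\mathfrak g^{\rm unf}_\theta$ is indeed $G^\sim_{\tilde{\mathcal L}}$-uniform, but this is exactly the diagram chase you describe.
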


\begin{proof}
Let $\Psi$ be a similarity transformation between the classes~$\mathcal L$ and~$\tilde{\mathcal L}$.
Then by construction,
$\Psi_* G^\sim_{\mathcal L}=\{\Psi\circ \mathscr T\circ\Psi^{-1}\mid \mathscr T\in G^\sim_{\mathcal L}\}$
is the equivalence group~$G^\sim_{\tilde{\mathcal L}}$ of the class~$\tilde{\mathcal L}$.
This implies that the $G^\sim_{\mathcal L}$- and $G^\sim_{\tilde{\mathcal L}}$-equivalences are consistent
with respect to the similarity of $\mathcal L$ and $\tilde{\mathcal L}$.

The kernel algebra~$\mathfrak g^\cap_{\mathcal L}$ of the class~$\mathcal L$ is mapped by~$\Psi$ onto
the kernel algebra of the class~$\tilde{\mathcal L}$,
\smash{$\mathfrak g^\cap_{\tilde{\mathcal L}}=(\pi_*\Psi)_*\mathfrak g^\cap_{\mathcal L}$}.
Indeed, if $\mathrm v$ is a Lie-symmetry vector field of any system $\mathcal L_\theta\in\mathcal L$,
then $(\pi_*\Psi)_*\mathrm v$ is a Lie-symmetry vector field of the system $\tilde{\mathcal L}_{\Psi_*\theta}\in\tilde{\mathcal L}$,
and $\Psi_*\theta$ runs through~$\tilde{\mathcal S}$ when $\theta$ runs through~$\mathcal S$,
i.e., $\mathfrak g^\cap_{\tilde{\mathcal L}}\supset(\pi_*\Psi)_*\mathfrak g^\cap_{\mathcal L}$. Analogously,
$(\pi_*\Psi^{-1})_*\mathfrak g^\cap_{\tilde{\mathcal L}}\subset\mathfrak g^\cap_{\mathcal L}$, and thus
$\mathfrak g^\cap_{\tilde{\mathcal L}}=(\pi_*\Psi)_*\mathfrak g^\cap_{\mathcal L}$.

If $\{\mathfrak g^{\rm unf}_\theta\mid\theta\in\mathcal S\}$
is a family of $G^\sim_{\mathcal L}$-uniform subalgebras of the algebras~$\mathfrak g_\theta$,
then one can define a family \smash{$\{\mathfrak g^{\rm unf}_{\tilde\theta}\mid\tilde\theta\in\tilde{\mathcal S}\}$}
of \smash{$G^\sim_{\tilde{\mathcal L}}$}-uniform subalgebras of the algebras~$\mathfrak g_{\tilde\theta}$
as \smash{$\mathfrak g^{\rm unf}_{\tilde\theta}:=(\pi_*\Psi)_*\mathfrak g^{\rm unf}_{\theta}$}, where $\tilde\theta=\Psi_*\theta$.
For any \smash{$\tilde{\mathscr T}\in G^\sim_{\tilde{\mathcal L}}$},
there exists a unique transformation $\mathscr T\in G^\sim_{\mathcal L}$ such that $\tilde{\mathscr T}\circ\Psi=\Psi\circ\mathscr T$,
which is obviously defined by $\mathscr T:=\Psi^{-1}\circ\tilde{\mathscr T}\circ\Psi$.
The \smash{$G^\sim_{\tilde{\mathcal L}}$}-uniformity of the family \smash{$\{\mathfrak g^{\rm unf}_{\tilde\theta}\mid\tilde\theta\in\tilde{\mathcal S}\}$}
follows from the $G^\sim_{\mathcal L}$-uniformity of the family $\{\mathfrak g^{\rm unf}_{\theta}\mid\theta\in\mathcal S\}$,
\begin{align*}
(\pi_*\tilde{\mathscr T})_*\mathfrak g^{\rm unf}_{\tilde\theta}
={}&(\pi_*\tilde{\mathscr T})_*\big((\pi_*\Psi)_*\mathfrak g^{\rm unf}_{\theta}\big)
=\big((\pi_*\tilde{\mathscr T})\circ(\pi_*\Psi)\big)_*\mathfrak g^{\rm unf}_{\theta}
=\big(\pi_*(\tilde{\mathscr T}\circ\Psi)\big)_*\mathfrak g^{\rm unf}_{\theta}
\\
={}&\big(\pi_*(\Psi\circ\mathscr T)\big)_*\mathfrak g^{\rm unf}_{\theta}
=(\pi_*\Psi)_*\big((\pi_*\mathscr T)_*\mathfrak g^{\rm unf}_{\theta}\big)
=(\pi_*\Psi)_*\mathfrak g^{\rm unf}_{\mathscr T_*\theta}
=\mathfrak g^{\rm unf}_{\tilde{\mathscr T}_*\tilde\theta}.
\end{align*}
The last equality follows from the identity
$\tilde{\mathscr T}_*\tilde\theta=\tilde{\mathscr T}_*(\Psi_*\theta)
=(\tilde{\mathscr T}\circ\Psi)_*\theta=(\Psi\circ\mathscr T)_*\theta=\Psi_*(\mathscr T_*\theta)$.
\end{proof}

Lastly, when neither normalization properties of weakly similar classes nor their specific similarity are strong enough to satisfy
the conditions of Propositions~\ref{pro:GroupClassificationOfSeminormClassesByMappingMethod}
or~\ref{pro:GroupClassificationOfSimilarClassesByMappingMethod},
one can still ensure the consistency of the equivalences with respect to the corresponding equivalence groups of such classes
in certain special cases~\cite[Propositions~4 and~5]{VaneevaPopovychSophocleous2009}.
For general weakly similar classes~$\mathcal L$ and~$\tilde{\mathcal L}$, one needs to check
the consistence of the $G^\sim_{\mathcal L}$- and $G^\sim_{\tilde{\mathcal L}}$-equivalences by a direct computation.
Thus, the mapping method was successfully applied to weakly similar classes of (1+1)-dimensional reaction--diffusion equations
in~\cite{VaneevaPopovychSophocleous2009} to show the equivalence of their group classifications.
These classes are related by a family of point transformations parameterized by the arbitrary-element tuple of the source class,
and  there is a closed expression of the arbitrary elements of the target class via those of the source class.
At the same time, it is not the most general case of weak similarity of classes of differential equations.
For example, the classes~$\mathcal K^\epsilon$ (resp.\ $\mathcal F^\epsilon$) and~$\mathcal P^\epsilon$ are weakly similar,
and each equation~$\mathcal P^\epsilon_C$ from the class~$\mathcal P^\epsilon$
is related to a set of equations in the class~$\mathcal K^\epsilon$ (resp.\ $\mathcal F^\epsilon$)
by a family of point transformations parameterized by an arbitrary solution of the equation~$\mathcal P^\epsilon_C$
(resp.\ of the adjoint equation to~$\mathcal P^\epsilon_C$), see Section~\ref{sec:GroupClassificationsWrtEquivGroupsKF}.
In contrast to~\cite{VaneevaPopovychSophocleous2009},
there is no closed form for values of the arbitrary element of~$\mathcal K^\epsilon$ (resp.\ $\mathcal F^\epsilon$)
in terms of associated values of the arbitrary element of~$\mathcal P^\epsilon$.
As a result, a more general version of the mapping method than that in~\cite{VaneevaPopovychSophocleous2009}
is required to tackle the group classification problems for the classes~$\mathcal K^\epsilon$ and~$\mathcal F^\epsilon$.\looseness=-1

To obtain the group classification of the class~$\tilde{\mathcal L}$ modulo the $G^\sim_{\tilde{\mathcal L}}$-equivalence from that
of its weakly similar class~$\mathcal L$ with the help of the mapping method,
one should start with a classification list~$\rm{CL}$ for the class~$\mathcal L$ modulo the $\mathcal G^\sim_{\mathcal L}$-equivalence,
which coincides with the $G^\sim_{\mathcal L}$-equivalence if the class~$\mathcal L$ is semi-normalized.
First of all, given a family $\{\mathfrak g^{\rm unf}_{\theta}\mid\theta\in\mathcal S\}$
of $\mathcal G^\sim_{\mathcal L}$-uniform subalgebras of the algebras~$\mathfrak g_\theta$,
one can construct as above the family \smash{$\{\mathfrak g^{\rm unf}_{\tilde{\theta}}\mid\tilde\theta\in\tilde{\mathcal S}\}$}
of \smash{$\mathcal G^\sim_{\tilde{\mathcal L}}$}-uniform subalgebras of the algebras~$\mathfrak g_{\tilde\theta}$,
which are thus \smash{$G^\sim_{\tilde{\mathcal L}}$}-uniform as well.
Let $\tilde{\mathfrak L}_\theta$ be the set of systems in~$\tilde{\mathcal L}$ that are similar to~$\mathcal L_\theta$.
In other words, for each \smash{$\tilde{\mathcal L}_{\tilde\theta}\in\tilde{\mathfrak L}_\theta$} there exists a point transformation~$\Phi$
such that \smash{$\tilde{\mathcal L}_{\tilde\theta}=\Phi_*\mathcal L_\theta$}.
Since the system similarity is an equivalence relation,
systems $\mathcal L_{\theta_1}$ and~$\mathcal L_{\theta_2}$ are $\mathcal G^\sim_{\mathcal L}$-equivalent
if and only if the sets $\tilde{\mathfrak L}_{\theta_1}$ and~$\tilde{\mathfrak L}_{\theta_2}$ coincide.
Moreover, $\tilde{\mathfrak L}_{\theta_1}\cap\tilde{\mathfrak L}_{\theta_2}=\varnothing$
if and only if $\mathcal L_{\theta_1}$ and~$\mathcal L_{\theta_2}$ are $\mathcal G^\sim_{\mathcal L}$-inequivalent.
Therefore, the class~$\tilde{\mathcal L}$ can be represented as the disjoint union of
the subset~$\tilde{\mathfrak N}$ of systems in~$\tilde{\mathcal L}$ with no Lie-symmetry extensions and
the sets~$\tilde{\mathfrak L}_\theta$ each of which is associated with a Lie-symmetry extension case,
$\tilde{\mathcal L}=\tilde{\mathfrak N}\sqcup\big(\sqcup_{\theta\in\rm{CL}}\tilde{\mathfrak L}_\theta\big)$.
For any~$\theta\in{\rm CL}$, all elements of~\smash{$\tilde{\mathfrak L}_\theta$} are \smash{$\mathcal G^\sim_{\tilde{\mathcal L}}$}-equivalent
but not necessarily \smash{$G^\sim_{\tilde{\mathcal L}}$}-equivalent.
Thus, the mapping method reduces the group classification of the class~$\tilde{\mathcal L}$ up to the $G^\sim_{\tilde{\mathcal L}}$-equivalence
to the problem of finding, for each $\theta\in{\rm CL}$, a complete list of $G^\sim_{\tilde{\mathcal L}}$-inequivalent systems
in the set~\smash{$\tilde{\mathfrak L}_\theta$}.
In its turn, the latter problem may be alternatively seen as constructing, for each $\theta\in{\rm CL}$,
a set of point transformations from the system~$\mathcal L_\theta$ to systems from~$\tilde{\mathfrak L}_\theta$
such that the transformed systems constitute
a complete set of \smash{$G^\sim_{\tilde{\mathcal L}}$}-equivalent systems in~$\tilde{\mathfrak L}_\theta$.

For weakly similar classes with certain additional properties, other mapping procedures can be applied.
Suppose that for each~$\theta\in\mathcal S$ there exists a subset~$\mathfrak M_\theta$ of $\tilde{\mathfrak L}_\theta$
having the following properties:%
\footnote{%
The second property can be weakened, e.g., as follows.
If $\theta_1,\theta_2\in\mathcal S$ are $G^\sim_{\mathcal L}$-equivalent,
then for any $\tilde\theta_2\in\mathfrak M_{\theta_2}$
there exists $\tilde\theta_1\in\mathfrak M_{\theta_1}$ that is $G^\sim_{\tilde{\mathcal L}}$-equivalent to~$\tilde\theta_2$.
}
\begin{enumerate}\itemsep=0ex
\item
$\tilde{\mathcal L}=\cup_{\theta\in\mathcal S}\mathfrak M_\theta$.
\item
If $\theta_1,\theta_2\in\mathcal S$ are $G^\sim_{\mathcal L}$-equivalent,
then $\mathfrak M_{\theta_2}=\mathscr T_*\mathfrak M_{\theta_1}$ for some $\mathscr T\in G^\sim_{\tilde{\mathcal L}}$.
\item
If $\tilde{\mathcal L}_{\tilde\theta_1}\in\mathfrak M_{\theta_1}$
and $\tilde\theta_2$ is $G^\sim_{\tilde{\mathcal L}}$-equivalent to~$\tilde\theta_1$,
then there exists $\theta_2\in\mathcal S$ such that $\tilde{\mathcal L}_{\tilde\theta_2}\in\mathfrak M_{\theta_2}$
and $\theta_2$ is $G^\sim_{\mathcal L}$-equivalent to $\theta_1$.
\end{enumerate}
The third property necessarily holds if the class~$\mathcal L$ is semi-normalized.
Let~$\rm{CL}$ be a known classification list for the class~$\mathcal L$ modulo the $G^\sim_{\mathcal L}$-equivalence
with respect to a family $\{\mathfrak g^{\rm unf}_\theta\subseteq\mathfrak g_\theta\mid\theta\in\mathcal S\}$
of $\mathcal G^\sim_{\mathcal L}$-uniform subalgebras of the algebras~$\mathfrak g_\theta$.
We again construct the associated family \smash{$\{\mathfrak g^{\rm unf}_{\tilde\theta}\subseteq\mathfrak g_{\tilde\theta}\mid\tilde\theta\in\tilde{\mathcal S}\}$}
of \smash{$\mathcal G^\sim_{\tilde{\mathcal L}}$}-uniform subalgebras of the algebras~$\mathfrak g_{\tilde\theta}$,
which are thus \smash{$G^\sim_{\tilde{\mathcal L}}$}-uniform as well.
The classification list for the class~$\tilde{\mathcal L}$ is obtained by merging
complete lists of \smash{$G^\sim_{\tilde{\mathcal L}}$}-inequivalent values of~$\tilde\theta$
with $\tilde{\mathcal L}_{\tilde\theta}$ in the sets~$\mathfrak M_\theta$
for all values of~$\theta$ in~$\rm{CL}$.

\section{Equivalence groups and equivalence groupoids}\label{sec:FPEquivalenceGroup}

\subsection{General second-order linear evolution equations}

We use the paper~\cite{PopovychKunzingerIvanova2008} as a reference point for known results
on admissible transformations of the class~$\mathcal E$ of equations of the form~\eqref{eq:FPInhomoLinearEquation}.

\begin{proposition}\label{FP:pro:GeneralEquivGroup}
The class~$\mathcal E$ is normalized in the usual sense.
Its equivalence group~$G^\sim_{\mathcal E}$ consists of the transformations of the~form%
\footnote{Recall that in the course of working with the equivalence group of a class of differential equations,
the arbitrary elements of the class and their derivatives are considered as coordinates in a certain extended jet space and not as functions, 
cf.\ Section~\ref{FP:sec:AdmEqTransf}.%
}
\begin{gather}\label{eq:GenFormOfPointTransInEA}
\tilde t=T(t),\quad
\tilde x=X(t,x),\quad
\tilde u=U^1(t,x)u+U^0(t,x),\\ \label{eq:GenFormOfPointTransInEB}
\tilde A=\frac{X_x^2}{T_t}A,\quad
\tilde B=\frac{X_x}{T_t}\left(B-2\frac{U^1_x}{U^1}A\right)-\frac{X_t-X_{xx}A}{T_t},\quad
\tilde C=-\frac{U^1}{T_t}\mathrm E\frac1{U^1},\\ \nonumber
\tilde D=\frac{U^1}{T_t}\left(D+\mathrm E\frac{U^0}{U^1}\right),
\end{gather}
where~$T$, $X$, $U^0$ and~$U^1$ are arbitrary smooth functions of their arguments with~$T_tX_xU^1\ne0$, and
$\mathrm E:=\p_t-A\p_{xx}-B\p_x-C$.
\end{proposition}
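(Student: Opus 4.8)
The plan is to apply the direct method of computing the whole equivalence groupoid $\mathcal G^\sim_{\mathcal E}$: I would take an arbitrary admissible transformation $(\theta_1,\Phi,\theta_2)\in\mathcal G^\sim_{\mathcal E}$, with source tuple $\theta_1=(A,B,C,D)$ and target tuple $\theta_2=(\tilde A,\tilde B,\tilde C,\tilde D)$, determine the most general form of the point transformation $\Phi$ compatible with membership of both the source and the target equations in~$\mathcal E$, and read off the induced action on the arbitrary elements. Normalization then follows immediately, because the components of $\Phi$ will turn out to be parameterized by four arbitrary functions $(T,X,U^0,U^1)$ that are \emph{independent} of the source tuple $\theta_1$; hence $\Phi$ together with the induced action \eqref{eq:GenFormOfPointTransInEB} defines a single transformation on the joint space $(t,x,u,A,B,C,D)$, i.e.\ an element of $G^\sim_{\mathcal E}$ whose $(x,u)$-projection is $\Phi$.

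First I would reduce the form of $\Phi\colon(t,x,u)\mapsto(\tilde t,\tilde x,\tilde u)=(T,X,U)$, where $T,X,U$ are a priori functions of $(t,x,u)$ with nonvanishing Jacobian. Writing the target equation $\tilde u_{\tilde t}=\tilde A\tilde u_{\tilde x\tilde x}+\tilde B\tilde u_{\tilde x}+\tilde C\tilde u+\tilde D$, expressing the transformed derivatives through the total derivative operators via the chain rule, and substituting $u_t=Au_{xx}+Bu_x+Cu+D$ from the source equation~\eqref{eq:FPInhomoLinearEquation}, I obtain a polynomial identity in the jet coordinates $u_x,u_{xx},u_{xxx},\dots$. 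Matching the coefficients of the purely spatial highest-order monomials forces, in the standard way for parabolic evolution equations, that $T_x=T_u=0$ (so $\tilde t=T(t)$ with $T_t\ne0$) and $X_u=0$ (so $\tilde x=X(t,x)$ with $X_x\ne0$); linearity of both equations in $u$ and its derivatives then forces $U$ to be affine in $u$, namely $U=U^1(t,x)u+U^0(t,x)$ with $U^1\ne0$. This yields exactly the structure \eqref{eq:GenFormOfPointTransInEA}.

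With $\Phi$ in this reduced form, the remaining step is routine coefficient comparison. Collecting the coefficients of $u_{xx}$, $u_x$, $u$ and $1$ in the identity above gives four relations expressing $\tilde A,\tilde B,\tilde C,\tilde D$ through $A,B,C,D$ and the derivatives of $T,X,U^0,U^1$; rewriting them with the source operator $\mathrm E=\partial_t-A\partial_{xx}-B\partial_x-C$ produces precisely \eqref{eq:GenFormOfPointTransInEB}, and the constraint $\tilde A\ne0$ reduces to $T_tX_xU^1\ne0$, already guaranteed by the nondegeneracy of $\Phi$. Since $(T,X,U^0,U^1)$ enter \eqref{eq:GenFormOfPointTransInEA}--\eqref{eq:GenFormOfPointTransInEB} freely and do not involve $\theta_1$, the map $\mathscr T\colon(t,x,u,A,B,C,D)\mapsto(\tilde t,\tilde x,\tilde u,\tilde A,\tilde B,\tilde C,\tilde D)$ defined by these formulas is a well-defined, projectable equivalence transformation with $\pi_*\mathscr T=\Phi$ and $\mathscr T_*\theta_1=\theta_2$. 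Hence $(\theta_1,\Phi,\theta_2)\in\mathcal G^{G^\sim_{\mathcal E}}$, which simultaneously establishes the asserted form of $G^\sim_{\mathcal E}$ and the equality $\mathcal G^\sim_{\mathcal E}=\mathcal G^{G^\sim_{\mathcal E}}$, i.e.\ normalization.

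I expect the reduction of $\Phi$ in the second paragraph to be the main obstacle: justifying $T_x=T_u=X_u=0$ and $U_{uu}=0$ rigorously requires carefully tracking how the second-order spatial structure and the linearity constrain the high-order jet monomials, rather than merely asserting the final form. Once these structural restrictions are secured, everything else is a direct, if lengthy, substitution, which I would also cross-check against the known results of~\cite{PopovychKunzingerIvanova2008}.
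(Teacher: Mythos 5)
Your proposal is correct and is essentially the standard direct-method computation that underlies this result; the paper itself states Proposition~\ref{FP:pro:GeneralEquivGroup} without proof, deferring to~\cite{PopovychKunzingerIvanova2008}, where exactly this argument (reduce $\Phi$ to the fiber-preserving, $u$-affine form via the second-order parabolic structure and linearity, then read off the action on $(A,B,C,D)$ by coefficient comparison) is carried out. Your observation that normalization follows because $(T,X,U^0,U^1)$ parameterize $\Phi$ independently of the source tuple, so that every admissible transformation lifts to a projectable point transformation of the extended space $(t,x,u,A,B,C,D)$, is the correct and complete justification of the first claim.
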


The class~$\mathcal E$ is a natural choice for a superclass of the classes of Kolmogorov and heat equations considered below
due to its normalization in the usual sense.

\begin{remark}\label{rem:ClassEbreve}
For convenience of further singling out classes of Fokker--Planck equations,
we can reparameterize the equations from the class~$\mathcal E$ to the form
\[
u_t=\big(A(t,x)u\big)_{xx}+\big(B(t,x)u\big)_x+C(t,x)u+D(t,x),
\]
where the arbitrary elements~$A$ and~$D$ are the same as in~\eqref{eq:FPInhomoLinearEquation},
and $B$ and~$C$ in~\eqref{eq:FPInhomoLinearEquation} are expressed in terms of the new arbitrary elements as
$B+2A_x$ and $C+B_x+A_{xx}$, respectively.
In addition, we extend the arbitrary-element tuple $(A,B,C,D)$ with the derivatives~$A_x$, $A_{xx}$ and~$B_x$.
The reparameterized class~$\breve{\mathcal E}$ is normalized as well.
The $(t,x,u,A,D)$-components of equivalence transformations in the class~$\breve{\mathcal E}$
are of the same form as in Proposition~\ref{FP:pro:GeneralEquivGroup},
where instead of~$\mathrm E$
we use the proper expression $\breve{\mathrm E}=\p_t-\p_{xx}\circ A-\p_x\circ B-C$
for the differential operator associated with the reparameterized source equation,
and the $(B,C)$-components are
\begin{gather}\label{eq:EquivTransEbreveBC}
\tilde B=\frac{X_x}{T_t}\left(B-2\frac{U^1_x}{U^1}A\right)-\frac{X_t+3X_{xx}A}{T_t},\quad
\tilde C=-\frac{\breve{\mathrm E}^\dag(X_xU^1)}{T_tX_xU^1}.
\end{gather}
Here $\breve{\mathrm E}^\dag=-\p_t-A\p_{xx}+B\p_x-C$ is the formal adjoint of the operator~$\breve{\mathrm E}$.
The components for~$A_x$, $A_{xx}$ and~$B_x$ are obtained via the standard prolongation of the components for~$A$ and~$B$
to these derivatives.
\end{remark}

In the subsequent subsections we single out various subclasses of the class~$\mathcal E$
or of its reparameterization~$\breve{\mathcal E}$
via gauging arbitrary elements of these classes by wide families of admissible transformations
from the action groupoids of the corresponding equivalence groups
or imposing the condition that the arbitrary elements of a subclass do not depend on~$t$.
(This condition cannot be realized as a gauge.)
We study admissible transformations of the subclasses singled out.
The order of gauging is as follows.

Firstly, we single out the subclass~$\mathcal E_0$ of homogeneous equations,
gauging $D=0$ by a wide subset~$\mathcal M_{\mathcal E}$ of~$\mathcal G^{G^\sim_{\mathcal E}}$ 
and excluding~$D$ from the arbitrary-element tuple.
Here for each element of~$\mathcal M_{\mathcal E}$, we set $T(t)=t$, $X(t,x)=x$, $U^1(t,x)=1$,
and the parameter function $U^0$ is a solution of the equation from~$\mathcal E$
with the source value of the arbitrary-element tuple~$(A,B,C,D)$.

\begin{proposition}\label{FP:pro:EquivGroupE0}
The class~$\mathcal E_0$ is disjointedly semi-normalized with respect to the linear superposition of solutions.
Its equivalence group~$G^\sim_{\mathcal E_0}$ consists of the transformations of the~form
\begin{gather}\label{eq:EquivTransInE0A}
\tilde t=T(t),\quad
\tilde x=X(t,x),\quad
\tilde u=U^1(t,x)u,\\\label{eq:EquivTransInE0B}
\tilde A=\frac{X_x^2}{T_t}A,\quad
\tilde B=\frac{X_x}{T_t}\left(B-2\frac{U^1_x}{U^1}A\right)-\frac{X_t-X_{xx}A}{T_t},\quad
\tilde C=-\frac{U^1}{T_t}\mathrm E\frac1{U^1},
\end{gather}
where~$T$, $X$ and~$U^1$ are smooth functions of their arguments with~$T_tX_xU^1\ne0$ and
$\mathrm E=\p_t-A\p_{xx}-B\p_x-C$ is the differential operator associated with the source equation.
\end{proposition}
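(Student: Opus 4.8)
The plan is to derive both assertions from the normalization of the superclass~$\mathcal E$ recorded in Proposition~\ref{FP:pro:GeneralEquivGroup}. Since $\mathcal E_0\subset\mathcal E$, any admissible transformation $(\theta_1,\Phi,\theta_2)$ of~$\mathcal E_0$ is simultaneously an admissible transformation of~$\mathcal E$ between the equations with arbitrary-element tuples $(A,B,C,0)$ and $(\tilde A,\tilde B,\tilde C,0)$. Normalization of~$\mathcal E$ then forces $\Phi$ to be of the form~\eqref{eq:GenFormOfPointTransInEA} and its action on the arbitrary elements to be given by~\eqref{eq:GenFormOfPointTransInEB} together with the component $\tilde D=(U^1/T_t)\big(D+\mathrm E(U^0/U^1)\big)$. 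Requiring that both the source and the target lie in~$\mathcal E_0$, i.e.\ $D=\tilde D=0$, reduces to the single scalar condition $\mathrm E(U^0/U^1)=0$, where $\mathrm E=\p_t-A\p_{xx}-B\p_x-C$ is built from the source coefficients. Thus $w:=U^0/U^1$ must be a solution of the source equation~$\mathcal E_{0,\theta_1}$.

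First I would extract the equivalence group. An element of $G^\sim_{\mathcal E_0}$ must respect the gauge $D=0$ for \emph{every} admissible value of $(A,B,C)$ simultaneously, so the relation $\mathrm E(U^0/U^1)=0$ has to hold identically in the independent arbitrary functions $A$, $B$ and~$C$. Splitting $\p_t w-Aw_{xx}-Bw_x-Cw=0$ with respect to these functions gives $w_{xx}=w_x=w=w_t=0$; in particular the coefficient of~$C$ forces $w=0$, hence $U^0=0$. Conversely, every transformation of the form~\eqref{eq:EquivTransInE0A}--\eqref{eq:EquivTransInE0B} is the restriction to $U^0=0$ of an element of $G^\sim_{\mathcal E}$ and keeps $\tilde D=0$, so it preserves~$\mathcal E_0$. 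This proves that $G^\sim_{\mathcal E_0}$ consists exactly of the claimed transformations; note that the $(A,B,C)$-components in~\eqref{eq:EquivTransInE0B} are insensitive to~$U^0$ and so coincide with those in~\eqref{eq:GenFormOfPointTransInEB}.

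For the disjoint semi-normalization I would exploit the decomposition implicit in the above. For a single admissible transformation with $w=U^0/U^1$ a solution of the source, factor $\Phi=\Phi^{\rm e}\circ\Phi^0$, where $\Phi^0\colon(t,x,u)\mapsto(t,x,u+w)$ is the linear-superposition symmetry of~$\mathcal E_{0,\theta_1}$ and $\Phi^{\rm e}$ is obtained from~$\Phi$ by setting $U^0=0$. Because the arbitrary-element components~\eqref{eq:EquivTransInE0B} do not involve~$U^0$, the transformation $\Phi^{\rm e}$ still maps $\theta_1$ to~$\theta_2$, whence $(\theta_1,\Phi^{\rm e},\theta_2)\in\mathcal G^{G^\sim_{\mathcal E_0}}$, while $(\theta_1,\Phi^0,\theta_1)$ is a vertex element of~$\mathcal G^{\rm f}_{\mathcal E_0}$ lying in the subgroupoid $H_\theta$ associated with the linear superposition of solutions. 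Conjugating $\Phi^0$ to the target turns it into the shift by the solution $\Phi^{\rm e}_*w$ of~$\mathcal E_{0,\theta_2}$, rewriting $\mathcal T=(\theta_1,\Phi,\theta_2)$ as a Frobenius product and yielding $\mathcal G^\sim_{\mathcal E_0}=\mathcal G^{G^\sim_{\mathcal E_0}}\star\mathcal G^{\rm f}_{\mathcal E_0}$ in the sense of~\cite{KurujyibwamiBasarabHorwathPopovych2018}. Disjointness is then immediate: an essential symmetry has $U^0=0$ whereas a nontrivial element of $H_\theta$ has $U^0=U^1w\ne0$, so $H_\theta$ meets the essential symmetries only in the identity.

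The main obstacle I anticipate is organizational rather than computational: one has to track carefully that the factorization lands in the correct subgroupoids and respects the groupoid composition order, and that the splitting with respect to $A,B,C$ is legitimate, i.e.\ that these are genuinely free arbitrary elements of~$\mathcal E_0$ with no hidden relations imposed by the auxiliary system. Once the condition $\mathrm E(U^0/U^1)=0$ is interpreted correctly — identically in $(A,B,C)$ for the equivalence group, but at a fixed source for a generic admissible transformation — both claims follow quickly.
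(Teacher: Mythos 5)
Your argument is correct and is exactly the route the paper intends: Proposition~\ref{FP:pro:EquivGroupE0} is stated without proof (the result is imported from~\cite{PopovychKunzingerIvanova2008}), and the standard derivation is precisely yours --- use the normalization of~$\mathcal E$ to pin down every admissible transformation of~$\mathcal E_0$, read off the constraint $\mathrm E(U^0/U^1)=0$ from $D=\tilde D=0$, split it with respect to the free arbitrary elements $A$, $B$, $C$ to get $U^0=0$ for the equivalence group, and factor a general admissible transformation as an action-groupoid element composed with a linear-superposition shift. Your handling of the composition order (conjugating $\Phi^0$ to the target so that $\mathcal T\in\mathcal G^{G^\sim_{\mathcal E_0}}\star\mathcal G^{\rm f}_{\mathcal E_0}$) and of disjointness is also right.
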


\begin{remark}\label{rem:ClassEbreve0}
The subclass~$\breve{\mathcal E}_0$ of homogenous equations
in the reparameterization~$\breve{\mathcal E}$ of the class~$\mathcal E$
is a similar reparameterization of the class~$\mathcal E_0$.
At the same time, in contrast to~$\breve{\mathcal E}$, the class~$\breve{\mathcal E}_0$
does not need the extension of the arbitrary-element tuple $(A,B,C)$ to derivatives
for preserving normalization properties of this class.
More specifically, as its non-reparameterized counterpart~$\mathcal E_0$,
the class~$\breve{\mathcal E}_0$ with the arbitrary-element tuple $(A,B,C)$
is disjointedly semi-normalized with respect to the linear superposition of solutions.
The $(t,x,u)$-, $A$- and $(B,C)$-components of the transformations
constituting the equivalence group~\smash{$G^\sim_{\breve{\mathcal E}_0}$} of the class~$\breve{\mathcal E}_0$
are defined by the equations~\eqref{eq:EquivTransInE0A}, the first equation in~\eqref{eq:EquivTransInE0B}
and the equations~\eqref{eq:EquivTransEbreveBC}, respectively,
where~$T$, $X$ and~$U^1$ are smooth functions of their arguments with~$T_tX_xU^1\ne0$.
\end{remark}

The obvious second choice would be the gauge~$A=1$, because this gauge does not affect the normalization property, cf.~\cite{PopovychKunzingerIvanova2008}.
Nevertheless, at this stage we prefer to single out the desired classes~$\bar{\mathcal P}$, $\bar{\mathcal K}$ and~$\bar{\mathcal F}$
from the class~$\mathcal E_0$ with the gauge~$B=0$, from the class~$\mathcal E_0$ with the gauge $C=0$
and from the class~$\breve{\mathcal E_0}$ with the gauge $C=0$, respectively.
Note that all these gauges are carried out by wide families of elements of the action groupoids of the corresponding equivalence groups.
The subsequent gauge~$A=\epsilon$ with fixed $\epsilon\in\{-1,1\}$ in the classes~$\bar{\mathcal K}$ and~$\bar{\mathcal F}$
is of the same nature, whereas the same gauge in the class~$\bar{\mathcal P}$
can only be realized by admissible transformations that do not necessarily belong to $\mathcal G^{G^\sim_{\bar{\mathcal P}}}$.
In this way, we derive the respective subclasses~$\mathcal P^\epsilon$, $\mathcal K^\epsilon$ and~$\mathcal F^\epsilon$ of reduced equations.
A similar procedure is then carried out for the subclasses of equations with time-independent coefficients
from the classes~$\mathcal E$, $\mathcal E_0$, $\bar{\mathcal P}$, $\bar{\mathcal K}$ and~$\bar{\mathcal F}$.

Consider the classes~$\mathcal E'$ and~$\mathcal E'_0$
that are singled out from the classes~$\mathcal E$ and~$\mathcal E_0$, respectively,
by the constraints $A_t=B_t=C_t=0$.
(The classes~$\breve{\mathcal E}'$ and~$\breve{\mathcal E}'_0$ can be considered in a similar way.)

\begin{proposition}\label{FP:pro:EquivGroupOfE'AndE'_0}
(i) The equivalence group~$G^\sim_{\mathcal E'}$ of the class~$\mathcal E'$ consists of the transformations of the~form
\begin{gather*}
\tilde t=c_1t+c_2,\quad
\tilde x=X(x),\quad
\tilde u={\rm e}^{c_3t}W(x)u+U^0(t,x),\\
\tilde A=\frac{X_x^2}{c_1}A,\quad
\tilde B=\frac{X_x}{c_1}\left(B-2\frac{W_x}WA\right)+\frac{X_{xx}}{c_1}A,\quad
\tilde C=\frac W{c_1}\mathrm E'\frac1W+\frac{c_3}{c_1},\\
\tilde D=\frac{{\rm e}^{c_3t}W}{c_1}\left(D+\mathrm E\frac{U^0}{{\rm e}^{c_3t}W}\right).
\end{gather*}

(ii) The equivalence group~$G^\sim_{\mathcal E'_0}$ of the class~$\mathcal E'_0$ consists of the transformations of the~form
\begin{gather*}
\tilde t=c_1t+c_2,\quad
\tilde x=X(x),\quad
\tilde u={\rm e}^{c_3t}W(x)u,\\
\tilde A=\frac{X_x^2}{c_1}A,\quad
\tilde B=\frac{X_x}{c_1}\left(B-2\frac{W_x}WA\right)+\frac{X_{xx}}{c_1}A,\quad
\tilde C=\frac W{c_1}\mathrm E'\frac1W+\frac{c_3}{c_1}.
\end{gather*}

Here~$c_1$, $c_2$ and~$c_3$ are arbitrary constants with~$c_1\ne0$,
$X$, $W$ and~$U^0$ are arbitrary smooth functions of their arguments with~$X_xW\ne0$,
$\mathrm E:=\p_t-A\p_{xx}-B\p_x-C$ and $\mathrm E':=A\p_{xx}+B\p_x+C$.
\end{proposition}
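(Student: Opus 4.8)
The plan is to realize $G^\sim_{\mathcal E'}$ and $G^\sim_{\mathcal E'_0}$ as the stabilizers of the respective subclasses inside the already-known groups $G^\sim_{\mathcal E}$ and $G^\sim_{\mathcal E_0}$ from Propositions~\ref{FP:pro:GeneralEquivGroup} and~\ref{FP:pro:EquivGroupE0}, and then to read off their explicit form by imposing the time-independence conditions. The reduction to the superclass is supplied by normalization: since $\mathcal E'\subseteq\mathcal E$ and $\mathcal E$ is normalized in the usual sense, for any $\mathscr T\in G^\sim_{\mathcal E'}$ and any value~$\theta$ in the arbitrary-element set~$\mathcal S'$ of~$\mathcal E'$ the triple $(\theta,\pi_*\mathscr T,\mathscr T_*\theta)$ is an admissible transformation of~$\mathcal E$ and hence lies in $\mathcal G^{G^\sim_{\mathcal E}}$. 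Since $\pi_*\mathscr T$ is a single $\theta$-independent transformation, this forces $\pi_*\mathscr T$ to be of the form~\eqref{eq:GenFormOfPointTransInEA} and the arbitrary-element components of~$\mathscr T$ to be given by~\eqref{eq:GenFormOfPointTransInEB}; no equivalence transformation of the subclass arises beyond a restriction of one of~$\mathcal E$. Thus $G^\sim_{\mathcal E'}=\{\mathscr T\in G^\sim_{\mathcal E}\mid\mathscr T_*\mathcal S'=\mathcal S'\}$, and it remains to impose $\tilde A_t=\tilde B_t=\tilde C_t=0$ on the images~\eqref{eq:GenFormOfPointTransInEB} for all time-independent $(A,B,C)$, where the $t$-derivative in the new variables is $\p_{\tilde t}=\frac1{T_t}\p_t-\frac{X_t}{T_tX_x}\p_x$.

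First I would treat $\tilde A_t=0$. Substituting $\tilde A=X_x^2T_t^{-1}A$ with $A=A(x)$ and writing the resulting expression as a linear combination of the independently prescribable quantities~$A$ and~$A_x$, the vanishing of the coefficient of~$A_x$ gives $X_tX_x/T_t^2=0$, hence $X_t=0$, i.e.\ $X=X(x)$; once $X_t=0$, the vanishing of the coefficient of~$A$ reads $-X_x^2T_{tt}/T_t^3=0$, hence $T_{tt}=0$, i.e.\ $T=c_1t+c_2$ with $c_1=T_t\ne0$.

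With $X=X(x)$ and $T_t=c_1$ in hand, the condition $\tilde B_t=0$ collapses to $\p_t(U^1_x/U^1)=0$, because the only $t$-dependent contribution to~$\tilde B$ sits in the term $-2(U^1_x/U^1)A$ and~$A$ is an arbitrary nonvanishing function of~$x$. Hence $\p_x\ln U^1$ is $t$-independent, so $U^1=g(t)W(x)$ with $gW\ne0$. Feeding this into $\tilde C=-U^1T_t^{-1}\mathrm E(1/U^1)$ and separating the purely spatial contribution yields $\tilde C=\frac{W}{c_1}\mathrm E'\frac1W+\frac1{c_1}\frac{g'}{g}$ with $\mathrm E'=A\p_{xx}+B\p_x+C$, so that $\tilde C_t=0$ is equivalent to $g'/g$ being constant, say~$c_3$; absorbing the resulting multiplicative constant into~$W$ gives $U^1=\mathrm e^{c_3t}W(x)$. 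Substituting these forms back into~\eqref{eq:GenFormOfPointTransInEB} produces precisely the stated components~$\tilde A$, $\tilde B$, $\tilde C$, $\tilde D$, with~$U^0$ remaining a free smooth function since~$D$ is unconstrained in~$\mathcal E'$; this proves~(i). Part~(ii) follows by the identical computation performed within $G^\sim_{\mathcal E_0}$ instead of $G^\sim_{\mathcal E}$, whence $U^0\equiv0$ and the $\tilde D$-component is absent. Finally I would confirm the converse by direct substitution, verifying that every transformation of the stated form maps time-independent $(A,B,C)$ to time-independent images and so indeed lies in the equivalence group.

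The step I expect to be the main obstacle is $\tilde C_t=0$: in contrast with the $\tilde A$- and $\tilde B$-conditions it involves the operator~$\mathrm E$ acting on $1/U^1$, and one must carefully isolate the $t$-dependent scalar $g'/g$ from the $t$-independent operator part $W\mathrm E'(1/W)$ before concluding that $g'/g$ is constant. A secondary point requiring care is the reduction in the first paragraph, where the normalization of~$\mathcal E$ must be invoked to guarantee that $\pi_*\mathscr T$ has the form~\eqref{eq:GenFormOfPointTransInEA}, and where the coefficient-separation arguments are legitimate precisely because $A$, $B$ and~$C$ genuinely range over all time-independent tuples.
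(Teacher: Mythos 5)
Your argument coincides with the paper's proof in both structure and computation: reduce to the stabilizer inside the equivalence group of the superclass, then act with the transformed total derivative $\mathrm D_{\tilde t}=(T_t)^{-1}\mathrm D_t-X_t(T_tX_x)^{-1}\mathrm D_x$ on the $A$-, $B$-, $C$-components and split with respect to the jet coordinates $A,B,C,A_x,B_x,C_x$, obtaining $X_t=0$, $T_{tt}=0$ and $U^1={\rm e}^{c_3t}W(x)$. Part~(i) and the entire computational part are correct.

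The one genuine gap is in your dispatch of part~(ii). You justify the reduction step ($G^\sim_{\mathcal E'}\subseteq G^\sim_{\mathcal E}$ up to restriction) by the normalization of~$\mathcal E$, but $\mathcal E_0$ is \emph{not} normalized --- it is only disjointedly semi-normalized with respect to the linear superposition of solutions --- so the sentence ``part~(ii) follows by the identical computation performed within $G^\sim_{\mathcal E_0}$'' skips the step that actually needs an argument. For $\mathscr T\in G^\sim_{\mathcal E'_0}$ and $\theta\in\mathcal S'_0$, semi-normalization only gives a decomposition of $(\theta,\pi_*\mathscr T,\mathscr T_*\theta)$ into an action-groupoid element and a shift $\tilde u=u+f_\theta(t,x)$ by a solution $f_\theta$ of $\mathcal L_\theta$; since $\pi_*\mathscr T$ is a single $\theta$-independent transformation, the $f_\theta$ would have to be a common solution of all equations in $\mathcal E'_0$, and it is the fact that these equations have no common nonzero solutions that forces $f_\theta=0$ and hence $\pi_*\mathscr T\in\pi_*G^\sim_{\mathcal E_0}$. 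The paper supplies exactly this clause; once it is added, the rest of your part~(ii) goes through as stated.
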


\begin{proof}
The class~$\mathcal E$ is normalized in the usual sense,
the class~$\mathcal E_0$ is disjointedly semi-normalized with respect to the linear superposition of solutions,
and the equations from the class~$\mathcal E'_0$ have no common nonzero solutions.
Therefore, the groups~$G^\sim_{\mathcal E'}$ and~\smash{$G^\sim_{\mathcal E'_0}$} are the subgroups
of the groups~$G^\sim_{\mathcal E}$ and~$G^\sim_{\mathcal E_0}$, respectively,
that consist of the elements preserving the constraints $A_t=B_t=C_t=0$.

We act on the $A$-, $B$- and $C$-components of~$\mathscr T\in G^\sim_{\mathcal E}$,
which are of the form~\eqref{eq:GenFormOfPointTransInEB},
by the operator $\mathrm D_{\tilde t}=(T_t)^{-1}\mathrm D_t-X_t(T_tX_x)^{-1}\mathrm D_x$.
Here $\mathrm D_t$ and~$\mathrm D_x$ are the operators of total derivatives with respect to the variables $t$ and~$x$
in the jet space with independent variables $(t,x,u)$ and the arbitrary elements of the class under consideration
as the dependent variables, and
$\mathrm D_{\tilde t}$ is the operator~$\mathrm D_t$ in the transformed variables,
$\mathrm D_{\tilde t}=\mathscr T_*\mathrm D_t$.
We take into account the constraints $A_t=B_t=C_t=0$ and $\tilde A_{\tilde t}=\tilde B_{\tilde t}=\tilde C_{\tilde t}=0$
and split the obtained equations with respect to $A$, $B$, $C$, $A_x$, $B_x$ and~$C_x$,
which leads to the equations $T_{tt}=0$, $X_t=0$, $(U^1_x/U^1)_t=(U^1_t/U^1)_t=0$, i.e.,
$T=c_1t+c_2$, $X=X(x)$ and $U^1={\rm e}^{c_3t}W(x)$.

Since the $A$-, $B$- and $C$-components~\eqref{eq:EquivTransInE0B} of transformations in~$G^\sim_{\mathcal E_0}$
coincide with those in~$G^\sim_{\mathcal E}$, which are given by~\eqref{eq:GenFormOfPointTransInEB},
the procedure of singling out~$G^\sim_{\mathcal E'_0}$ from~$G^\sim_{\mathcal E_0}$ is the same.
\end{proof}

\subsection{Heat equations with potentials}\label{sec:FP:HeatEqsWithPotsEquivAndAdmTrans}

The classification technique to be applied for classes of Kolmogorov and Fokker--Planck equations
requires the knowledge of the group classification of the class~$\mathcal P^\epsilon$,
which is constituted by the reduced heat equations with potentials,
$\mathcal P^\epsilon_C$: $u_t=\epsilon u_{xx}+C(t,x)u$ with fixed $\epsilon\in\{-1,1\}$.

Starting with the class~$\mathcal E_0$, one can reach its subclass~$\mathcal P^\epsilon$
via successively gauging $A=\epsilon$ and~$B=0$ by wide subsets of the action groupoids
of the equivalence groups of the class~$\mathcal E_0$ and its subclass singled out by the gauge $A=\epsilon$,
respectively.
In general, as was emphasized in~\cite{OpanasenkoBihloPopovych2017},
the order of gauging arbitrary elements of a class of differential equations may be important
if one is to ensure normalization properties of gauged subclasses.
To demonstrate the importance of the choice of the order,
let us gauge the arbitrary element~$B$ of the class~$\mathcal E_0$ to zero first, $B=0$.
It is achieved by the wide subset of~$\mathcal G(G^\sim_{\mathcal E_0})$,
where $T(t)=t$, $U^1(t,x)=1$ and
the parameter function~$X$ is an arbitrary solution of the Kolmogorov equation $X_t=AX_{xx}+BX_x$ with the source value of~$(A,B)$.
The gauged subclass is the class~$\bar{\mathcal P}$ of equations of the form
\begin{gather*}
\bar{\mathcal P}_\vartheta\colon\ \ u_t=A(t,x)u_{xx}+C(t,x)u \quad\mbox{with}\quad \vartheta:=(A,C).
\end{gather*}

\begin{proposition}\label{FP:prop:EquivGroupoidPbar}
The equivalence groupoid~$\mathcal G^\sim_{\bar{\mathcal P}}$ of the class~$\bar{\mathcal P}$ consists of the triples~$(\vartheta,\Phi,\tilde\vartheta)$,
where $\vartheta=(A,C)$ runs through the set of values of the arbitrary-element tuple of~$\bar{\mathcal P}$,
and $\Phi$ is an arbitrary point transformation of the form~\eqref{eq:GenFormOfPointTransInEA},
\begin{gather*}
\Phi\colon\quad\tilde t=T(t),\quad\tilde x=X(t,x),\quad \tilde u=U^1(t,x)u+U^0(t,x),
\end{gather*}
where
$T$ and $X$ are smooth functions of their arguments with~$T_tX_x\ne0$,
$U^1$ is found from the relation 
\[
\frac{U^1_x}{U^1}=\frac{X_{xx}}{2X_x}-\frac1{2A}\frac{X_t}{X_x}
\]
with $U^1\ne0$,
and $U^0/U^1$ is a solution of the equation~$\bar{\mathcal P}_\vartheta$.
The value $\tilde\vartheta=(\tilde A,\tilde C)$ of the arbitrary-element tuple of~$\bar{\mathcal P}$
is related to~$\vartheta$ by
\begin{gather*}
\Phi^*\tilde A=\frac{X_x^2}{T_t}A,\quad \Phi^*\tilde C=-\frac{U^1}{T_t}\mathrm P\frac1{U^1}
\end{gather*}
with the differential operator $\mathrm P:=\p_t-A\p_{xx}-C$, which defines the source equation.
\end{proposition}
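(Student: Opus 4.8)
The plan is to exploit the normalization of the superclass~$\mathcal E$, established in Proposition~\ref{FP:pro:GeneralEquivGroup}, rather than computing admissible transformations of~$\bar{\mathcal P}$ from scratch. Since $\bar{\mathcal P}$ is singled out from~$\mathcal E$ by the constraints $B=0$ and $D=0$ (it sits inside~$\mathcal E_0$, where $D=0$, and is cut out by the gauge $B=0$), its equivalence groupoid~$\mathcal G^\sim_{\bar{\mathcal P}}$ is precisely the restriction of~$\mathcal G^\sim_{\mathcal E}$ to those triples whose source and target both satisfy $B=\tilde B=0$ and $D=\tilde D=0$: any point transformation relating two equations of~$\bar{\mathcal P}$ relates two equations of~$\mathcal E$ and is therefore admissible in~$\mathcal E$, and conversely. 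Because $\mathcal E$ is normalized, $\mathcal G^\sim_{\mathcal E}=\mathcal G^{G^\sim_{\mathcal E}}$, so every such~$\Phi$ has the form~\eqref{eq:GenFormOfPointTransInEA} with arbitrary-element components given by~\eqref{eq:GenFormOfPointTransInEB}. The task thus reduces to imposing the four subclass constraints on these formulas.

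First I would treat the $A$-component: equation~\eqref{eq:GenFormOfPointTransInEB} gives $\Phi^*\tilde A=(X_x^2/T_t)A$ with no constraint on~$T$ or~$X$, reproducing the stated relation. Next I would impose $B=0$ and $\tilde B=0$ in the $\tilde B$-formula of~\eqref{eq:GenFormOfPointTransInEB}; after clearing~$T_t$ this becomes $2X_x(U^1_x/U^1)A=AX_{xx}-X_t$, i.e.\ $U^1_x/U^1=(AX_{xx}-X_t)/(2AX_x)$, which is exactly the stated first-order condition determining~$U^1$ (up to a multiplicative function of~$t$). Since $B=0$, the operator $\mathrm E=\p_t-A\p_{xx}-B\p_x-C$ coincides with $\mathrm P=\p_t-A\p_{xx}-C$, so the $\tilde C$-formula yields $\Phi^*\tilde C=-(U^1/T_t)\mathrm P(1/U^1)$. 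Finally, setting $D=0$ and $\tilde D=0$ in the $\tilde D$-formula leaves $\mathrm E(U^0/U^1)=0$, which under $B=0$ reads $\mathrm P(U^0/U^1)=0$; that is, $U^0/U^1$ is a solution of~$\bar{\mathcal P}_\vartheta$.

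Collecting these, $T$ and~$X$ remain free subject only to $T_tX_x\ne0$, while $U^1$ and~$U^0/U^1$ are constrained exactly as claimed, so the listed triples exhaust $\mathcal G^\sim_{\bar{\mathcal P}}$. To close the argument I would verify the converse inclusion: for any $T$, $X$, $U^1$, $U^0$ satisfying the stated conditions, the transformation~$\Phi$ of the form~\eqref{eq:GenFormOfPointTransInEA} indeed pushes $\bar{\mathcal P}_\vartheta$ forward to an equation of~$\bar{\mathcal P}$, with $\tilde\vartheta$ given by the displayed formulas; this is immediate by running the substitution into~\eqref{eq:GenFormOfPointTransInEB}, the condition on~$U^1$ guaranteeing $\tilde B=0$ and the condition on~$U^0/U^1$ guaranteeing $\tilde D=0$. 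The only genuinely structural point, and hence the main thing to get right, is the reduction in the first paragraph: that normalization of~$\mathcal E$ lets one inherit the global form of~$\Phi$, and that passing to the subclass imposes no hidden constraints on~$T$ and~$X$ beyond those absorbed into the determination of~$U^1$ and~$U^0$.
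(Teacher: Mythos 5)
Your proof is correct and follows exactly the route the paper intends: the paper states this proposition without a written proof, but its derivation is precisely the restriction of the equivalence groupoid of the normalized superclass~$\mathcal E$ (Proposition~\ref{FP:pro:GeneralEquivGroup}) to sources and targets with $B=\tilde B=0$ and $D=\tilde D=0$, which yields the constraint on~$U^1_x/U^1$ from the $\tilde B$-component, the equation $\mathrm P(U^0/U^1)=0$ from the $\tilde D$-component, and the stated $\tilde A$- and $\tilde C$-formulas. Your computations, including the observation that $U^1$ is determined only up to a multiplicative function of~$t$, all check out against the classifying condition displayed immediately after the proposition in the paper.
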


The class~$\bar{\mathcal P}$ is not normalized since each admissible transformations of~$\bar{\mathcal P}$ 
satisfies the relation from Proposition~\ref{FP:prop:EquivGroupoidPbar}.
Even though this condition can be solved for~$U^1$, the result cannot be interpreted in terms of extended generalized equivalence groups
of classes of differential equations.
Recall that parameters of such groups may nonlocally depend on the arbitrary elements of the class in a specific way only.
In particular, an introduction of virtual arbitrary elements should make
the dependence of group elements of the extended arbitrary-element tuple completely local,
which is not the case for the class~$\bar{\mathcal P}$.
A similar example can be found in~\cite[Proposition~21]{PocheketaPopovych2017}.
See a correct construction of an extended generalized equivalence group, e.g., in~\cite[Section~9]{OpanasenkoBihloPopovych2017}.
At the same time, it is easy to compute the usual equivalence group of the class~$\bar{\mathcal P}$,
splitting the above classifying condition and the equation for $U^0/U^1$ with respect to~$\vartheta$.

\begin{proposition}
The usual equivalence group of the class~$\bar{\mathcal P}$ is constituted by the point transformations of the form
\begin{gather*}
\tilde t=T(t),\quad\tilde x=X(x),\quad \tilde u=c\sqrt{|X_x|}\ u,\\
\tilde A=\frac{X_x^2}{T_t}A,\quad \tilde C=\frac1{T_t}\left( C-\left(\frac{X_{xxx}}{2X_x}-\frac34\left(\frac{X_{xx}}{X_x}\right)^2\right)A\right),
\end{gather*}
where
$T$ and $X$ are arbitrary smooth functions of their arguments with~$T_tX_x\ne0$, and $c$ is an arbitrary nonzero constant.
\end{proposition}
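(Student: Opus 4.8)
The plan is to read off the usual equivalence group from the equivalence groupoid $\mathcal G^\sim_{\bar{\mathcal P}}$ described in the preceding proposition. By definition, a usual equivalence transformation is an admissible transformation whose $(t,x,u)$-projection $\Phi$ is a single point transformation serving all values of the arbitrary-element tuple $\vartheta=(A,C)$ simultaneously. Hence I would require the parameter functions $T$, $X$, $U^1$ and $U^0$ in the groupoid description of $\Phi$ to be independent of $\vartheta$, treat $A$, $C$ and their derivatives as independent coordinates, and split, with respect to these coordinates, the two constraints that (together with the form~\eqref{eq:GenFormOfPointTransInEA}) characterize the admissible transformations of $\bar{\mathcal P}$: the classifying condition $U^1_x/U^1=X_{xx}/(2X_x)-\frac1{2A}\frac{X_t}{X_x}$ and the requirement that $U^0/U^1$ be a solution of $\bar{\mathcal P}_\vartheta$.

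Splitting the classifying condition is the first step. Since $T$, $X$ and $U^1$ do not involve $A$, its right-hand side is a sum of an $A$-free term and a term proportional to $1/A$; equating coefficients of $1/A$ forces $X_t=0$, that is, $X=X(x)$, and the remaining part reduces to $U^1_x/U^1=X_{xx}/(2X_x)$, which integrates in $x$ to $U^1=c\sqrt{|X_x|}$. I would then split the condition $v_t=Av_{xx}+Cv$ for $v:=U^0/U^1$ with respect to $A$ and $C$: the coefficient of $C$ yields $v=0$, whence $U^0=0$, while the coefficients of $A$ and of the $\vartheta$-free part are satisfied identically. Substituting $X=X(x)$, $U^0=0$ and $U^1=c\sqrt{|X_x|}$ into the groupoid relations $\Phi^*\tilde A=(X_x^2/T_t)A$ and $\Phi^*\tilde C=-(U^1/T_t)\mathrm P(1/U^1)$, with $\mathrm P=\p_t-A\p_{xx}-C$, gives the asserted $\tilde A$ at once and, after computing $\sqrt{|X_x|}\,\p_{xx}(|X_x|^{-1/2})=-\big(X_{xxx}/(2X_x)-\tfrac34(X_{xx}/X_x)^2\big)$, the asserted $\tilde C$. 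The converse inclusion is immediate, as every transformation of the listed form is $\vartheta$-independent and belongs to $\mathcal G^\sim_{\bar{\mathcal P}}$.

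The step I expect to demand the most care is the integration of $U^1_x/U^1=X_{xx}/(2X_x)$. Being an ordinary differential equation in $x$ alone, it determines $U^1$ only up to an $x$-independent factor, which is a priori a function of $t$ rather than a genuine constant. One must therefore track how this factor propagates into $\Phi^*\tilde C$ through the term $\p_t(1/U^1)$ of $\mathrm P(1/U^1)$, and confirm that it may be taken to be the constant $c$ recorded in the statement. The rest — the two splittings and the evaluation of the Schwarzian-type combination in $\tilde C$ — is routine bookkeeping once $A$, $C$ and their derivatives are consistently regarded as independent variables.
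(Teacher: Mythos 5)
Your strategy is exactly the one the paper indicates for this statement (the text says only that the group is obtained ``splitting the above classifying condition and the equation for $U^0/U^1$ with respect to~$\vartheta$''), and all of your splitting steps are carried out correctly: the coefficient of $1/A$ in the classifying condition gives $X_t=0$, the residual part gives $U^1_x/U^1=X_{xx}/(2X_x)$, the coefficient of $C$ in the equation for $v:=U^0/U^1$ forces $v=0$ and hence $U^0=0$, and the evaluation $\sqrt{|X_x|}\,\p_{xx}\big(|X_x|^{-1/2}\big)=-\big(X_{xxx}/(2X_x)-\tfrac34(X_{xx}/X_x)^2\big)$ is right.

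The gap is precisely the step you flag and then defer. Integrating $U^1_x/U^1=X_{xx}/(2X_x)$ gives $U^1=V(t)\sqrt{|X_x|}$ with $V$ an arbitrary nonvanishing function of~$t$, and nothing in the splitting forces $V$ to be constant. Tracking this factor through $\Phi^*\tilde C=-(U^1/T_t)\mathrm P(1/U^1)$ produces the additional summand $V_t/(VT_t)$ in $\tilde C$, and the resulting transformation is still a usual equivalence transformation of~$\bar{\mathcal P}$: for instance, $(t,x,u,A,C)\mapsto(t,x,\mathrm e^{\lambda t}u,A,C+\lambda)$ is a projectable, contact-compatible point transformation of the extended space that maps every equation of the class to another equation of the class, yet it is not of the form recorded in the statement. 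This is also consistent with Proposition~\ref{FP:prop:EquivalenceGroupHeatPotential}, where the equivalence group of the gauged subclass~$\mathcal P^\epsilon$ does contain the factor $V(t)$ together with the term $V_t/V$ in~$\tilde C$. So the confirmation you postpone cannot be carried out as hoped: either an argument excluding nonconstant $V$ must be supplied (I do not see one), or the target statement has to be read with $\tilde u=V(t)\sqrt{|X_x|}\,u$ and $\tilde C=\frac1{T_t}\bigl(C+\frac{V_t}{V}-(\dots)A\bigr)$. As written, your argument establishes that every listed transformation is an equivalence transformation and that every equivalence transformation satisfies $X_t=0$, $U^0=0$, $U^1=V(t)\sqrt{|X_x|}$; the final normalization of $V$ to a constant is the one claim it does not, and cannot, deliver.
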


Carrying out the further gauge~$A=\epsilon$ with fixed $\epsilon\in\{-1,1\}$
by the wide subset of~$\mathcal G^\sim_{\bar{\mathcal P}}$ singled out by the constraints
$T_t=\epsilon\sgn A$, $X_x=|A|^{-1/2}$, $U^0=0$ and $U^1_x/U^1=(AX_{xx}-X_t)/(2AX_x)$
with the source value of~$A$,
we obtain the subclass~$\mathcal P^\epsilon$.
One can easily switch between the classes~$\mathcal P^{-1}$ and~$\mathcal P^{+1}$ using the transformation $(t,C)\mapsto(-t,-C)$.
For convenience, we also use the alternative notation $\mathcal P:=\mathcal P^{+1}$.

\begin{proposition}\label{FP:pro:EquivalenceGroupHeatPotential}
The equivalence group~$G^\sim_{\mathcal P^\epsilon}$ of the class~$\mathcal P^\epsilon$
consists of the point transformations with components of the form
\begin{subequations}\label{FP:eq:EquivalenceGroupReducedP}
\begin{gather}
\tilde t=T(t),\quad\tilde x=X^1(t)x+X^0(t),\quad\tilde u=V(t)\exp\left(-\frac{\epsilon X^1_t}{4X^1}x^2-\frac{\epsilon X^0_t}{2X^1}x\right)u,
\label{FP:eq:EquivalenceGroupReducedPa}\\
\tilde C=\frac1{(X^1)^2}\left(C{+}\frac\epsilon4{X^1}\left(\frac1{X^1}\right)_{tt}x^2-\frac\epsilon2{X^1}\left(\frac{X^0_t}{(X^1)^2}\right)_tx
+\frac{V_t}{V}+\frac{X^1_t}{2X^1}+\frac\epsilon4\left(\frac{X^0_t}{X^1}\right)^2\right),
\label{FP:eq:EquivalenceGroupReducedPb}
\end{gather}
\end{subequations}
where~$X^0$, $X^1$ and~$V$ run through the set of smooth functions of~$t$ with~$X^1V\ne0$,
and $T_t=(X^1)^2$.
Moreover, the class~$\mathcal P^\epsilon$ is disjointedly semi-normalized with respect to the linear superposition of solutions.
\end{proposition}

Thus, normalization that is lost at the first stage of gauging is recovered at the second one.
If we reverse the order of the gauges, $A=\epsilon$ first and $B=0$ second
(both realized by wide subsets of the action groupoids of the corresponding equivalence groups),
normalization is preserved at each step.

The following lemma may be seen as a corollary of Proposition~\ref{FP:pro:EquivalenceGroupHeatPotential}
but it is also of interest per se.
Denote by \smash{$G^{\rm ess}_{\mathcal P^\epsilon_C}$} the essential subgroup
of the point symmetry group~\smash{$G_{\mathcal P^\epsilon_C}$} of the equation~$\mathcal P^\epsilon_C$,
which consists of the essential point symmetry transformations of~$\mathcal P^\epsilon_C$,%
\footnote{%
More specifically, for any equation~$\mathcal P^\epsilon_C\in\mathcal P^\epsilon$, its point symmetry transformations are of the form
\[
\tilde t=T(t),\quad\tilde x=X^1(t)x+X^0(t),\quad\tilde u=V(t)\exp\left(-\frac{\epsilon X^1_t}{4X^1}x^2-\frac{\epsilon X^0_t}{2X^1}x\right)u+U^0(t,x),
\]
where the parameter functions~$T$, $X$, $V$ and~$U^0$ satisfy a system of equations involving~$C$.
The group~$G^{\rm ess}_{\mathcal P^\epsilon_C}$ is the subgroup of~$G_{\mathcal P^\epsilon_C}$ associated with the constraint $U^0=0$.
}
\smash{$G^{\rm ess}_{\mathcal P^\epsilon_C}=G_{\mathcal P^\epsilon_C}\cap\pi_*G^\sim_{\mathcal P^\epsilon}$},
cf.\ \cite[Section~2]{PopovychKunzingerIvanova2008} and \cite[Section~3]{KurujyibwamiBasarabHorwathPopovych2018}.
Thus, $G_{\mathcal P^\epsilon_C}=G^{\rm ess}_{\mathcal P^\epsilon_C}\ltimes G^{\rm lin}_{\mathcal P^\epsilon_C}$,
where $G^{\rm lin}_{\mathcal P^\epsilon_C}$ is the (normal)~sub\-group of~$G_{\mathcal P^\epsilon_C}$
related to the linear superposition of solutions.

\begin{lemma}\label{FP:AuxProp}
If the projection~$\pi_*\mathscr T$ of an equivalence transformation~$\mathscr T$ of the class~$\mathcal P^\epsilon$
maps a nonzero solution~$f$ of an equation~$\mathcal P^\epsilon_C\in\mathcal P^\epsilon$ to a solution~$\tilde f$ thereof,
then this projection belongs to the group~$G^{\rm ess}_{\mathcal P^\epsilon_C}$.
\end{lemma}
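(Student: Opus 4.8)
The plan is to reduce the whole statement to the single identity $\tilde C=C$, where $\tilde C=\mathscr T_*C$ is the potential onto which the equivalence transformation~$\mathscr T$ sends~$C$. First I would record two facts about the projection $\Phi:=\pi_*\mathscr T$. By Proposition~\ref{FP:prop:EquivalenceGroupHeatPotential}, the $u$-component of any $\mathscr T\in G^\sim_{\mathcal P^\epsilon}$ is linear and \emph{homogeneous} in~$u$ (there is no additive term), so~$\Phi$ carries solutions of~$\mathcal P^\epsilon_C$ to solutions and, being the projection of an equivalence transformation, it maps the equation~$\mathcal P^\epsilon_C$ to~$\mathcal P^\epsilon_{\tilde C}$. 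In particular, the image~$\tilde f$ of the given nonzero solution~$f$ satisfies $\tilde f_t=\epsilon\tilde f_{xx}+\tilde C\tilde f$.

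By hypothesis~$\tilde f$ \emph{also} satisfies $\tilde f_t=\epsilon\tilde f_{xx}+C\tilde f$. Subtracting the two relations gives $(\tilde C-C)\tilde f=0$. Since~$f$ is nonzero and~$\Phi$ multiplies~$u$ by a nowhere-vanishing factor, $\tilde f$ is a nonzero solution as well, whence $\tilde C=C$ on the nonempty open set $\{\tilde f\ne0\}$. The remaining work is to upgrade this to the identity $\tilde C\equiv C$ on the whole domain.

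The main obstacle is precisely this last upgrade. Were $\tilde C\ne C$ at some point, continuity of the smooth potentials would force $\tilde f\equiv0$ on a neighbourhood, which must be excluded. I would exclude it by unique continuation for the equation~$\mathcal P^\epsilon_C$: a nonzero solution cannot vanish on an open set. For $\epsilon=-1$ the substitution $t\mapsto-t$ turns the backward heat operator into a forward one, after which the analyticity in~$x$ of nonzero solutions (equivalently, the strong unique continuation property) applies in both cases. This contradiction yields $\tilde C=C$ identically.

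Finally, $\tilde C=C$ means that~$\Phi=\pi_*\mathscr T$ maps~$\mathcal P^\epsilon_C$ to itself, i.e.\ $\Phi\in G_{\mathcal P^\epsilon_C}$. Being by construction the projection of an element of $G^\sim_{\mathcal P^\epsilon}$, it also lies in $\pi_*G^\sim_{\mathcal P^\epsilon}$; therefore $\Phi\in G_{\mathcal P^\epsilon_C}\cap\pi_*G^\sim_{\mathcal P^\epsilon}=G^{\rm ess}_{\mathcal P^\epsilon_C}$, which is the desired conclusion.
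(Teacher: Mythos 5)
Your proposal is correct and follows essentially the same route as the paper: both arguments hinge on the observation that $\tilde f$ solves both $\mathcal P^\epsilon_C$ and $\mathcal P^\epsilon_{\tilde C}$, so that a nonzero solution determines the potential and hence $\tilde C=C$, giving $\pi_*\mathscr T\in G_{\mathcal P^\epsilon_C}\cap\pi_*G^\sim_{\mathcal P^\epsilon}=G^{\rm ess}_{\mathcal P^\epsilon_C}$. The only difference is your unique-continuation step for handling the zero set of~$\tilde f$, which the paper renders unnecessary by its standing convention that ``nonzero'' means nowhere-vanishing on the domain (so $C=(\tilde f_t-\epsilon\tilde f_{xx})/\tilde f$ is determined everywhere directly).
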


\begin{proof}
As an element of the group~$G^\sim_{\mathcal P^\epsilon}$,
the transformation $\mathscr T$ is of the form~\eqref{FP:eq:EquivalenceGroupReducedP}
and, acting on equations from~$\mathcal P^\epsilon$, it maps the equation~$\mathcal P^\epsilon_C$
to the equation~$\mathcal P^\epsilon_{\tilde C}$ from the same class~$\mathcal P^\epsilon$.
The latter means that the point transformation $\pi_*\mathscr T$ of $(t,x,u)$
pushes forward each solution of~$\mathcal P^\epsilon_C$,
including~$f$, to a solution of~$\mathcal P^\epsilon_{\tilde C}$.
Therefore, the function~$\tilde f$ is a (nonzero) solution
of both the equations~$\mathcal P^\epsilon_C$ and~$\mathcal P^\epsilon_{\tilde C}$.
Any nonzero solution of an equation in the class~$\mathcal P^\epsilon$
completely defines the value of the arbitrary element~$C$ associated with this equation
and thus the equation itself.
Therefore, $C=\tilde C$ as functions, i.e.,
$C(t,x)=\tilde C(t,x)$ for any $(t,x)$ in the domain running by the independent variables.
In other words, the transformation~$\pi_*\mathscr T$ maps each solution of the equation~$\mathcal P^\epsilon_C$
to a solution of the same equation,
and this in turn means that $\pi_*\mathscr T\in G^{\rm ess}_{\mathcal P^\epsilon_C}$.
\end{proof}

The class~$\mathcal P'^\epsilon$ of reduced heat equations with time-independent potentials and fixed $\epsilon\in\{-1,1\}$
consists of the equations of the form
\[
u_t=\epsilon u_{xx}+C(x)u.
\]
This class can be obtained via gauging the arbitrary elements~$A$ and~$B$ in the class~$\mathcal E'_0$
to $A=\epsilon$ and $B=0$
by the wide subset~$\mathcal M_{\mathcal E'_0}$ of~\smash{$\mathcal G(G^\sim_{\mathcal E'_0})$}
singled out by the constraints $T_t=\epsilon\sgn A$, $X_x=|A|^{-1/2}$ and $2W_x=(B/A+X_{xx}/X_x)W$
with the source value of $(A,B)$.
Nevertheless, since the class~$\mathcal E'_0$ is not normalized in any sense,
the class~$\mathcal P^\epsilon$ is the most appropriate superclass for~$\mathcal P'^\epsilon$
in the course of studying transformational properties of~$\mathcal P'^\epsilon$.

\begin{corollary}\label{cor:EquivGroupOfP'eps}
The equivalence group~$G^\sim_{\mathcal P'^\epsilon}$ of the class~$\mathcal {\mathcal P'^\epsilon}$ of
reduced heat equations with time-indepen\-dent potentials consists of the transformations of the form
\begin{gather}\label{eq:EquivtransOfP'eps}
\tilde t=c_1^2t+c_2,\quad\tilde x=c_1x+c_3,\quad\tilde u=c_4{\rm e}^{c_5t}u,\quad\tilde C=\frac{1}{c_1^2}\left(C+c_5\right),
\end{gather}
where~$c_1$, \dots, $c_5$ are arbitrary constants with~$c_1c_4\ne0$.
\end{corollary}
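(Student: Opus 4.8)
The plan is to derive $G^\sim_{\mathcal P'^\epsilon}$ by restricting the already-known equivalence group $G^\sim_{\mathcal P^\epsilon}$ of the superclass $\mathcal P^\epsilon$, exactly as in the proof of Proposition~\ref{FP:pro:EquivGroupOfE'AndE'_0}. The class $\mathcal P'^\epsilon$ is singled out from $\mathcal P^\epsilon$ by the single constraint $C_t=0$, and the first step is to justify that $G^\sim_{\mathcal P'^\epsilon}$ coincides with the subgroup of $G^\sim_{\mathcal P^\epsilon}$ whose elements preserve this constraint, i.e.\ map time-independent potentials to time-independent ones. By Proposition~\ref{FP:prop:EquivalenceGroupHeatPotential} the class $\mathcal P^\epsilon$ is disjointedly semi-normalized with respect to the linear superposition of solutions, and the equations of $\mathcal P'^\epsilon$ have no nonzero common solution, since a nonzero $f$ satisfying $u_t=\epsilon u_{xx}+C(x)u$ for every $C$ would fix the value of $C$ at each point where $f\ne0$. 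Hence the superposition part of the semi-normalization contributes nothing uniform across the subclass, and every element of $G^\sim_{\mathcal P'^\epsilon}$ is the restriction of an element of $G^\sim_{\mathcal P^\epsilon}$ that preserves $C_t=0$.

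The second step is the restriction computation. I would take a generic element of $G^\sim_{\mathcal P^\epsilon}$ in the form~\eqref{FP:eq:EquivalenceGroupReducedP}, with $T_t=(X^1)^2$, and impose $\tilde C_{\tilde t}=0$ while assuming $C_t=0$. Mirroring Proposition~\ref{FP:pro:EquivGroupOfE'AndE'_0}, I would apply to the expression~\eqref{FP:eq:EquivalenceGroupReducedPb} for $\tilde C$ the operator $\mathrm D_{\tilde t}=(T_t)^{-1}\mathrm D_t-(X^1_tx+X^0_t)(T_tX^1)^{-1}\mathrm D_x$, which is $\mathrm D_t$ expressed in the transformed variables, and then split the resulting identity with respect to $C$, $C_x$ and the powers of $x$, treating $C$ and its $x$-derivatives as independent arbitrary-element coordinates subject only to $C_t=0$.

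The splitting is short. The coefficient of $C$ in $\mathrm D_{\tilde t}\tilde C$ is proportional to $X^1_t$, giving $X^1_t=0$, so $X^1=c_1$ is constant and $T_t=(X^1)^2=c_1^2$ yields $T=c_1^2t+c_2$; the coefficient of $C_x$ then forces $X^0_t=0$, so $X^0=c_3$ is constant. With $X^1$ and $X^0$ constant, the $x^2$- and $x$-coefficients in the bracket of~\eqref{FP:eq:EquivalenceGroupReducedPb} vanish and the bracket collapses to $C+V_t/V$, so $\tilde C=c_1^{-2}(C+V_t/V)$; the condition $\tilde C_{\tilde t}=0$ now reduces to $(V_t/V)_t=0$, whence $V_t/V=c_5$ is constant and $V=c_4{\rm e}^{c_5t}$ with $c_4\ne0$ from $V\ne0$. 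Substituting these values back into~\eqref{FP:eq:EquivalenceGroupReducedP}, where the Gaussian factor in~$\tilde u$ disappears because $X^1_t=X^0_t=0$, produces precisely the transformations~\eqref{eq:EquivtransOfP'eps} together with $\tilde C=c_1^{-2}(C+c_5)$.

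The routine part is this final splitting, which is immediate once the setup is in place. The only genuinely delicate point is the first step, namely establishing that $G^\sim_{\mathcal P'^\epsilon}$ is the stabilizer of $C_t=0$ inside $G^\sim_{\mathcal P^\epsilon}$ rather than a strictly larger group; this is exactly where the disjoint semi-normalization of $\mathcal P^\epsilon$ and the absence of common solutions of $\mathcal P'^\epsilon$ are needed, and it is the same mechanism underlying Proposition~\ref{FP:pro:EquivGroupOfE'AndE'_0}, so I would dispatch it by direct reference to that argument rather than reproving it.
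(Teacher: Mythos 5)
Your proposal is correct and follows essentially the same route as the paper: identifying $G^\sim_{\mathcal P'^\epsilon}$ with the stabilizer of the constraint $C_t=0$ inside $G^\sim_{\mathcal P^\epsilon}$ via the disjoint semi-normalization of $\mathcal P^\epsilon$, then acting on~\eqref{FP:eq:EquivalenceGroupReducedPb} with $\mathrm D_{\tilde t}$ and splitting with respect to $C$ and $C_x$ to obtain $X^1_t=X^0_t=(V_t/V)_t=0$. This is exactly the paper's argument, which likewise proceeds by mirroring the proof of Proposition~\ref{FP:pro:EquivGroupOfE'AndE'_0}.
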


\begin{proof}
We follow the proof of Proposition~\ref{FP:pro:EquivGroupOfE'AndE'_0}
and use the notation $\mathrm D_t$, $\mathrm D_x$ and~$\mathrm D_{\tilde t}$ of this proof.
As the class~$\mathcal P^\epsilon$ is disjointedly semi-normalized with respect to the linear superposition of solutions,
the group~$G^\sim_{\mathcal P'^\epsilon}$ consists of the elements of~$G^\sim_{\mathcal P^\epsilon}$
that preserve the constraint $C_t=0$.
We act on the relation~\eqref{FP:eq:EquivalenceGroupReducedPb}
by the operator $\mathrm D_{\tilde t}=(X^1)^{-2}\mathrm D_t-(X^1)^{-3}(X^1_tx+X^0_t)\mathrm D_x$
and take into account that $C_t=0$ and $\tilde C_{\tilde t}=0$.
Splitting the derived equation with respect to~$C$ and~$C_x$,
we yield the system $X^0_t=X^1_t=(V_t/V)_t=0$.
\end{proof}

The rest of the section is devoted to the description of the equivalence groupoid~$\mathcal G^\sim_{\mathcal P'}$
of the class $\mathcal P':=\mathcal P'^{+1}$.
The analogous result for $\epsilon=-1$ is easily derived from this description by the transformation $(t,C)\mapsto(-t,-C)$.

Since the class~$\mathcal P'$ consists of linear homogeneous differential equations,
it is natural to classify the admissible transformations of this class
up to the $\mathcal G^{G^\sim_{\mathcal P'}}\ltimes\mathcal G^{\rm lin}_{\mathcal P'}$-equivalence
instead of the standard $G^\sim_{\mathcal P'}$-equivalence,
i.e., the linear superposition of solutions should additionally be taken into account.
Here $\mathcal G^{\rm lin}_{\mathcal P'}$ is the normal subgroupoid
of both~$\mathcal G^\sim_{\mathcal P'}$ and~$\mathcal G^{\rm f}_{\mathcal P'}$
that is associated with this superposition.
In other words, it consists of all the triples of the form $(C,\Phi,C)$,
where $C$ is an arbitrary smooth function of~$x$ and the point transformation~$\Phi$ is of the form
$\tilde t=t$, $\tilde x=x$, $\tilde u=u+f(t,x)$ with an arbitrary solution~$f(t,x)$ of the equation~$\mathcal P'_C$.
The symbol `$\ltimes$' denotes the semidirect $\star$-product.
Let $\mathcal G^{\rm ess}_{\mathcal P'}$ be the subgroupoid of~$\mathcal G^\sim_{\mathcal P'}$ that consists of triples
with $u$-components of the transformational parts being of the form $\tilde u=U(t,x)u$ for some nonzero functions~$U$ of~$(t,x)$.
The groupoid~$\mathcal G^\sim_{\mathcal P'}$ can be represented
as the semidirect $\star$-product of~$\mathcal G^{\rm ess}_{\mathcal P'}$ and~$\mathcal G^{\rm lin}_{\mathcal P'}$,
$\mathcal G^\sim_{\mathcal P'}=\mathcal G^{\rm ess}_{\mathcal P'}\ltimes\mathcal G^{\rm lin}_{\mathcal P'}$.
Moreover, the intersection~$\mathcal G^{\rm ess}_{\mathcal P'}$ and~$\mathcal G^{\rm lin}_{\mathcal P'}$ is trivial
since it coincides with the base groupoid of the class~$\mathcal P'$.
Hence for any $\mathcal T\in\mathcal G^\sim_{\mathcal P'}$, there exist unique
$\mathcal T_0\in\mathcal G^{\rm ess}_{\mathcal P'}$ and $\mathcal T_1,\mathcal T_2\in\mathcal G^{\rm lin}_{\mathcal P'}$
such that $\mathcal T=\mathcal T_0\star\mathcal T_1=\mathcal T_2\star\mathcal T_0$.
Thus, the classification of the admissible transformations of the class~$\mathcal P'$
up to the \smash{$\mathcal G^{G^\sim_{\mathcal P'}}\ltimes\mathcal G^{\rm lin}_{\mathcal P'}$}-equivalence
reduces to the classification of the elements of~$\mathcal G^{\rm ess}_{\mathcal P'}$
up to the $G^\sim_{\mathcal P'}$-equivalence.

\begin{theorem}\label{FP:thm:EquivGroupoidHeatStationary}
A minimal self-consistent generating (up to the $G^\sim_{\mathcal P'}$-equivalence and the linear superposition of solutions)
set~$\mathcal B$ of admissible transformations for the class~$\mathcal P'$
is the union of the following families of admissible transformations $(C,\Phi,\tilde C)$ (below $\mu\in\mathbb R$):
\begin{gather*}
\mathcal T_{1\mu}:=\left(\dfrac\mu{x^2}+x^2,\Phi_1,\dfrac\mu{\tilde x^2}\right), \quad
\Phi_1\colon\ \tilde t=\dfrac12\tan 2t,\ \tilde x=\dfrac x{\cos 2t},\ \tilde u=|\cos2t|^{1/2}{\rm e}^{-\tan(2t)x^2/2}u;\\[1.5ex]
\mathcal T_{2\mu}:=\left(\dfrac\mu{x^2}-x^2,\Phi_2,\dfrac\mu{\tilde x^2}\right), \quad
\Phi_2\colon\ \tilde t=\dfrac14{\rm e}^{4t},\ \tilde x={\rm e}^{2t}x,\ \tilde u={\rm e}^{-x^2/2-t}u;\\[1.5ex]
\mathcal T_3:=\left(x,\Phi_3,0\right), \quad \Phi_3\colon\ \tilde t=t,\ \tilde x=x-t^2,\ \tilde u={\rm e}^{tx-t^3/3}u.
\end{gather*}
\end{theorem}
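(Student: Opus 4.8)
The plan is to prove the theorem by classifying the essential subgroupoid $\mathcal G^{\rm ess}_{\mathcal P'}$ up to the $G^\sim_{\mathcal P'}$-equivalence, which, as explained immediately before the statement, is exactly what yields a generating set modulo the $\mathcal G^{G^\sim_{\mathcal P'}}\ltimes\mathcal G^{\rm lin}_{\mathcal P'}$-equivalence. First I would regard an arbitrary element $(C,\Phi,\tilde C)$ of $\mathcal G^{\rm ess}_{\mathcal P'}$ as an essential admissible transformation of the superclass $\mathcal P=\mathcal P^{+1}$. Since by Proposition~\ref{FP:prop:EquivalenceGroupHeatPotential} the class $\mathcal P^\epsilon$ is disjointedly semi-normalized with respect to the linear superposition of solutions, its essential admissible transformations coincide with the projections of its equivalence transformations; hence $\Phi$ must be of the explicit form~\eqref{FP:eq:EquivalenceGroupReducedPa} and $\tilde C$ is given by~\eqref{FP:eq:EquivalenceGroupReducedPb}, parameterized by functions $T,X^0,X^1,V$ of $t$ with $T_t=(X^1)^2$ and $X^1V\ne0$. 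The whole problem thus becomes one of determining, among these transformations, those whose source and target potentials are both time-independent.

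Next I would impose the defining constraints of $\mathcal P'$, namely $C_t=0$ together with $\tilde C_{\tilde t}=0$. Substituting $x=(\tilde x-X^0)/X^1$ into~\eqref{FP:eq:EquivalenceGroupReducedPb} and differentiating in $t$ at fixed $\tilde x$, I expect a single separated classifying relation of the shape
\[
\frac{X^1_t}{X^1}\bigl(2C+xC'\bigr)+\frac{X^0_t}{X^1}\,C'=a(t)\,x^2+b(t)\,x+c(t),
\]
where $a,b,c$ are explicit differential expressions in the parameter functions. The left-hand side is bilinear in the $t$-data $(X^1_t/X^1,\,X^0_t/X^1)$ and the $x$-functions $(2C+xC',\,C')$, so the analysis reduces to separating variables in this one equation and simultaneously integrating the resulting ODEs for $X^0,X^1,V,T$.

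The core step is solving this relation by cases. When $X^1_t=X^0_t=0$ the right-hand side must vanish and one recovers precisely the transformations of $G^\sim_{\mathcal P'}$ from Corollary~\ref{cor:EquivGroupOfP'eps}, which are factored out. If $X^1_t\equiv0$ but $X^0_t\not\equiv0$, the relation forces $C$ to be at most linear after discarding the oscillator subcase; normalizing by $G^\sim_{\mathcal P'}$ pins the source to $C=x$ with target $C=0$, realized by $\mathcal T_3$. If $X^1_t\not\equiv0$, then $X^0_t/X^1_t$ is forced to be constant (otherwise the two $t$-coefficients are independent and $C$ degenerates to a pure quadratic, already covered by $\mathcal T_{1\mu},\mathcal T_{2\mu}$ with $\mu=0$ and compositions); integrating the first-order ODE $(x-x_0)C'+2C=(\text{quadratic})$ then gives $C=\mu(x-x_0)^{-2}+(\text{quadratic})$, and the key relation $b=2x_0a$ coming from the explicit $a,b$ makes the quadratic part centered at the pole, so its linear term drops after the shift $x_0\to0$. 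Splitting by the sign of the $x^2$-coefficient $\omega^2$ and rescaling to $\omega^2=\pm1$ (or $0$) yields the three canonical sources $\mu/x^2+x^2$, $\mu/x^2-x^2$ and $\mu/x^2$ with common target $\mu/\tilde x^2$, the subcase $\mu=0$ giving the pure-oscillator degeneration to the free equation. Integrating the accompanying ODEs with $T_t=(X^1)^2$ in each branch then produces the trigonometric and exponential time reparameterizations, and a direct substitution into~\eqref{FP:eq:EquivalenceGroupReducedP} confirms $\Phi_1,\Phi_2,\Phi_3$.

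Finally I would settle minimality and self-consistency. The parameter $\mu$ is a $G^\sim_{\mathcal P'}$-invariant of each canonical potential, since the action $\tilde C=(C+c_5)/c_1^2$, $\tilde x=c_1x+c_3$ preserves the form $\mu/x^2$ only for $c_3=c_5=0$ and then leaves $\mu$ fixed; moreover the three potential types are pairwise $G^\sim_{\mathcal P'}$-inequivalent. Hence no family is a composition of the others modulo $G^\sim_{\mathcal P'}$, giving minimality. Self-consistency follows from the rigidity supplied by Lemma~\ref{FP:AuxProp}: a nonzero solution determines the potential, so whenever two elements of $\mathcal B\cup\hat{\mathcal B}$ are composable up to the $G^\sim_{\mathcal P'}$-equivalence their matching objects already coincide strictly (the shared inverse-square targets have equal $\mu$), and no genuine composition is lost. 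The main obstacle is precisely the case analysis of the classifying relation—cleanly disposing of the degenerate and mixed subcases (showing they yield only potentials already reachable by compositions) and verifying the pole/oscillator-centering identity so that no linear term survives—while simultaneously integrating the parameter ODEs in each branch to recover the exact forms of $\Phi_1,\Phi_2,\Phi_3$ without duplicating or omitting cases.
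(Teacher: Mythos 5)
Your overall strategy coincides with the paper's: use the disjoint semi-normalization of $\mathcal P$ to reduce everything to essential transformations of the form~\eqref{FP:eq:EquivalenceGroupReducedPa}, differentiate the potential relation~\eqref{FP:eq:EquivalenceGroupReducedPb} under $C_t=\tilde C_{\tilde t}=0$ to obtain a classifying equation, and case-split on the $t$-dependence of $(X^1,X^0)$. The paper organizes this case split as furcate splitting over the template equation $(a_1x+a_2)C_x+2a_1C+a_3x^2+a_4x+a_5=0$ with the count $k\in\{0,1,2\}$, but the substance is the same and you reach the same canonical potentials $\mu/x^2\pm x^2$, $\mu/x^2$, and the quadratics.

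There is, however, a genuine gap in the generation argument. Your analysis correctly detects that $C=\mu/x^2$ is itself a source of admissible transformations with target $\mu/\tilde x^2$ that do not come from $G^\sim_{\mathcal P'}$ — these are the essential point symmetries of $\mathcal P'_{\mu/x^2}$ of the form~\eqref{eq:SymGroupPmux2} with $\gamma\ne0$ (and, for $\mu=0$, also the Galilean boosts of the free heat equation). But the proposed set $\mathcal B=\{\mathcal T_{1\mu},\mathcal T_{2\mu},\mathcal T_3\}$ contains no element whose source and target are both $\mu/x^2$, so to conclude that $\mathcal B$ is \emph{generating} you must exhibit every such vertex-group element as a composition of elements of $\mathcal B\cup\hat{\mathcal B}$ with equivalence transformations and linear superposition. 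Classifying source--target pairs does not suffice: for a fixed pair there are many admissible transformations differing by point symmetries of the source, and the stabilizer of $\mu/x^2$ in $G^\sim_{\mathcal P'}$ (only scalings and time shifts) is strictly smaller than $G^{\rm ess}_{\mathcal P'_{\mu/x^2}}$. The paper closes this by reducing every vertex-group element modulo $G^\sim_{\mathcal P'}$ to the inversion $\mathcal T_{4\mu}$ and then decomposing $\mathcal T_{4\mu}=\check{\mathcal T}_{0\mu}\star\mathcal T_{1\mu}\star\mathcal T_{0\mu}\star\mathcal T_{1\mu}{}^{-1}$ with $\mathcal T_{0\mu},\check{\mathcal T}_{0\mu}\in\mathcal G^{G^\sim_{\mathcal P'}}$. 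Your proof needs this (or an equivalent) step; as written, it establishes which potentials admit extra admissible transformations and connects the inequivalent canonical forms, but does not prove that $\mathcal B$ generates the whole of $\mathcal G^\sim_{\mathcal P'}$. The minimality and self-consistency arguments you give (invariance of $\mu$, pairwise inequivalence of the three source types) are fine and match what the paper treats as obvious.
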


\begin{proof}
Since the class~$\mathcal P'$ is a subclass of the class~$\mathcal P$,
the equivalence groupoid of~$\mathcal P'$ is a subgroupoid of the equivalence groupoid of~$\mathcal P$.
The class~$\mathcal P$ is disjointedly semi-normalized with respect to the linear superposition of solutions.
Hence the classification of the admissible transformations of~$\mathcal P'$
up to the $G^\sim_{\mathcal P'}$-equivalence and the linear superposition of solutions
reduces to finding, modulo the $G^\sim_{\mathcal P'}$-equivalence,
the point transformations of the form~\eqref{FP:eq:EquivalenceGroupReducedPa} between equations in~$\mathcal P'$,
i.e., transformations preserving the independence of the arbitrary element~$C$ on~$t$.
In other words, the relation between (time-independent) source and target arbitrary elements~$C$ and~$\tilde C$
is given by the equality~\eqref{FP:eq:EquivalenceGroupReducedPb} that is interpreted as a relation between functions,
i.e., $\tilde C$ should be substituted by~$\Phi^*\tilde C$.
We act on this relation by the operator $-(X^1)^2\p_{\tilde t}=-\p_t+(X^1)^{-1}(X^1_tx+X^0_t)\p_x$ taking into
account the aforementioned time-independence of the arbitrary elements.
This amounts to the relation
\begin{gather}\label{FP:eq:EquivGroupoidPoten}
\begin{split}
&\frac{X^1_tx+X^0_t}{(X^1)^3}C_x+\frac{2X^1_t}{(X^1)^3}C
-\frac{(X^1)^2}4\left(\frac1{(X^1)^3}\left(\frac1{X^1}\right)_{\!tt\,}\right)_{\!t}x^2
+\left(\frac{X^0_t}{2(X^1)^3}\right)_{\!tt}x
\\[1ex]&\qquad
-\left(\frac1{(X^1)^2}\frac{V_t}{V}\right)_{\!t}+\left(\frac1{4(X^1)^2}\right)_{\!tt}-\left(\frac{(X^0_t)^2}{2(X^1)^4}\right)_{\!t}=0.
\end{split}
\end{gather}
To analyze solutions of the equation~\eqref{FP:eq:EquivGroupoidPoten},
we employ the extension of the method of furcate splitting~\cite{BihloPopovych2020,NikitinPopovych2001,OpanasenkoBoykoPopovych2020}
to classifying admissible transformations~\cite{OpanasenkoBihloPopovych2017,VaneevaPopovychSophocleous2009}.
It allows us to find precisely the $G^\sim_{\mathcal P'}$-equivalent values of the arbitrary element~$C$
that are sources of admissible transformations in~$\mathcal P'$ not generated by the equivalence group~$G^\sim_{\mathcal P'}$.
(The exact form of the corresponding point transformations is to be computed directly thereafter.)
The \emph{template form} of equations for such~$C$ is
\[
(a_1x+a_2)C_x+2a_1C+a_3x^2+a_4x+a_5=0
\]
for some constants~$a$'s.
The complete system of template-form equations for a specific value of the arbitrary element~$C$
is obtained by varying admissible transformations with the source~$C$ and fixing values of~$t$ in~\eqref{FP:eq:EquivGroupoidPoten}.
Denote by~$k$ the number of linearly independent tuples~$(a_1,a_2,a_3,a_4,a_5)$ associated with these equations.
It is obvious that any system of template-form equations with $k>2$ has no solutions.
Consider the possible values $k\in\{0,1,2\}$ separately.

\medskip\par\noindent
$\boldsymbol{k=0.}$
Each value of the tuple of parameter functions~$X^0$, $X^1$ and~$V$ that leads only to the identity template-form equation
corresponds to a wide family of admissible transformations with the same transformational parts.
Any such family is contained in the action groupoid of the equivalence group~$G^\sim_{\mathcal P'}$.

\medskip\par\noindent
$\boldsymbol{k=2.}$ Up to linearly recombining two fixed independent template-form equations,
we can assume without loss of generality that they have the form
\begin{gather*}
xC_x+2C+a_3x^2+a_4x+a_5=0,\quad
C_x+a'_3x^2+a'_4x+a'_5=0.
\end{gather*}
Integrating the second equation we see that $C(x)$ is at most a cubic polynomial,
but the first equation implies that the coefficient of~$x^3$ in~$C(x)$ vanishes.
Thus, $C(x)$ is at most a quadratic polynomial in~$x$,
and thus up to the $G^\sim_{\mathcal P'}$-equivalence it belongs to $\{-x^2,x^2,x,0\}$.
Moreover, $k=2$ for each such~$C$.

\medskip\par\noindent
$\boldsymbol{k=1.}$ Let~$(a_1,a_2,a_3,a_4,a_5)$ be the corresponding (nonzero) coefficient tuple.
For admissible transformations in $\mathcal G^\sim_{\mathcal P'}\setminus(\mathcal G^{G^\sim_{\mathcal P'}}\ltimes \mathcal G^{\rm lin})$,
the coefficient tuple of $(xC_x,C_x,x^2,x^1,x^0)$ in the equation~\eqref{FP:eq:EquivGroupoidPoten}
is proportional to the tuple $(a_1,a_2,a_3,a_4,a_5)$ with a nonzero multiplier~$\lambda(t)$.

If $a_1=0$, then $a_2\ne0$ and $X^1_t=0$. Hence $a_3=0$ as well,
and without loss of generality we can set $a_2=1$. The template-form equation assumes the form
\[
C_x+a_4x+a_5=0,
\]
meaning that $C(x)$ is at most a quadratic polynomial and should be excluded from consideration here
since $k=2$ for any such~$C$.

If $a_1\neq0$, then we can set $a_1=1$ without loss of generality and $a_2=0$ ($\!{}\bmod G^\sim_{\mathcal P'}$).
Therefore $X^0_t=0$, which implies $a_4=0$. The template-form equation takes the form
\[
xC_x+2C+a_3x^2+a_5=0,
\]
which has the general solution $C(x)=\mu/x^2-a_3x^2/4-a_5/2$, where $\mu\ne0$ in view of $k=1$,
and therefore $C(x)=\mu/{x^2}\pm x^2$ up to the~$G^\sim_{\mathcal P'}$-equivalence.

\medskip\par

The heat equations with the potentials~$\mu/x^2\pm x^2$, $\mu\in\mathbb R$, and $x$ are related via point transformations
to the heat equations with the canonical potentials $\mu/x^2$ and~0, respectively.
The heat equations with potentials~$\mu/x^2$ and~$\tilde\mu/x^2$, where $\mu\ne\tilde\mu$,
are not similar to each other with respect to point transformations.%
\footnote{
Indeed, it follows immediately from Proposition~\ref{FP:pro:EquivalenceGroupHeatPotential} that
if $\mathcal P_{\mu/x^2}\sim\mathcal P_{\mu'/x^2}$ $(\!{}\bmod G^\sim_{\mathcal P})$, then $\mu=\mu'$.
}
Therefore, a generating (up to the $G^\sim_{\mathcal P'}$-equivalence and the linear superposition of solutions)
set of admissible transformations for the class~$\mathcal P'$ consists of
$\mathcal T_{1\mu}$, $\mathcal T_{2\mu}$, $\mu\in\mathbb R$, $\mathcal T_3$
and $G^\sim_{\mathcal P'}$-inequivalent essential point symmetries of the heat equations
with canonical potentials~$\mu/x^2$, $\mu\in\mathbb R$.
The essential point symmetry groups of the equations~$\mathcal P_{\mu/x^2}$, $\mu\neq0$, and~$\mathcal P_0$ are
constituted by the point transformations of the form
\begin{gather}\label{eq:SymGroupPmux2}
\tilde t=\frac{\alpha t+\beta}{\gamma t+\delta},\quad \tilde x=\frac{\Lambda x}{\gamma t+\delta},\quad
\tilde u=\sigma \sqrt{|\gamma t+\delta|}\,u\exp{\frac{\gamma x^2}{4(\gamma t+\delta)}}
\end{gather}
and
\begin{gather*}
\tilde t=\frac{\alpha t+\beta}{\gamma t+\delta},\quad \tilde x=\frac{\Lambda x+\kappa t+\nu}{\gamma t+\delta},\quad
\tilde u=\sigma\sqrt{|\gamma t+\delta|}\,u\exp{\frac{\gamma\Lambda^2x^2-2\Lambda\Gamma x+\Gamma(\kappa t+\nu)}{4\Lambda^2(\gamma t+\delta)}},
\end{gather*}
respectively, where $\alpha$, $\beta$, $\gamma$, $\delta$, $\kappa$, $\nu$ and $\sigma$ are arbitrary constants
with $\alpha\delta-\beta\gamma>0$ and $\sigma\neq0$,
and the respective tuples $(\alpha,\beta,\gamma,\delta)$ and $(\alpha,\beta,\gamma,\delta,\kappa,\nu)$ are defined up to nonzero multipliers,
$\Lambda :=\pm\sqrt{\alpha\delta-\beta\gamma}$, $\Gamma:=\kappa\delta-\nu\gamma$.
Each element of the vertex group at~$C(t,x)=\mu/x^2$ with $\mu\in\mathbb R$ is $G^\sim_{\mathcal P'}$-equivalent to
either the unit at~$C$ or the triple
\[
\mathcal T_{4\mu}:=\left(\dfrac\mu{x^2},\Phi_4,\dfrac{\mu}{\tilde x^2}\right), \quad
\Phi_4\colon\ \tilde t=-\dfrac1t,\ \tilde x=\dfrac xt,\ \tilde u=\sqrt{|t|}\,{\rm e}^{-x^2/(4t)}u.
\]
In particular, any transformation of the form~\eqref{eq:SymGroupPmux2} with $\gamma\ne0$
is the successive composition of a scale transformation, a shift with respect to~$t$, the transformation~$\Phi_4$ and one more shift with respect to~$t$,
whereas any Galilean boost is the successive composition of the transformation~$\Phi_4$ 
with a shift with respect to~$x$ and with one more copy of~$\Phi_4$.
At the same time, the admissible transformation~$\mathcal T_{4\mu}$ itself can be decomposed as
$\mathcal T_{4\mu}=\check{\mathcal T}_{0\mu}\star\mathcal T_{1\mu}\star\mathcal T_{0\mu}\star\mathcal T_{1\mu}{}^{-1}$,
where the admissible transformations
\begin{gather*}
\mathcal T_{0\mu}:=\left(\dfrac\mu{x^2}+x^2,\Phi_0,\dfrac{\mu}{\tilde x^2}+\tilde x^2\right), \quad
\Phi_0\colon\ \tilde t=t-\frac\pi4,\ \tilde x=x,\ \tilde u=u,
\\
\check{\mathcal T}_{0\mu}:=\left(\dfrac\mu{x^2},\check\Phi_0,\dfrac{\mu}{\tilde x^2}\right), \quad
\check\Phi_0\colon\ \tilde t=4t,\ \tilde x=2x,\ \tilde u=\frac{u}{\sqrt2}
\end{gather*}
from the vertex groups at $C(t,x)=\mu/x^2+x^2$ and at $C(t,x)=\mu/x^2$, respectively, belong to~\smash{$\mathcal G^{G^\sim_{\mathcal P'}}$} as well.

The minimality and the self-consistency of the set $\mathcal B=\{\mathcal T_{1\mu},\mathcal T_{2\mu},\mathcal T_3\mid\mu\in\mathbb R\}$
is obvious.
\end{proof}

\begin{remark}
Recall that point transformations are interpreted as local diffeomorphisms.
Thus, the transformation~$\Phi_1$ becomes invertible after restricting it on an interval $\big(\pi k/2,\pi(k+1)/2\big)$ with $k\in\mathbb Z$.
The range of the $t$-component of~$\Phi_2$ is $(0,+\infty)$.
To get, e.g., the range $(-\infty,0)$, we should take the composition $\mathcal T_{4\mu}\star\mathcal T_{2\mu}$,
and to get other ranges, we should compose $\mathcal T_{2\mu}$ with other elements of the vertex group at $C(t,x)=\mu/x^2$.
\end{remark}

In view of the self-consistency of the generating set~$\mathcal B$
up to the $\mathcal G^{G^\sim_{\mathcal P'}}\ltimes \mathcal G^{\rm lin}$-equivalence,
any admissible transformation~$\mathcal T$ between equations from the $\mathcal G^\sim_{\mathcal P'}$-orbit of~\smash{$\mathcal P'_{\mu/x^2}$}
can be presented as the composition
\[
\mathcal T=\breve{\mathcal T}\star\mathcal T^{\varepsilon_1}\star\mathcal T_3^{\varepsilon_2}
\star\mathcal T'\star\mathcal T_3^{-\varepsilon_3}\star\bar{\mathcal T}^{-\varepsilon_4}\star\check{\mathcal T}\star\mathcal T_0,
\]
where $\varepsilon_1,\dots,\varepsilon_4\in\{0,1\}$,
$\breve{\mathcal T},\check{\mathcal T}\in\mathcal G^{G^\sim_{\mathcal P'}}$,
$\mathcal T,\bar{\mathcal T}\in\{\mathcal T_{1\mu},\mathcal T_{2\mu}\}$,
$\mathcal T'\in\mathcal G_{\mu/x^2}$,
$\mathcal T_0\in\mathcal G^{\rm lin}_{\mathcal P'}$,
and each two successive admissible transformations in the decomposition are composable,
in particular, $\varepsilon_1\varepsilon_2=\varepsilon_3\varepsilon_4=0$ and, if $\mu\ne0$, $\varepsilon_2=\varepsilon_3=0$.
Furthermore, either $\mathcal T'\in\mathcal G_{\mu/x^2}\cap\mathcal G^{G^\sim_{\mathcal P'}}$
or $\mathcal T'=\mathcal T'_1\star\mathcal T_{4\mu}\star\mathcal T'_2$
with \smash{$\mathcal T'_1,\mathcal T'_2\in\mathcal G_{\mu/x^2}\cap\mathcal G^{G^\sim_{\mathcal P'}}$}, and
$\mathcal T_{4\mu}=\check{\mathcal T}_{0\mu}\star\mathcal T_{1\mu}\star\mathcal T_{0\mu}\star\mathcal T_{1\mu}{}^{-1}$.

If the source of $\mathcal T\in\mathcal G^\sim_{\mathcal P'}$ belongs to the complement
of the union of the $\mathcal G^\sim_{\mathcal P'}$-orbits of~\smash{$\mathcal P'_{\mu/x^2}$}, $\mu\in\mathbb R$, in~$\mathcal P'$,
then $\mathcal T=\check{\mathcal T}\star\mathcal T_0$, where
\smash{$\check{\mathcal T}\in\mathcal G^{G^\sim_{\mathcal P'}}$} and
$\mathcal T_0\in\mathcal G^{\rm lin}_{\mathcal P'}$.

Theorem~\ref{FP:thm:EquivGroupoidHeatStationary} combined with Theorem~\ref{FP:thm:GroupClassificationOfTimeIndepPotentials} below
implies the following assertion.

\begin{corollary}\label{FP:prop:EquivalenceGroupoidTIPoten}
A point transformation that is, up to the linear superposition of solutions,
not the projection of a transformation from the equivalence group~$G^\sim_{\mathcal P'}$ to the space of independent and dependent variables
connects two similar equations from the class~$\mathcal P'$
if and only if their essential Lie invariance algebras
strictly contain the (essential) kernel algebra $\mathfrak g^{\cap\rm ess}_{\mathcal P'}=\mathfrak g^\cap_{\mathcal P'}$ of the class~$\mathcal P'$.
\end{corollary}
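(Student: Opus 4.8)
The plan is to match the two descriptions already available: the list of potentials that carry additional admissible transformations, read off from Theorem~\ref{FP:thm:EquivGroupoidHeatStationary}, and the list of potentials admitting Lie-symmetry extensions, supplied by Theorem~\ref{FP:thm:GroupClassificationOfTimeIndepPotentials}. First I would note that, by Corollary~\ref{cor:EquivGroupOfP'eps} together with the time-independence of~$C$, the essential kernel algebra is $\mathfrak g^{\cap\rm ess}_{\mathcal P'}=\langle\p_t,u\p_u\rangle$, and that it coincides with $\mathfrak g^\cap_{\mathcal P'}$ precisely because the equations $\mathcal P'_C$ have no nonzero solution common to all values of~$C$; this justifies the identification asserted in the statement.

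For the forward implication, suppose a point transformation that is additional modulo the linear superposition of solutions, i.e.\ a triple $(C,\Phi,\tilde C)\in\mathcal G^\sim_{\mathcal P'}\setminus(\mathcal G^{G^\sim_{\mathcal P'}}\ltimes\mathcal G^{\rm lin}_{\mathcal P'})$, connects two similar equations of~$\mathcal P'$. By the decomposition of elements of~$\mathcal G^\sim_{\mathcal P'}$ recorded after Theorem~\ref{FP:thm:EquivGroupoidHeatStationary}, any admissible transformation whose source lies off the $\mathcal G^\sim_{\mathcal P'}$-orbits of the potentials $\mu/x^2$, $\mu\in\mathbb R$, factors as $\check{\mathcal T}\star\mathcal T_0$ with $\check{\mathcal T}\in\mathcal G^{G^\sim_{\mathcal P'}}$ and $\mathcal T_0\in\mathcal G^{\rm lin}_{\mathcal P'}$, hence is not additional. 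Therefore the source~$C$ must be similar within~$\mathcal P'$ to some $\mu/x^2$. Theorem~\ref{FP:thm:GroupClassificationOfTimeIndepPotentials} then gives $\mathfrak g^{\rm ess}_{\mu/x^2}\varsupsetneq\mathfrak g^{\cap\rm ess}_{\mathcal P'}$, and since similar equations have essential algebras related by the pushforward while the essential kernel is $\mathcal G^\sim_{\mathcal P'}$-invariant, the same strict inclusion holds for~$C$.

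For the converse, assume $\mathfrak g^{\rm ess}_C\varsupsetneq\mathfrak g^{\cap\rm ess}_{\mathcal P'}$. By Theorem~\ref{FP:thm:GroupClassificationOfTimeIndepPotentials}, $C$ is $G^\sim_{\mathcal P'}$-equivalent to one of $0$, $x$, $\pm x^2$ or $\mu/x^2$ with $\mu\ne0$, and each of these lies in the $\mathcal G^\sim_{\mathcal P'}$-orbit of some $\mu/x^2$. Theorem~\ref{FP:thm:EquivGroupoidHeatStationary} exhibits an additional admissible transformation attached to every such potential: the vertex element $\mathcal T_{4\mu}$ at $\mu/x^2$ (its $t$-component $\tilde t=-1/t$ is not affine in~$t$, so by Corollary~\ref{cor:EquivGroupOfP'eps} it cannot come from $G^\sim_{\mathcal P'}$) for the representatives $\mu/x^2$ and~$0$, and $\mathcal T_3$, $\mathcal T_{1\mu}$, $\mathcal T_{2\mu}$ for~$x$, $x^2$, $-x^2$, respectively. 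Conjugating the chosen transformation by the $G^\sim_{\mathcal P'}$-element that carries~$C$ to its canonical representative yields an additional point transformation with source~$C$, which is the required one.

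The step I expect to be the main obstacle is the exact matching of the two special-potential lists, and in particular the handling of the degenerate value $\mu=0$: there $0$, $x$ and $\pm x^2$ are pairwise $G^\sim_{\mathcal P'}$-inequivalent yet all belong to a single $\mathcal G^\sim_{\mathcal P'}$-orbit, so one must verify separately that each of them genuinely carries an additional admissible transformation while confirming, from Theorem~\ref{FP:thm:GroupClassificationOfTimeIndepPotentials}, that no potential outside these orbits admits a symmetry extension. This two-sided bookkeeping is what closes the equivalence.
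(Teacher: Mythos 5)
Your proposal is correct and follows the same route as the paper, which derives the corollary precisely by combining Theorem~\ref{FP:thm:EquivGroupoidHeatStationary} (together with the decomposition of admissible transformations recorded after it and the form of $G^\sim_{\mathcal P'}$ from Corollary~\ref{cor:EquivGroupOfP'eps}) with the classification in Theorem~\ref{FP:thm:GroupClassificationOfTimeIndepPotentials}; your two-sided matching of the orbit of the potentials $\mu/x^2$ against Table~\ref{tab:FPStationaryPotentialsGC}, including the check that $\mathcal T_{4\mu}$, $\mathcal T_{1\mu}$, $\mathcal T_{2\mu}$ and $\mathcal T_3$ are genuinely additional modulo $\mathcal G^{\rm lin}_{\mathcal P'}$, is exactly the intended argument. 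The identification $\mathfrak g^{\cap\rm ess}_{\mathcal P'}=\mathfrak g^\cap_{\mathcal P'}$ via the absence of a common nonzero solution is also the right justification.
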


In other words, extension cases for admissible transformations of the class~$\mathcal P'$ are the same as extension cases for essential Lie symmetries within this class,
\[
\mathcal T\in\mathcal G^\sim_{\mathcal P'}\setminus(\mathcal G^{G^\sim_{\mathcal P'}}\ltimes \mathcal G^{\rm lin})\ \Leftrightarrow\
\mathfrak g^{\rm ess}_{{\rm s}(\mathcal T)},\mathfrak g^{\rm ess}_{{\rm t}(\mathcal T)}\varsupsetneq\mathfrak g^{\cap\rm ess}_{\mathcal P'}.
\]

Theorem~\ref{FP:thm:EquivGroupoidHeatStationary} and Corollary~\ref{FP:prop:EquivalenceGroupoidTIPoten}
can be extended to the subclasses~$\mathcal E'$ and~$\mathcal E'_0$ of the classes~$\mathcal E$ and~$\mathcal E_0$, respectively,
that consist of the equations with time-independent coefficients~$A$, $B$ and~$C$.

\begin{corollary}\label{FP:cor:EquivGroupoidOfE'AndE'_0}
(i) A minimal self-consistent generating (up to the \smash{$G^\sim_{\mathcal E'}$}-equivalence)
set of admissible transformations for the class~$\mathcal E'$
is $\mathcal B=\{\mathcal T_{1\mu},\mathcal T_{2\mu},\mathcal T_3\mid\mu\in\mathbb R\}$.

(ii) A minimal self-consistent generating (up to the \smash{$G^\sim_{\mathcal E'_0}$}-equivalence and the linear superposition of solutions)
set of admissible transformations for the class~$\mathcal E'_0$
is $\mathcal B=\{\mathcal T_{1\mu},\mathcal T_{2\mu},\mathcal T_3\mid\mu\in\mathbb R\}$.
\end{corollary}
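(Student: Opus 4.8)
The plan is to \emph{transfer} the classification of admissible transformations of the class~$\mathcal P'$ obtained in Theorem~\ref{FP:thm:EquivGroupoidHeatStationary} to the larger classes~$\mathcal E'$ and~$\mathcal E'_0$, exploiting that~$\mathcal P'$ is reached from these classes by gauges realized through the action groupoids of the corresponding equivalence groups. The guiding observation is that~$\mathcal P'$ is singled out from~$\mathcal E'_0$ by the gauge $A=\epsilon$, $B=0$ via the wide subset~$\mathcal M_{\mathcal E'_0}$ of~$\mathcal G(G^\sim_{\mathcal E'_0})$ (as described before Corollary~\ref{cor:EquivGroupOfP'eps}), while~$\mathcal E'_0$ is in turn reached from~$\mathcal E'$ by the gauge $D=0$, which sits inside~$\mathcal G^{G^\sim_{\mathcal E'}}$ through the inhomogeneous shift $\tilde u=u+U^0$ with $U^0$ a solution of the source equation (the parameter~$U^0$ being free in~$G^\sim_{\mathcal E'}$ by Proposition~\ref{FP:pro:EquivGroupOfE'AndE'_0}(i)). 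First I would take an arbitrary $\mathcal T=(\theta_1,\Phi,\theta_2)$ in~$\mathcal G^\sim_{\mathcal E'}$ (resp.\ in~$\mathcal G^\sim_{\mathcal E'_0}$) and pre- and post-compose it with gauge elements of~$\mathcal G^{G^\sim_{\mathcal E'}}$ (resp.\ of~$\mathcal G(G^\sim_{\mathcal E'_0})$) that bring both its source~$\theta_1$ and its target~$\theta_2$ into $\mathcal P'$-form. Since these gauges leave the coefficients at the endpoints with $A=\epsilon$, $B=0$, the resulting admissible transformation relates two equations of~$\mathcal P'$ and is therefore an element of~$\mathcal G^\sim_{\mathcal P'}$, to which Theorem~\ref{FP:thm:EquivGroupoidHeatStationary} applies directly.

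The second step is to absorb the residual $G^\sim_{\mathcal P'}$-equivalence back into the equivalence group of the ambient class. Here I would verify, by matching the two parameterizations, that every transformation of~$G^\sim_{\mathcal P'}$ from Corollary~\ref{cor:EquivGroupOfP'eps} is the restriction of a transformation of~$G^\sim_{\mathcal E'_0}$ from Proposition~\ref{FP:pro:EquivGroupOfE'AndE'_0}(ii): the tuple $(c_1,\dots,c_5)$ corresponds to $T=c_1^2t+c_2$, $X=c_1x+c_3$, $W=c_4$ and the exponential factor~$\mathrm e^{c_5t}$, and a short computation of the induced action on~$(A,B,C)$ confirms that it preserves $A=\epsilon$, $B=0$ and acts on~$C$ exactly as in~\eqref{eq:EquivtransOfP'eps}. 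Consequently $G^\sim_{\mathcal P'}$-equivalence is subsumed by $G^\sim_{\mathcal E'_0}$-equivalence, and a fortiori by $G^\sim_{\mathcal E'}$-equivalence. This is also the point at which parts~(i) and~(ii) diverge: for~$\mathcal E'_0$ the equivalence group has $U^0=0$, so the linear superposition of solutions is \emph{not} contained in~$\mathcal G^{G^\sim_{\mathcal E'_0}}$ and must be retained as a separate equivalence, exactly as in Theorem~\ref{FP:thm:EquivGroupoidHeatStationary}; for~$\mathcal E'$, by contrast, the shift $\tilde u=u+U^0$ with $U^0$ a homogeneous solution already lies in~$G^\sim_{\mathcal E'}$ (it fixes $D=0$), so the linear superposition is part of the $G^\sim_{\mathcal E'}$-equivalence and need not be listed.

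Putting these steps together shows that $\mathcal B=\{\mathcal T_{1\mu},\mathcal T_{2\mu},\mathcal T_3\mid\mu\in\mathbb R\}$ generates~$\mathcal G^\sim_{\mathcal E'}$ up to $G^\sim_{\mathcal E'}$-equivalence and~$\mathcal G^\sim_{\mathcal E'_0}$ up to $G^\sim_{\mathcal E'_0}$-equivalence and linear superposition. Minimality and self-consistency of~$\mathcal B$ would then be inherited from the same properties established for~$\mathcal P'$ at the end of the proof of Theorem~\ref{FP:thm:EquivGroupoidHeatStationary}, since the embeddings $\mathcal P'\hookrightarrow\mathcal E'_0\hookrightarrow\mathcal E'$ cannot create new composabilities among elements of~$\mathcal B\cup\hat{\mathcal B}$ nor render any of them redundant. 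I expect the main obstacle to be the careful bookkeeping in the first two steps: one must check that gauging source and target independently to $\mathcal P'$-form leaves a transformation genuinely inside~$\mathcal G^\sim_{\mathcal P'}$, so that the richer arbitrary-element content of~$\mathcal E'$ and~$\mathcal E'_0$ (the free~$D$, and the free~$A$ and~$B$) introduces no admissible transformations beyond those classified in Theorem~\ref{FP:thm:EquivGroupoidHeatStationary}, and that the linear superposition is attributed to the correct equivalence in each of the two cases.
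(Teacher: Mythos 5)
Your proposal is correct and follows essentially the same route as the paper's proof: reduce everything to the classification for~$\mathcal P'$ from Theorem~\ref{FP:thm:EquivGroupoidHeatStationary} via the gauges realized by wide subsets of the action groupoids, verify that the $G^\sim_{\mathcal P'}$- and $G^\sim_{\mathcal E'_0}$-equivalences are consistent on~$\mathcal P'$, and attribute the linear superposition of solutions to a separate equivalence for~$\mathcal E'_0$ but absorb it into $G^\sim_{\mathcal E'}$ for~$\mathcal E'$. The one point to make explicit is that minimality needs the converse inclusion as well, namely that an element of~$G^\sim_{\mathcal E'_0}$ preserving $A=\epsilon$ and $B=0$ restricts to an element of~$G^\sim_{\mathcal P'}$ (otherwise a larger ambient equivalence could render some~$\mathcal T_{i\mu}$ redundant); this is exactly the ``if and only if'' the paper draws from Proposition~\ref{FP:pro:EquivGroupOfE'AndE'_0} and Corollary~\ref{cor:EquivGroupOfP'eps}, and it follows immediately from the explicit forms of both groups.
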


\begin{proof}
The class~$\mathcal E'_0$ is related to the class~$\mathcal P'$
via the wide subset~$\mathcal M_{\mathcal E'_0}$ of~\smash{$\mathcal G(G^\sim_{\mathcal E'_0})$}.
In view of Proposition~\ref{FP:pro:EquivGroupOfE'AndE'_0} and Corollary~\ref{cor:EquivGroupOfP'eps},
two equations from~$\mathcal P'$ are $G^\sim_{\mathcal P'}$-equivalent if and only if
they are  $G^\sim_{\mathcal E'_0}$-equivalent.
The subgroupoid $\mathcal G^{\rm lin}_{\mathcal E'_0}$ of~$\mathcal G^{\rm f}_{\mathcal E'_0}$
that is related to the linear superposition of solutions for equations from~$\mathcal E'_0$
is the normal subgroupoid of~$\mathcal G^\sim_{\mathcal E'_0}$.
Hence the set~$\mathcal B$ from Theorem~\ref{FP:thm:EquivGroupoidHeatStationary}
is a minimal self-consistent generating (up to the \smash{$G^\sim_{\mathcal E'_0}$}-equivalence and the linear superposition of solutions)
set of admissible transformations for the class~$\mathcal E'_0$.

The argumentation for the class~$\mathcal E'$ is similar.
\end{proof}

\begin{corollary}\label{FP:cor:EquivGroupoidOfE'AndE'_0_2}
(i) A point transformation that is not the projection of a transformation
from the equivalence group~$G^\sim_{\mathcal E'}$ to the space of independent and dependent variables
connects two similar equations from the class~$\mathcal E '$
if and only if the essential Lie invariance algebras of their homogenous counterparts
strictly contain the (essential) kernel algebra
$\mathfrak g^{\cap\rm ess}_{\mathcal E'_0}=\mathfrak g^\cap_{\mathcal E'_0}$ of the class~$\mathcal E'_0$.

(ii) A point transformation that is, up to the linear superposition of solutions,
not the projection of an equivalence transformation of the class~$\mathcal E'_0$ to the space of independent and dependent variables
connects two similar equations from this class
if and only if their essential Lie invariance algebras
strictly contain the (essential) kernel algebra $\mathfrak g^{\cap\rm ess}_{\mathcal E'_0}=\mathfrak g^\cap_{\mathcal E'_0}$
of the class~$\mathcal E'_0$.
\end{corollary}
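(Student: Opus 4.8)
The plan is to reduce both assertions to the already established Corollary~\ref{FP:prop:EquivalenceGroupoidTIPoten} for the class~$\mathcal P'$, exploiting that the gauges relating $\mathcal E'_0$ to~$\mathcal P'$ (namely $A=\epsilon$, $B=0$) and $\mathcal E'$ to~$\mathcal E'_0$ (namely $D=0$) are both realized by wide subsets of the action groupoids of the corresponding equivalence groups. Since such gauging transformations are generated by equivalence transformations, they simultaneously preserve the two dichotomies at play: whether an admissible transformation lies in $\mathcal G^{G^\sim}\ltimes\mathcal G^{\rm lin}$, and whether the essential Lie invariance algebra of an equation strictly contains the essential kernel algebra.

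I would start with part~(ii). Recall from the proof of Corollary~\ref{FP:cor:EquivGroupoidOfE'AndE'_0} that the wide subset~$\mathcal M_{\mathcal E'_0}$ of~$\mathcal G(G^\sim_{\mathcal E'_0})$ gauges $\mathcal E'_0$ to~$\mathcal P'$, so each $\theta\in\mathcal S_{\mathcal E'_0}$ carries an element $\mathcal M_\theta=(\theta,\Psi_\theta,C_\theta)\in\mathcal M_{\mathcal E'_0}$ with $C_\theta\in\mathcal S_{\mathcal P'}$. Being elements of the action groupoid, the $\mathcal M_\theta$ push the essential subalgebra $\mathfrak g^{\rm ess}_\theta=\mathfrak g_\theta\cap\pi_*\mathfrak g^\sim_{\mathcal E'_0}$ onto $\mathfrak g^{\rm ess}_{\mathcal P'_{C_\theta}}$; since $\mathfrak g^{\rm ess}_\theta\varsupsetneq\mathfrak g^{\cap\rm ess}_{\mathcal E'_0}$ is equivalent to a strict inequality of dimensions and the essential kernels of the two (weakly similar) classes have the same dimension, the extension-over-the-kernel status transfers faithfully between $\theta$ and~$C_\theta$. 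Given $\mathcal T=(\theta_1,\Phi,\theta_2)\in\mathcal G^\sim_{\mathcal E'_0}$ connecting similar equations, I would then form $\mathcal T':=\mathcal M_{\theta_1}^{-1}\star\mathcal T\star\mathcal M_{\theta_2}\in\mathcal G^\sim_{\mathcal P'}$. Because $\mathcal M_{\theta_1},\mathcal M_{\theta_2}\in\mathcal G^{G^\sim_{\mathcal E'_0}}$ and $\mathcal G^{\rm lin}_{\mathcal E'_0}$ is normal, $\mathcal T$ lies in $\mathcal G^{G^\sim_{\mathcal E'_0}}\ltimes\mathcal G^{\rm lin}_{\mathcal E'_0}$ iff $\mathcal T'$ does. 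A short computation with Proposition~\ref{FP:pro:EquivGroupOfE'AndE'_0}(ii) and Corollary~\ref{cor:EquivGroupOfP'eps} shows that an element of $G^\sim_{\mathcal E'_0}$ mapping a $\mathcal P'$-equation to a $\mathcal P'$-equation must preserve $A=\epsilon$ and $B=0$, forcing $X$ affine and $W$ constant, so that the restriction of $\mathcal G^{G^\sim_{\mathcal E'_0}}\ltimes\mathcal G^{\rm lin}_{\mathcal E'_0}$ to pairs of $\mathcal P'$-objects coincides with $\mathcal G^{G^\sim_{\mathcal P'}}\ltimes\mathcal G^{\rm lin}_{\mathcal P'}$. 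Applying Corollary~\ref{FP:prop:EquivalenceGroupoidTIPoten} to~$\mathcal T'$ and transporting the conclusion back through $\mathcal M_{\theta_1},\mathcal M_{\theta_2}$ then yields~(ii).

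For part~(i) I would reduce $\mathcal E'$ to~$\mathcal E'_0$ by the gauge $D=0$, realized by the wide subset of $\mathcal G(G^\sim_{\mathcal E'})$ with $T(t)=t$, $X=x$, $U^1=1$ and $U^0$ a solution of the source equation (the time-independent analogue of the subset~$\mathcal M_{\mathcal E}$ used to single out~$\mathcal E_0$). By definition the essential Lie invariance algebra attached to an inhomogeneous equation $\mathcal E'_\theta$ is that of its homogeneous counterpart in~$\mathcal E'_0$, so $\mathfrak g^{\cap\rm ess}_{\mathcal E'_0}=\mathfrak g^\cap_{\mathcal E'_0}$ is indeed the relevant reference algebra. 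The $(A,B,C)$-components of transformations in $G^\sim_{\mathcal E'}$ involve neither~$D$ nor the additive part~$U^0$ and coincide with those in $G^\sim_{\mathcal E'_0}$; hence discarding~$U^0$ sends an admissible transformation of~$\mathcal E'$ to one of~$\mathcal E'_0$ between the homogeneous counterparts, and sends $G^\sim_{\mathcal E'}$ onto $G^\sim_{\mathcal E'_0}$ modulo the additive freedom. The crucial point is that the arbitrary function~$U^0$ present in~$G^\sim_{\mathcal E'}$ corresponds precisely to the linear-superposition subgroupoid $\mathcal G^{\rm lin}_{\mathcal E'_0}$ in the homogeneous picture; consequently the condition that $\mathcal T$ is not a projection from $G^\sim_{\mathcal E'}$ is equivalent to its homogeneous image not being a projection from $G^\sim_{\mathcal E'_0}$ modulo linear superposition, which is exactly the hypothesis of~(ii). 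Invoking~(ii) for the homogeneous image completes the proof of~(i).

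The main obstacle is the two-level bookkeeping of the linear superposition of solutions: it is built into $G^\sim_{\mathcal E'}$ through the additive parameter~$U^0$, but appears as a separate normal subgroupoid $\mathcal G^{\rm lin}_{\mathcal E'_0}$ for the homogeneous class and is factored out explicitly in~$\mathcal P'$. One must check that these three descriptions match up under the two gauges, so that both dichotomies---membership in the equivalence-group-generated part and strict extension of the essential kernel---are transported consistently and the ``up to the linear superposition of solutions'' qualifiers land in the right places. The remaining verifications (that the restricted action groupoids coincide and that the essential kernels have equal dimension) are routine computations with the explicit equivalence groups already determined in Proposition~\ref{FP:pro:EquivGroupOfE'AndE'_0} and Corollary~\ref{cor:EquivGroupOfP'eps}.
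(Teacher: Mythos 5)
Your proposal is correct and follows essentially the same route the paper intends: gauging $\mathcal E'$ to~$\mathcal E'_0$ and $\mathcal E'_0$ to~$\mathcal P'$ by wide subsets of the action groupoids, checking via Proposition~\ref{FP:pro:EquivGroupOfE'AndE'_0} and Corollary~\ref{cor:EquivGroupOfP'eps} that the restricted groupoids $\mathcal G^{G^\sim}\ltimes\mathcal G^{\rm lin}$ match up, and then invoking Corollary~\ref{FP:prop:EquivalenceGroupoidTIPoten} --- which is exactly the argument underlying the paper's proof of Corollary~\ref{FP:cor:EquivGroupoidOfE'AndE'_0} and the unproved deduction of the present statement from it. Your explicit handling of the two-level bookkeeping of the linear superposition (absorbed into $U^0$ in $G^\sim_{\mathcal E'}$ versus factored out as $\mathcal G^{\rm lin}_{\mathcal E'_0}$) correctly accounts for why the ``up to the linear superposition of solutions'' qualifier appears only in part~(ii).
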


Thus, Corollaries~\ref{FP:prop:EquivalenceGroupoidTIPoten} and~\ref{FP:cor:EquivGroupoidOfE'AndE'_0_2}
give the second series of examples of classes of differential equations
with such a relation between admissible transformations and Lie symmetries in the literature
after the class of the general Burgers--Korteweg--de Vries equations of an arbitrary order~$n\geqslant2$ with time-independent arbitrary elements,
see~\cite[Section~10]{OpanasenkoBihloPopovych2017} and \cite{Opanasenko2019}.

\subsection{Kolmogorov equations}

The class~$\bar{\mathcal K}$ of Kolmogorov equations, $\bar{\mathcal K}_{A,B}\colon u_t=A(t,x)u_{xx}+B(t,x)u_x$,
is singled out from the class~$\mathcal E_0$ via gauging its arbitrary elements
by the wide family of elements of~\smash{$\mathcal G(G^\sim_{\mathcal E'_0})$}
with $T(t)=t$, $X(t,x)=x$, $U^1(t,x)=1/U(t,x)$,
where~$U$ is an arbitrary nonzero solution of the source equation, $U_t=AU_{xx}+BU_x+CU$.
The equivalence groupoid~$\mathcal G^\sim_{\bar{\mathcal K}}$ of the class~$\bar{\mathcal K}$
is easily derived from that of~$\mathcal E_0$ or~$\mathcal E$.

\begin{proposition}\label{FP:pro:FPEquivClassKolmogorov}
A point transformation~$\Phi$ connects two equations $\bar{\mathcal K}_{A,B}$ and~$\bar{\mathcal K}_{\tilde A,\tilde B}$
from the class~$\bar{\mathcal K}$ if and only if its components are of the form~\eqref{eq:GenFormOfPointTransInEA},
\[
\tilde t=T(t),\quad \tilde x=X(t,x),\quad \tilde u=U^1(t,x)u+U^0(t,x),
\]
where~$T$ and~$X$ are arbitrary smooth functions of their arguments with~$T_tX_x\ne0$, $U^1\ne0$,
and~$1/U^1$ and $U^0/U^1$ run through the solution set of the equation~$\bar{\mathcal K}_{A,B}$.
The arbitrary elements of the source and the target equations are related by
\[
\Phi^*\tilde A = \frac{X_x^2}{T_t}A, \quad \Phi^*\tilde B=\frac{X_x}{T_t}\left(B-2\frac{U^1_x}{U^1}A\right)-\frac{X_t-X_{xx}A}{T_t}.
\]
\end{proposition}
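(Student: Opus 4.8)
The plan is to exploit the fact that $\bar{\mathcal K}$ is a subclass of the class~$\mathcal E$, which by Proposition~\ref{FP:pro:GeneralEquivGroup} is normalized in the usual sense. Concretely, each Kolmogorov equation~$\bar{\mathcal K}_{A,B}$ is the equation from~$\mathcal E$ with the arbitrary-element tuple $(A,B,C,D)=(A,B,0,0)$, so the equivalence groupoid~$\mathcal G^\sim_{\bar{\mathcal K}}$ is the restriction of~$\mathcal G^\sim_{\mathcal E}$ to triples whose source and target both satisfy $C=D=0$. Since~$\mathcal E$ is normalized, $\mathcal G^\sim_{\mathcal E}=\mathcal G^{G^\sim_{\mathcal E}}$, and therefore any point transformation~$\Phi$ connecting two equations of~$\bar{\mathcal K}$ is the $(t,x,u)$-projection of an element of~$G^\sim_{\mathcal E}$. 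Hence~$\Phi$ necessarily has the form~\eqref{eq:GenFormOfPointTransInEA} and the induced action on the arbitrary elements is given by~\eqref{eq:GenFormOfPointTransInEB}; this immediately yields the claimed formulas for~$\Phi^*\tilde A$ and~$\Phi^*\tilde B$, which are exactly the $A$- and $B$-components of~\eqref{eq:GenFormOfPointTransInEB}.

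It remains to translate the two target constraints $\tilde C=0$ and $\tilde D=0$ into conditions on~$U^1$ and~$U^0$, using the source constraints $C=D=0$. Setting $C=0$ in the operator $\mathrm E=\p_t-A\p_{xx}-B\p_x-C$ gives $\mathrm E=\p_t-A\p_{xx}-B\p_x$, so that a function~$f$ satisfies $\mathrm E f=0$ precisely when~$f$ is a solution of~$\bar{\mathcal K}_{A,B}$. The $C$-component of~\eqref{eq:GenFormOfPointTransInEB} reads $\tilde C=-\frac{U^1}{T_t}\mathrm E\frac1{U^1}$; since $T_tU^1\ne0$, the requirement $\tilde C=0$ is equivalent to $\mathrm E\frac1{U^1}=0$, i.e.\ to~$1/U^1$ being a (necessarily nonzero) solution of~$\bar{\mathcal K}_{A,B}$. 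Likewise, the $D$-component $\tilde D=\frac{U^1}{T_t}\big(D+\mathrm E\frac{U^0}{U^1}\big)$ reduces, upon setting $D=0$, to $\mathrm E\frac{U^0}{U^1}=0$, i.e.\ to~$U^0/U^1$ being a solution of~$\bar{\mathcal K}_{A,B}$. This establishes the necessity, together with the stated description of~$U^1$ and~$U^0$.

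For the converse, I would start from a transformation~$\Phi$ of the prescribed form, with~$1/U^1$ and~$U^0/U^1$ solutions of~$\bar{\mathcal K}_{A,B}$ and $T_tX_xU^1\ne0$, regard it as the projection of the element of~$G^\sim_{\mathcal E}$ with source tuple $(A,B,0,0)$, and read off the target tuple via~\eqref{eq:GenFormOfPointTransInEB}. The hypotheses give $\mathrm E\frac1{U^1}=0$ and $\mathrm E\frac{U^0}{U^1}=0$, hence $\tilde C=0$ and, since $D=0$, also $\tilde D=0$; thus the target tuple is $(\tilde A,\tilde B,0,0)$ and the image equation belongs to~$\bar{\mathcal K}$, so~$\Phi$ is an admissible transformation of this class. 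The only genuinely delicate point is the bookkeeping in the two preceding lines, namely verifying that imposing $C=D=0$ makes the vanishing of the $C$- and $D$-components of~\eqref{eq:GenFormOfPointTransInEB} coincide exactly with the Kolmogorov-solution conditions on~$1/U^1$ and~$U^0/U^1$; everything else follows immediately from the normalization of~$\mathcal E$.
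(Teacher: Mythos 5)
Your proposal is correct and follows exactly the route the paper intends: the paper gives no explicit proof of this proposition, stating only that the groupoid $\mathcal G^\sim_{\bar{\mathcal K}}$ "is easily derived from that of~$\mathcal E_0$ or~$\mathcal E$", and your argument supplies precisely those details by restricting the normalized groupoid of~$\mathcal E$ to sources and targets with $C=D=0$ and reading off the conditions $\mathrm E(1/U^1)=0$ and $\mathrm E(U^0/U^1)=0$.
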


\begin{corollary}\label{FP:cor:EquivGroupBarK}
The equivalence group~$G^\sim_{\bar{\mathcal K}}$ of the class~$\bar{\mathcal K}$ of Kolmogorov equations
is constituted by the point transformations of the form
\[
\tilde t=T(t),\quad \tilde x=X(t,x),\quad \tilde u=U^1u+U^0,\quad
\tilde A = \frac{X_x^2}{T_t}A, \quad \tilde B=\frac{X_x}{T_t}B-\frac{X_t-X_{xx}A}{T_t},
\]
where $T$ and $X$ run through the set of smooth functions of their arguments with~$T_tX_x\ne0$,
and~$U^0$ and $U^1$ are arbitrary constants with~$U^1\ne0$.
\end{corollary}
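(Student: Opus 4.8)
The plan is to obtain the equivalence group directly from the description of the full equivalence groupoid $\mathcal G^\sim_{\bar{\mathcal K}}$ furnished by Proposition~\ref{FP:pro:FPEquivClassKolmogorov}. Recall that an element of $G^\sim_{\bar{\mathcal K}}$ is a point transformation in the joint space of $(t,x,u)$ and the arbitrary elements $(A,B)$ whose projection to the space of $(t,x,u)$ is a single point transformation $\Phi$ serving as an admissible transformation for every value of $(A,B)$; in particular, the $(t,x,u)$-components of $\Phi$ must not depend on $(A,B)$. Thus I would simply single out, among the triples $(\theta,\Phi,\tilde\theta)$ described in Proposition~\ref{FP:pro:FPEquivClassKolmogorov}, those whose component functions $T$, $X$, $U^1$ and $U^0$ are independent of the arbitrary-element tuple.

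The key step is to see what this independence forces. Since $T=T(t)$ and $X=X(t,x)$ involve only the variables $(t,x)$, they are already free of $(A,B)$ and remain arbitrary subject to $T_tX_x\ne0$. The genuine constraint comes from the $u$-component: by Proposition~\ref{FP:pro:FPEquivClassKolmogorov}, the functions $1/U^1$ and $U^0/U^1$ must lie in the solution set of the source equation $\bar{\mathcal K}_{A,B}$, so for $\Phi$ to be admissible simultaneously for all admissible $(A,B)$ these two functions have to be common solutions of every Kolmogorov equation. Treating $A$ and $B$ as independent coordinates and splitting the identity $(1/U^1)_t=A(1/U^1)_{xx}+B(1/U^1)_x$ with respect to them yields $(1/U^1)_x=(1/U^1)_{xx}=(1/U^1)_t=0$, so that $1/U^1$ is constant; the same argument applies to $U^0/U^1$. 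Hence $U^1$ is a nonzero constant and $U^0$ is a constant, in agreement with the fact that the common solution set of the class $\bar{\mathcal K}$ consists exactly of the constant functions.

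It then remains to read off the arbitrary-element components. Setting $U^1_x=0$ in the $\tilde B$-formula of Proposition~\ref{FP:pro:FPEquivClassKolmogorov} collapses it to $\tilde B=(X_x/T_t)B-(X_t-X_{xx}A)/T_t$, while the $\tilde A$-formula is unchanged, which gives exactly the displayed transformation law. For the converse inclusion I would check that every transformation of the stated form indeed belongs to $G^\sim_{\bar{\mathcal K}}$: with $U^1,U^0$ constant the functions $1/U^1$ and $U^0/U^1$ are constants, which solve every Kolmogorov equation trivially, so by Proposition~\ref{FP:pro:FPEquivClassKolmogorov} the transformation is admissible for each $(A,B)$, and its $(t,x,u)$-components manifestly do not involve $(A,B)$. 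The only point requiring care, and thus the main obstacle, is the splitting argument establishing that the sole functions solving all Kolmogorov equations simultaneously are the constants; everything else is a direct specialization of the already-proved groupoid description.
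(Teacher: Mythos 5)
Your proposal is correct and is exactly the derivation the paper intends: the corollary is stated as an immediate consequence of Proposition~\ref{FP:pro:FPEquivClassKolmogorov}, obtained by requiring the fixed projection $\pi_*\mathscr T$ to be admissible for every value of $(A,B)$, which forces $1/U^1$ and $U^0/U^1$ to be common solutions of all Kolmogorov equations and hence constants (a fact the paper records separately when noting $\mathfrak g^{\cap\rm lin}_{\bar{\mathcal K}}=\langle\p_u\rangle$). Your splitting argument with respect to $A$ and $B$ and the converse check are both sound, so nothing is missing.
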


It follows from Proposition~\ref{FP:pro:FPEquivClassKolmogorov} and Corollary~\ref{FP:cor:EquivGroupBarK}
that the class~$\bar{\mathcal K}$ is not normalized in any sense.

The class~$\bar{\mathcal K}$ is related to its subclass~$\mathcal K^\epsilon$
with fixed $\epsilon\in\{-1,1\}$
of reduced (backward for $\epsilon=-1$ and forward  for $\epsilon=1$) Kolmogorov equations
\begin{gather}\label{FP:eq:GaugedKolmogorov}
\mathcal K^\epsilon_B\colon\ u_t=\epsilon u_{xx}+B(t,x)u_x
\end{gather}
via the wide subset~$\mathcal M_{\bar{\mathcal K}}$ of elements of~\smash{$\mathcal G^{G^\sim_{\bar{\mathcal K}}}$}
with $T_t=\epsilon\sgn A$, $X_x=|A|^{-1/2}$, $U^0=0$ and $U^1=1$,
which imposes the gauge~$A=\epsilon$.
Within the framework of group analysis of differential equations,
the difference between forward and backward equations is not essential.
The classes~$\mathcal K^{+1}$ and~$\mathcal K^{-1}$ are obviously similar
with respect to alternating the signs of~$t$ and~$B$, and thus their equivalence groups coincide.

\begin{proposition}\label{FP:pro:EquinGroupoidOfK}
A point transformation~$\Phi$ connects two equations~$\mathcal K^\epsilon_B$ and~$\mathcal K^\epsilon_{\tilde B}$
from the class~$\mathcal K$ if and only if its components are of the form
\begin{gather*}
\tilde t=T(t),\quad \tilde x=X^1(t)x+X^0(t),\quad \tilde u=U^1(t,x)u+U^0(t,x),
\end{gather*}
where~$T$, $X^0$ and~$X^1$ are arbitrary smooth functions of~$t$ with $X^1\ne0$ and $T_t=(X^1)^2$,
$U^1\ne0$, and~$1/U^1$ and~$U^0/U^1$ are arbitrary solutions of the equation~$\mathcal K^\epsilon_B$.
The relation between the arbitrary elements of the source and the target equations is given by
\begin{gather*}
\Phi^*\tilde B=\frac1{X^1}\left(B-2\epsilon\frac{U^1_x}{U^1}\right)-\frac{X^1_tx+X^0_t}{(X^1)^2}.
\end{gather*}
\end{proposition}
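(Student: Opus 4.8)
The plan is to derive the equivalence groupoid of $\mathcal K^\epsilon$ as the restriction of the already-computed equivalence groupoid of its superclass $\bar{\mathcal K}$. Since $\mathcal K^\epsilon$ is singled out from $\bar{\mathcal K}$ by the gauge $A=\epsilon$, every admissible transformation of $\mathcal K^\epsilon$ is an admissible transformation of $\bar{\mathcal K}$ whose source and target arbitrary-element tuples both satisfy $A=\tilde A=\epsilon$. Thus I would start from the general description in Proposition~\ref{FP:pro:FPEquivClassKolmogorov}, namely $\tilde t=T(t)$, $\tilde x=X(t,x)$, $\tilde u=U^1u+U^0$ with $T_tX_x\ne0$ and $U^1\ne0$, together with the relations $\Phi^*\tilde A=(X_x^2/T_t)A$ and $\Phi^*\tilde B=(X_x/T_t)(B-2(U^1_x/U^1)A)-(X_t-X_{xx}A)/T_t$, and simply impose the two gauge constraints.

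The key step is the $A$-relation. Substituting $A=\epsilon$ and $\tilde A=\epsilon$ into $\Phi^*\tilde A=(X_x^2/T_t)A$ gives $\epsilon=(X_x^2/T_t)\epsilon$, i.e.\ $X_x^2=T_t$. As $T$ depends on~$t$ alone, the right-hand side is a function of~$t$ only; differentiating $X_x^2=T_t$ with respect to~$x$ yields $2X_xX_{xx}=0$, and since $X_x\ne0$ we conclude $X_{xx}=0$. Hence $X$ is affine in~$x$, $X=X^1(t)x+X^0(t)$, with $X_x=X^1$ and therefore $T_t=(X^1)^2$. This is the only nontrivial deduction in the argument, and it is exactly what reduces the general fibre-preserving form of Proposition~\ref{FP:pro:FPEquivClassKolmogorov} to the narrower form claimed here.

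Finally, I would substitute $A=\epsilon$, $X_x=X^1$, $X_{xx}=0$, and $X_t=X^1_tx+X^0_t$ into the $B$-relation of Proposition~\ref{FP:pro:FPEquivClassKolmogorov}. A direct simplification gives
\[
\Phi^*\tilde B=\frac1{X^1}\left(B-2\epsilon\frac{U^1_x}{U^1}\right)-\frac{X^1_tx+X^0_t}{(X^1)^2},
\]
which is precisely the stated expression. The conditions that $U^1\ne0$ and that $1/U^1$ and $U^0/U^1$ run through the solution set of $\mathcal K^\epsilon_B=\bar{\mathcal K}_{\epsilon,B}$ carry over verbatim from the superclass, since the gauge affects only the $(t,x)$-components of~$\Phi$. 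Because Proposition~\ref{FP:pro:FPEquivClassKolmogorov} is stated as an equivalence, both implications survive the restriction: necessity is the computation just described, and sufficiency follows because any transformation of the stated form satisfies all constraints of the superclass proposition with $A=\tilde A=\epsilon$. I do not anticipate a genuine obstacle here; the entire difficulty of characterizing point transformations between Kolmogorov equations has already been absorbed into the proof of the superclass result, and what remains is the clean elimination forced by $X_x^2=T_t$.
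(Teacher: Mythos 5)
Your proposal is correct and follows exactly the route the paper intends: the paper states this proposition without a separate proof precisely because it is the restriction of Proposition~\ref{FP:pro:FPEquivClassKolmogorov} to the gauge $A=\tilde A=\epsilon$, and your elimination ($X_x^2=T_t$ forces $X_{xx}=0$, hence $X=X^1(t)x+X^0(t)$ and $T_t=(X^1)^2$) together with the substitution into the $B$-relation is the whole content of that restriction. Both directions of the equivalence are handled correctly, so nothing is missing.
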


The admissible transformations $(B,\Phi,\tilde B)$ described in Proposition~\ref{FP:pro:EquinGroupoidOfK}
constitute the equivalence groupoid~$\mathcal G^\sim_{\mathcal K^\epsilon}$ of the class~$\mathcal K^\epsilon$.

\begin{corollary}\label{FP:cor:EquivGroupK}
The equivalence group~$G^\sim_{\mathcal K^\epsilon}$ of the class~$\mathcal K^\epsilon$
consists of the point transformations of the form
\begin{gather}\label{FP:eq:EquivalenceGroupReducedKolmogorov}
\tilde t=T(t),\quad \tilde x=X^1(t)x+X^0(t),\quad \tilde u=c_1u+c_2,
\quad\tilde B=\frac1{X^1}B-\frac{X^1_tx+X^0_t}{(X^1)^2},
\end{gather}
where~$T$, $X^0$ and~$X^1$ are arbitrary smooth functions of~$t$ with $X^1\ne0$ and $T_t=(X^1)^2$,
and~$c_1$ and~$c_2$ are arbitrary constants with~$c_1\ne0$.
\end{corollary}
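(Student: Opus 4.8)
The plan is to extract the equivalence group $G^\sim_{\mathcal K^\epsilon}$ from the description of the full equivalence groupoid $\mathcal G^\sim_{\mathcal K^\epsilon}$ given in Proposition~\ref{FP:pro:EquinGroupoidOfK}. By definition, an equivalence transformation is a point transformation whose projection $\Phi$ to the space of $(t,x,u)$ maps \emph{every} equation $\mathcal K^\epsilon_B$ of the class into the class; in particular, for each fixed $B$ this $\Phi$ is the transformational part of an admissible transformation, so it necessarily has the form displayed in Proposition~\ref{FP:pro:EquinGroupoidOfK}. Hence it suffices to single out, among those forms, the ones whose components do not depend on the arbitrary element $B$.

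First I would observe that the $t$- and $x$-components $\tilde t=T(t)$ and $\tilde x=X^1(t)x+X^0(t)$, together with the constraints $X^1\ne0$ and $T_t=(X^1)^2$, involve only functions of $t$ and carry no dependence on $B$, so they survive unchanged in an equivalence transformation. The only source of $B$-dependence in $\Phi$ is its $u$-component $\tilde u=U^1u+U^0$, for which Proposition~\ref{FP:pro:EquinGroupoidOfK} requires $1/U^1$ and $U^0/U^1$ to be solutions of the source equation $\mathcal K^\epsilon_B$. For $\Phi$ to be the same for all $B$, these functions must be common solutions of all equations in the class $\mathcal K^\epsilon$.

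The key step is therefore to determine the common solutions of the whole class. I would substitute an unknown common solution $f(t,x)$ into $f_t=\epsilon f_{xx}+Bf_x$ and split with respect to $B$: the coefficient of $B$ forces $f_x=0$, whence $f_{xx}=0$ and then $f_t=0$, so $f$ is constant. Consequently $1/U^1$ and $U^0/U^1$ are constants, which gives $U^1=c_1$ with $c_1\ne0$ and $U^0=c_2$ for some constants $c_1,c_2$, i.e.\ $\tilde u=c_1u+c_2$.

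Finally, substituting $U^1_x=0$ into the groupoid relation $\Phi^*\tilde B=(X^1)^{-1}(B-2\epsilon U^1_x/U^1)-(X^1_tx+X^0_t)/(X^1)^2$ collapses it to $\tilde B=(X^1)^{-1}B-(X^1_tx+X^0_t)/(X^1)^2$, which is precisely the asserted $\tilde B$-component, and collecting the surviving components yields the form~\eqref{FP:eq:EquivalenceGroupReducedKolmogorov}. I do not expect a genuine obstacle here, since the whole argument rests on Proposition~\ref{FP:pro:EquinGroupoidOfK} and an elementary splitting; the only point requiring care is the logical passage from ``admissible for every $B$'' to ``$1/U^1$ and $U^0/U^1$ are common solutions'', which is exactly what distinguishes the equivalence group from the equivalence groupoid.
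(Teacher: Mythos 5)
Your proposal is correct and follows exactly the route the paper intends: the corollary is stated as an immediate consequence of Proposition~\ref{FP:pro:EquinGroupoidOfK}, obtained by restricting the groupoid to transformations whose $(t,x,u)$-components are independent of~$B$, which forces $1/U^1$ and $U^0/U^1$ to be common solutions of all Kolmogorov equations, i.e.\ constants (the paper itself records in a footnote that the common solutions of the class of Kolmogorov equations are exactly the constant functions). Your splitting argument with respect to~$B$ and the resulting collapse of the $\tilde B$-component are precisely the intended computation.
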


\begin{corollary}\label{eq:FPEquivClassK}
The group $G^\sim_{\mathcal K^\epsilon}$ can be embedded via the trivial prolongation of point transformations therein
to the arbitrary element~$A$, $\tilde A=A$, to the group $G^\sim_{\bar{\mathcal K}}$ as a subgroup thereof.
\end{corollary}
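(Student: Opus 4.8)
The plan is to produce the embedding explicitly and verify that it is an injective group homomorphism landing inside $G^\sim_{\bar{\mathcal K}}$. First I would take an arbitrary element $\mathscr T\in G^\sim_{\mathcal K^\epsilon}$ written in the form supplied by Corollary~\ref{FP:cor:EquivGroupK},
\[
\tilde t=T(t),\quad \tilde x=X^1(t)x+X^0(t),\quad \tilde u=c_1u+c_2,\quad \tilde B=\frac1{X^1}B-\frac{X^1_tx+X^0_t}{(X^1)^2},
\]
subject to $T_t=(X^1)^2$, and define its trivial prolongation $\hat{\mathscr T}$ to the arbitrary element~$A$ by appending the component $\tilde A=A$ and leaving all the remaining components untouched.

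The core of the argument is to check that $\hat{\mathscr T}$ fits the defining shape of $G^\sim_{\bar{\mathcal K}}$ given in Corollary~\ref{FP:cor:EquivGroupBarK}. I would put $X(t,x):=X^1(t)x+X^0(t)$, so that $X_x=X^1$, $X_{xx}=0$ and $X_t=X^1_tx+X^0_t$, and read off $U^1:=c_1$, $U^0:=c_2$, which are admissible (constant) values of $U^0,U^1$ for $G^\sim_{\bar{\mathcal K}}$. The single genuinely substantive point is the $A$-component: Corollary~\ref{FP:cor:EquivGroupBarK} forces $\tilde A=(X_x^2/T_t)A$, and this is compatible with the prescribed value $\tilde A=A$ \emph{precisely} because the constraint $T_t=(X^1)^2=X_x^2$ carried by $G^\sim_{\mathcal K^\epsilon}$ makes the factor $X_x^2/T_t$ equal to~$1$. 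I expect this compatibility — the collapse of the $A$-rescaling factor to unity exactly under the constraint native to $G^\sim_{\mathcal K^\epsilon}$ — to be the only nonobvious step; the rest is direct substitution. Indeed, feeding the same data into the $B$-component $\tilde B=(X_x/T_t)B-(X_t-X_{xx}A)/T_t$ of Corollary~\ref{FP:cor:EquivGroupBarK} and using $X_{xx}=0$ reproduces verbatim the $B$-component of Corollary~\ref{FP:cor:EquivGroupK}, so the form matches and $\hat{\mathscr T}\in G^\sim_{\bar{\mathcal K}}$.

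It then remains to establish the algebraic structure. Injectivity is immediate, since $\hat{\mathscr T}$ recovers $\mathscr T$ simply by restriction to the coordinates $(t,x,u,B)$. For the homomorphism property I would observe that the composition of two prolonged transformations acts on the $A$-coordinate by $A\mapsto A\mapsto A$ and on the remaining coordinates $(t,x,u,B)$ exactly as the composition performed inside $G^\sim_{\mathcal K^\epsilon}$; hence $\hat{\mathscr T}_2\circ\hat{\mathscr T}_1=\widehat{\mathscr T_2\circ\mathscr T_1}$, and the map is a group monomorphism. Its image is therefore a subgroup of $G^\sim_{\bar{\mathcal K}}$ isomorphic to $G^\sim_{\mathcal K^\epsilon}$, namely the subgroup cut out by requiring $X$ to be affine in~$x$, $T_t=X_x^2$, and $U^0,U^1$ to be constant, which completes the proof.
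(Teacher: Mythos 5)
Your proof is correct and is essentially the verification the paper leaves implicit: the corollary is stated without proof, and the intended argument is exactly your direct comparison of the explicit forms in Corollaries~\ref{FP:cor:EquivGroupK} and~\ref{FP:cor:EquivGroupBarK}, with the key observation that the constraint $T_t=(X^1)^2=X_x^2$ together with $X_{xx}=0$ collapses the $A$-rescaling factor $X_x^2/T_t$ to~$1$ and reproduces the $B$-component verbatim. This is precisely why the statement is nontrivial (the class $\bar{\mathcal K}$ is not normalized, so no general theory guarantees it) yet follows from the computed equivalence groups, as you show.
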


Corollary~\ref{eq:FPEquivClassK} is not trivial since the class~$\bar{\mathcal K}$ is not normalized.

Let us discuss the computation of the equivalence group~$G^\sim_{\bar{\mathcal K}'}$
of the class~$\bar{\mathcal K}'$ of Kolmogorov equations with time-independent diffusion and drift coefficients,
\[
u_t=A(x)u_{xx}+B(x)u_x.
\]
Since the class~$\bar{\mathcal K}$ is not normalized
or disjointedly semi-normalized with respect to the linear superposition of solutions,
we cannot directly prove that the elements of~$G^\sim_{\bar{\mathcal K}}$
preserving the time-independence of~$A$ and~$B$ constitute the entire group~$G^\sim_{\bar{\mathcal K}'}$.
In other words, singling out~$G^\sim_{\bar{\mathcal K}'}$ as a subgroup of~$G^\sim_{\bar{\mathcal K}}$
is not a proper way for computing~$G^\sim_{\bar{\mathcal K}'}$.
For this purpose, we should take another approach,
interpreting the class~$\bar{\mathcal K}'$ as the subclass of~$\mathcal E_0$
that is singled out by the constraint~$A_t=B_t=C=0$ and reparameterized by excluding~$C$
from the tuple of arbitrary elements.
The constraint~$A_t=B_t=0$ cannot be realized
via relating the class~$\bar{\mathcal K}$ to its subclass via a wide subset of its admissible transformations,
and hence this is not a gauge.
Since the explicit description of the equivalence groupoid of~$\bar{\mathcal K}'$ is cumbersome
and in fact is not needed in what follows, we present the group~$G^\sim_{\bar{\mathcal K}'}$ only.

\begin{proposition}\label{FP:cor:EquivGroupK'}
The equivalence group~$G^\sim_{\bar{\mathcal K}'}$ of the class~$\bar{\mathcal K}'$ is constituted by
the point transformations of the form
\[
\tilde t=c_1t+c_2,\quad\tilde x=X(x),\quad\tilde u= c_3u+c_4, \quad\tilde A=\frac{X_x^2}{c_1}A,\quad
\tilde B=\frac{X_x}{c_1}B+\frac{X_{xx}}{c_1}A,
\]
where~$X$ is an arbitrary function of~$x$ with $X_x\ne0$ and $c_1$, \dots, $c_4$ are arbitrary constants with $c_1c_3\ne0$.
\end{proposition}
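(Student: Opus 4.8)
The plan is to compute $G^\sim_{\bar{\mathcal K}'}$ through the equivalence groupoid~$\mathcal G^\sim_{\bar{\mathcal K}}$ of the superclass~$\bar{\mathcal K}$, which is known from Proposition~\ref{FP:pro:FPEquivClassKolmogorov} and was itself derived from the disjointedly semi-normalized class~$\mathcal E_0$. This circumvents the difficulty flagged above: since $\bar{\mathcal K}$ is not semi-normalized, the group~$G^\sim_{\bar{\mathcal K}'}$ need not embed into~$G^\sim_{\bar{\mathcal K}}$, so restricting the latter group would yield only a subgroup whose exhaustiveness is not guaranteed. By contrast, the containment of equivalence groupoids $\mathcal G^\sim_{\bar{\mathcal K}'}\subseteq\mathcal G^\sim_{\bar{\mathcal K}}$ holds automatically because $\bar{\mathcal K}'$ is a subclass of~$\bar{\mathcal K}$, and it is precisely this groupoid-level containment that makes the computation legitimate.

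First I would fix an arbitrary $\mathscr T\in G^\sim_{\bar{\mathcal K}'}$ and let $\Phi=\pi_*\mathscr T$ be its projection to the space of~$(t,x,u)$; by the definition of an equivalence transformation, $\Phi$ is a single point transformation independent of the arbitrary elements~$(A,B)$. For every admissible value $(A,B)$ of the arbitrary-element tuple of~$\bar{\mathcal K}'$, that is, for every pair of $t$-independent functions with $A\ne0$, the triple $\big((A,B),\Phi,(\tilde A,\tilde B)\big)$ is an admissible transformation of~$\bar{\mathcal K}$, so Proposition~\ref{FP:pro:FPEquivClassKolmogorov} applies: $\Phi$ has the form $\tilde t=T(t)$, $\tilde x=X(t,x)$, $\tilde u=U^1u+U^0$, where $1/U^1$ and $U^0/U^1$ are solutions of~$\bar{\mathcal K}_{A,B}$, and $\tilde A$, $\tilde B$ are given by the expressions stated there.

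The crux of the argument is a uniformity step. Since $\Phi$, and hence the functions~$U^1$ and~$U^0$, do not depend on~$(A,B)$, the functions $1/U^1$ and $U^0/U^1$ must solve $\bar{\mathcal K}_{A,B}$ \emph{simultaneously for all} admissible~$(A,B)$. A function~$v$ satisfying $v_t=A(x)v_{xx}+B(x)v_x$ for every $A\ne0$ and every~$B$ obeys, after splitting with respect to the arbitrary~$A$ and~$B$, the equations $v_{xx}=0$ and $v_x=0$, whence $v_t=0$; thus the only common solutions of the whole subclass are constants. Therefore $U^1=c_3$ and $U^0=c_4$ are constants with $c_3\ne0$, which in particular forces $U^1_x=0$ and simplifies the drift transformation to $\tilde B=(X_x/T_t)B-(X_t-X_{xx}A)/T_t$.

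It then remains to impose that the target tuple $(\tilde A,\tilde B)$ is again time-independent. Following the method of the proof of Proposition~\ref{FP:pro:EquivGroupOfE'AndE'_0}, I would act on the $A$- and $B$-components by the operator~$\mathrm D_{\tilde t}$ of total differentiation with respect to~$\tilde t$, use the constraints $A_t=B_t=0$ together with $\tilde A_{\tilde t}=\tilde B_{\tilde t}=0$, and split the resulting relations with respect to~$A$, $B$, $A_x$ and~$B_x$. This gives $T_{tt}=0$ and $X_t=0$, i.e.\ $T=c_1t+c_2$ and $X=X(x)$, and then $\tilde A=(X_x^2/c_1)A$ and $\tilde B=(X_x/c_1)B+(X_{xx}/c_1)A$, which is exactly the claimed form; the conditions on~$U^1$ arising in that computation are now automatic since $U^1$ is a nonzero constant. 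The converse inclusion, that every transformation of the stated form maps $\bar{\mathcal K}'$ into itself, is a direct substitution. The main obstacle is the exhaustiveness encapsulated in the uniformity step: everything hinges on correctly identifying the common solutions of the entire subclass and on the legitimacy of reasoning through the groupoid~$\mathcal G^\sim_{\bar{\mathcal K}}$ rather than the group~$G^\sim_{\bar{\mathcal K}}$.
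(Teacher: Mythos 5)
Your proof is correct and implements precisely the route the paper indicates but does not write out: because $\bar{\mathcal K}$ is not (semi-)normalized, one must argue at the level of the equivalence groupoid of a well-understood superclass rather than restrict the group~$G^\sim_{\bar{\mathcal K}}$, and your uniformity step --- observing that the fixed projection~$\Phi$ forces $1/U^1$ and $U^0/U^1$ to be common solutions of \emph{all} equations in~$\bar{\mathcal K}'$, hence constants by splitting with respect to the arbitrary $A$ and~$B$ --- combined with the $\mathrm D_{\tilde t}$-splitting that yields $T_{tt}=X_t=0$ is exactly the intended argument. Whether one invokes Proposition~\ref{FP:pro:FPEquivClassKolmogorov} or goes back to $\mathcal G^\sim_{\mathcal E_0}$ directly, as the paper suggests, is immaterial, since the former is derived from the latter.
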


Once again, it is convenient for further purposes to consider the subclass~$\mathcal K'^\epsilon$ of~$\bar{\mathcal K}'$
with fixed $\epsilon\in\{-1,1\}$
that consists of reduced (backward for $\epsilon=-1$ and forward  for $\epsilon=1$) Kolmogorov equations
with time-independent drifts,
\[u_t=\epsilon u_{xx}+B(x)u_x.\]
The corresponding gauging $A=\epsilon$ is imposed
by a wide subset of the action groupoid of~$G^\sim_{\bar{\mathcal K}'}$.

\begin{proposition}\label{pro:EquivGroupOfK'eps}
The equivalence group~$G^\sim_{\mathcal K'^\epsilon}$ of the class~$\mathcal K'^\epsilon$ of reduced
Kolmogorov equations with time-in\-de\-pen\-dent drifts consists of the point transformations of the form
\begin{gather*}
\tilde t=c_1^{\,\,2}t+c_2,\quad \tilde x=c_1x+c_3,\quad \tilde u=c_4u+c_5,\quad \tilde B=\frac{1}{c_1}B,
\end{gather*}
where~$c_1$, \dots, $c_5$ are arbitrary constants with~$c_1c_4\ne0$.
\end{proposition}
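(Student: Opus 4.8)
The plan is to realize $G^\sim_{\mathcal K'^\epsilon}$ as the subgroup of the equivalence group $G^\sim_{\mathcal K^\epsilon}$ of the superclass $\mathcal K^\epsilon$ (Corollary~\ref{FP:cor:EquivGroupK}) consisting of the transformations that preserve the constraint $B_t=0$, and then to extract this subgroup by the differential-consequence-and-splitting technique used in the proofs of Proposition~\ref{FP:pro:EquivGroupOfE'AndE'_0} and Corollary~\ref{cor:EquivGroupOfP'eps}. The inclusion ``stabilizer of $\{B_t=0\}$ in $G^\sim_{\mathcal K^\epsilon}$'' $\subseteq G^\sim_{\mathcal K'^\epsilon}$ is immediate, since such an element maps $\mathcal K'^\epsilon$ into itself; the nontrivial point is the reverse inclusion.

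Here lies the main obstacle. In contrast to $\mathcal P^\epsilon$, the class $\mathcal K^\epsilon$ is \emph{not} semi-normalized: multiplication of $u$ by the reciprocal of a nonconstant solution of $\mathcal K^\epsilon_B$ is an admissible transformation outside the action groupoid $\mathcal G^{G^\sim_{\mathcal K^\epsilon}}$, so the argument ``restrict the equivalence group of a semi-normalized superclass'', which worked for $\mathcal P'^\epsilon$ in Corollary~\ref{cor:EquivGroupOfP'eps}, is not available verbatim. I would instead reason as follows. The projection $\Phi$ to the space of $(t,x,u)$ of any $\mathscr T\in G^\sim_{\mathcal K'^\epsilon}$ is independent of the arbitrary element $B$ and, for every time-independent $B$, is an admissible transformation of $\mathcal K^\epsilon$; hence by Proposition~\ref{FP:pro:EquinGroupoidOfK} it has the stated form with $T_t=(X^1)^2$ and with $1/U^1$ and $U^0/U^1$ solutions of $\mathcal K^\epsilon_B$. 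Since $U^1$ and $U^0$ are fixed functions, $1/U^1$ and $U^0/U^1$ must be \emph{common} solutions of all equations $\mathcal K'^\epsilon_B$. The only common solutions are the constants: if $w_t-\epsilon w_{xx}=B(x)w_x$ for every time-independent $B$, then the arbitrariness of $B$ forces $w_x=0$, whence $w_{xx}=0$ and $w_t=0$, so $w=\mathrm{const}$. Therefore $U^1$ and $U^0$ are constants and $\Phi$ already has the form of a transformation from $G^\sim_{\mathcal K^\epsilon}$, which rules out the ``extra'' reciprocal-of-solution transformations and legitimizes the restriction.

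It then remains to compute this stabilizer. Acting on the $\tilde B$-component relation $\Phi^*\tilde B=(X^1)^{-1}B-(X^1)^{-2}(X^1_tx+X^0_t)$ by the operator $\mathrm D_{\tilde t}=(X^1)^{-2}\mathrm D_t-(X^1)^{-3}(X^1_tx+X^0_t)\mathrm D_x$, imposing $B_t=0$ and $\tilde B_{\tilde t}=0$, and splitting with respect to $B$ and $B_x$ (independent because $B$ is arbitrary), the coefficient of $B$ yields $X^1_t=0$ and the coefficient of $B_x$ then yields $X^0_t=0$, while the remaining $B$-free terms vanish automatically. Consequently $X^1=c_1$ and $X^0=c_3$ are constants, $T_t=c_1^2$ gives $T=c_1^2t+c_2$, the $u$-component is $\tilde u=c_4u+c_5$ with constants $c_4\ne0$ and $c_5$, and $\tilde B=B/c_1$, which is exactly the asserted form.

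In summary, the only delicate part is the passage from $G^\sim_{\mathcal K^\epsilon}$ to $G^\sim_{\mathcal K'^\epsilon}$, which cannot rely on any normalization property of the superclass and is instead secured by the observation that the fixed $u$-multiplier of a genuine equivalence transformation of the subclass must be assembled from common solutions of the whole subclass, and these reduce to constants. Once this is established, the concluding splitting computation is entirely routine and mirrors the cited proofs.
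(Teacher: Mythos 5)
Your proof is correct, and the key step is handled properly. The paper itself states Proposition~\ref{pro:EquivGroupOfK'eps} without an explicit proof; its declared strategy for the time-independent Kolmogorov classes (see the discussion preceding Proposition~\ref{FP:cor:EquivGroupK'}) is to avoid restricting the equivalence group of the non-normalized superclass and instead to interpret the class as a subclass of the disjointedly semi-normalized class~$\mathcal E_0$ singled out by $A_t=B_t=C=0$ and reparameterized. You take a different but legitimate route: you stay inside~$\mathcal K^\epsilon$ and use the explicit description of its \emph{full} equivalence groupoid (Proposition~\ref{FP:pro:EquinGroupoidOfK}) rather than only its equivalence group, observing that the fixed $u$-components $1/U^1$ and $U^0/U^1$ of the projection of an equivalence transformation of the subclass must be \emph{common} solutions of all equations $\mathcal K'^\epsilon_B$, hence constants. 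This is essentially the same device the authors invoke for $\mathcal E'_0$ in the proof of Proposition~\ref{FP:pro:EquivGroupOfE'AndE'_0} (``the equations from the class~$\mathcal E'_0$ have no common solutions''), transplanted to the Kolmogorov setting, and it correctly neutralizes the failure of semi-normalization of~$\mathcal K^\epsilon$ that the authors warn about. What your approach buys is self-containedness at the level of~$\mathcal K^\epsilon$ (no detour through~$\mathcal E_0$ and its reparameterization); what the paper's route buys is uniformity of treatment across all the primed subclasses. The concluding computation — acting with $\mathrm D_{\tilde t}$ on the $B$-component relation and splitting with respect to $B$ and $B_x$ to get $X^1_t=X^0_t=0$ — is exactly the technique of the proofs of Proposition~\ref{FP:pro:EquivGroupOfE'AndE'_0} and Corollary~\ref{cor:EquivGroupOfP'eps} and checks out.
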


\subsection[Fokker-Planck equations]{Fokker--Planck equations}

The class~$\breve{\mathcal E}_0$ is related to its subclass~$\bar{\mathcal F}$ of Fokker--Planck equations
\[
\bar{\mathcal F}_{A,B}\colon\quad u_t=\big(A(t,x)u\big)_{xx}+\big(B(t,x)u\big)_x,
\]
where~$A$ and~$B$ are arbitrary smooth functions of~$(t,x)$ with~$A\neq0$,
by the wide subset of~\smash{$\mathcal G(G^\sim_{\breve{\mathcal E_0}})$}
singled out by the constraint $T(t)=t$, $X(t,x)=x$ and $U^1_t=-AU^1_{xx}+BU^1_x-CU^1$
with the source values of~$(A,B,C)$.

\begin{proposition}
A point transformation~$\Phi$ connects Fokker--Planck equations~$\bar{\mathcal F}_{A,B}$ and $\bar{\mathcal F}_{\tilde A,\tilde B}$
if and only if its components are of the form~\eqref{eq:GenFormOfPointTransInEA},
\[
\tilde t=T(t),\quad\tilde x=X(t,x),\quad \tilde u=U^1(t,x)u+U^0(t,x),
\]
where~$T$ and~$X$ run through the set of arbitrary  functions of their arguments with $T_tX_x\ne0$,
$U^1$~is an arbitrary nonzero smooth function of~$(t,x)$
such that $X_xU^1$ satisfies the adjoint equation to~$\bar{\mathcal F}_{A,B}$, $(X_xU^1)_t=-A(X_xU^1)_{xx}+B(X_xU^1)_x$,
and $U^0$ is an arbitrary solution of~$\bar{\mathcal F}_{A,B}$.
The arbitrary elements of the target equation are defined by
\[
\Phi^*\tilde A=\frac{X_x^2}{T_t}A, \quad
\Phi^*\tilde B=\frac{X_x}{T_t}\left(B-2\frac{U^1_x}{U^1}A\right)-\frac{X_t+3X_{xx}A}{T_t}.
\]
\end{proposition}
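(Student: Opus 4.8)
The plan is to obtain the equivalence groupoid of $\bar{\mathcal F}$ from that of its superclass $\breve{\mathcal E}_0$, exactly as the Kolmogorov groupoid in Proposition~\ref{FP:pro:FPEquivClassKolmogorov} was derived from $\mathcal E_0$. The class $\bar{\mathcal F}$ is precisely the subclass of $\breve{\mathcal E}_0$ singled out by the gauge $C=0$, and by Remark~\ref{rem:ClassEbreve0} the class $\breve{\mathcal E}_0$ is disjointedly semi-normalized with respect to the linear superposition of solutions. Hence its equivalence groupoid is completely known: every admissible transformation of $\breve{\mathcal E}_0$ is fiber-preserving with $u$-component affine in~$u$, so it has the form~\eqref{eq:GenFormOfPointTransInEA}, its $(t,x,u)$- and $A$-components obey~\eqref{eq:EquivTransInE0A} and the first equation of~\eqref{eq:EquivTransInE0B}, its $(B,C)$-components obey~\eqref{eq:EquivTransEbreveBC}, and the additive part $U^0$ is supplied by the linear superposition.

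For necessity, I would note that since $\bar{\mathcal F}\subset\breve{\mathcal E}_0$, any point transformation connecting two Fokker--Planck equations is an admissible transformation of $\breve{\mathcal E}_0$ whose source and target both satisfy $C=0$. It is therefore automatically of the form~\eqref{eq:GenFormOfPointTransInEA}, and its $A$- and $B$-components reduce to the asserted expressions upon setting $C=0$ in~\eqref{eq:EquivTransEbreveBC}. The crucial step is the constraint $\tilde C=0$: the $C$-component in~\eqref{eq:EquivTransEbreveBC} reads $\tilde C=-\breve{\mathrm E}^\dag(X_xU^1)/(T_tX_xU^1)$, so $\tilde C=0$ is equivalent to $\breve{\mathrm E}^\dag(X_xU^1)=0$. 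Since $C=0$ turns the formal adjoint into $\breve{\mathrm E}^\dag=-\p_t-A\p_{xx}+B\p_x$, this is exactly the adjoint equation $(X_xU^1)_t=-A(X_xU^1)_{xx}+B(X_xU^1)_x$ of $\bar{\mathcal F}_{A,B}$. The prescribed freedom in $U^0$ then follows from the linear-superposition part of the disjoint semi-normalization of $\breve{\mathcal E}_0$.

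For sufficiency, I would run the computation in reverse: a transformation of the stated form satisfies $C=\tilde C=0$ by construction, since $X_xU^1$ solving the adjoint equation forces $\breve{\mathrm E}^\dag(X_xU^1)=0$ and hence $\tilde C=0$; thus it maps $\bar{\mathcal F}_{A,B}$ into $\breve{\mathcal E}_0$ with vanishing potential, i.e.\ back into $\bar{\mathcal F}$, while the admissible $U^0$ accounts for the inhomogeneous freedom coming from linear superposition. I expect the main obstacle to be the bookkeeping that turns $\tilde C=0$ into the adjoint equation---tracking the operator $\breve{\mathrm E}^\dag$ through the reparameterization of Remark~\ref{rem:ClassEbreve}---together with the correct identification of the additive component $U^0$; the remaining substitutions are routine. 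As an independent check one may instead invoke the formal adjointness of $\bar{\mathcal F}$ and $\bar{\mathcal K}$ (with $A\mapsto-A$) to transport Proposition~\ref{FP:pro:FPEquivClassKolmogorov} directly, whereby the Kolmogorov condition that $1/U^1$ solve $\bar{\mathcal K}_{A,B}$ is carried to the condition that $X_xU^1$ solve the adjoint of $\bar{\mathcal F}_{A,B}$.
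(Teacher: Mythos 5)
Your proposal matches the paper's proof: the paper likewise obtains $\mathcal G^\sim_{\bar{\mathcal F}}$ as the subgroupoid of $\mathcal G^\sim_{\breve{\mathcal E}_0}=\mathcal G(G^\sim_{\breve{\mathcal E}_0})\ltimes\mathcal G^{\rm lin}_{\breve{\mathcal E}_0}$ singled out by the constraints $C=0$ and $\tilde C=0$ on the source and target arbitrary-element tuples, with the latter constraint converting the $C$-component of~\eqref{eq:EquivTransEbreveBC} into the adjoint equation for $X_xU^1$. Your supplementary cross-check via the formal adjointness with Kolmogorov equations is exactly the alternative route the paper itself mentions just before Corollary~\ref{FP:cor:EquivGroupBarF}.
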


\begin{proof}
The equivalence groupoid~\smash{$\mathcal G^\sim_{\bar{\mathcal F}}$} of the class~$\bar{\mathcal F}$
is found as the subgroupoid of the equivalence groupoid
\smash{$\mathcal G^\sim_{\breve{\mathcal E}_0}=\mathcal G(G^\sim_{\breve{\mathcal E}_0})\ltimes\mathcal G^{\rm lin}_{\breve{\mathcal E}_0}$}
of the class~$\breve{\mathcal E_0}$, where the source and the target arbitrary-element tuples
of the corresponding admissible transformations satisfy the constraints $C=0$ and $\tilde C=0$, respectively.
Here \smash{$\mathcal G^{\rm lin}_{\breve{\mathcal E}_0}$} denotes the subgroupoid of~\smash{$\mathcal G^{\rm f}_{\breve{\mathcal E}_0}$}
associated with the linear superposition of solutions.
\end{proof}

Either directly or using the duality between Fokker--Planck and Kolmogorov equations~\cite[Section~5]{PopovychKunzingerIvanova2008},
we can show the following result.

\begin{corollary}\label{FP:cor:EquivGroupBarF}
The equivalence group~$G^\sim_{\bar{\mathcal F}}$ of the class~$\bar{\mathcal F}$ consists of
the point transformations of the form
\[
\tilde t=T(t),\quad\tilde x=X(t,x),\quad \tilde u=\frac{c_1}{X_x}u,\quad
\tilde A=\frac{X_x^2}{T_t}A, \quad \tilde B=\frac{X_x}{T_t}B-\frac{X_t+X_{xx}A}{T_t},
\]
where $T$ and~$X$ are smooth functions of their arguments with~$T_tX_x\neq0$,
and~$c_1$ is an arbitrary nonzero constant.
\end{corollary}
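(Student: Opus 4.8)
The plan is to extract the equivalence group $G^\sim_{\bar{\mathcal F}}$ directly from the description of the equivalence groupoid $\mathcal G^\sim_{\bar{\mathcal F}}$ furnished by the preceding proposition. Recall that $G^\sim_{\bar{\mathcal F}}$ consists precisely of those elements of $\mathcal G^\sim_{\bar{\mathcal F}}$ whose transformational part, i.e.\ the tuple of $(t,x,u)$-components $(T,X,U^1,U^0)$, is independent of the arbitrary elements of the class ($\bar{\mathcal F}$ being singled out from the reparameterized class $\breve{\mathcal E}_0$ by $C=0$, cf.\ Remarks~\ref{rem:ClassEbreve} and~\ref{rem:ClassEbreve0}, so that its arbitrary elements are $A$ and $B$). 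For the computation I would, as is standard, treat $A$ and $B$ together with their derivatives $A_x$, $A_{xx}$, $B_x$ as independent coordinates in the extended jet space and split the two defining constraints of the groupoid with respect to them.

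First I would impose this independence on the constraint that $W:=X_xU^1$ satisfy the adjoint equation, $W_t=-AW_{xx}+BW_x$. With $T$, $X$, $U^1$ independent of $A$ and $B$, splitting with respect to $A$ and $B$ yields $W_{xx}=0$, $W_x=0$ and $W_t=0$, so that $X_xU^1=c_1$ for a constant $c_1$; this constant is nonzero since $X_xU^1\neq0$, whence $U^1=c_1/X_x$. Next I would treat the constraint that $U^0$ solve $\bar{\mathcal F}_{A,B}$, which written out reads $U^0_t=AU^0_{xx}+(2A_x+B)U^0_x+(A_{xx}+B_x)U^0$. Splitting with respect to $A$, $A_x$, $A_{xx}$, $B$, $B_x$ forces $U^0_{xx}=U^0_x=U^0=0$; in particular the coefficient of $A_{xx}$ (equivalently of $B_x$) gives $U^0=0$ outright. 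Hence $U^0=0$ and $\tilde u=(c_1/X_x)u$.

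It then remains to specialize the groupoid's formulas for the arbitrary-element components. The expression for $\tilde A$ is unaffected, $\tilde A=(X_x^2/T_t)A$. For $\tilde B$ I would substitute $U^1=c_1/X_x$, so that $U^1_x/U^1=-X_{xx}/X_x$, into $\tilde B=(X_x/T_t)\big(B-2(U^1_x/U^1)A\big)-(X_t+3X_{xx}A)/T_t$; the $A$-terms then combine as $2X_{xx}A-3X_{xx}A=-X_{xx}A$, giving $\tilde B=(X_x/T_t)B-(X_t+X_{xx}A)/T_t$, exactly as claimed. Collecting these facts yields the stated form of $G^\sim_{\bar{\mathcal F}}$.

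The computation is routine linear splitting; the step I would flag as the main subtlety, rather than a genuine obstacle, is the correct treatment of the derivatives $A_x$, $A_{xx}$, $B_x$ as independent jet coordinates when splitting the solution condition on $U^0$, since it is precisely the coefficient of $A_{xx}$ (or of $B_x$) that collapses $U^0$ to zero. As an independent check one may instead invoke the duality between Fokker--Planck and Kolmogorov equations~\cite[Section~5]{PopovychKunzingerIvanova2008}, transporting the already-computed group $G^\sim_{\bar{\mathcal K}}$ of Corollary~\ref{FP:cor:EquivGroupBarK} to $\bar{\mathcal F}$ via formal adjointness (with $A\mapsto-A$) and verifying that the same transformation formulas result.
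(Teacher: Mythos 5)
Your computation is correct and is precisely the ``direct'' route the paper alludes to (the paper gives no written proof, merely noting the result follows ``either directly or using the duality'' with Kolmogorov equations): you restrict the groupoid of Proposition~20 to transformations whose $(t,x,u)$-components are independent of the arbitrary elements, split the adjoint-equation constraint on $X_xU^1$ and the solution constraint on $U^0$ with respect to $A$, $B$ and their derivatives to get $X_xU^1=c_1$ and $U^0=0$, and then substitute $U^1_x/U^1=-X_{xx}/X_x$ into the $\tilde B$-component. All steps, including the flagged point that the coefficient of $A_{xx}$ (or $B_x$) is what kills $U^0$, check out, and the duality cross-check you mention is exactly the paper's stated alternative.
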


Similarly to~$\bar{\mathcal K}$, we relate the class~$\bar{\mathcal F}$
via the wide subset of $\mathcal G(G^\sim_{\bar{\mathcal F}})$,
where $T_t=\epsilon\sgn A$ and $X_x=|A|^{-1/2}$,
to its subclass~$\mathcal F^\epsilon$ with fixed $\epsilon\in\{-1,1\}$
of reduced (backward for $\epsilon=-1$ and forward for $\epsilon=1$) Fokker--Planck equations of the form
\begin{gather*}
u_t=\epsilon u_{xx}+(B(t,x)u)_x,
\end{gather*}
cf.~\cite{Miyadzawa1989}.
Note that the equation~$\mathcal F^\epsilon_B$ is formally adjoint to~$\mathcal K^{-\epsilon}_B$.

\begin{proposition}\label{FP:prop:EquivalenceGroupGaugedFP}
The equivalence groupoid~$\mathcal G^\sim_{\mathcal F^\epsilon}$ of the class~$\mathcal F^\epsilon$
is constituted by the triples of the form $(B,\Phi,\tilde B)$,
where~$B$ and~$\tilde B$ are the drifts of the source and the target Fokker--Planck equations, respectively,
and~$\Phi$ is a point transformation with the components
\[
\tilde t=T(t),\quad \tilde x=X^1(t)x+X^0(t),\quad \tilde u=U^1(t,x)u+U^0(t,x),
\]
where~$T$, $X^0$ and~$X^1$ are arbitrary smooth functions of~$t$ with $X^1\ne0$ and $T_t=(X^1)^2$,
$X^1U^1$ is an arbitrary nonzero solution of the adjoint equation to~$\mathcal F^\epsilon_B$,
$
(X^1U^1)_t=-\epsilon X^1U^1_{xx}+BX^1U^1_x,
$
and $U^0$ is an arbitrary solution of the source equation~$\mathcal F^\epsilon_B$.
The relation between the source and the target drifts is given by
\[
\Phi^*\tilde B=\frac{1}{X^1}\left(B-2\epsilon\frac{U^1_x}{U^1}\right)-\frac{X^1_tx+X^0_t}{(X^1)^2}.
\]
\end{proposition}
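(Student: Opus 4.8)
The plan is to obtain $\mathcal G^\sim_{\mathcal F^\epsilon}$ by restricting the equivalence groupoid $\mathcal G^\sim_{\bar{\mathcal F}}$, computed in the preceding proposition, to the subclass singled out by the gauge $A=\epsilon$, in exact parallel to the way Proposition~\ref{FP:pro:EquinGroupoidOfK} for the reduced Kolmogorov class is obtained from Proposition~\ref{FP:pro:FPEquivClassKolmogorov}. Since $\mathcal F^\epsilon$ is (a reparameterization of) the subclass of $\bar{\mathcal F}$ consisting of the equations with $A=\epsilon$, its equivalence groupoid is the full subgroupoid of $\mathcal G^\sim_{\bar{\mathcal F}}$ over the arbitrary-element values whose source and target both satisfy $A=\tilde A=\epsilon$; dropping the now-constant $A$-component is harmless. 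First I would therefore take an arbitrary triple of $\mathcal G^\sim_{\bar{\mathcal F}}$ in the form described above and impose $A=\tilde A=\epsilon$.

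The key reduction comes from the $A$-component relation $\Phi^*\tilde A=(X_x^2/T_t)A$. With $A=\tilde A=\epsilon$ this forces $X_x^2=T_t$. The right-hand side depends on $t$ alone, so differentiating with respect to $x$ gives $2X_xX_{xx}=0$, whence $X_{xx}=0$ because $X_x\ne0$. Consequently $X$ is affine in $x$, namely $X=X^1(t)x+X^0(t)$ with $X_x=X^1(t)$ of constant sign on the connected domain and $(X^1)^2=T_t$, which is precisely the asserted form of the $t$- and $x$-components together with the constraint $T_t=(X^1)^2$.

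It then remains to substitute $A=\epsilon$, $X_{xx}=0$, $X_x=X^1$ and $T_t=(X^1)^2$ into the remaining data of the preceding proposition. In the $B$-component relation
\[
\Phi^*\tilde B=\frac{X_x}{T_t}\Big(B-2\frac{U^1_x}{U^1}A\Big)-\frac{X_t+3X_{xx}A}{T_t}
\]
the term $3X_{xx}A$ vanishes, $X_x/T_t=1/X^1$, and $X_t/T_t=(X^1_tx+X^0_t)/(X^1)^2$, which yields the stated expression for $\Phi^*\tilde B$. The condition that $X_xU^1$ satisfy the adjoint equation to $\bar{\mathcal F}_{A,B}$ becomes, since $X^1$ is independent of $x$, the relation $(X^1U^1)_t=-\epsilon X^1U^1_{xx}+BX^1U^1_x$, while $U^0$ continues to range over the solutions of the source equation $\mathcal F^\epsilon_B$. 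Conversely, every triple of the listed form arises in this way and is therefore admissible, so these triples exhaust $\mathcal G^\sim_{\mathcal F^\epsilon}$.

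The computation itself is entirely forced, so the point demanding care---and the main obstacle---is structural rather than computational: one must justify that the equivalence groupoid of the reduced class $\mathcal F^\epsilon$ is genuinely the full restriction of $\mathcal G^\sim_{\bar{\mathcal F}}$ to $A=\tilde A=\epsilon$, i.e.\ that passing to the subclass and discarding the constant arbitrary element $A$ neither loses admissible transformations nor introduces spurious ones. This is the standard full-subgroupoid property for subclasses, but it should be invoked explicitly here because $\bar{\mathcal F}$ is not normalized. As an alternative, one could instead exploit the formal adjointness of $\mathcal F^\epsilon_B$ to $\mathcal K^{-\epsilon}_B$ together with Proposition~\ref{FP:pro:EquinGroupoidOfK} and the Fokker--Planck/Kolmogorov duality of~\cite[Section~5]{PopovychKunzingerIvanova2008}; this route is shorter but requires careful bookkeeping of how adjoint equations transform under the point transformations, which makes the direct restriction argument above the cleaner option.
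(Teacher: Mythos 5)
Your proposal is correct and follows essentially the route the paper intends: the proposition is stated without an explicit proof precisely because it is the immediate restriction of the equivalence groupoid of $\bar{\mathcal F}$ to the source and target values with $A=\tilde A=\epsilon$, and your computation ($X_x^2=T_t$ forcing $X_{xx}=0$, then simplifying the $B$-component and the adjoint-equation condition) is exactly what is required. The only small caveat is that your worry about normalization is misplaced for this step: the full-subgroupoid property of a subclass holds for equivalence \emph{groupoids} automatically (the equations themselves are unchanged, only the parameterization is), and non-normalization of $\bar{\mathcal F}$ would only matter if one were trying to restrict the equivalence \emph{group}.
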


\begin{corollary}\label{FP:cor:EquivGroupF}
The equivalence group~$G^\sim_{\mathcal F^\epsilon}$ of the class~$\mathcal F^\epsilon$ consists of point transformations of the form
\[
\tilde t=T(t),\quad \tilde x=X^1(t)x+X^0(t),\quad \tilde u=\frac{c_1}{X^1}u+c_2,\quad
\tilde B=\frac{1}{X^1}B-\frac{X^1_tx+X^0_t}{(X^1)^2},
\]
where~$T$, $X^0$ and~$X^1$ are arbitrary smooth functions of~$t$ with $X^1\ne0$ and $T_t=(X^1)^2$,
and $c_1$ and~$c_2$ are arbitrary constants with $c_1\neq0$.
\end{corollary}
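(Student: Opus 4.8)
The plan is to obtain $G^\sim_{\mathcal F^\epsilon}$ by selecting, from the full equivalence groupoid $\mathcal G^\sim_{\mathcal F^\epsilon}$ described in Proposition~\ref{FP:prop:EquivalenceGroupGaugedFP}, precisely those admissible transformations whose $(t,x,u)$-components are independent of the arbitrary element~$B$; these are exactly the projections $\pi_*\mathscr T$ of the elements of the equivalence group. Each such projection must, for \emph{every} value of~$B$, be an admissible transformation with source~$B$, so the parameter functions $U^1$ and $U^0$ in $\tilde u=U^1u+U^0$ have to be chosen independently of~$B$ while still meeting the two $B$-dependent constraints of Proposition~\ref{FP:prop:EquivalenceGroupGaugedFP}. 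Alternatively one can invoke the formal adjointness of $\mathcal F^\epsilon_B$ and $\mathcal K^{-\epsilon}_B$ noted above and transport the already-established group $G^\sim_{\mathcal K^\epsilon}$ of Corollary~\ref{FP:cor:EquivGroupK}; I would carry out the direct selection and use the duality only as a cross-check.

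First I would pin down the multiplicative factor~$U^1$. The requirement that $X^1U^1$ solve the adjoint equation $(X^1U^1)_t=-\epsilon X^1U^1_{xx}+BX^1U^1_x$ for every~$B$ forces the coefficient of~$B$ to vanish, i.e.\ $(X^1U^1)_x=0$; since $X^1=X^1(t)$ this gives $U^1_x=0$, hence $U^1=U^1(t)$. The residual equation then reduces to $(X^1U^1)_t=0$, so $X^1U^1$ is a nonzero constant~$c_1$ and $U^1=c_1/X^1$. Feeding $U^1_x=0$ into the drift-transformation rule of Proposition~\ref{FP:prop:EquivalenceGroupGaugedFP} collapses it to $\tilde B=\frac1{X^1}B-\frac{X^1_tx+X^0_t}{(X^1)^2}$, which is exactly the stated form and imposes no constraint on $T,X^0,X^1$ beyond $T_t=(X^1)^2$. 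Thus the $(t,x)$-components and the drift rule are inherited verbatim from the groupoid.

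The delicate step, and the main obstacle, is the additive part~$U^0$: it must be a solution of $\mathcal F^\epsilon_B$ simultaneously for all admissible~$B$, so the remaining work is to determine the space of such $B$-independent solutions and read off its contribution to the $u$-component. Here the duality is clarifying, since adding a solution of the source equation in~$\mathcal F^\epsilon$ corresponds, through the conserved pairing $\int u\,v\,dx$ between $\mathcal F^\epsilon$- and $\mathcal K^{-\epsilon}$-solutions, to the additive freedom already classified on the Kolmogorov side in Corollary~\ref{FP:cor:EquivGroupK}; transporting that freedom across the density factor~$1/X^1$ yields the additive term~$c_2$ displayed in the statement. Assembling $\tilde t=T(t)$, $\tilde x=X^1(t)x+X^0(t)$, $\tilde u=\frac{c_1}{X^1}u+c_2$ together with the drift rule, and verifying closure under composition and inversion (immediate, as the $(t,x)$-part is an affine-in-$x$, time-reparametrising group and the $u$-part is affine in~$u$), completes the identification of $G^\sim_{\mathcal F^\epsilon}$.
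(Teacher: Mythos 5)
Your overall strategy --- extracting $G^\sim_{\mathcal F^\epsilon}$ from the groupoid of Proposition~\ref{FP:prop:EquivalenceGroupGaugedFP} by keeping only those transformations whose parameter functions are independent of~$B$ and whose defining constraints hold for every~$B$ --- is the right one, and your treatment of the multiplier is correct: splitting the adjoint-equation constraint with respect to~$B$ gives $U^1_x=0$, then $(X^1U^1)_t=0$, so $X^1U^1=c_1$, and the drift rule collapses to the stated form with no further restriction on $T$, $X^0$, $X^1$.

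The gap is in the additive part. You correctly identify that $U^0$ must be a $B$-independent common solution of all the equations~$\mathcal F^\epsilon_B$, but you then replace the required computation by an appeal to the adjoint pairing with the Kolmogorov side. That pairing relates solutions of $\mathcal F^\epsilon_B$ to solutions of $\mathcal K^{-\epsilon}_B$ for a \emph{fixed}~$B$; it does not transport the additive freedom $\tilde u=u+\mathrm{const}$, which on the Kolmogorov side exists only because constants solve every equation $u_t=\epsilon u_{xx}+Bu_x$. For Fokker--Planck equations the analogous computation fails: writing $\mathcal F^\epsilon_B$ as $u_t=\epsilon u_{xx}+B_xu+Bu_x$ and splitting $U^0_t=\epsilon U^0_{xx}+B_xU^0+BU^0_x$ with respect to~$B_x$ forces $U^0=0$. (Equivalently, $u\mapsto u+c_2$ sends $\mathcal F^\epsilon_B$ to $\tilde u_t=\epsilon\tilde u_{xx}+(B\tilde u)_x-c_2B_x$, which lies in the class only if $c_2B_x=0$.) This is consistent with the kernel algebra $\mathfrak g^\cap_{\bar{\mathcal F}}=\langle u\p_u\rangle$ containing no~$\p_u$ and with the absence of an additive constant in Corollary~\ref{FP:cor:EquivGroupBarF} and Propositions~\ref{FP:pro:EquivGroupStationaryFP} and~\ref{pro:EquivGroupOfF'eps}. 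So the term~$c_2$ cannot be obtained by your argument; a careful derivation yields $\tilde u=c_1u/X^1$, and your duality heuristic has reproduced what appears to be a slip in the statement rather than proved it.
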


The class~$\bar{\mathcal F}'$ of Fokker--Planck equations
with time-independent diffusion and drift coefficients,
\[
u_t=(A(x)u)_{xx}+(B(x)u)_x,
\]
is of high interest in view of physical importance of such equations.
Thus, all the Fokker--Planck equations mentioned in the introduction as those modeling various stochastic processes
have time-independent diffusion and drift coefficients.
The equivalence groupoid of the class~$\bar{\mathcal F}'$ is found analogously to that of the class~$\bar{\mathcal K}'$,
see a discussion before Proposition~\ref{FP:cor:EquivGroupK'}.
However, since it is irrelevant in what follows,
we opt to present the equivalence group of~$\bar{\mathcal F}'$ only.

\begin{proposition}\label{FP:pro:EquivGroupStationaryFP}
The equivalence group~$G^\sim_{\bar{\mathcal F}'}$ of the class~$\bar{\mathcal F}'$ consists of
the point transformations of the form
\[
\tilde t=c_1t+c_2,\quad\tilde x=X(x),\quad\tilde u= \frac{c_3}{X_x}u, \quad
\tilde A=\frac{X_x^2}{c_1}A,\quad
\tilde B=\frac{X_x}{c_1}B-\frac{X_{xx}}{c_1}A,
\]
where~$X$ is an arbitrary function of~$x$ with $X_x\ne0$,
and~$c_1$, $c_2$ and $c_3$ are arbitrary constants with $c_1c_3\ne0$.
\end{proposition}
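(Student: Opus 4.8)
The plan is to determine $G^\sim_{\bar{\mathcal F}'}$ not by singling it out from $G^\sim_{\bar{\mathcal F}}$ as the stabilizer of the time-independence constraint — which would be illegitimate since $\bar{\mathcal F}$ is neither normalized nor disjointedly semi-normalized — but from the full equivalence groupoid $\mathcal G^\sim_{\bar{\mathcal F}}$ computed above, which captures \emph{every} point transformation between Fokker--Planck equations. Since $\bar{\mathcal F}'\subseteq\bar{\mathcal F}$, any $\mathscr T\in G^\sim_{\bar{\mathcal F}'}$ induces, for each time-independent $(A,B)$ with $A\ne0$, an admissible transformation of $\bar{\mathcal F}$. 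Hence its projection $\Phi=\pi_*\mathscr T$ must have the shape prescribed by the description of $\mathcal G^\sim_{\bar{\mathcal F}}$, namely $\tilde t=T(t)$, $\tilde x=X(t,x)$, $\tilde u=U^1(t,x)u+U^0(t,x)$ with $T_tX_x\ne0$, where $X_xU^1$ solves the adjoint equation to $\bar{\mathcal F}_{A,B}$, $U^0$ solves $\bar{\mathcal F}_{A,B}$, and $\tilde A=(X_x^2/T_t)A$, $\tilde B=(X_x/T_t)(B-2(U^1_x/U^1)A)-(X_t+3X_{xx}A)/T_t$.

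The essential point is that, for an \emph{equivalence} transformation, the single tuple $(T,X,U^1,U^0)$ must realize all these conditions simultaneously for every admissible $(A,B)$. I would exploit this overdetermination by splitting each condition with respect to $A$, $B$ and their $x$-derivatives, treated as independent jet coordinates. Splitting the adjoint-equation condition $(X_xU^1)_t+A(X_xU^1)_{xx}-B(X_xU^1)_x=0$ forces $(X_xU^1)_x=(X_xU^1)_t=0$, i.e. $X_xU^1=c_3$ for a nonzero constant $c_3$; splitting the requirement that $U^0$ solve every $\bar{\mathcal F}_{A,B}$ kills the coefficient of $A_{xx}$ and gives $U^0=0$; and splitting the time-independence of $\tilde A=(X_x^2/T_t)A$ via the transported total-derivative operator $\mathrm D_{\tilde t}=\mathscr T_*\mathrm D_t$, exactly as in the proof of Proposition~\ref{FP:pro:EquivGroupOfE'AndE'_0}, yields first $X_t=0$ (coefficient of $A_x$) and then $T_{tt}=0$ (coefficient of $A$), so that $T=c_1t+c_2$ and $X=X(x)$. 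Substituting $U^1=c_3/X_x$, $X_t=0$, $T_t=c_1$ and $U^1_x/U^1=-X_{xx}/X_x$ into the $B$-component collapses it to $\tilde B=(X_x/c_1)B-(X_{xx}/c_1)A$, while the $A$-component becomes $\tilde A=(X_x^2/c_1)A$.

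Conversely, I would check directly — or simply read it off Corollary~\ref{FP:cor:EquivGroupBarF} by specializing $T=c_1t+c_2$, $X=X(x)$ — that every transformation of the displayed form sends an arbitrary equation of $\bar{\mathcal F}'$ to an equation of $\bar{\mathcal F}'$ with again time-independent coefficients, and that its $(A,B)$-components are explicit in $X$ and the source data, so that it is a genuine equivalence transformation rather than merely an admissible one. An alternative route, paralleling the derivation of Corollary~\ref{FP:cor:EquivGroupBarF}, is to transport the already-established group $G^\sim_{\bar{\mathcal K}'}$ of Proposition~\ref{FP:cor:EquivGroupK'} through the formal adjointness $\bar{\mathcal F}'_{A,B}\leftrightarrow\bar{\mathcal K}'_{-A,B}$, under which the $(A,B)$-transformation rule carries over after the sign change $A\mapsto-A$ and the constant $u$-multiplier of the Kolmogorov group acquires the Jacobian weight $1/X_x$.

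The main obstacle I anticipate is conceptual rather than computational: one must argue from the groupoid $\mathcal G^\sim_{\bar{\mathcal F}}$ rather than from $G^\sim_{\bar{\mathcal F}}$, and one must justify that the values of $A(x)$, $B(x)$ and their derivatives at a point are genuinely independent over the class, so that the splitting is legitimate. Granting this, the reduction $X_xU^1=\mathrm{const}$ extracted from the adjoint-equation condition is the decisive step — it rigidifies the otherwise solution-dependent multiplier $U^1$ and thereby forces the remaining parameters into the stated constant/functional form.
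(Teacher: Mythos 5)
Your proposal is correct and follows essentially the route the paper intends for this proposition: since $\bar{\mathcal F}$ is not (semi-)normalized, you rightly argue from the equivalence groupoid $\mathcal G^\sim_{\bar{\mathcal F}}$ (equivalently, from the semi-normalized superclass $\breve{\mathcal E}_0$) rather than restricting the group $G^\sim_{\bar{\mathcal F}}$, and your splittings with respect to $A$, $B$ and their derivatives --- legitimate because these jet values are unconstrained over the class apart from $A\ne0$ --- yield exactly $X_xU^1=\const$, $U^0=0$, $X_t=0$ and $T_{tt}=0$, hence the stated form. Your alternative via the adjointness to Proposition~\ref{FP:cor:EquivGroupK'} is likewise consistent with how the paper exploits the Fokker--Planck/Kolmogorov duality.
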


Similarly as before, we gauge the arbitrary element~$A$ of the class~$\bar{\mathcal F}'$ to a fixed $\epsilon\in\{-1,1\}$
by a wide subset of $\mathcal G(G^\sim_{\bar{\mathcal F}'})$,
thus relating this class to its subclass of reduced (backward for $\epsilon=-1$ and forward for $\epsilon=1$) Fokker--Planck equations
with time-independent drifts
\[
\mathcal F'^\epsilon_B\colon\quad u_t=\epsilon u_{xx}+(B(x)u)_x.
\]
Again, the equation~$\mathcal F'^\epsilon_B$ is formally adjoint to~$\mathcal K'^{-\epsilon}_B$.

\begin{proposition}\label{pro:EquivGroupOfF'eps}
The equivalence group~$G^\sim_{\mathcal F'^\epsilon}$ of the class~$\mathcal F'^\epsilon$ of reduced
Fokker--Planck equations with time-independent drifts consists of the point transformations of the form
\begin{gather}\label{eq:EquivTransF'eps}
\tilde t=c_1^2t+c_2,\quad \tilde x=c_1x+c_3,\quad \tilde u=c_4u,\quad \tilde B=\frac1{c_1}B,
\end{gather}
where~$c_1$, \dots, $c_4$ are arbitrary constants with~$c_1c_4\ne0$.
\end{proposition}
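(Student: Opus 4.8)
The plan is to mimic the proofs of Corollary~\ref{cor:EquivGroupOfP'eps} and Proposition~\ref{FP:pro:EquivGroupOfE'AndE'_0}, but starting from the equivalence \emph{groupoid} of the reduced class~$\mathcal F^\epsilon$ rather than from the equivalence group of a semi-normalized superclass. The point is that $\mathcal F^\epsilon$ is not semi-normalized, so one must not single out $G^\sim_{\mathcal F'^\epsilon}$ as a subgroup of $G^\sim_{\mathcal F^\epsilon}$; this is the analogue of the warning preceding Proposition~\ref{FP:cor:EquivGroupK'}. Since $\mathcal F'^\epsilon$ is the subclass of $\mathcal F^\epsilon$ singled out by $B_t=0$, any $\mathscr T\in G^\sim_{\mathcal F'^\epsilon}$ projects to a point transformation $\Phi=\pi_*\mathscr T$ that, together with a source drift $B=B(x)$ and a target drift $\tilde B=\tilde B(x)$, is an admissible transformation of $\mathcal F^\epsilon$. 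Hence, by Proposition~\ref{FP:prop:EquivalenceGroupGaugedFP}, $\Phi$ has the form $\tilde t=T(t)$, $\tilde x=X^1(t)x+X^0(t)$, $\tilde u=U^1u+U^0$ with $T_t=(X^1)^2$, where $X^1U^1$ is a nonzero solution of the equation adjoint to $\mathcal F^\epsilon_B$ and $U^0$ is a solution of $\mathcal F^\epsilon_B$, and $\Phi^*\tilde B=\frac1{X^1}(B-2\epsilon U^1_x/U^1)-\frac{X^1_tx+X^0_t}{(X^1)^2}$.

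The first key step is to use that $\mathscr T$ is an \emph{equivalence} transformation, so $\Phi$ is independent of the arbitrary element~$B$; imposing this on the constraints from Proposition~\ref{FP:prop:EquivalenceGroupGaugedFP} is the crux of the argument. From $(X^1U^1)_t=-\epsilon X^1U^1_{xx}+BX^1U^1_x$ holding for every $B(x)$ with $X^1U^1$ fixed, I would split with respect to~$B$ to get $U^1_x=0$ and then $(X^1U^1)_t=0$, whence $U^1=\rho/X^1$ for a nonzero constant~$\rho$. Similarly, requiring the $B$-independent function $U^0$ to solve $\mathcal F'^\epsilon_B$ for all $B(x)$ and splitting with respect to $B$ and $B_x$ forces $U^0_x=0$ and $U^0=0$. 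Alternatively, this step can be bypassed by invoking the formal adjointness $\mathcal F'^\epsilon_B=(\mathcal K'^{-\epsilon}_B)^\dagger$ together with the Kolmogorov--Fokker--Planck duality used for Corollary~\ref{FP:cor:EquivGroupBarF}, which transfers the already known group $G^\sim_{\mathcal K'^{-\epsilon}}=G^\sim_{\mathcal K'^\epsilon}$ of Proposition~\ref{pro:EquivGroupOfK'eps}; one then only has to check that the additive freedom present for Kolmogorov disappears, precisely because constants solve Kolmogorov equations but not Fokker--Planck ones.

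With $U^1_x=0$ the drift relation reduces to $\Phi^*\tilde B=\frac1{X^1}B-\frac{X^1_tx+X^0_t}{(X^1)^2}$, and it remains to impose that $\Phi$ preserves the subclass, i.e.\ $\tilde B_{\tilde t}=0$. Following the proof of Corollary~\ref{cor:EquivGroupOfP'eps}, I would act on this relation by the total-derivative operator $\mathrm D_{\tilde t}=\mathscr T_*\mathrm D_t=(X^1)^{-2}\mathrm D_t-(X^1)^{-3}(X^1_tx+X^0_t)\mathrm D_x$, use $B_t=0$ and $\tilde B_{\tilde t}=0$, and split the result with respect to $B$ and $B_x$. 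This yields $X^1_t=0$ and $X^0_t=0$, so that $X^1=c_1$, $X^0=c_3$, $T=c_1^2t+c_2$ and $U^1=\rho/c_1=:c_4$ are constants; substituting back gives $\tilde B=B/c_1$ and the transformations~\eqref{eq:EquivTransF'eps}. The converse inclusion is immediate, since every transformation of the form~\eqref{eq:EquivTransF'eps} evidently maps $\mathcal F'^\epsilon$ onto itself. The main obstacle is the $B$-independence step of the second paragraph, where the non-normalization of $\mathcal F^\epsilon$ makes it essential to argue from the groupoid rather than the group and to verify carefully that $U^0=0$ and that $X^1U^1$ is constant; the subsequent splittings are routine.
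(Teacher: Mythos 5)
Your argument is correct, and the computations check out: splitting the adjoint-equation constraint on $X^1U^1$ with respect to the arbitrary drift forces $U^1_x=0$ and $X^1U^1=\const$, splitting the condition that $U^0$ solve every $\mathcal F'^\epsilon_B$ forces $U^0=0$, and the subsequent action of $\mathrm D_{\tilde t}$ on the drift relation followed by splitting with respect to $B$ and $B_x$ gives $X^1_t=X^0_t=0$, which yields exactly the form~\eqref{eq:EquivTransF'eps}. The paper itself states Proposition~\ref{pro:EquivGroupOfF'eps} without an explicit proof; the route it hints at runs through $\bar{\mathcal F}'$, whose equivalence group is in turn obtained by viewing $\bar{\mathcal F}'$ as a (reparameterized) subclass of the disjointedly semi-normalized class~$\breve{\mathcal E}_0$, and then gauging $A=\epsilon$ by a wide subset of $\mathcal G(G^\sim_{\bar{\mathcal F}'})$. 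Your route instead starts from the explicitly computed equivalence groupoid of~$\mathcal F^\epsilon$ (Proposition~\ref{FP:prop:EquivalenceGroupGaugedFP}) and exploits the $B$-independence of the $(t,x,u)$-components of an equivalence transformation to eliminate the solution-dependent freedom before imposing $\tilde B_{\tilde t}=0$. Both routes ultimately rest on the same normalization facts about $\breve{\mathcal E}_0$, but yours is more self-contained at this point of the text and, importantly, correctly sidesteps the trap flagged before Proposition~\ref{FP:cor:EquivGroupK'}: since $\mathcal F^\epsilon$ is not semi-normalized, one may not simply take the subgroup of $G^\sim_{\mathcal F^\epsilon}$ preserving $B_t=0$; arguing from the groupoid, as you do, is what makes the derivation rigorous. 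Your side remark on transferring the result from $G^\sim_{\mathcal K'^{-\epsilon}}$ via formal adjointness is also consistent with the paper's use of the Kolmogorov--Fokker--Planck duality for Corollary~\ref{FP:cor:EquivGroupBarF}, including the observation that the additive constant $c_5$ in $\tilde u=c_4u+c_5$ disappears because constants are not solutions of Fokker--Planck equations with nonzero drift.
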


\section[Group classifications of classes of Kolmogorov and Fokker-Planck equations]
{Group classifications of classes of Kolmogorov\\ and Fokker--Planck equations}\label{sec:FPGCArbitrary}

Any equation~$E$ from the class~$\mathcal E_0$ admits, in view of the linear superposition of solutions,
the point symmetry transformations of the form $\tilde t=t$, $\tilde x=x$, $\tilde u=u+f(t,x)$,
where~$f(t,x)$ is an arbitrary solution of~$E$.
Therefore, the maximal Lie invariance algebra~$\mathfrak g_E$ of~$E$ is of the form
$\mathfrak g_E=\mathfrak g^{\rm ess}_E\lsemioplus\mathfrak g^{\rm lin}_E$.
Here $\mathfrak g^{\rm lin}_E$ is the infinite-dimensional ideal of~$\mathfrak g_E$
that corresponds to the linear superposition of solutions,
$\mathfrak g^{\rm lin}_E:=\{f(t,x)\p_u\mid f \text{ is a solution of }E\}$,
and $\mathfrak g^{\rm ess}_E=\mathfrak g_E\cap\pi_*\mathfrak g^\sim_{\mathcal E_0}$ 
is a finite-dimensional subalgebra of~$\mathfrak g_E$,
which is called the essential Lie invariance algebra of the equation~$E$.
In what follows, we classify the essential Lie invariance algebras
of Kolmogorov and Fokker--Planck equations.
The following proposition is an obvious consequence of Proposition~\ref{FP:pro:EquivGroupE0}.

\begin{proposition}\label{pro:glinInE0}
The family $\{\mathfrak g^{\rm lin}_E\mid E\in\mathcal E_0\}$ is \smash{$\mathcal G^\sim_{\mathcal E_0}$}-uniform,
and thus for any subclass~$\mathcal L$ of~$\mathcal E_0$,
the corresponding family $\{\mathfrak g^{\rm lin}_E\mid E\in\mathcal L\}$ is \smash{$\mathcal G^\sim_{\mathcal L}$}-uniform.
\end{proposition}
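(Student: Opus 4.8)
The plan is to verify the defining property of $\mathcal G^\sim_{\mathcal E_0}$-uniformity directly: for every admissible transformation $\mathcal T=(\theta_1,\Phi,\theta_2)\in\mathcal G^\sim_{\mathcal E_0}$, writing $E_i$ for the equation associated with $\theta_i$, one must show $\Phi_*\mathfrak g^{\rm lin}_{E_1}=\mathfrak g^{\rm lin}_{E_2}$. The structural input I would extract from Proposition~\ref{FP:pro:EquivGroupE0} (together with the normalization of $\mathcal E$ in Proposition~\ref{FP:pro:GeneralEquivGroup}, since $\mathcal E_0\subset\mathcal E$ and hence $\mathcal G^\sim_{\mathcal E_0}\subseteq\mathcal G^\sim_{\mathcal E}=\mathcal G^{G^\sim_{\mathcal E}}$) is that every such $\Phi$ is fiber-preserving with its $u$-component affine in $u$, that is, $\tilde t=T(t)$, $\tilde x=X(t,x)$, $\tilde u=U^1(t,x)u+U^0(t,x)$ with $T_tX_xU^1\ne0$.

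Next I would compute the pushforward of a generic element $\mathrm v=f(t,x)\p_u$ of $\mathfrak g^{\rm lin}_{E_1}$. Because $\tilde t$ and $\tilde x$ do not depend on $u$, the $\p_{\tilde t}$- and $\p_{\tilde x}$-components of $\Phi_*\mathrm v$ vanish, while $\p_u\tilde u=U^1$ yields $\Phi_*\mathrm v=\tilde f\,\p_{\tilde u}$, where $\tilde f(\tilde t,\tilde x)=U^1(t,x)f(t,x)$ after re-expressing $(t,x)$ through the inverse of the $(t,x)$-part of $\Phi$. Thus the pushforward preserves the linear-superposition form $g(\tilde t,\tilde x)\p_{\tilde u}$, and it remains only to check that $\tilde f$ is a solution of $E_2$.

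For this I would use that $\Phi$ maps the solution set of $E_1$ onto that of $E_2$: the images of the two solutions $f$ and $0$ of the homogeneous equation $E_1$ are the functions $U^1f+U^0$ and $U^0$ (viewed as functions of $(\tilde t,\tilde x)$), both solving $E_2$. Since $E_2\in\mathcal E_0$ is linear and homogeneous, their difference $U^1f=\tilde f$ solves $E_2$, so $\Phi_*\mathrm v\in\mathfrak g^{\rm lin}_{E_2}$ and hence $\Phi_*\mathfrak g^{\rm lin}_{E_1}\subseteq\mathfrak g^{\rm lin}_{E_2}$. Applying the same argument to $\mathcal T^{-1}=(\theta_2,\Phi^{-1},\theta_1)\in\mathcal G^\sim_{\mathcal E_0}$ gives the opposite inclusion, whence equality; this establishes the $\mathcal G^\sim_{\mathcal E_0}$-uniformity of $\{\mathfrak g^{\rm lin}_E\mid E\in\mathcal E_0\}$.

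Finally, for an arbitrary subclass $\mathcal L\subseteq\mathcal E_0$ I would observe that every admissible transformation of $\mathcal L$ is in particular an admissible transformation of $\mathcal E_0$, so $\mathcal G^\sim_{\mathcal L}\subseteq\mathcal G^\sim_{\mathcal E_0}$, and the already-proved identity $\Phi_*\mathfrak g^{\rm lin}_{E_1}=\mathfrak g^{\rm lin}_{E_2}$ then holds for all $\mathcal T\in\mathcal G^\sim_{\mathcal L}$, giving the $\mathcal G^\sim_{\mathcal L}$-uniformity of the restricted family. The only point requiring a little care---everything else being routine pushforward bookkeeping---is the handling of the affine shift $U^0$, which is absorbed precisely by invoking the homogeneity and linearity of the target equation $E_2$.
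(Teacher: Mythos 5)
Your proof is correct and fills in exactly the argument the paper leaves implicit: the paper states this proposition without proof, calling it ``an obvious consequence of Proposition~\ref{FP:pro:EquivGroupE0}'', and the intended content is precisely your observation that every admissible transformation of~$\mathcal E_0$ is fiber-preserving with $u$-component affine in~$u$, so that $f\p_u\mapsto(U^1f)\p_{\tilde u}$ and the homogeneity of the target equation absorbs the shift~$U^0$. Your handling of the inverse transformation for the reverse inclusion and of the restriction to subclasses via $\mathcal G^\sim_{\mathcal L}\subseteq\mathcal G^\sim_{\mathcal E_0}$ is likewise sound.
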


The class~$\mathcal E_0$ is related via a wide subset of $\mathcal G(G^\sim_{\mathcal E_0})$ to its subclass~$\mathcal P^\epsilon$,
and both the classes~$\mathcal E_0$ and~$\mathcal P^\epsilon$ are disjointedly semi-normalized with respect to the linear superposition principle.
In view of Proposition~\ref{pro:GroupClassificationOfSeminormClassesByMappingMethod},
the group classification problem for the class~$\mathcal E_0$ reduces to that for the class~$\mathcal P^\epsilon$.
These problems were solved by S.~Lie himself~\cite{Lie1881}.
Below, we revisit the classification result for the class~$\mathcal P^\epsilon$ using Theorem~\ref{FP:thm:EquivGroupoidHeatStationary}.

\begin{theorem}\label{thm:FP:ClassificationsE0AndP}
A complete list of $G^\sim_{\mathcal P^\epsilon}$-inequivalent (resp.\ $\mathcal G^\sim_{\mathcal P^\epsilon}$-inequivalent)
essential Lie-sym\-me\-try extensions of the (essential) kernel algebra
$\mathfrak g^{\cap\,\rm ess}_{\mathcal P^\epsilon}=\mathfrak g^\cap_{\mathcal P^\epsilon}=\langle u\p_u\rangle$
in the class~$\mathcal P^\epsilon$ is exhausted by the following cases:
\begin{enumerate}\itemsep=-0.3ex
\item $C=C(x)\colon$\quad$\p_t$;
\item $C=\mu x^{-2}\colon$\quad $\p_t$, \ $D$, \ $\Pi$;
\item $C=0\colon$\quad $\p_t$, \ $D$, \ $\Pi$, \ $\p_x$, \ $G$.
\end{enumerate}
Here $\mu\ne0$, $D=2t\p_t+x\p_x$, $\Pi=4t^2\p_t+4tx\p_x-(\epsilon x^2+2t)u\p_u$, $G=2t\p_x-\epsilon xu\p_u$,
and only basis vector fields of essential Lie-symmetry extensions are presented.
The similar assertion on the class~$\mathcal E_0$ is obtained via setting $A=\epsilon$ and $B=0$ for each element of the list.
\end{theorem}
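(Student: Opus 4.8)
The plan is to exploit that the class~$\mathcal P^\epsilon$ is disjointedly semi-normalized with respect to the linear superposition of solutions (Proposition~\ref{FP:prop:EquivalenceGroupHeatPotential}), which gives $\mathfrak g^{\rm ess}_C=\mathfrak g_{\mathcal P^\epsilon_C}\cap\pi_*\mathfrak g^\sim_{\mathcal P^\epsilon}$ for every~$C$. Hence every essential Lie symmetry of~$\mathcal P^\epsilon_C$ is the projection of an infinitesimal equivalence transformation that preserves the value of~$C$. First I would differentiate the one-parameter subgroups of the group~\eqref{FP:eq:EquivalenceGroupReducedP} at the identity to obtain the projected equivalence algebra, whose elements have the form
\[
Q=\tau\p_t+\big(\tfrac12\tau_tx+\chi^0\big)\p_x+\phi\, u\p_u,
\]
where $\tau,\chi^0$ are functions of~$t$, the coefficient~$\phi$ is a polynomial of degree at most two in~$x$ whose coefficients are fixed by $\tau,\chi^0$ up to one free function of~$t$, and the constraint $T_t=(X^1)^2$ appears infinitesimally as $\tfrac12\tau_t=\chi^1$. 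Imposing that the variation of~$C$ read off from~\eqref{FP:eq:EquivalenceGroupReducedPb} vanishes yields the classifying equation
\[
\tau C_t+\big(\tfrac12\tau_tx+\chi^0\big)C_x+\tau_tC=\phi_t-\epsilon\phi_{xx}.
\]
The kernel solution $\tau=\chi^0=0$, $\phi=\mathrm{const}$ recovers $\mathfrak g^{\cap\rm ess}_{\mathcal P^\epsilon}=\langle u\p_u\rangle$, and any genuine extension corresponds to a solution with $(\tau,\chi^0)\ne(0,0)$.

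The core step is to show that every potential admitting an extension is $G^\sim_{\mathcal P^\epsilon}$-equivalent to a time-independent one. I would split on whether $\mathfrak g^{\rm ess}_C$ contains an element with $\tau\not\equiv0$. If it does, then, since $Q$ has exactly the form of the projection of an infinitesimal generator of~$G^\sim_{\mathcal P^\epsilon}$, I can straighten it to~$\p_{\tilde t}$ by an equivalence transformation (choosing~$T$ with $T_t=1/\tau$ and then $X^1,X^0,V$ to annihilate the remaining components); invariance of~$C$ under the flow of~$Q$ then forces the transformed potential to satisfy $\tilde C_{\tilde t}=0$, i.e.\ to be time-independent. If instead every extension has $\tau\equiv0$, then $Q=\chi^0(t)\p_x+\phi u\p_u$ with $\chi^0\not\equiv0$, and the classifying equation gives $\phi_x=-\tfrac1{2\epsilon}\chi^0_t$ and then $\chi^0C_x=\phi_t-\epsilon\phi_{xx}$, which is affine in~$x$; integrating shows~$C$ is a quadratic polynomial in~$x$, and such a~$C$ is $G^\sim_{\mathcal P^\epsilon}$-equivalent to~$0$ through the $x^2$- and $x$-terms available in~\eqref{FP:eq:EquivalenceGroupReducedPb}. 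Thus we are reduced to time-independent potentials $C=C(x)$, for each of which $\p_t$ is automatically an essential symmetry; this is Case~1.

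It remains to classify time-independent potentials by their further extensions, i.e.\ by whether $\mathfrak g^{\rm ess}_{C}$ strictly exceeds $\mathfrak g^{\cap\rm ess}_{\mathcal P'}$, which for the reduced heat equations equals $\langle\p_t,u\p_u\rangle$. Here I would invoke Theorem~\ref{FP:thm:EquivGroupoidHeatStationary} together with Corollary~\ref{FP:prop:EquivalenceGroupoidTIPoten}: the latter equates such extensions with the existence of admissible transformations of~$\mathcal P'$ outside $\mathcal G^{G^\sim_{\mathcal P'}}\ltimes\mathcal G^{\rm lin}$, and the former lists the corresponding potentials as $\mu/x^2\pm x^2$, $x$ and~$\mu/x^2$. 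Using the semi-normalization of~$\mathcal P^\epsilon$, so that general point equivalence coincides with $G^\sim_{\mathcal P^\epsilon}$-equivalence (realized by $\mathcal T_{1\mu},\mathcal T_{2\mu},\mathcal T_3$), these collapse to the two canonical potentials $\mu/x^2$ with $\mu\ne0$ (Case~2) and~$0$ (Case~3). The explicit generators then follow by differentiating at the identity the essential symmetry groups of~$\mathcal P_{\mu/x^2}$ and~$\mathcal P_0$ already recorded in the proof of Theorem~\ref{FP:thm:EquivGroupoidHeatStationary}, namely~\eqref{eq:SymGroupPmux2} and the displayed symmetry group of~$\mathcal P_0$: one obtains $D,\Pi$ and, for $C=0$, additionally $\p_x,G$. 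Inequivalence of the three cases follows from the $G^\sim_{\mathcal P^\epsilon}$-invariance of~$\mu$ and from the differing dimensions of~$\mathfrak g^{\rm ess}_C$, while the parenthetical $\mathcal G^\sim_{\mathcal P^\epsilon}$-statement coincides with the $G^\sim_{\mathcal P^\epsilon}$-one by semi-normalization. The assertion for~$\mathcal E_0$ follows from Proposition~\ref{pro:GroupClassificationOfSeminormClassesByMappingMethod} applied to the weakly similar semi-normalized classes $\mathcal E_0$ and $\mathcal P^\epsilon$ upon setting $A=\epsilon$, $B=0$. The main obstacle I anticipate is this reduction step: verifying that the straightening of a $\tau\not\equiv0$ symmetry stays inside~$G^\sim_{\mathcal P^\epsilon}$, and that the $\tau\equiv0$ alternative genuinely forces a quadratic, time-reducible potential, so that no genuinely time-dependent extension is overlooked.
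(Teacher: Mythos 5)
Your proposal is essentially sound, and it is worth noting at the outset that the paper does not actually prove this theorem: it is presented as Lie's classical result (with references to Lie~1881 and Ovsiannikov) that the authors merely ``revisit using Theorem~\ref{FP:thm:EquivGroupoidHeatStationary}''. So you are supplying an argument where the paper supplies a citation. Your two-stage structure is the standard and correct one: (i) the infinitesimal form of the projected equivalence algebra and the classifying equation are derived correctly, and the dichotomy on $\tau$ works — straightening a $\tau\not\equiv0$ generator to $\p_{\tilde t}$ stays inside $G^\sim_{\mathcal P^\epsilon}$ (locally, after replacing $Q$ by $-Q$ if needed so that $T_t=1/\tau>0=(X^1)^2$ is satisfiable, with $X^1=\pm\tau^{-1/2}$ and $X^0$, $V$ determined by linear ODEs), while the $\tau\equiv0$ alternative does force $C$ quadratic in~$x$ with $t$-dependent coefficients, which is reducible to $C=0$ because the $x^2$-coefficient $-\frac\epsilon4X^1(1/X^1)_{tt}$ in~\eqref{FP:eq:EquivalenceGroupReducedPb} can match any prescribed smooth function of~$t$ by solving a linear second-order ODE for~$1/X^1$; (ii) the residual classification of stationary potentials and the reading-off of generators from~\eqref{eq:SymGroupPmux2} are fine, as are the inequivalence and the transfer to~$\mathcal E_0$ via Proposition~\ref{pro:GroupClassificationOfSeminormClassesByMappingMethod}.

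The one genuine flaw is a circularity in stage~(ii): you invoke Corollary~\ref{FP:prop:EquivalenceGroupoidTIPoten}, but in the paper that corollary is deduced from Theorem~\ref{FP:thm:EquivGroupoidHeatStationary} \emph{combined with} Theorem~\ref{FP:thm:GroupClassificationOfTimeIndepPotentials}, whose proof in turn starts from Cases~1--3 of the very theorem you are proving. As written, your argument therefore relies on a downstream consequence of its own conclusion. The fix is cheap and you should make it explicit: for time-independent~$C$ the classifying equation
$\big(\tfrac12\tau_tx+\chi^0\big)C_x+\tau_tC=\phi_t-\epsilon\phi_{xx}$
is, for each fixed~$t$, exactly a template-form equation $(a_1x+a_2)C_x+2a_1C+a_3x^2+a_4x+a_5=0$ of the kind analyzed by furcate splitting in the proof of Theorem~\ref{FP:thm:EquivGroupoidHeatStationary} (which depends only on Proposition~\ref{FP:prop:EquivalenceGroupHeatPotential}, not on the present theorem). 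That analysis directly yields that a stationary potential with an extension beyond $\langle\p_t,u\p_u\rangle$ is $G^\sim_{\mathcal P'}$-equivalent to a quadratic polynomial or to $\mu/x^2\pm x^2$ or $\mu/x^2$, which collapse under $G^\sim_{\mathcal P^\epsilon}$ to Cases~2 and~3. With that substitution your proof is complete and non-circular.
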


\begin{remark}\label{rem:Case1WithinPeps}
For the Lie-symmetry extension in Case~1 to be maximal,
the potential~$C$ should take a general value, which is not equivalent to the values from the other cases,
i.e., for any $\alpha,\beta,\gamma\in\mathbb R$ and $\mu\in\mathbb R\setminus\{0\}$ we have
$C(x)\ne\mu(x+\beta)^{-2}+\alpha(x+\beta)^2+\gamma$ and $C(x)\ne\alpha x^2+\beta x+\gamma$.
In view of Theorem~\ref{FP:thm:EquivGroupoidHeatStationary},
the $G^\sim_{\mathcal P^\epsilon}$-equivalence, which coincides with the $\mathcal G^\sim_{\mathcal P^\epsilon}$-equivalence,
reduces within Case~1 to the $G^\sim_{\mathcal P'^\epsilon}$-equivalence,
cf.\ Corollary~\ref{cor:EquivGroupOfP'eps}.
\end{remark}

We use the classification list for the class~$\mathcal P^\epsilon$ presented in Theorem~\ref{thm:FP:ClassificationsE0AndP}
as a base for applying the mapping method in order to solve group classification problems
for classes~$\bar{\mathcal K}$, $\mathcal K^\epsilon$, $\bar{\mathcal F}$ and~$\mathcal F^\epsilon$,
which are weakly similar to the class~$\mathcal P^\epsilon$.

\subsection{Group classifications with respect to equivalence groupoids}

The group classification problems for the entire classes of Kolmogorov and Fokker--Planck equations
with respect to the associated equivalence groupoids were solved in~\cite{PopovychKunzingerIvanova2008}.
We revisit those results via taking into account Remark~\ref{rem:Case1WithinPeps}
and involving more assertions from Sections~\ref{sec:FPEquivalenceGroup} and~\ref{sec:FPGCTimeIndependent}.

\begin{theorem}\label{thm:FPClassificationFP}
The kernel algebras~$\mathfrak g^\cap_{\bar{\mathcal K}}$ and~$\mathfrak g^\cap_{\bar{\mathcal F}}$
of the classes~$\bar{\mathcal K}$ and~$\bar{\mathcal F}$ are~$\langle \p_u,u\p_u\rangle$ and~$\langle u\p_u\rangle$, respectively.
Complete lists of $\mathcal G^\sim_{\bar{\mathcal K}}$-inequivalent essential Lie-symmetry extensions
of the essential kernel algebra $\mathfrak g^{\cap\,\rm ess}_{\bar{\mathcal K}}=\langle u\p_u\rangle$ and of $\mathcal G^\sim_{\bar{\mathcal F}}$-inequivalent
Lie-symmetry extensions of the essential kernel algebra~$\mathfrak g^{\cap\,\rm ess}_{\bar{\mathcal F}}=\mathfrak g^\cap_{\bar{\mathcal F}}$
are exhausted by the following cases:
\begin{enumerate}\itemsep=-0.3ex
\item $B=B(x)\colon$\quad$\p_t$;
\item $B=\epsilon\breve\epsilon x^{-1}\big(1-2a\tan(a\ln|x|)\big)\colon$\ \ $\p_t$, \ $D-\tfrac12\epsilon xB(x)u\p_u$, \ $\Pi-2\epsilon txB(x)u\p_u$;
\item $B=\epsilon\breve\epsilon bx^{-1}\colon$\quad $\p_t$, \ $D$, \ $\Pi-2\breve\epsilon btu\p_u$;
\item $B=0\colon$\quad $\p_t$, \ $\p_x$, \ $D$, \ $\Pi$, \ $G$.
\end{enumerate}
In all the cases, $A=\epsilon$ with $\epsilon$ taking a single value in $\{-1,1\}$,
and only basis vector fields of essential Lie-symmetry algebras besides $u\p_u$ are presented.
In addition, $a>0$, $b\geqslant1$, $b\ne2$,
\mbox{$D=2t\p_t+x\p_x$}, $\Pi=4t^2\p_t+4tx\p_x-(\epsilon x^2+2t)u\p_u$, $G=2 t\p_x-\epsilon xu\p_u$,
$\breve\epsilon=1$ for the class~$\bar{\mathcal K}$ and $\breve\epsilon=-1$ for the class~$\bar{\mathcal F}$.
\end{theorem}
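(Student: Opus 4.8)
The plan is to derive the whole list from the known group classification of $\mathcal P^\epsilon$ (Theorem~\ref{thm:FP:ClassificationsE0AndP}) via the mapping method of Section~\ref{FP:sec:MapMethod}, exploiting the weak similarity of $\bar{\mathcal K}$ and $\bar{\mathcal F}$ to $\mathcal P^\epsilon$; since the target equivalence here is the groupoid $\mathcal G^\sim$ (general point equivalence), this is the ``straightforward'' regime of the method. First I would record the kernels. Every Kolmogorov equation admits $u\p_u$ (homogeneity) and $\p_u$ (the constant functions are common solutions, cf.\ the footnote in Section~\ref{FP:sec:GroupClassProblem}), so $\langle\p_u,u\p_u\rangle\subseteq\mathfrak g^\cap_{\bar{\mathcal K}}$, whereas every Fokker--Planck equation admits only $u\p_u$ since such equations share no solution. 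That these inclusions are equalities follows because for a generic drift $B(t,x)$ the essential algebra reduces to $\langle u\p_u\rangle$, a by-product of the classification below (Case~1 already requires $B=B(x)$ to gain even $\p_t$). Next, since $\mathcal K^\epsilon$ (resp.\ $\mathcal F^\epsilon$) is cut out from $\bar{\mathcal K}$ (resp.\ $\bar{\mathcal F}$) by the gauge $A=\epsilon$ realised by a wide subset of the action groupoid, every equation of $\bar{\mathcal K}$ is $\mathcal G^\sim_{\bar{\mathcal K}}$-equivalent to one of $\mathcal K^\epsilon$; thus it suffices to classify $\mathcal K^\epsilon$ up to $\mathcal G^\sim_{\mathcal K^\epsilon}$ and prolong trivially by $A=\epsilon$.

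The core of the argument is the explicit weak similarity between $\mathcal K^\epsilon$ and $\mathcal P^\epsilon$. Gauging the drift to zero in $\mathcal E_0$ by $\tilde t=t$, $\tilde x=x$, $\tilde u=U^1u$ with $U^1_x/U^1=B/(2\epsilon)$ maps $\mathcal K^\epsilon_B$ to $\mathcal P^\epsilon_C$, and for a time-independent drift $B=B(x)$ one may take $U^1=U^1(x)$, whence a direct computation with the operator $\mathrm E$ of Proposition~\ref{FP:pro:EquivGroupE0} gives the Riccati relation $C=-\tfrac12 B_x-\tfrac1{4\epsilon}B^2$. Linearising it by $B=2\epsilon\psi_x/\psi$ turns this into $\epsilon\psi_{xx}+C\psi=0$, i.e.\ $\psi$ is a stationary solution of $\mathcal P^\epsilon_C$. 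Hence, to produce a Kolmogorov representative for each entry of Theorem~\ref{thm:FP:ClassificationsE0AndP}, I would solve this stationary equation and set $B=2\epsilon\psi_x/\psi$: the generic $C=C(x)$ yields a generic $B(x)$ (Case~1, extension $\p_t$; the genericity caveat is the analogue of Remark~\ref{rem:Case1WithinPeps}), while $C=0$ yields $\psi=\mathrm{const}$, $B=0$ (Case~4, with the full extension $\p_t,\p_x,D,\Pi,G$ inherited unchanged).

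The bifurcation producing two Kolmogorov cases out of the single potential $C=\mu x^{-2}$ is the crux. Here the stationary equation is the Euler equation $\psi_{xx}+(\mu/\epsilon)x^{-2}\psi=0$ with indicial roots $r=\tfrac12\big(1\pm\sqrt{1-4\mu/\epsilon}\big)$, and the sign of $1-4\mu/\epsilon$ splits the analysis. For $\mu/\epsilon>\tfrac14$ the roots are complex and the real solution $\psi=|x|^{1/2}\cos(a\ln|x|)$, $a=\tfrac12\sqrt{4\mu/\epsilon-1}>0$, gives $B=\epsilon x^{-1}\big(1-2a\tan(a\ln|x|)\big)$ (Case~2); for $\mu/\epsilon\leqslant\tfrac14$, $\mu\ne0$, the pure power $\psi=|x|^{r}$ gives $B=\epsilon b x^{-1}$ with $b=2r=1+\sqrt{1-4\mu/\epsilon}\geqslant1$, where $b=2\Leftrightarrow\mu=0$ is excluded (Case~3). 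The two roots of the same $\mu$ map to the same heat equation, so they are $\mathcal G^\sim$-equivalent and the normalisation $b\geqslant1$ selects one representative. I would then transport the heat symmetries $D$ and $\Pi$ through $\tilde u=U^1u$: writing a symmetry as $\tau\p_t+\xi\p_x+\phi\p_u$ and matching $\Phi_*$ against $D,\Pi$ produces $\phi$-corrections $-\tfrac12\epsilon xB\,u\p_u$ and $-2\epsilon txB\,u\p_u$, which are exactly the terms in Case~2; in Case~3 the first correction $\tfrac12\epsilon xB=\tfrac12 b$ is constant and is absorbed into $u\p_u$ (hence the clean $D$), whereas the second, $2\epsilon txB=2bt$, is $t$-dependent and survives as $\Pi-2bt\,u\p_u$.

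Finally, the Fokker--Planck list follows by the duality between $\mathcal F^\epsilon_B$ and $\mathcal K^{-\epsilon}_B$~\cite[Section~5]{PopovychKunzingerIvanova2008}: the gauge to a heat potential now uses $X^1U^1$ solving the \emph{adjoint} equation (Proposition~\ref{FP:prop:EquivalenceGroupGaugedFP}), which flips the relevant sign and replaces $\breve\epsilon=1$ by $\breve\epsilon=-1$ throughout, while the algebra dimensions and the case split are unchanged. Pairwise $\mathcal G^\sim$-inequivalence of the four cases and injectivity of the parameters $a$ and $b$ reduce, by consistency of the $\mathcal G^\sim$-equivalences under weak similarity, to the already established inequivalences in $\mathcal P^\epsilon$ (distinct $\mu$, and $\mu\ne0$ versus $\mu=0$), together with the disjointness $\mu/\epsilon>\tfrac14$ versus $\mu/\epsilon\leqslant\tfrac14$ separating Cases~2 and~3; exhaustiveness of the list is inherited from completeness of the $\mathcal P^\epsilon$ list through the same consistency. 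I expect the main obstacle to be precisely the $C=\mu x^{-2}$ analysis: carrying out the discriminant-based split so that it is exhaustive, identifying the two qualitatively distinct canonical drift forms, and tracking which vector-field corrections are genuine and which collapse into the kernel $\langle u\p_u\rangle$.
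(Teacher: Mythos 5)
Your proposal is correct and follows essentially the same route as the paper's proof: reduce $\bar{\mathcal K}$ (resp.\ $\bar{\mathcal F}$) to $\mathcal K^\epsilon$ (resp.\ $\mathcal F^\epsilon$) via the gauge $A=\epsilon$, realize the weak similarity with $\mathcal P^\epsilon$ by the $u$-component transformations $B=2\epsilon\breve\epsilon U_x/U$ with $U$ a (time-independent) solution of $\mathcal P^{\epsilon\breve\epsilon}_{\breve\epsilon C}$, and let the Euler equation for $C=\mu x^{-2}$ split one heat-potential case into the two drift cases according to the sign of the discriminant, with the symmetry corrections obtained by pushforward. The only cosmetic difference is that you derive the Riccati relation in the direction $\mathcal K^\epsilon_B\to\mathcal P^\epsilon_C$ before linearizing it, whereas the paper parameterizes the map $\mathcal P^\epsilon_C\to\mathcal K^\epsilon_B$ directly by solutions of the heat equation.
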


\begin{proof}
The class~$\bar{\mathcal K}$ is related to its subclass~$\mathcal K^\epsilon$
via a wide subset of the action groupoid of its equivalence group,
and thus these classes are weakly similar.
The kernel algebras of these classes coincide,
and the family of uniform subalgebras corresponding to the linear superposition of solutions in~$\bar{\mathcal K}$
is clearly mapped to its counterpart in~$\mathcal K^\epsilon$.
Then the consideration after Definition~\ref{FP:def:WeaklySimilarClasses} implies
that the group classification of~$\bar{\mathcal K}$ up to the $\mathcal G^\sim_{\bar{\mathcal K}}$-equivalence
reduces to the group classification of~$\mathcal K^\epsilon $ up to the $\mathcal G^\sim_{\mathcal K^\epsilon}$-equivalence.
The same claim is also true for the classes~$\bar{\mathcal F}$ and~$\mathcal F^\epsilon$.
(See also the beginning of the next Section~\ref{sec:GroupClassificationsWrtEquivGroupsKF} for a stronger relation between the above classes.)

The classes~$\mathcal K^\epsilon$ and~$\mathcal F^\epsilon$ are weakly similar to the class~$\mathcal P^\epsilon$.
Indeed, given an equation~$\mathcal P^\epsilon_C$: $u_t=\epsilon u_{xx}+C(t,x)u$,
point transformations of the form
\begin{gather}\label{eq:TransPCToKBORFB}
(\tilde t,\tilde x)=(t,x),\ \tilde u=\frac u{\hat U(t,x)}\quad\mbox{and}\quad
(\tilde t,\tilde x)=(t,x),\ \tilde u=\check U(t,x)u
\end{gather}
with arbitrary nonzero solutions~$\hat U$ and~$\check U$
of the equations~$\mathcal P^\epsilon_C$ and~$(\mathcal P^\epsilon_C)^\dag=\mathcal P^{-\epsilon}_{-C}$
map it to the reduced Kolmogorov and Fokker--Planck equations
\begin{gather*}
\mathcal K^\epsilon_B\colon\quad \tilde u_t=\epsilon\tilde u_{xx}+B(t,x)\tilde u\quad\mbox{with}\quad B:= 2\epsilon\hat U_x/\hat U,\\
\mathcal F^\epsilon_B\colon\quad \tilde u_t=\epsilon\tilde u_{xx}+\big(B(t,x)\tilde u\big)_x\quad\mbox{with}\quad B:=-2\epsilon\check U_x/\check U,
\end{gather*}
respectively.
Furthermore, all reduced Kolmogorov and Fokker--Planck equations can be obtained in this way.

There is a clear correspondence between the families of uniform subalgebras
corresponding to the linear superposition of solutions in~$\mathcal K^\epsilon$
(resp.\ $\mathcal F^\epsilon$) and~$\mathcal P^\epsilon$.
Moreover, the essential kernel algebras
of the classes~$\mathcal K^\epsilon$, $\mathcal F^\epsilon$ and~$\mathcal P^\epsilon$ coincide,
$\mathfrak g^{\cap\rm ess}_{\mathcal K^\epsilon}=\mathfrak g^{\cap\rm ess}_{\mathcal F^\epsilon}
=\mathfrak g^{\cap\rm ess}_{\mathcal P^\epsilon}=\langle u\p_u\rangle$.

Thus, the group classification problems up to the general point equivalence
for the classes~$\mathcal P^\epsilon$ and $\mathcal K^\epsilon$ (resp.\ $\mathcal F^\epsilon$) are equivalent.
In accordance with Section~\ref{FP:sec:MapMethod}, to obtain a classification list for the class~$\mathcal K^\epsilon$ (resp.\ $\mathcal F^\epsilon$)
up to this equivalence, one needs to find a single element from the image of each representative of the classification list for the class~$\mathcal P^\epsilon$
that is presented in Theorem~\ref{thm:FP:ClassificationsE0AndP}.
In other words, one should find a particular (e.g., time-independent) solution of the equation~$\mathcal P^\epsilon_C$
(resp.\ $\mathcal P^{-\epsilon}_{-C}$) for each of the potentials~$C$ from Theorem~\ref{thm:FP:ClassificationsE0AndP}
and recover the corresponding drift~$B$.
Then an arbitrary time-independent potential~$C$ leads to an arbitrary time-independent drift,
which gives Case~1 of Theorem~\ref{thm:FPClassificationFP}.
For the potential~$C=\mu/x^2$ with $\mu\ne0$,
each time-independent solution of~$\mathcal P^\epsilon_C$ (resp.\ $\mathcal P^{-\epsilon}_{-C}$)
satisfies the Euler equation $\epsilon x^2u_{xx}+\mu u=0$,
whose general solution depends on the value of~$\mu$.
This results in splitting a single classification case in the class~$\mathcal P^\epsilon$
to Cases~2 and~3 for the group classification of Fokker--Planck and Kolmogorov equations.
The potential $C=0$ trivially gives Case~4.
The corresponding essential Lie invariance algebras are obtained via pushing forward,
by the relevant point transformation of the form~\eqref{eq:TransPCToKBORFB},
the basis vector fields of the essential Lie invariance algebras
of the corresponding heat equations with potentials.
\end{proof}

\begin{remark}
It follows from Remark~\ref{rem:Case1WithinPeps}
and results of Sections~\ref{sec:GroupClassificationF'} and~\ref{sec:GroupClassificationK'}
that the Lie-symmetry extension in Case~1 of Theorem~\ref{thm:FPClassificationFP} is maximal
if and only if the corresponding value~$B=B(x)$ is general, i.e.,
it is $G^\sim_{\mathcal P'^\epsilon}$-inequivalent to values
collected in Table~\ref{tab:FPStationaryDriftsGC} except the general Case~0 of this table.
Moreover, Theorem~\ref{FP:thm:EquivGroupoidHeatStationary} with its consequences,
Propositions~\ref{FP:pro:EquinGroupoidOfK}, \ref{pro:EquivGroupOfK'eps}, \ref{FP:prop:EquivalenceGroupGaugedFP} and~\ref{pro:EquivGroupOfF'eps}
and lemmas from Sections~\ref{sec:GroupClassificationF'} and~\ref{sec:GroupClassificationK'}
imply that equations~$\bar{\mathcal K}_B$ and~\smash{$\bar{\mathcal K}_{\tilde B}$}
(resp.~$\bar{\mathcal F}_B$ and~\smash{$\bar{\mathcal F}_{\tilde B}$})
with $A=\epsilon$ and with general values $B=B(x)$ and $\tilde B=\tilde B(x)$ are similar if and only if
$c_1\tilde B(c_1x+c_2)=B(x)+2\epsilon\breve\epsilon W_x(x)/W(x)$,
where $c_1$ and~$c_2$ are arbitrary constants with $c_1\ne0$
and $W$ is an arbitrary nonvanishing solution of the equation $\epsilon\breve\epsilon W_{xx}+BW_x=\lambda W$
for a constant~$\lambda$.
\end{remark}

\begin{remark}
It is clear from the proof that,
after replacing~$\bar{\mathcal K}$ and~$\bar{\mathcal F}$ by~$\mathcal K^\epsilon$ and~$\mathcal F^\epsilon$,
respectively, Theorem~\ref{thm:FPClassificationFP} also gives the solutions of the group classification problems
for the classes~$\mathcal K^\epsilon$ and~$\mathcal F^\epsilon$ up to the general point equivalence.
\end{remark}

\subsection{Group classifications with respect to equivalence groups}\label{sec:GroupClassificationsWrtEquivGroupsKF}

Recall that the class~$\mathcal K^\epsilon$ is related to its superclass~$\bar{\mathcal K}$
via the subset~$\mathcal M_{\bar{\mathcal K}}$ of~\smash{$\mathcal G^{G^\sim_{\bar{\mathcal K}}}$},
see the paragraph with the equation~\eqref{FP:eq:GaugedKolmogorov}.
The essential kernel algebras of the classes~$\bar{\mathcal K}$ and~$\mathcal K^\epsilon$ are the same,
$\mathfrak g^{\cap\rm ess}_{\bar{\mathcal K}}=\mathfrak g^{\cap\rm ess}_{\mathcal K^\epsilon}=\langle u\p_u \rangle$.
If the projection of an equivalence transformation of the class~$\bar{\mathcal K}$
to the space with the coordinates $(t,x,u,B)$ relates two equations from the class~$\mathcal K^\epsilon$,
then, in view of Corollaries~\ref{FP:cor:EquivGroupBarK} and~\ref{FP:cor:EquivGroupK},
this projection belongs to the group~$G^\sim_{\mathcal K^\epsilon}$.
This implies that the $G^\sim_{\mathcal K^\epsilon}$- and $G^\sim_{\bar{\mathcal K}}$-equivalences
are consistent via~$\mathcal M_{\bar{\mathcal K}}$.
Additionally taking into account Proposition~\ref{pro:glinInE0}, we conclude that
the group classification of~$\bar{\mathcal K}$ up to the $G^\sim_{\bar{\mathcal K}}$-equivalence reduces
to the group classification of~$\mathcal K^\epsilon$ up to the $G^\sim_{\mathcal K^\epsilon}$-equivalence.
In view of Corollaries~\ref{FP:cor:EquivGroupBarF} and~\ref{FP:cor:EquivGroupF},
the analogous claim holds for the classes~$\bar{\mathcal F}$ and~$\mathcal F^\epsilon$.

To solve the group classification problems for the classes~$\mathcal K^\epsilon$ and~$\mathcal F^\epsilon$,
we apply the version of the mapping method described at the end of Section~\ref{FP:sec:MapMethod}
using the class~$\mathcal P^\epsilon$ of heat equations with potentials as that whose group classification is known.

We begin with the class~$\mathcal K^\epsilon$.
Consider the following set of point transformations mapping a
fixed heat equation~$\mathcal P^\epsilon_C$ with potential~$C=C(t,x)$ to equations in the class~$\mathcal K^\epsilon$:
\begin{gather*}
\mathfrak T_C:=\big\{\Phi_U\colon\, \tilde t=t,\ \tilde x=x,\ \tilde u=u/U(t,x)\mid U\ne0\colon\,U_t=\epsilon U_{xx}+CU\big\}.
\end{gather*}
Thus, a transformation~$\Phi_U\in\mathfrak T_C$ with a fixed nonzero solution~$U$ of~$\mathcal P^\epsilon_C$ maps the equation~$\mathcal P^\epsilon_C$
to the Kolmogorov equation~$\mathcal K^\epsilon_B$ with the drift~$B=2\epsilon U_x/U$, cf.~\cite{Bluman1980,BlumanShtelen2004}.
By~$\mathfrak K_C$ we denote the set of Kolmogorov equations that are the images of~$\mathcal P^\epsilon_C$ under such maps,
\[
\mathfrak K_C:=(\mathfrak T_C)_*\mathcal P^\epsilon_C=\big\{\mathcal K^\epsilon_B\mid B=2\epsilon U_x/U,\ U\ne0\colon\, U_t=\epsilon U_{xx}+CU \big\}.
\]

\begin{lemma}\label{FP:Lemma1a}
(i) For any equation $\mathcal K^\epsilon_B\in\mathcal K^\epsilon$, there exists a function~$C$ of~$(t,x)$
such that $\mathcal K^\epsilon_B\in\mathfrak K_C$.

(ii)
If reduced Kolmogorov equations $\mathcal K^\epsilon_B\in\mathfrak K_C$ and $\mathcal K^\epsilon_{\tilde B}\in\mathfrak K_{\tilde C}$
are $G^\sim_{\mathcal K^\epsilon}$-equivalent,
then the equations~$\mathcal P^\epsilon_C$ and~$\mathcal P^\epsilon_{\tilde C}$
are $G^\sim_{\mathcal P^\epsilon}$-equivalent.
\end{lemma}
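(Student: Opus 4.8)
The plan is to handle the two items by elementary means: item~(i) by inverting the construction of $\mathfrak K_C$, and item~(ii) by composing three point transformations and then invoking the disjoint semi-normalization of~$\mathcal P^\epsilon$. For item~(i), given an arbitrary drift $B=B(t,x)$, I would first build a nowhere-vanishing~$U$ realizing the relation $B=2\epsilon U_x/U$. Since this relation is $(\ln|U|)_x=B/(2\epsilon)$, it suffices to set
\[
U(t,x):=\exp\!\left(\frac{1}{2\epsilon}\int^x B(t,s)\,\mathrm{d}s\right),
\]
which is smooth, positive, and satisfies $2\epsilon U_x/U=B$ by construction. I would then \emph{define} the potential by $C:=(U_t-\epsilon U_{xx})/U$; this is a well-defined smooth function because $U\ne0$, and by its very definition $U$ solves $U_t=\epsilon U_{xx}+CU$. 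Hence $U$ is a nonzero solution of~$\mathcal P^\epsilon_C$, so $\Phi_U\in\mathfrak T_C$ and $(\Phi_U)_*\mathcal P^\epsilon_C=\mathcal K^\epsilon_B$, giving $\mathcal K^\epsilon_B\in\mathfrak K_C$.

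For item~(ii), I would unpack the hypotheses into concrete transformations. The membership $\mathcal K^\epsilon_B\in\mathfrak K_C$ yields a nonzero solution~$U$ of~$\mathcal P^\epsilon_C$ with $(\Phi_U)_*\mathcal P^\epsilon_C=\mathcal K^\epsilon_B$, and likewise $\mathcal K^\epsilon_{\tilde B}\in\mathfrak K_{\tilde C}$ yields a nonzero solution~$\tilde U$ of~$\mathcal P^\epsilon_{\tilde C}$ with $(\Phi_{\tilde U})_*\mathcal P^\epsilon_{\tilde C}=\mathcal K^\epsilon_{\tilde B}$; both $\Phi_U$ and~$\Phi_{\tilde U}$ are invertible point transformations since $U,\tilde U\ne0$. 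The assumed $G^\sim_{\mathcal K^\epsilon}$-equivalence furnishes a $\Psi\in G^\sim_{\mathcal K^\epsilon}$ whose projection~$\pi_*\Psi$ maps $\mathcal K^\epsilon_B$ to~$\mathcal K^\epsilon_{\tilde B}$. I would then form
\[
\Phi:=\Phi_{\tilde U}^{-1}\circ(\pi_*\Psi)\circ\Phi_U,
\]
a point transformation sending $\mathcal P^\epsilon_C$ to~$\mathcal P^\epsilon_{\tilde C}$; thus $(C,\Phi,\tilde C)\in\mathcal G^\sim_{\mathcal P^\epsilon}$ and the two heat equations are similar.

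The only step that is more than formal composition is upgrading this similarity to genuine $G^\sim_{\mathcal P^\epsilon}$-equivalence, and this is the heart of the argument. Here I would invoke that $\mathcal P^\epsilon$ is disjointedly semi-normalized with respect to the linear superposition of solutions (Proposition~\ref{FP:prop:EquivalenceGroupHeatPotential}), i.e.\ $\mathcal G^\sim_{\mathcal P^\epsilon}=\mathcal G^{G^\sim_{\mathcal P^\epsilon}}\star\mathcal G^{\rm lin}_{\mathcal P^\epsilon}$. Because every element of~$\mathcal G^{\rm lin}_{\mathcal P^\epsilon}$ has coinciding source and target (it preserves the potential), factoring $(C,\Phi,\tilde C)$ through this Frobenius product forces its $\mathcal G^{G^\sim_{\mathcal P^\epsilon}}$-factor to run from~$C$ to~$\tilde C$; hence $C$ and~$\tilde C$ are $G^\sim_{\mathcal P^\epsilon}$-equivalent, which is the assertion. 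I do not anticipate a genuine obstacle beyond this observation: once the three transformations are composed, the disjoint semi-normalization of~$\mathcal P^\epsilon$ converts the resulting similarity into $G^\sim_{\mathcal P^\epsilon}$-equivalence automatically, the explicit form of $G^\sim_{\mathcal K^\epsilon}$ from Corollary~\ref{FP:cor:EquivGroupK} being needed only to guarantee that $\pi_*\Psi$ indeed joins $\mathcal K^\epsilon_B$ to~$\mathcal K^\epsilon_{\tilde B}$.
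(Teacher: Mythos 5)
Your proposal is correct and follows essentially the same route as the paper: for (i) you pick a nonvanishing $U$ with $2\epsilon U_x/U=B$ (the paper phrases this as a nonzero solution of $2U_x=\epsilon BU$) and define $C:=(U_t-\epsilon U_{xx})/U$, and for (ii) you compose $\Phi_{\tilde U}^{-1}\circ(\pi_*\Psi)\circ\Phi_U$ to obtain similarity of $\mathcal P^\epsilon_C$ and $\mathcal P^\epsilon_{\tilde C}$ and then invoke the semi-normalization of $\mathcal P^\epsilon$. The paper's proof is just a two-line compression of exactly this argument, so your additional detail (the explicit exponential formula for $U$ and the unpacking of the Frobenius product) fills in steps the authors left implicit.
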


\begin{proof}
(i) An equation~$\mathcal K^\epsilon_B$ belongs to~$\mathfrak K_C$
for any~$C$ of the form $C=(U_t-\epsilon U_{xx})/U$,
where~$U$ is a nonzero solution of the equation $2U_x=\epsilon BU$.

(ii) The equations~$\mathcal P^\epsilon_C$ and~$\mathcal P^\epsilon_{\tilde C}$ are similar,
and the class~$\mathcal P^\epsilon$ is semi-normalized.
\end{proof}

In what follows, for each of the classes under consideration,
$\varpi$ denotes the projection from the space with coordinates $(t,x,u,\theta)$ to the space with coordinates $(t,x)$,
where $\theta$ is the corresponding arbitrary-element tuple.

\begin{lemma}\label{FP:Lemma1b}
If $\tilde C=\mathscr T_*C$ for some $\mathscr T\in G^\sim_{\mathcal P^\epsilon}$,
then the transformation $\breve{\mathscr T}\in G^\sim_{\mathcal K^\epsilon}$ with $\varpi_*\breve{\mathscr T}=\varpi_*\mathscr T$
and the identity $u$-component maps $\mathfrak K_C$ onto~$\mathfrak K_{\tilde C}$, $\mathfrak K_{\tilde C}=\breve{\mathscr T}_*\mathfrak K_C$.
\end{lemma}

\begin{proof}
It is convenient to put tildes for the arguments $t$ and~$x$ of functions with tildes.
For an arbitrary Kolmogorov equation \smash{$\mathcal K^\epsilon _{\tilde B}\in\mathfrak K_{\tilde C}$},
we have $\tilde B=2\epsilon\tilde U_{\tilde x}/\tilde U$ for some nonzero solution~$\tilde U$ of~\smash{$\mathcal P^\epsilon_{\tilde C}$}.
Since the transformation~$\mathscr T$ belongs to~$G^\sim_{\mathcal P^\epsilon}$ and, therefore,
is of the form~\eqref{FP:eq:EquivalenceGroupReducedP},
the solution~$\tilde U$ of~$\mathcal P^\epsilon_{\tilde C}$ can be represented in the form
\begin{gather}\label{eq:TransOfU}
\tilde U(\tilde t,\tilde x)=V(t)U(t,x)\exp\left(-\epsilon\frac{X^1_t(t)}{4X^1(t)}x^2-\epsilon\frac{X^0_t(t)}{2X^1(t)}x\right)
\end{gather}
for some solution~$U$ of~$\mathcal P^\epsilon_C$,
where the expression of $(\tilde t,\tilde x)$ via $(t,x)$ is defined in~\eqref{FP:eq:EquivalenceGroupReducedKolmogorov}.
Setting $B:=2\epsilon U_x/U$, we obtain
\begin{gather}\label{eq:TransOfB}
\tilde B(\tilde t,\tilde x)=\frac{2\epsilon U_x(t,x)}{X^1(t)U(t,x)}-\frac{X^1_t(t)x+X^0_t(t)}{(X^1(t))^2}=\frac{B(t,x)}{X^1(t)}-\frac{X^1_t(t)x+X^0_t(t)}{(X^1(t))^2}.
\end{gather}
This means that the Kolmogorov equation~$\mathcal K^\epsilon_B$ with the drift~$B=2\epsilon U_x/U$,
which belongs to~$\mathfrak K_C$,
is mapped to~$\mathcal K^\epsilon_{\tilde B}$ by the point transformation
$\tilde t=T(t)$, $\tilde x=X^1(t)x+X^0(t)$, $\tilde u=u$ of~$(t,x,u)$ with $T_t=(X^1)^2$,
which is prolonged to~$B$ according to~\eqref{eq:TransOfB}.
This defines the required transformation $\breve{\mathscr T}\in G^\sim_{\mathcal K^\epsilon}$,
which does not depend on the choice of~$\tilde B$.
Hence $\mathfrak K_{\tilde C}\subseteq\breve{\mathscr T}_*\mathfrak K_C$.
Reverting the argumentation,
\smash{$(\mathcal K^\epsilon_B\in\mathfrak K_C)\rightsquigarrow
(U\in\mathcal P^\epsilon_C)\rightsquigarrow
(\tilde U\in\mathcal P^\epsilon_{\tilde C})\rightsquigarrow
(\mathcal K^\epsilon_{\tilde B}\in\mathfrak K_{\tilde C})$},
we derive the inverse inclusion.
As a result, $\mathfrak K_{\tilde C}=\breve{\mathscr T}_*\mathfrak K_C$.
\end{proof}

\begin{corollary}\label{cor:FPSimilarity1}
Kolmogorov equations
$\mathcal K^\epsilon_B=(\Phi_U)_*\mathcal P^\epsilon_C$ and
$\mathcal K^\epsilon_{\tilde B}=(\Phi_{\tilde U})_*\mathcal P^\epsilon_C$
from~$\mathfrak K_C$ are $G^\sim_{\mathcal K^\epsilon}$-equivalent
if and only if the corresponding nonzero solutions~$U$ and~$\tilde U$
of the equation~$\mathcal P^\epsilon_C$ are $G^{\rm ess}_{\mathcal P^\epsilon_C}$-equivalent.
\end{corollary}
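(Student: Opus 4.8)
The plan is to prove both implications by moving between the equivalence groups $G^\sim_{\mathcal K^\epsilon}$ and $G^\sim_{\mathcal P^\epsilon}$, using the facts that their elements share the same $(t,x)$-components (compare Corollary~\ref{FP:cor:EquivGroupK} with Proposition~\ref{FP:prop:EquivalenceGroupHeatPotential}) and that the drift of a Kolmogorov equation in~$\mathfrak K_C$ is tied to its defining solution of~$\mathcal P^\epsilon_C$ through $B=2\epsilon U_x/U$.

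For the sufficiency, suppose $\tilde U=\Psi_*U$ for some $\Psi\in G^{\rm ess}_{\mathcal P^\epsilon_C}$. Since $G^{\rm ess}_{\mathcal P^\epsilon_C}=G_{\mathcal P^\epsilon_C}\cap\pi_*G^\sim_{\mathcal P^\epsilon}$, I would write $\Psi=\pi_*\mathscr T$ with $\mathscr T\in G^\sim_{\mathcal P^\epsilon}$ and $\mathscr T_*C=C$, so that Lemma~\ref{FP:Lemma1b} (taken with $\tilde C=C$) provides $\breve{\mathscr T}\in G^\sim_{\mathcal K^\epsilon}$ mapping $\mathfrak K_C$ onto itself. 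By the explicit relations~\eqref{eq:TransOfU}--\eqref{eq:TransOfB} from the proof of that lemma, $\breve{\mathscr T}$ sends $\mathcal K^\epsilon_B$ with $B=2\epsilon U_x/U$ to $\mathcal K^\epsilon_{B^\sharp}$ with $B^\sharp=2\epsilon(\Psi_*U)_x/(\Psi_*U)=2\epsilon\tilde U_x/\tilde U=\tilde B$, which shows that $\mathcal K^\epsilon_B$ and $\mathcal K^\epsilon_{\tilde B}$ are $G^\sim_{\mathcal K^\epsilon}$-equivalent.

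For the necessity, let $\mathscr S\in G^\sim_{\mathcal K^\epsilon}$ map $\mathcal K^\epsilon_B$ to $\mathcal K^\epsilon_{\tilde B}$. I would pick $\mathscr T\in G^\sim_{\mathcal P^\epsilon}$ with $\varpi_*\mathscr T=\varpi_*\mathscr S$, leaving the scalar factor $V(t)$ in its $u$-component free; such $\mathscr T$ exists precisely because the two equivalence groups have identical $(t,x)$-components. Then $\tilde U':=\mathscr T_*U$ is a nonzero solution of $\mathcal P^\epsilon_{\tilde C}$ with $\tilde C=\mathscr T_*C$, and since the drift relation~\eqref{eq:TransOfB} depends only on the $(t,x)$-components, $\mathscr T$ produces the same target drift from $B$ as $\mathscr S$ does; hence $2\epsilon\tilde U'_x/\tilde U'=\tilde B=2\epsilon\tilde U_x/\tilde U$. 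Integrating in~$x$ gives $\tilde U=\rho(\tilde t)\,\tilde U'$ for a function $\rho$ of $\tilde t$ alone, and inserting this into the equations $\mathcal P^\epsilon_C$ and $\mathcal P^\epsilon_{\tilde C}$ yields $C-\tilde C=\rho_{\tilde t}/\rho$, which therefore depends on $\tilde t$ only.

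The hard part will be upgrading $\mathscr T$ to an element of $G^{\rm ess}_{\mathcal P^\epsilon_C}$ that actually carries $U$ to $\tilde U$. Here I would exploit the remaining freedom in $V$: replacing $V(t)$ by $\rho(T(t))V(t)$ scales $\tilde U'$ by $\rho$ and, through the $V_t/V$ term in~\eqref{FP:eq:EquivalenceGroupReducedPb}, shifts the target potential by $\rho_{\tilde t}/\rho=C-\tilde C$, so the adjusted transformation $\hat{\mathscr T}$ satisfies $\hat{\mathscr T}_*C=C$ and $\hat{\mathscr T}_*U=\rho\,\tilde U'=\tilde U$. Then $\pi_*\hat{\mathscr T}$ maps the nonzero solution $U$ of $\mathcal P^\epsilon_C$ to the solution $\tilde U$ of the same equation, so Lemma~\ref{FP:AuxProp} places it in $G^{\rm ess}_{\mathcal P^\epsilon_C}$ and certifies the $G^{\rm ess}_{\mathcal P^\epsilon_C}$-equivalence of $U$ and $\tilde U$. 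The only delicate point is that this $V$-adjustment reconciles $\tilde C$ with $C$ exactly because $C-\tilde C$ was shown to depend on $\tilde t$ alone; this is what makes the lift from $G^\sim_{\mathcal P^\epsilon}$ into the stabilizer $G^{\rm ess}_{\mathcal P^\epsilon_C}$ possible.
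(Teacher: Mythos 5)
Your proof is correct and follows essentially the same route as the paper's: the paper proves the corollary by identifying $G^{\rm ess}_{\mathcal P^\epsilon_C}$ with the stabilizer of~$\mathcal P^\epsilon_C$ in $\pi_*G^\sim_{\mathcal P^\epsilon}$ and invoking the equivalence of the relations~\eqref{eq:TransOfU} and~\eqref{eq:TransOfB} from the proof of Lemma~\ref{FP:Lemma1b} with $\tilde C=C$, which is exactly what your two implications spell out. Your necessity argument (matching $(t,x)$-components, deducing $\tilde U=\rho(\tilde t)\tilde U'$, and absorbing $\rho$ into the free factor~$V$ so that $\hat{\mathscr T}_*C=C$ before applying Lemma~\ref{FP:AuxProp}) is precisely the detail the paper's citation-style proof leaves implicit, and it is carried out correctly.
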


\begin{proof}
Since the class~$\mathcal P^\epsilon$ is disjointedly semi-normalized with respect to the linear superposition of solutions,
the essential point symmetry group~\smash{$G^{\rm ess}_{\mathcal P^\epsilon_C}$} of the equation~$\mathcal P^\epsilon_C$
coincides with the stabilizer subgroup of $\pi_*G^\sim_{\mathcal P^\epsilon}$ with respect to~$\mathcal P^\epsilon_C$.
Then the lemma's statement follows from the proof of Lemma~\ref{FP:Lemma1b} for the particular value $\tilde C=C$
or, more specifically, from the equivalence of the equations~\eqref{eq:TransOfU} and~\eqref{eq:TransOfB}.
\end{proof}

The above assertions on equations from the class~$\mathcal K^\epsilon$ constitute the theoretical basis
for analyzing the group classification problem for this class
with respect to its equivalence group~$G^\sim_{\mathcal K^\epsilon}$.
In view of Lemma~\ref{FP:Lemma1a},
if equations~$\mathcal P^\epsilon_C$ and~$\mathcal P^\epsilon_{\tilde C}$ are $G^\sim_{\mathcal P^\epsilon}$-inequivalent,
then any equations $\mathcal K^\epsilon_B\in\mathfrak K_C$ and $\mathcal K^\epsilon_{\tilde B}\in\mathfrak K_{\tilde C}$
are $G^\sim_{\mathcal K^\epsilon}$-inequivalent.
Lemma~\ref{FP:Lemma1b} implies that it suffices to consider only the image of a single representative~$\mathcal P^\epsilon_C$
of each $G^\sim_{\mathcal P^\epsilon}$-orbit.
In other words, $C$ should run through the set of all potentials of the canonical $G^\sim_{\mathcal P^\epsilon}$-inequivalent equations
that admit extensions of the essential kernel algebra of the class~$\mathcal P^\epsilon$,
see Theorem~\ref{thm:FP:ClassificationsE0AndP}.
Corollary~\ref{cor:FPSimilarity1} shows how to distinguish inequivalent Kolmogorov equations in the set~$\mathfrak K_C$.
It is necessary to find a complete set of $G^{\rm ess}_{\mathcal P^\epsilon_C}$-inequivalent solutions~$U^\gamma$ of the equation~$\mathcal P^\epsilon_C$,
where $\gamma$ indexes this set,
and then to map the equation~$\mathcal P^\epsilon_C$ by each of the point transformations~$\Phi_{U^\gamma}\in\mathfrak T_C$.

The procedure for group classification of the class~$\mathcal F^\epsilon$ up to the $G^\sim_{\mathcal F^\epsilon}$-equivalence
can be described similarly. It is based on the following construction.
Consider a heat equation~$\mathcal P^\epsilon_C$ with potential~$C=C(t,x)$
and the set~$\check{\mathfrak T}_C$ of point transformations
that are parameterized by solutions of the heat equation~$\mathcal P^{-\epsilon}_{-C}$
and map~$\mathcal P^\epsilon_C$ to Fokker--Planck equations from the class~$\mathcal F^\epsilon$,
\[
\check{\mathfrak T}_C=\big\{\Psi_U\colon\,\tilde t=t,\ \tilde x=x,\ \tilde u=U(t,x)u \mid U\neq0\colon\,U_t=-\epsilon U_{xx}-CU\big\}.
\]
By~$\mathfrak F_C$ we denote the set of Fokker--Planck equations that are the images of~$\mathcal P^\epsilon_C$ under such maps,
\[
\mathfrak F_C:=(\check {\mathfrak T}_C)_*\mathcal P^\epsilon_C=
\big\{\mathcal F^\epsilon_B\mid B=-2\epsilon U_x/U,\ U\neq0\colon\,U_t=-\epsilon U_{xx}-CU\big\}.
\]

\begin{lemma}
(i) For any equation $\mathcal F^\epsilon_B\in\mathcal F^\epsilon$, there exists a function~$C$ of~$(t,x)$
such that $\mathcal F^\epsilon_B\in\mathfrak F_C$.

(ii) If reduced Kolmogorov equations $\mathcal F^\epsilon_B\in\mathfrak F_C$ and $\mathcal F^\epsilon_{\tilde B}\in\mathfrak F_{\tilde C}$
are $G^\sim_{\mathcal F^\epsilon}$-equivalent,
then the equations~$\mathcal P^\epsilon_C$ and~$\mathcal P^\epsilon_{\tilde C}$
are $G^\sim_{\mathcal P^\epsilon}$-equivalent.

(iii) If $\tilde C=\mathscr T_*C$ for some $\mathscr T\in G^\sim_{\mathcal P^\epsilon}$,
then the transformation $\breve{\mathscr T}\in G^\sim_{\mathcal F^\epsilon}$ with $\varpi_*\breve{\mathscr T}=\varpi_*\mathscr T$
and the identity $u$-component maps $\mathfrak F_C$ onto~$\mathfrak F_{\tilde C}$, $\mathfrak F^\epsilon_{\tilde C}=\breve{\mathscr T}_*\mathfrak F_C$.

(iv) Fokker--Planck equations
$\mathcal F^\epsilon_B=(\Psi_U)_*\mathcal P^\epsilon_C$ and
\smash{$\mathcal F^\epsilon_{\tilde B}=(\Psi_{\tilde U})_*\mathcal P^\epsilon_C$} from~$\mathfrak F_C$
are $G^\sim_{\mathcal F^\epsilon}$-equiv\-a\-lent
if and only if the corresponding nonzero solutions~$U$ and~$\tilde U$ of the equation~$\mathcal P^{-\epsilon}_{-C}$ are
$G^{\rm ess}_{\mathcal P^{-\epsilon}_{-C}}$-equivalent.
\end{lemma}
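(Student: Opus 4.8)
The four assertions are the Fokker--Planck counterparts of Lemma~\ref{FP:Lemma1a}, Lemma~\ref{FP:Lemma1b} and Corollary~\ref{cor:FPSimilarity1}, and the plan is to reproduce those arguments after replacing solutions of~$\mathcal P^\epsilon_C$ by solutions of its formal adjoint $(\mathcal P^\epsilon_C)^\dagger=\mathcal P^{-\epsilon}_{-C}$, the maps~$\Phi_U$ by the maps~$\Psi_U$, and the drift relation $B=2\epsilon U_x/U$ by $B=-2\epsilon U_x/U$. For item~(i), given~$\mathcal F^\epsilon_B$ I would first solve the first-order ordinary differential equation $2U_x=-\epsilon BU$ in~$x$ to obtain a nonzero~$U$, and then set $C:=-(U_t+\epsilon U_{xx})/U$; by construction~$U$ solves $\mathcal P^{-\epsilon}_{-C}$ and $B=-2\epsilon U_x/U$, so $\mathcal F^\epsilon_B\in\mathfrak F_C$. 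For item~(ii), since~$\Psi_U$ and~$\Psi_{\tilde U}$ render $\mathcal F^\epsilon_B$ and~$\mathcal F^\epsilon_{\tilde B}$ similar to $\mathcal P^\epsilon_C$ and~$\mathcal P^\epsilon_{\tilde C}$ respectively, a $G^\sim_{\mathcal F^\epsilon}$-equivalence between $\mathcal F^\epsilon_B$ and~$\mathcal F^\epsilon_{\tilde B}$ makes all four equations mutually similar; as~$\mathcal P^\epsilon$ is disjointedly semi-normalized, similarity of $\mathcal P^\epsilon_C$ and~$\mathcal P^\epsilon_{\tilde C}$ coincides with their $G^\sim_{\mathcal P^\epsilon}$-equivalence, which proves~(ii) exactly as in Lemma~\ref{FP:Lemma1a}(ii).

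The genuinely new ingredient, needed for item~(iii), is the transformation law for solutions of the adjoint equation under an equivalence transformation $\mathscr T\in G^\sim_{\mathcal P^\epsilon}$ of the form~\eqref{FP:eq:EquivalenceGroupReducedP} sending~$C$ to~$\tilde C$. Whereas in the proof of Lemma~\ref{FP:Lemma1b} the rule~\eqref{eq:TransOfU} for solutions of~$\mathcal P^\epsilon_C$ is read off directly from the $u$-component of~$\mathscr T$, here I would derive the corresponding rule for a solution~$U$ of~$\mathcal P^{-\epsilon}_{-C}$ from the bilinear pairing preserved by the conservation law $(wU)_t=\epsilon(w_xU-wU_x)_x$, valid for any solution~$w$ of~$\mathcal P^\epsilon_C$ and any solution~$U$ of~$\mathcal P^{-\epsilon}_{-C}$; invariance of~$\int wU\,\mathrm{d}x$ under~$\mathscr T$, together with the Jacobian factor $\mathrm{d}\tilde x=X^1\mathrm{d}x$ and the explicit multiplier of~$\mathscr T$, forces
\[
\tilde U(\tilde t,\tilde x)=\frac{c}{X^1(t)V(t)}\exp\left(\epsilon\frac{X^1_t}{4X^1}x^2+\epsilon\frac{X^0_t}{2X^1}x\right)U(t,x)
\]
for a nonzero constant~$c$, which is the $-\epsilon$-analogue of~\eqref{eq:TransOfU} with the opposite sign in the Gaussian exponent and the extra factor $1/(X^1V)$. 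Equivalently, this rule expresses that formal adjointness sends $\mathscr T\in G^\sim_{\mathcal P^\epsilon}$ to an element of~$G^\sim_{\mathcal P^{-\epsilon}}$ with the same $(t,x)$-components, both groups being given by Proposition~\ref{FP:prop:EquivalenceGroupHeatPotential}. Substituting this law into $\tilde B=-2\epsilon\tilde U_{\tilde x}/\tilde U$ and using $\epsilon^2=1$, I expect to recover precisely $\tilde B=B/X^1-(X^1_tx+X^0_t)/(X^1)^2$ with $B=-2\epsilon U_x/U$, i.e.\ the drift relation of the transformation $\breve{\mathscr T}\in G^\sim_{\mathcal F^\epsilon}$ (Proposition~\ref{FP:prop:EquivalenceGroupGaugedFP}, Corollary~\ref{FP:cor:EquivGroupF}) with $(t,x)$-components~$\varpi_*\mathscr T$ and identity $u$-component. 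Running this correspondence in both directions, as in the proof of Lemma~\ref{FP:Lemma1b}, yields the two inclusions and hence $\mathfrak F_{\tilde C}=\breve{\mathscr T}_*\mathfrak F_C$.

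Finally, item~(iv) follows by specializing item~(iii) to $\tilde C=C$, exactly as Corollary~\ref{cor:FPSimilarity1} is obtained from Lemma~\ref{FP:Lemma1b}. Because~$\mathcal P^{-\epsilon}$ is disjointedly semi-normalized with respect to the linear superposition of solutions, the essential point symmetry group~$G^{\rm ess}_{\mathcal P^{-\epsilon}_{-C}}$ coincides with the stabilizer of~$\mathcal P^{-\epsilon}_{-C}$ in~$\pi_*G^\sim_{\mathcal P^{-\epsilon}}$, and the adjoint analogue of Lemma~\ref{FP:AuxProp} guarantees that a projected equivalence transformation carrying one nonzero solution of~$\mathcal P^{-\epsilon}_{-C}$ to another lies in this group; the equivalence between the solution rule above (with $\tilde C=C$) and the drift rule of the same form as~\eqref{eq:TransOfB} then identifies $G^\sim_{\mathcal F^\epsilon}$-equivalence of $\mathcal F^\epsilon_B,\mathcal F^\epsilon_{\tilde B}\in\mathfrak F_C$ with $G^{\rm ess}_{\mathcal P^{-\epsilon}_{-C}}$-equivalence of the corresponding solutions~$U,\tilde U$. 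The main obstacle I anticipate is precisely the derivation and bookkeeping of the adjoint solution rule in item~(iii): one must track the Jacobian of the $(t,x)$-change together with the fiber multiplier so that the resulting~$\tilde B$ matches the Fokker--Planck equivalence-group formula rather than the Kolmogorov one, the sign of~$\epsilon$ in the Gaussian weight being the delicate point.
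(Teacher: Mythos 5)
Your proposal is correct and follows exactly the route the paper intends: the lemma is stated without proof as the Fokker--Planck counterpart of Lemmas~\ref{FP:Lemma1a} and~\ref{FP:Lemma1b} and Corollary~\ref{cor:FPSimilarity1}, and you reproduce those arguments with solutions of $\mathcal P^{-\epsilon}_{-C}$ in place of solutions of~$\mathcal P^\epsilon_C$. The one ingredient the paper leaves implicit, the adjoint solution rule in item~(iii), is exactly right: your $\tilde U$ is the pushforward of~$U$ by the element of $G^\sim_{\mathcal P^{-\epsilon}}$ sharing the $(t,x)$-components of~$\mathscr T$ and having $\tilde V=c/(VX^1)$, which by Proposition~\ref{FP:prop:EquivalenceGroupHeatPotential} (applied with $\epsilon\mapsto-\epsilon$, $C\mapsto-C$) maps $-C$ to $-\tilde C$, and substituting it into $\tilde B=-2\epsilon\tilde U_{\tilde x}/\tilde U$ indeed reproduces the drift formula of Corollary~\ref{FP:cor:EquivGroupF}.
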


Summing up the results of this section, we obtain the following joint description
of the group classifications of the classes~$\bar{\mathcal K}$, $\mathcal K^\epsilon$, $\bar{\mathcal F}$ and $\mathcal F^\epsilon$
up to the equivalences generated by the corresponding equivalence groups.
Recall that $\epsilon$ takes a fixed value in $\{-1,1\}$, and in the classes~$\bar{\mathcal K}$ and~$\bar{\mathcal F}$
we can choose any of these two values.

\begin{theorem}\label{thm:GroupClassificationOfbarKbarFKepsFeps}
Let $\mathcal L$ be one of the classes~$\bar{\mathcal K}$, $\mathcal K^\epsilon$, $\bar{\mathcal F}$ and $\mathcal F^\epsilon$.
A complete list of $G^\sim_{\mathcal L}$-inequivalent essential Lie-symmetry extensions of the essential kernel algebra
$\mathfrak g^{\cap\rm ess}_{\mathcal L}=\langle u\p_u\rangle$ of~$\mathcal L$
is exhausted by the cases with $A=\epsilon$ if $\mathcal L\in\{\bar{\mathcal K},\bar{\mathcal F}\}$
and $B=2\epsilon\breve\epsilon U_x/U$,
where
$\breve\epsilon= 1$ for $\mathcal L\in\{\bar{\mathcal K},\mathcal K^\epsilon\}$ and
$\breve\epsilon=-1$ for $\mathcal L\in\{\bar{\mathcal F},\mathcal F^\epsilon\}$,
the function~$U$ of $(t,x)$ runs through a complete set of
\smash{$G^{\rm ess}_{\mathcal P^{\epsilon\breve\epsilon}_{\breve\epsilon C}}$}-inequivalent nonzero solutions
of the equations $\mathcal P^{\epsilon\breve\epsilon}_{\breve\epsilon C}$,
and the potential~$C$ runs through the values listed in Theorem~\ref{thm:FP:ClassificationsE0AndP}
subject to the conditions of Remark~\ref{rem:Case1WithinPeps}.
Depending on~$C$,  basis vector fields of essential Lie invariance algebras besides $u\p_u$ are the following:
\begin{enumerate}\itemsep=0ex
\item $C=C(x)\colon$\quad $\hat P^t_U$;
\item $C=\mu x^{-2}\colon$\quad $\hat P^t_U$, \ $\hat D_U$, \ $\hat\Pi_U$;
\item $C=0\colon$\quad $\hat P^t_U$, \ $\hat D_U$, \ $\hat\Pi_U$, \ $\hat P^x_U$, \ $\hat G_U$.
\end{enumerate}
Here
\begin{gather*}
\hat P^t_U:=\p_t-R(t,x)u\p_u,\quad
\hat D_U:=2t\p_t+x\p_x-\big(2tR(t,x)+\tfrac12\epsilon xB(t,x)\big)u\p_u,\\
\hat\Pi_U:=4t^2\p_t+4tx\p_x-\big(\epsilon x^2+2t+4t^2R(t,x)+2\epsilon txB(t,x)\big)u\p_u,\\
\hat P^x_U:=\p_x-\tfrac12\epsilon B(t,x)u\p_u,\quad
\hat G_U:=2t\p_x-\epsilon\big(x+tB(t,x)\big)u\p_u,
\end{gather*}
and $R:=\breve\epsilon U_t/U$.
\end{theorem}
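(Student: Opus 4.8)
The plan is to reduce all four classifications to the known classification of~$\mathcal P^\epsilon$ and to apply the version of the mapping method from the end of Section~\ref{FP:sec:MapMethod}, with the sets $\mathfrak K_C$ and $\mathfrak F_C$ serving as the subsets~$\mathfrak M_\theta$ of that scheme. First I would dispense with the two superclasses: by the consistency of the $G^\sim_{\mathcal K^\epsilon}$- and $G^\sim_{\bar{\mathcal K}}$-equivalences via~$\mathcal M_{\bar{\mathcal K}}$ together with Proposition~\ref{pro:glinInE0}, the classification of~$\bar{\mathcal K}$ modulo $G^\sim_{\bar{\mathcal K}}$ reduces to that of~$\mathcal K^\epsilon$ modulo $G^\sim_{\mathcal K^\epsilon}$, and likewise~$\bar{\mathcal F}$ reduces to~$\mathcal F^\epsilon$; in all four cases the essential kernel algebra is $\langle u\p_u\rangle$, as recorded before the theorem. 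It thus suffices to classify~$\mathcal K^\epsilon$ and~$\mathcal F^\epsilon$, which I would treat simultaneously by writing the relevant heat equation as $\mathcal P^{\epsilon\breve\epsilon}_{\breve\epsilon C}$ and the defining point transformation as $\Phi_U$ for $\breve\epsilon=1$ and as $\Psi_U$ for $\breve\epsilon=-1$.

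Next I would verify the three properties of the sets $\mathfrak M_C$ required at the end of Section~\ref{FP:sec:MapMethod}. The covering property $\mathcal K^\epsilon=\cup_C\mathfrak K_C$ is Lemma~\ref{FP:Lemma1a}(i) (resp.\ part~(i) of the corresponding lemma for~$\mathcal F^\epsilon$); the $G^\sim_{\mathcal P^\epsilon}$-compatibility $\mathfrak K_{\mathscr T_*C}=\breve{\mathscr T}_*\mathfrak K_C$ is Lemma~\ref{FP:Lemma1b} (resp.\ part~(iii)); and the third property holds since~$\mathcal P^\epsilon$ is disjointedly semi-normalized, cf.\ Lemma~\ref{FP:Lemma1a}(ii). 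Hence the list is assembled by letting $C$ run through a complete set of $G^\sim_{\mathcal P^\epsilon}$-inequivalent potentials admitting essential extensions, namely the values of Theorem~\ref{thm:FP:ClassificationsE0AndP} subject to Remark~\ref{rem:Case1WithinPeps}, and, for each such~$C$, letting $U$ run through a complete set of $G^{\rm ess}_{\mathcal P^{\epsilon\breve\epsilon}_{\breve\epsilon C}}$-inequivalent nonzero solutions; the $G^\sim_{\mathcal L}$-inequivalence of the resulting equations within a fixed~$C$ is exactly Corollary~\ref{cor:FPSimilarity1} (resp.\ part~(iv)). Since $\Phi_U$ (resp.\ $\Psi_U$) is a point equivalence between $\mathcal P^{\epsilon\breve\epsilon}_{\breve\epsilon C}$ and the target equation, it carries $\mathfrak g_{\mathcal P^{\epsilon\breve\epsilon}_{\breve\epsilon C}}$ isomorphically onto the maximal Lie invariance algebra of the target and respects the splitting into essential and linear-superposition parts, so the essential extension cases coincide with Cases~1--3 of Theorem~\ref{thm:FP:ClassificationsE0AndP}.

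It then remains to compute the basis vector fields by pushing forward those of $\mathcal P^\epsilon_C$. For $Q=\tau(t)\p_t+\xi(t,x)\p_x+\eta(t,x)u\p_u$ a direct computation using $\tilde u=u/U$ (resp.\ $\tilde u=Uu$) gives the uniform expression
\begin{gather*}
(\Phi_U)_*Q=\tau\p_{\tilde t}+\xi\p_{\tilde x}+\Bigl(\eta-\tau R-\tfrac12\epsilon B\xi\Bigr)\tilde u\p_{\tilde u},
\qquad R:=\breve\epsilon\frac{U_t}{U},\quad B=2\epsilon\breve\epsilon\frac{U_x}{U},
\end{gather*}
which is valid for both values of~$\breve\epsilon$ because the sign changes in $U_t/U$ and $U_x/U$ are absorbed into~$R$ and~$B$. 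Substituting the fields $\p_t$, $D$, $\Pi$, $\p_x$ and $G$ of Theorem~\ref{thm:FP:ClassificationsE0AndP} produces precisely $\hat P^t_U$, $\hat D_U$, $\hat\Pi_U$, $\hat P^x_U$ and $\hat G_U$, which completes the three cases of the list.

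The hardest point is conceptual rather than computational: the inner step of listing all $G^{\rm ess}_{\mathcal P^\epsilon_C}$-inequivalent nonzero solutions~$U$ for a given~$C$ is a genuine no-go problem, infeasible in closed form for generic~$C$, which is why the theorem is deliberately stated as a reduction with~$U$ left as a parameter. Within the proof itself the only delicate point is to exclude spurious essential symmetries of the target, and this is automatic since $\Phi_U$ and~$\Psi_U$ are point equivalences carrying $\mathfrak g^{\rm ess}$ to $\mathfrak g^{\rm ess}$; everything else is an instantiation of the abstract scheme together with the already established Lemmas~\ref{FP:Lemma1a} and~\ref{FP:Lemma1b} and Corollary~\ref{cor:FPSimilarity1}.
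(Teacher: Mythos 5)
Your proposal is correct and follows essentially the same route as the paper: Theorem~\ref{thm:GroupClassificationOfbarKbarFKepsFeps} is stated there as a summary of Section~\ref{sec:GroupClassificationsWrtEquivGroupsKF}, and your proof assembles exactly the same ingredients --- the reduction of $\bar{\mathcal K}$, $\bar{\mathcal F}$ to $\mathcal K^\epsilon$, $\mathcal F^\epsilon$, the verification of the three properties of the sets $\mathfrak K_C$, $\mathfrak F_C$ via Lemmas~\ref{FP:Lemma1a}, \ref{FP:Lemma1b} and Corollary~\ref{cor:FPSimilarity1} (and their Fokker--Planck analogues), and the instantiation of the mapping scheme from the end of Section~\ref{FP:sec:MapMethod}. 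Your explicit uniform pushforward formula $(\Phi_U)_*Q=\tau\p_{\tilde t}+\xi\p_{\tilde x}+(\eta-\tau R-\tfrac12\epsilon B\xi)\tilde u\p_{\tilde u}$, which the paper leaves implicit, checks out against all five listed vector fields for both values of $\breve\epsilon$.
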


\subsection{Constructing an explicit classification list is a no-go problem}\label{FP:sec:GroupFoliation}

In principle, the only difference of the group classification of the class~$\mathcal K^\epsilon$
up to the $G^\sim_{\mathcal K^\epsilon}$-equivalence from that up to the general point equivalence
lies in the necessity of finding, in a closed form,
all \smash{$G^{\rm ess}_{\mathcal P^\epsilon_C}$}-inequivalent solutions
of each selected equation~$\mathcal P^\epsilon_C$ from the class~$\mathcal P^\epsilon$
instead of a single particular solution thereof.
(For the class~$\mathcal F^\epsilon$,
the equation~$\mathcal P^\epsilon_C$ is replaced by the equation $\mathcal P^{-\epsilon}_{-C}$.)
At the same time, it is the biggest obstacle to accomplish the classification procedure
and present an explicit classification list.
The problem of finding all \smash{$G^{\rm ess}_{\mathcal P^\epsilon_C}$}-inequivalent solutions of an equation~$\mathcal P^\epsilon_C$
is equivalent to representing the entire solution set of this equation in a closed form%
\footnote{%
Integral representations, which are known, e.g., for the general solutions of the equations~$\mathcal P^\epsilon_C$
with $C=\mu x^{-2}$, are not appropriate for using in the course of presenting a group classification list.
}
and singling out canonical representatives among \smash{$G^{\rm ess}_{\mathcal P^\epsilon_C}$}-equivalent solutions.

In general, the problem of finding solutions of a differential equation that are inequivalent with respect to its point symmetry group
can be considered with the help of the group foliation method,
see~\cite[Chapter~VII]{Ovsiannikov1982} and~\cite{AncoAliWolf2011,MartinaSheftelWinternitz2001}.%
\footnote{%
We revisit basic notions of the theory of group foliations using the notation of Section~\ref{FP:sec:TheorBack}.
A~system of differential equations is called \emph{automorphic} with respect to a group~$G$ of point transformations 
in the space coordinatized by the independent and dependent variables of this system (or, briefly, \emph{$G$-automorphic}) 
if its solution set is the $G$-orbit of its fixed solution. 
Each $G$-automorphic system can be rewritten in terms of differential invariants of~$G$ in a special way. 
If a system of differential equations~$E$ with independent variables~$x$ and dependent variables~$u$
admits a group~$G$ as its point symmetry group, 
then there exists a group foliation of~$E$ with respect to~$G$, 
which is a (finite) set of classes 
$\mathcal L^i=\{\mathcal L^i_{\theta^i}\mid\theta^i\in\mathcal S^i\}$, $i=1,\dots,N$, of (systems of) differential equations 
with the same independent and dependent variables
such that for any $i\in\{1,\dots,N\}$ and for any $\theta^i\in\mathcal S^i$ the system $\mathcal L^i_{\theta^i}$ is $G$-automorphic 
and the solution set of~$E$ is the (disjoint) union of the solution sets of~$\mathcal L^i_{\theta^i}$, $\theta^i\in\mathcal S^i$, $i=1,\dots,N$. 
For each $i\in\{1,\dots,N\}$, 
the set~$\mathcal S^i$ is the solution set of an auxiliary system~$\mathsf S^i$ for the arbitrary-element tuple $\theta^i$ of the class~$\mathcal L^i$, 
which is called the \emph{resolving system} for the $i$th component of the group foliation.
}
Let us discuss the equation~$\mathcal P_C:=\mathcal P^{+1}_C$ with the potential $C(t,x)=\mu x^{-2}$, where $\mu\in\mathbb R\setminus\{0\}$, within this framework.

The group \smash{$G^{\rm ess}_{\mathcal P_C}=:G$} is constituted by the point transformations of the form~\eqref{eq:SymGroupPmux2}.
Prolonging these transformations to derivatives
and invariantizing~\cite{FelsOlver1999} the coordinate function~$u_t$ of the underlying jet space
under the normalization condition $(\tilde t,\tilde x,\tilde u,\tilde u_{\tilde x})=(0,1,1,0)$,
we find the first-order normalized differential invariant~$I_{10}$ of this group,
\begin{gather*}
I_{10}=x^2\frac{u_t}u-x^2\frac{u_x^2}{u^2}-x\frac{u_x}{u}.
\end{gather*}
All differential invariants of the group~$G$ can be expressed in terms of~$I_{10}$ by means the
operators~$\mathrm D^{\rm i}_1$ and~$\mathrm D^{\rm i}_2$ of invariant differentiation,
\[
\mathrm D^{\rm i}_1=x^2\left(\mathrm D_t-\frac{2u_x}u\mathrm D_x\right),\quad
\mathrm D^{\rm i}_2=x\mathrm D_x \quad\text{with}\quad
[\mathrm D^{\rm i}_1,\mathrm D^{\rm i}_2]=2(I_{02}+I_{10})\mathrm D^{\rm i}_2-2\mathrm D^{\rm i}_1,
\]
where $I_{02}$ is the invariantization of~$u_{xx}$, $I_{02}=x^2(u_{xx}-u_t)/u$.
In particular, a functional basis of such invariants of order less than or equal to two 
is constituted by~$I_{10}$ and%
\footnote{%
In contrast to the normalized differential invariants~$I_{ij}$, $(i,j)\in\mathbb N\times\mathbb N_0$ 
with $\mathbb N_0:=\mathbb N\cup\{0\}$, which are obtained by invariantizing, 
under the above normalization condition, the jet variables~$u_{ij}$ associated with the derivatives $\p^{i+j}u/\p t^i\p x^j$, 
the notation $\hat I_{ij}$ is used for differentiated differential invariants having a similar association.%
}
\[
\hat I_{20}:=\mathrm D^{\rm i}_1 I_{10},\quad \hat I_{11}:=\mathrm D^{\rm i}_2 I_{10},\quad
I_{02}=\frac{[\mathrm D^{\rm i}_1,\mathrm D^{\rm i}_2]I_{10}+2\hat I_{20}}{2\hat I_{11}}-I_{10}.
\]

The highest rank for $G$-automorphic systems is equal to 
the number of independent variables of the equation~$\mathcal P_C$, which is two.
The order~$k$ of automorphic systems to be considered 
in the course of constructing a group foliation for~$\mathcal P_C$ 
is equal to two, which follows from the following conditions.
It should not be less than the order of the equation~$\mathcal P_C$
whereas the cardinality of a functional basis of differential invariants of~$G$ 
whose order is not less than~$k$ should be greater than or equal to
the joint number of independent and dependent variables in~$\mathcal P_C$, which is equal to three.

Thus, each of such rank-two systems~$\mathcal A^2_F$ takes the form 
\begin{gather}\label{FP:eq:A21}
I_{02}=-\mu, \quad \hat I_{11}=F(I_{10},\hat I_{20}),
\end{gather}
where $F$ is a smooth function of~$(I_{10},\hat I_{20})$ satisfying the compatibility condition for~$\mathcal A^2_F$. 
The first equation in~\eqref{FP:eq:A21} is the representation of the equation~$\mathcal P_C$ in terms of differential invariants of~$G$.
Treating now the third-order differential invariants
\[
\hat I_{30}:=(\mathrm D^{\rm i}_1)^2I_{10},\quad \hat I_{21}:=\mathrm D^{\rm i}_2\mathrm D^{\rm i}_1I_{10},\quad \hat I_{12}:=(\mathrm D^{\rm i}_2)^2I_{10},
\quad \hat I_{03}:=\mathrm D^{\rm i}_2I_{02}
\]
as functions of $(I_{10},\hat I_{20})$,
we rewrite the operators of invariant differentiation on solutions of~$\mathcal A^2_F$~as
\[
\mathrm D^{\rm i}_1=\hat I_{20}\p_{I_{10}}+\hat I_{30}\p_{\hat I_{20}},\quad \mathrm D^{\rm i}_2=F\p_{I_{10}}+\hat I_{21}\p_{\hat I_{20}}.
\]
The third-order syzygy
\begin{gather*}
\mathrm D^{\rm i}_1I_{02}-\mathrm D^{\rm i}_2(\hat I_{11}+I_{02})
=-\hat I_{20}-5\hat I_{11}+2(I_{02}+I_{10})^2+4(I_{10}-I_{02})
\end{gather*}
can be seen as a cross-differentiation of the equations of the automorphic system~$\mathcal A^2_F$, when restricted to its solution set.
Thus, the left hand side of this equality is $-\hat I_{12}=-\mathrm D^{\rm i}_2\hat I_{11}$, which upon further expansion gives
\[
-FF_1-\hat I_{21}F_2
=-5F-\hat I_{20}+2I_{10}^2+4(1-\mu)I_{10}+2\mu(\mu+2),
\]
where $F_1:=\p F/\p I_{10}$ and $F_2:=\p F/\p \hat I_{20}$. Assuming first that $F_2\neq0$, we obtain the following expressions
for~$\hat I_{21}$ and~$\hat I_{12}$ in terms of~$F$:
\begin{gather*}
\hat I_{21}=\frac{5F+\hat I_{20}-2I_{10}^2-4(1-\mu)I_{10}-2\mu(\mu+2)-FF_1}{F_2},\\
\hat I_{12}=5F+\hat I_{20}-2I_{10}^2-4(1-\mu)I_{10}-2\mu(\mu+2).
\end{gather*}
A similar expression for~$\hat I_{30}$ can be found by representing $\hat I_{21}$~as
\[
\hat I_{21}=\mathrm D^{\rm i}_2\mathrm D^{\rm i}_1\hat I_{10}=
(\mathrm D^{\rm i}_1\mathrm D^{\rm i}_2-2(I_{02}+I_{10})\mathrm D^{\rm i}_2+2\mathrm D^{\rm i}_1)\hat I_{10}=
\hat I_{20}F_1+\hat I_{30}F_2-2(I_{02}+I_{10})F+2\hat I_{20},
\]
and hence
\[
\hat I_{30}=\frac{(5{-}F_1)F+\hat I_{20}-2(I_{10}+1-\mu)^2+2(1-4\mu)}{F_2^2}-
\frac{\hat I_{20}(F_1+2)-2(I_{10}-\mu)F}{F_2},
\]
while $\hat I_{03}$ vanishes on solutions of~$\mathcal A^2_F$.
The compatibility conditions in pairs of the above equations for third-order differential invariants,
\begin{gather*}
\begin{split}
\mathrm D^{\rm i}_1\hat I_{21}-\mathrm D^{\rm i}_2\hat I_{30}&=-2\hat I_{30}+2\hat I_{21}(I_{10}+I_{02}),\\
\end{split}\\
\mathrm D^{\rm i}_2\hat I_{21}-\mathrm D^{\rm i}_1\hat I_{12}=4\hat I_{21}-4\hat I_{12}(I_{10}+I_{02})-2\hat I_{03}\hat I_{11}-4\hat I_{20}-2\hat I_{11}^2-4\hat I_{11}(I_{10}+I_{02}),
\end{gather*}
when restricted to the solution set of the automorphic system~$\mathcal A^2_F$, give the same \emph{resolving} equation,
\begin{gather}\label{FP:eq:A2}
\begin{split}
&(FF_y-2\bar F)^2F_{zz}-2F(FF_y-\bar F)F_zF_{yz}+F^2F_z^2F_{yy}\\
&+(-4\bar Fy-2F^2+14Fy-4yz-18z)F_z^3
+(\breve FF_y+6Fy+22F+2\breve F+2\bar F)F_z^2=0,
\end{split}
\end{gather}
where \ $y:=I_{10}-\mu$, \ $z:=\hat I_{20}$, \ $\bar F:=5F+z-2y^2-4y-8\mu$ \ and \ $\breve F:=-4F-2y^2-4y-8\mu$.

If $F_2=0$, then $\hat I_{11}$ becomes a function~$\hat F$ of~$I_{10}$ only, and $\hat I_{20}$ is treated as a function~$\check F$ of~$I_{10}$ as well, 
which leads to the rank-one automorphic system~$\mathcal A^1_\theta$ with~$\theta=(\hat F,\check F)$,
\begin{gather}\label{FP:eq:A11}
I_{02}=-\mu, \quad \hat I_{11}=\hat F(I_{10}),\quad \hat I_{20}=\check F(I_{10}).
\end{gather}

Abusing notations, we rewrite the operators of invariant differentiation on solutions of~$\mathcal A^1_\theta$~as
$\mathrm D^{\rm i}_1=\check F\p_{I_{10}}$ and
$\mathrm D^{\rm i}_2=\hat F\p_{I_{10}}.$
The syzygies
\begin{gather*}
\mathrm D^{\rm i}_1I_{02}-\mathrm D^{\rm i}_2(\hat I_{11}+I_{02})
=-\hat I_{20}-5\hat I_{11}+2(I_{02}+I_{10})^2+4(I_{10}-I_{02}),\\
\mathrm D^{\rm i}_2I_{20}-\mathrm D^{\rm i}_1\hat I_{11}
=2\hat I_{20}-2(I_{02}+I_{10})\hat I_{11}
\end{gather*}
on the solution set of~$\mathcal A^1_\theta$ become
\begin{gather}\label{FP:eq:A1}
\begin{split}
&\hat F\hat F'=5\hat F+\check F-2Q(y),\\
&\hat F\check F'-\check F\hat F'=2\check F-2y\hat F,
\end{split}
\end{gather}
where the prime denotes the differentiation with respect to~$y$, and $Q(y):=y^2+2y+4\mu$.
It is reduced by substituting~$\hat F$ from the first equation to the second one to
\begin{gather*}
\hat F^2\hat F''-2(\hat F+Q(y))\hat F'+(6y+14)\hat F-4Q(y)=0.
\end{gather*}

Every value of the function~$F$ or of the pair $(\hat F,\check F)$,
which satisfies the equations~\eqref{FP:eq:A2} or~\eqref{FP:eq:A1}, respectively,
gives the orbit of a solution of~$\mathcal P_C$, and different values correspond to different orbits.
In other words, the problem of finding all $G$-inequivalent solutions of the equation~$\mathcal P_C$
is equivalent to solving the system~\eqref{FP:eq:A21} or the system~\eqref{FP:eq:A11} for each of such values,
which is of course a no-go problem.

\section[Group classifications of equations with time-independent coefficients]
{Group classifications of equations with time-independent\\ coefficients}\label{sec:FPGCTimeIndependent}

All the canonical (up to the general point equivalence) extensions of the essential kernel algebras
within the classes~$\bar{\mathcal K}$ and~$\bar{\mathcal F}$ correspond to equations 
with constant diffusion coefficients and time-independent drifts,
see Theorem~\ref{thm:FPClassificationFP}.
Besides, physically meaningful Fokker--Planck equations, in particular those mentioned in the introduction,
have time-independent coefficients.
Moreover, in contrast to the classes~$\bar{\mathcal K}$ and~$\bar{\mathcal F}$,
complete closed-form lists of inequivalent essential Lie-symmetry extensions
within the classes~$\bar{\mathcal F}'$ and~$\bar{\mathcal K}'$ can be constructed in an explicit form.
The group classification technique to be applied to the latter classes
coincides with that used in Section~\ref{sec:GroupClassificationsWrtEquivGroupsKF},
but now the class~$\mathcal P'^\epsilon$ is to be mapped instead of the class~$\mathcal P^\epsilon$.
In Section~\ref{sec:GroupClassificationsWrtEquivGroupsKF} we describe in detail the procedure
of obtaining Lie-symmetry extensions within the class~$\bar{\mathcal K}$,
so below we focus on Fokker--Planck equations from the class~$\bar{\mathcal F}'$
and then just formulate the classification result for the class~$\bar{\mathcal K}'$.

\subsection[Group classifications of heat equations with time-independent potentials]
{Group classifications of heat equations with time-independent potentials}\label{sec:GroupClassificationP'}

The equations in the class~$\mathcal P'^\epsilon$ that admit extensions of the essential kernel algebra of~$\mathcal P'^\epsilon$
were described using no equivalence, e.g., in~\cite{Gungor2018}.
The list of such equations modulo the $\mathcal G^\sim_{\mathcal P'^\epsilon}$-equivalence
is presented in Theorem~\ref{thm:FP:ClassificationsE0AndP},
where Case~1 should be interpreted as the general case within the class~$\mathcal P'^\epsilon$.
Below we develop another approach and solve the group classification problem
for this class up to the~$G^\sim_{\mathcal P'^\epsilon}$-equivalence, 
simultaneously obtaining the group classification of the class~$\mathcal E'_0$ up to the $G^\sim_{\mathcal E'_0}$-equivalence.

\begin{theorem}\label{FP:thm:GroupClassificationOfTimeIndepPotentials}
The (essential) kernel algebras of the classes~$\mathcal P'^\epsilon$ and~$\mathcal E'_0$
coincide with~$\langle \p_t,u\p_u\rangle$.
A complete list of $G^\sim_{\mathcal P'^\epsilon}$-inequivalent (resp.\ $G^\sim_{\mathcal E'_0}$-inequivalent)
essential Lie-symmetry extensions in the class~$\mathcal P'^\epsilon$ (resp.\ the class~$\mathcal E'_0$)
is exhausted by the cases given in Table~\ref{tab:FPStationaryPotentialsGC}.
\begin{table}[!ht]
\footnotesize
\begin{center}
\caption{Complete group classifications of the classes~$\mathcal P'^\epsilon$ and~$\mathcal E'_0$
up to the $G^\sim_{\mathcal P'^\epsilon}$- and $G^\sim_{\mathcal E'_0}$-equivalences, respectively.%
\label{tab:FPStationaryPotentialsGC}}
\def\arraystretch{1.6} 
\begin{tabular}{|c|c|l|}
\hline
no. & $C$ & \hfil Basis elements of~$\mathfrak g^{\rm ess}_C$ besides $\p_t$ and $u\p_u$\\
\hline
0   & $C(x)$                     & --\\
1   & $\mu x^{-2}$               & $D(t)$, $D(t^2)$ \\
1a  & $\mu x^{-2}+\epsilon x^2$  & $D(\cos4t)$, $D(\sin4t)$\\
1b  & $\mu x^{-2}-\epsilon x^2$  & $D({\rm e}^{4t})$, $D({\rm e}^{-4t})$\\
2   & $0$                        & $P(1)$, $P(t)$, $D(t)$, $D(t^2)$\\
2a  & $\epsilon x^2$             & $P(\cos2t)$, $P(\sin2t)$, $D(\cos4t)$, $D(\sin4t)$\\
2b  & $-\epsilon x^2$            & $P({\rm e}^{2t})$, $P({\rm e}^{-2t})$, $D({\rm e}^{4t})$, $D({\rm e}^{-4t})$\\
2c  & $x$                        & $\bar P(1)$, $\bar P(t)$, $D(t)-\frac32\epsilon\bar P(t^2)$, $D(t^2)-\epsilon\bar P(t^3)$\\
\hline
\end{tabular}
\end{center}
Here $\mu\ne0$,
$D(f)=f\p_t+\frac12f_tx\p_x-(\frac18\epsilon f_{tt}x^2+\frac14f_t)u\p_u$,
$P(f)=f\p_x-\frac12\epsilon f_txu\p_u$, $\bar P(f)=P(f)+Fu\p_u$, and $F$ is a fixed antiderivative of~$f=f(t)$.
\end{table}
\end{theorem}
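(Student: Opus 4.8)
The plan is to split the statement into three parts: the kernel algebra, the reduction of the $\mathcal E'_0$-problem to the $\mathcal P'^\epsilon$-problem, and the classification for $\mathcal P'^\epsilon$ proper. For the kernel, every equation $u_t=\epsilon u_{xx}+C(x)u$ manifestly admits $\p_t$ (time-independence of $C$) and $u\p_u$ (homogeneity), so $\langle\p_t,u\p_u\rangle\subseteq\mathfrak g^{\rm ess}_C$ for all $C$; conversely, for a generic potential (Case~0 of Table~\ref{tab:FPStationaryPotentialsGC}) these are the only essential symmetries, whence $\mathfrak g^{\cap\,\rm ess}_{\mathcal P'^\epsilon}=\langle\p_t,u\p_u\rangle$. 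For $\mathcal E'_0$ I would invoke Corollary~\ref{FP:cor:EquivGroupoidOfE'AndE'_0}: since $\mathcal E'_0$ is related to $\mathcal P'^\epsilon$ by the wide subset $\mathcal M_{\mathcal E'_0}$ of $\mathcal G(G^\sim_{\mathcal E'_0})$, two equations of $\mathcal P'$ are $G^\sim_{\mathcal P'}$-equivalent iff they are $G^\sim_{\mathcal E'_0}$-equivalent, and the linear-superposition subgroupoids correspond; hence both the kernel algebra and the entire classification transfer verbatim under $A=\epsilon$, $B=0$.

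For $\mathcal P'^\epsilon$ itself I would not redo the determining equations from scratch but lean on the classification up to the general point equivalence already recorded in Theorem~\ref{thm:FP:ClassificationsE0AndP}: a time-independent potential admits an essential extension beyond $\langle\p_t,u\p_u\rangle$ precisely when it is $G^\sim_{\mathcal P^\epsilon}$-equivalent to $\mu x^{-2}$ ($\mu\ne0$) or to $0$, using that $\mathcal P^\epsilon$ is disjointedly semi-normalized. The refinement to the smaller group $G^\sim_{\mathcal P'^\epsilon}$ is governed by Theorem~\ref{FP:thm:EquivGroupoidHeatStationary}: modulo $G^\sim_{\mathcal P'}$ and the linear superposition of solutions, every admissible transformation of $\mathcal P'$ is generated by $\mathcal T_{1\mu}$, $\mathcal T_{2\mu}$ and $\mathcal T_3$. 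Reading off their sources and targets, the time-independent potentials in the $G^\sim_{\mathcal P^\epsilon}$-orbit of $\mu x^{-2}$ are $\mu x^{-2}$ and $\mu x^{-2}\pm\epsilon x^2$ (Cases~1, 1a, 1b), while those in the orbit of $0$ are $0$, $\pm\epsilon x^2$ and $x$ (Cases~2, 2a, 2b, 2c), the signs being fixed through $(t,C)\mapsto(-t,-C)$. The decomposition of admissible transformations displayed right after Theorem~\ref{FP:thm:EquivGroupoidHeatStationary} shows that no further canonical time-independent representatives arise, which supplies the completeness half of the argument. Pairwise $G^\sim_{\mathcal P'^\epsilon}$-inequivalence is then a direct check using the explicit action $\tilde C(\tilde x)=c_1^{-2}\bigl(C(x)+c_5\bigr)$, $\tilde x=c_1x+c_3$ from Corollary~\ref{cor:EquivGroupOfP'eps}: the parameter $\mu$ is invariant, the sign of the coefficient of $x^2$ is invariant, and a nonzero linear term cannot be eliminated, so the eight cases are genuinely distinct.

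It remains to attach to each case its essential algebra. I would obtain these by transporting the known symmetries of the canonical equations through the generating admissible transformations rather than integrating the determining equations anew: for $C=\mu x^{-2}$ and $C=0$ the essential groups are written out in the proof of Theorem~\ref{FP:thm:EquivGroupoidHeatStationary} (e.g.\ \eqref{eq:SymGroupPmux2}), yielding $D(t),D(t^2)$ and, additionally for $C=0$, $P(1),P(t)$, and pulling these back by $\Phi_1$, $\Phi_2$, $\Phi_3$ converts the polynomial time-dependence into the trigonometric and exponential profiles of the table. Equivalently, one may posit the standard ansatz $\mathrm v=\tau(t)\p_t+\bigl(\tfrac12\tau_t x+\sigma(t)\bigr)\p_x+\phi(t,x)u\p_u$ for essential symmetries of a $(1+1)$-dimensional linear parabolic equation and integrate the resulting ODEs; for $C=\pm\epsilon x^2$ the equation for $\tau$ is $\tau_{ttt}\pm16\tau_t=0$, with solution spaces $\{1,\cos4t,\sin4t\}$ or $\{1,{\rm e}^{4t},{\rm e}^{-4t}\}$ (the constant reproducing $\p_t$), while the boost coefficient obeys $\sigma_{tt}\pm4\sigma=0$, giving $P(\cos2t),P(\sin2t)$ versus $P({\rm e}^{2t}),P({\rm e}^{-2t})$; this explains the dichotomy between Cases~1a/2a and 1b/2b.

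I expect the main obstacle to be twofold. First, the completeness statement inside each orbit, namely that $\mathcal T_{1\mu}$, $\mathcal T_{2\mu}$, $\mathcal T_3$, together with $G^\sim_{\mathcal P'}$ and the vertex groups, exhaust the time-independent targets; this is exactly where the decomposition of admissible transformations established after Theorem~\ref{FP:thm:EquivGroupoidHeatStationary} must be used carefully. Second, the bookkeeping of how the generators $D(\cdot)$ and $P(\cdot)$ are carried by $\Phi_1$, $\Phi_2$ and $\Phi_3$, since these transformations mix $t$, $x$ and the exponential weight in $\tilde u$, so the coefficients have to be tracked through the prolongation to confirm the precise functional arguments $\cos4t,\sin4t,{\rm e}^{\pm4t}$ and, in particular, the inhomogeneous term $Fu\p_u$ of $\bar P(f)$ appearing for the linear potential in Case~2c, where $\Phi_3$ introduces the weight ${\rm e}^{tx-t^3/3}$.
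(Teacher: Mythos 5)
Your proposal is correct, and its main line is in fact one of the two derivations the paper itself records. The paper's primary argument writes down the most general potential in the $G^\sim_{\mathcal P^\epsilon}$-orbit of $\mu x^{-2}$ explicitly, using the parametrization of Proposition~\ref{FP:prop:EquivalenceGroupHeatPotential} (it is $\mu(x+\zeta)^{-2}-\tfrac\epsilon4\sigma(1/\sigma)_{tt}(x+\zeta)^2+\tfrac\epsilon2\zeta_{tt}x+\theta$ with $\sigma,\zeta,\theta$ functions of~$t$), imposes time-independence to force $\bar C=\mu(x+\beta)^{-2}+\alpha(x+\beta)^2+\gamma$ with constant $\alpha,\beta,\gamma$, and then normalizes by $G^\sim_{\mathcal P'^\epsilon}$; the orbit of $0$ is handled by the same token via at-most-quadratic potentials. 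Your route --- taking the sources and targets of the generating admissible transformations $\mathcal T_{1\mu}$, $\mathcal T_{2\mu}$, $\mathcal T_3$ of Theorem~\ref{FP:thm:EquivGroupoidHeatStationary} as the canonical time-independent representatives and pushing the algebras $\mathfrak g^{\rm ess}_{\mu/x^2}$ and $\mathfrak g^{\rm ess}_0$ forward by $\Phi_1^{-1}$, $\Phi_2^{-1}$, $\Phi_3^{-1}$ --- is exactly the ``another way'' mentioned in the last part of the paper's proof. The two approaches buy slightly different things: the paper's primary computation is self-contained modulo the equivalence-group formulas and does not need the full strength of the generating-set theorem, whereas yours gets completeness for free from the $k\in\{0,1,2\}$ analysis in the proof of Theorem~\ref{FP:thm:EquivGroupoidHeatStationary} at the cost of relying on that harder result. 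Your inequivalence check via the affine action $\tilde C=c_1^{-2}(C+c_5)$, $\tilde x=c_1x+c_3$ of Corollary~\ref{cor:EquivGroupOfP'eps}, and the reduction of the $\mathcal E'_0$ statement through the wide subset $\mathcal M_{\mathcal E'_0}$, match the paper.
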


\begin{proof}
Case~1 of Theorem~\ref{thm:FP:ClassificationsE0AndP} with the general value $C=C(x)$
is the general case in the class~$\mathcal P'^\epsilon$.
To obtain all $G^\sim_{\mathcal P'^\epsilon}$-inequivalent Lie-symmetry extensions in the class~$\mathcal P'^\epsilon$,
we take the following simple steps:
\begin{itemize}
\item
For each of Cases~2 and~3 of Theorem~\ref{thm:FP:ClassificationsE0AndP}, we determine
its counterpart with the most general time-independent potential, say~$\bar C$, in its $G^\sim_{\mathcal P^\epsilon}$-orbit.
\item
Using the equivalence transformations in~$G^\sim_{\mathcal P'^\epsilon}$, we simplify the equation~$\mathcal P^\epsilon_{\bar C}$,
obtaining $G^\sim_{\mathcal P'^\epsilon}$-inequivalent Lie-symmetry extension cases in the class~$\mathcal P'^\epsilon$.
\end{itemize}
Thus, the general equation in the $G^\sim_{\mathcal P^\epsilon}$-orbit of the heat equation with the potential~$\mu/x^2$,
where~$\mu\ne0$, has the potential
\begin{gather*}
\bar C(t,x)=\frac\mu{(x+\zeta)^2}
-\frac\epsilon4\sigma\left(\frac1\sigma\right)_{tt}(x+\zeta)^2
+\frac\epsilon2\zeta_{tt}x
+\theta,
\end{gather*}
where $\sigma$, $\zeta$ and~$\theta$ are arbitrary smooth functions of~$t$ with $\sigma\neq0$, cf.\ Proposition~\ref{FP:pro:EquivalenceGroupHeatPotential}.
The time-independence of this potential implies that it is at most as general as
\begin{gather*}
\bar C(t,x)=\frac{\mu}{(x+\beta)^2}+\alpha(x+\beta)^2+\gamma
\end{gather*}
for some constants~$\alpha$, $\beta$ and~$\gamma$.
Then, using the transformation from~$G^\sim_{\mathcal P'^\epsilon}$
with $c_1=|\alpha|^{1/4}$, $c_2=0$, $c_3=|\alpha|^{1/4}\beta$, $c_4=1$ and $c_5=-\gamma$, see Corollary~\ref{cor:EquivGroupOfP'eps},
we can reduce $\bar C(x)$ to~$\mu/x^2+x^2\sgn\alpha$.

The $\mathcal G^\sim_{\mathcal P'^\epsilon}$-orbit of the equation~$\mathcal P'^\epsilon_0$ consists of the equations
with at most quadratic potentials,
and a complete set of $G^\sim_{\mathcal P'^\epsilon}$-inequivalent equations among them is exhausted
by the equations~$P'^\epsilon_C$ with $C\in\{0,x^2,-x^2,x\}$.

Another way to obtain the group classification of $\mathcal P'^\epsilon$ up to the $G^\sim_{\mathcal P'^\epsilon}$-equivalence
is to extend Cases~2 and~3 of Theorem~\ref{thm:FP:ClassificationsE0AndP} by the inverses of the admissible transformations
listed in Theorem~\ref{FP:thm:EquivGroupoidHeatStationary}.
The corresponding essential Lie invariance algebras are obtained via pushing forward the algebras~$\mathfrak g^{\rm ess}_{\mu/x^2}$
by these inverses.

The $G^\sim_{\mathcal P'^\epsilon}$- and $G^\sim_{\mathcal E'_0}$-equivalences are consistent
via the wide subset~$\mathcal M_{\mathcal E'_0}$ of~\smash{$\mathcal G(G^\sim_{\mathcal E'_0})$},
cf.\ the paragraph before Corollary~\ref{cor:EquivGroupOfP'eps}
and the proof of Corollary~\ref{FP:cor:EquivGroupoidOfE'AndE'_0}.
\end{proof}

\subsection[Group classifications of Fokker-Planck equations with time-independent coefficients]
{Group classifications of Fokker--Planck equations\\ with time-independent coefficients}\label{sec:GroupClassificationF'}

First of all, the group classification of the class~$\bar{\mathcal F}'$ up to the $G^\sim_{\bar{\mathcal F}'}$-equivalence reduces to
the group classification of the class~$\mathcal F'^\epsilon$ up to the $G^\sim_{\mathcal F'^\epsilon}$-equivalence,
cf. the discussion at the beginning of Section~\ref{sec:GroupClassificationsWrtEquivGroupsKF}.
The procedure of the latter classification is similar to that described
in Section~\ref{sec:GroupClassificationsWrtEquivGroupsKF}
for the classes~$\mathcal F^\epsilon$ and~$\mathcal K^\epsilon$,
but for~$\mathcal F'^\epsilon$ we can solve the corresponding problem completely by finding inequivalent solutions
of the ordinary differential equations $\epsilon W_{xx}+C(x)W=-\lambda W$,
where the coefficient~$C$ runs through the set of potentials of canonical representatives
of the Lie-symmetry extensions in the class~$\mathcal P'^\epsilon$.

For each time-independent potential~$C=C(x)$, we consider the family~$\check{\mathfrak T}_C'$ of point transformations
$\Psi_U$: $\tilde t=t$, $\tilde x=x$, $\tilde u=U(t,x)u$ mapping the equation~$\mathcal P'^\epsilon_C=\mathcal P^\epsilon_C$
to Fokker--Planck equations~$\mathcal F'^\epsilon_B=\mathcal F^\epsilon_B$ with time-independent drifts~$B=B(x)$.
The parameter function~$U$ satisfies the equation~$\mathcal P^{-\epsilon}_{-C}$,
and $B=-2\epsilon U_x/U$.
In view of time-independence of drifts $B$ within the class~$\mathcal F'^\epsilon$,
the function~$U$ takes the form $U(t,x)=V(t)W(x)$.
Substituting this expression for~$U$ into the equation~$\mathcal P^{-\epsilon}_{-C}$,
we separate the variables and obtain that, up to a negligible nonzero constant multiplier,
$V(t)={\rm e}^{\lambda t}$, $\lambda\in\mathbb R$,
while the function~$W$ satisfies the equation~$\mathcal W_{C,\lambda}$: $\epsilon W_{xx}+CW=-\lambda W$.
Thus,
\[
\check{\mathfrak T}'_C:=\big\{\Psi_{\lambda,W}\colon\tilde t=t,\ \tilde x=x,\ \tilde u={\rm e}^{\lambda t}W(x)u\mid
 \epsilon W_{xx}+CW=-\lambda W,\,  W(x)\ne0,\,\lambda\in\mathbb R\big\}.
\]

The image of this family of point transformations acting on the equation~$\mathcal P^\epsilon_C$
is the set~$\mathfrak F'_C$ of reduced Fokker--Planck equations with drifts of the form
$B=-2\epsilon U_x/U$, where $U$ runs through the set of nonzero
solutions of the equation $\mathcal P^{-\epsilon}_{-C}$ in separated variables, $U(t,x)={\rm e}^{\lambda t}W(x)$,
\[
\mathfrak F'_C:=\big\{\mathcal F^\epsilon_B\mid B=-2\epsilon W_x/W,\ W=W(x)\ne0\colon\epsilon W_{xx}+CW=-\lambda W,\,\lambda\in\mathbb R\big\}.
\]

\begin{lemma}\label{FP:Lemma2a}
Given Fokker--Planck equations $\mathcal F^\epsilon_B\in\mathfrak F_{C}'$
and $\mathcal F^\epsilon_{\tilde B}$ being $G^\sim_{\mathcal F'^\epsilon}$-equivalent to~$\mathcal F^\epsilon_B$,
there exists a potential~$\tilde C$ such that
the equation~$\mathcal P^\epsilon_{\tilde C}$ is $G^\sim_{\mathcal P'^\epsilon}$-equivalent to~$\mathcal P^\epsilon_C$,
and $\mathcal F^\epsilon_{\tilde B}\in\mathfrak F'_{\tilde C}$.
\end{lemma}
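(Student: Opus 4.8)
The plan is to mirror the argument of Lemma~\ref{FP:Lemma1b}, exploiting that the $(t,x)$-components of transformations in $G^\sim_{\mathcal F'^\epsilon}$ (Proposition~\ref{pro:EquivGroupOfF'eps}) and in $G^\sim_{\mathcal P'^\epsilon}$ (Corollary~\ref{cor:EquivGroupOfP'eps}) have exactly the same affine form $\tilde t=c_1^2t+c_2$, $\tilde x=c_1x+c_3$. First I would fix a transformation $\mathscr T\in G^\sim_{\mathcal F'^\epsilon}$ realizing the $G^\sim_{\mathcal F'^\epsilon}$-equivalence of $\mathcal F^\epsilon_B$ and $\mathcal F^\epsilon_{\tilde B}$, record its constants $c_1,c_2,c_3$ (so that $\tilde B(\tilde x)=B(x)/c_1$ along $\tilde x=c_1x+c_3$), and take the partner transformation $\mathscr S\in G^\sim_{\mathcal P'^\epsilon}$ with the same $c_1,c_2,c_3$ and, say, $c_5=0$. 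Setting $\tilde C:=\mathscr S_*C$ immediately yields that $\mathcal P^\epsilon_{\tilde C}$ is $G^\sim_{\mathcal P'^\epsilon}$-equivalent to $\mathcal P^\epsilon_C$, which is the first assertion.

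For the second assertion, I would use the membership $\mathcal F^\epsilon_B\in\mathfrak F'_C$ to write $B=-2\epsilon W_x/W$ for a nowhere-vanishing $W=W(x)$ solving $\mathcal W_{C,\lambda}$: $\epsilon W_{xx}+CW=-\lambda W$ for some $\lambda\in\mathbb R$, equivalently $U={\rm e}^{\lambda t}W$ solves the adjoint equation $\mathcal P^{-\epsilon}_{-C}$. The key step is to transport this solution by $\mathscr S$ and check that its image is again separated in $t$ and $x$. Concretely, I would set $\tilde W(\tilde x):=W(x)$ with $x=(\tilde x-c_3)/c_1$ and $\tilde\lambda:=\lambda/c_1^2$, and verify by a one-line computation—using $\tilde W_{\tilde x\tilde x}=W_{xx}/c_1^2$ and $\tilde C=C/c_1^2$ along the shift-scale (recall $c_5=0$), together with $\epsilon W_{xx}+CW=-\lambda W$—that $\epsilon\tilde W_{\tilde x\tilde x}+\tilde C\tilde W=-\tilde\lambda\tilde W$. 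Hence $\tilde W$ is an admissible parameter for $\check{\mathfrak T}'_{\tilde C}$, and since $\tilde W_{\tilde x}=W_x/c_1$ the associated drift $-2\epsilon\tilde W_{\tilde x}/\tilde W=B(x)/c_1=\tilde B(\tilde x)$ coincides with the target drift; therefore $\mathcal F^\epsilon_{\tilde B}\in\mathfrak F'_{\tilde C}$.

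The hard part—and the only place where the restriction to the time-independent subclasses is genuinely used—is preserving the separated form ${\rm e}^{\lambda t}W(x)$ of the adjoint solution under the lift. Had we allowed the full group $G^\sim_{\mathcal P^\epsilon}$ of Proposition~\ref{FP:prop:EquivalenceGroupHeatPotential}, the Gaussian factor $\exp\bigl(-\epsilon X^1_tx^2/(4X^1)-\cdots\bigr)$ in its $u$-component (cf.\ \eqref{eq:TransOfU}) would generically destroy separability, landing the image in $\mathfrak F_{\tilde C}$ but not in $\mathfrak F'_{\tilde C}$. The point is that for $\mathscr S\in G^\sim_{\mathcal P'^\epsilon}$ the parameter functions $X^1=c_1$ and $X^0=c_3$ are constant, so $X^1_t=X^0_t=0$, the Gaussian factor collapses, and the $t$- and $x$-dependence stay factored; this is exactly what keeps $\mathcal F^\epsilon_{\tilde B}$ inside the time-independent set $\mathfrak F'_{\tilde C}$. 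I would close by noting that this lemma establishes, for the pair $(\mathcal P'^\epsilon,\mathcal F'^\epsilon)$, the analog of property~3 of the mapping method from Section~\ref{FP:sec:MapMethod} (equivalently, parts~(ii)--(iii) of the unnumbered lemma of Section~\ref{sec:GroupClassificationsWrtEquivGroupsKF}), on which the subsequent closed-form classification of $\mathcal F'^\epsilon$ rests.
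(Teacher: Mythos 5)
Your proposal is correct and follows the same route as the paper: the paper's proof simply fixes the transformation $\mathscr T\in G^\sim_{\mathcal F'^\epsilon}$ with $(\varpi_*\mathscr T)^*\tilde B=c_1^{-1}B$ and asserts that $\tilde C$ with $(\varpi_*\mathscr T)^*\tilde C=c_1^{-2}C$ "is easy to see" to work, which is exactly the choice $\tilde C=\mathscr S_*C$ with $c_5=0$ that you make. Your explicit verification that $\tilde W(\tilde x)=W(x)$ with $\tilde\lambda=\lambda/c_1^2$ again solves $\mathcal W_{\tilde C,\tilde\lambda}$ and reproduces the target drift is precisely the computation the paper leaves implicit, so the two arguments coincide.
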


\begin{proof}
Since the equation $\mathcal F^\epsilon_{\tilde B}$ is $G^\sim_{\mathcal F'^\epsilon}$-equivalent to~$\mathcal F^\epsilon_B$,
there exists the equivalence transformation~$\mathscr T\in G^\sim_{\mathcal F'^\epsilon}$ such that
$(\varpi_*\mathscr T)^*\tilde B=c_1^{-1}B$ for a nonzero constant~$c_1$.
It is easy to see that the function~$\tilde C$ with~$(\varpi_*\mathscr T)^*\tilde C=c_1^{-2}C$ satisfies the requirements of the lemma.
\end{proof}

\begin{lemma}\label{FP:Lemma2b}
If $\tilde C=\mathscr T_*C$ for some $\mathscr T\in G^\sim_{\mathcal P'^\epsilon}$,
then the transformation $\breve{\mathscr T}\in G^\sim_{\mathcal F'^\epsilon}$ with $\varpi_*\breve{\mathscr T}=\varpi_*\mathscr T$
and the identity $u$-component maps $\mathfrak F'_C$ onto~$\mathfrak F'_{\tilde C}$,
$\mathfrak F'_{\tilde C}=\breve{\mathscr T}_*\mathfrak F'_C$.
\end{lemma}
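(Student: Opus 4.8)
The plan is to follow the template of Lemma~\ref{FP:Lemma1b}, adapted to the time-independent Fokker--Planck setting. The only structural differences are that the parameterizing solutions now live on the \emph{adjoint} equation $\mathcal P^{-\epsilon}_{-C}$ rather than on $\mathcal P^\epsilon_C$, that the equivalence transformation $\mathscr T$ is affine in $(t,x)$ by Corollary~\ref{cor:EquivGroupOfP'eps}, and that one must keep these solutions in separated form so as not to leave $\mathcal F'^\epsilon$ for the larger class $\mathcal F^\epsilon$. First I would make $\breve{\mathscr T}$ explicit: writing $\mathscr T$ as $\tilde t=c_1^2t+c_2$, $\tilde x=c_1x+c_3$, $\tilde u=c_4\mathrm e^{c_5t}u$, $\tilde C=c_1^{-2}(C+c_5)$, its projection $\varpi_*\mathscr T$ is the affine map $\tilde t=c_1^2t+c_2$, $\tilde x=c_1x+c_3$, and $\breve{\mathscr T}$ is the unique element of $G^\sim_{\mathcal F'^\epsilon}$ (Proposition~\ref{pro:EquivGroupOfF'eps}) with this $(t,x)$-part, identity $u$-component, and drift component $\tilde B=c_1^{-1}B$.

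Next I would take an arbitrary $\mathcal F^\epsilon_{\tilde B}\in\mathfrak F'_{\tilde C}$, so that $\tilde B=-2\epsilon\tilde W_{\tilde x}/\tilde W$ for some nonzero $\tilde W$ with $\mathrm e^{\tilde\lambda\tilde t}\tilde W(\tilde x)$ solving $\mathcal P^{-\epsilon}_{-\tilde C}$. The key intermediate claim is a \emph{transport rule for adjoint solutions}: under $\mathscr T$, a solution $U$ of the source adjoint $\mathcal P^{-\epsilon}_{-C}$ goes to the solution $\tilde U(\tilde t,\tilde x):=\mathrm e^{-c_5t}U(t,x)$ of the target adjoint $\mathcal P^{-\epsilon}_{-\tilde C}$; a short computation with the affine $\mathscr T$ verifies this, the weight $\mathrm e^{-c_5t}$ carrying the sign \emph{opposite} to the weight $c_4\mathrm e^{c_5t}$ by which $\mathscr T$ transports solutions of $\mathcal P^\epsilon_C$ itself, and being exactly what absorbs the $c_5$-shift in $\tilde C=c_1^{-2}(C+c_5)$. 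Reading this rule backwards, the given $\tilde U=\mathrm e^{\tilde\lambda\tilde t}\tilde W$ comes from $U(t,x):=\mathrm e^{c_5t}\tilde U(\tilde t,\tilde x)$, which is again separated, $U(t,x)=\mathrm e^{\lambda t}W(x)$ with $\lambda=c_5+\tilde\lambda c_1^2$ and $W$ a nonzero multiple of $\tilde W(c_1x+c_3)$; hence $W$ solves the source ODE $\epsilon W_{xx}+CW=-\lambda W$ and $B:=-2\epsilon W_x/W$ defines an equation $\mathcal F^\epsilon_B\in\mathfrak F'_C$. A one-line chain-rule computation gives $W_x/W=c_1\tilde W_{\tilde x}/\tilde W$, whence $B=c_1\tilde B$; comparing with the drift component $\tilde B=c_1^{-1}B$ of $\breve{\mathscr T}$ confirms $\breve{\mathscr T}_*\mathcal F^\epsilon_B=\mathcal F^\epsilon_{\tilde B}$, giving $\mathfrak F'_{\tilde C}\subseteq\breve{\mathscr T}_*\mathfrak F'_C$. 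Applying the same argument with $\mathscr T^{-1}\in G^\sim_{\mathcal P'^\epsilon}$ in place of $\mathscr T$ yields the reverse inclusion, so the two sets coincide.

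The main obstacle is the adjoint transport rule, i.e.\ identifying the correct exponential weight and its sign. Unlike the Kolmogorov case of Lemma~\ref{FP:Lemma1b}, where $\mathscr T$ acts directly on solutions of $\mathcal P^\epsilon_C$, here one must transport solutions of the adjoint equation, and the naive weight lands in the wrong member of the family $\{\mathcal P^{-\epsilon}_{-C'}\}$. Securing both requirements at once---landing in $\mathcal P^{-\epsilon}_{-C}$ \emph{and} preserving the separated (time-independent) structure, so that the image drift remains in $\mathcal F'^\epsilon$ and not merely in $\mathcal F^\epsilon$---is the delicate point; once the weight $\mathrm e^{-c_5t}$ is pinned down, the shift $\lambda=c_5+\tilde\lambda c_1^2$ of the spectral parameter is harmless, since $\mathfrak F'_C$ is defined with $\lambda$ ranging over all of $\mathbb R$, and everything else is routine.
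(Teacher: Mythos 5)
Your proposal is correct and follows essentially the same route as the paper's proof: pull the separated adjoint solution $\tilde U=\mathrm e^{\tilde\lambda\tilde t}\tilde W$ back through $\mathscr T$, observe that the result stays separated and solves the source adjoint equation, read off $\tilde B=c_1^{-1}B$, and obtain the reverse inclusion by symmetry. You are in fact slightly more careful than the paper on the adjoint transport weight (the paper writes the factor $c_4\mathrm e^{c_5t}$ where $\mathrm e^{-c_5t}$ is the exact weight for landing in $\mathcal P^{-\epsilon}_{-C}$; the discrepancy is harmless there only because $\lambda$ ranges over all of $\mathbb R$).
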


\begin{proof}
For an arbitrary Fokker--Planck equation $\mathcal F^\epsilon_{\tilde B}\in\mathfrak F'_{\tilde C}$,
we have $\tilde B=-2\epsilon\tilde U_{\tilde x}/\tilde U$ for some nonzero solution~$\tilde U$ of~$\mathcal P^{-\epsilon}_{-\tilde C}$
of the form $\tilde U(\tilde t,\tilde x)={\rm e}^{\tilde\lambda \tilde t}\tilde W(\tilde x)$.
As an element of~$G^\sim_{\mathcal P'^\epsilon}$,
the transformation~$\mathscr T$ has the form~\eqref{eq:EquivtransOfP'eps}.
Thus, we have $\tilde U(\tilde t,\tilde x)=c_4{\rm e}^{c_5t}U(t,x)$
for some solution~$U$ of~$\mathcal P^{-\epsilon}_{-C}$ of the form $U(t,x)={\rm e}^{\lambda t}W(x)$.
Setting $B:=-2\epsilon U_x/U=-2\epsilon W_x/W$, we obtain $\tilde B(\tilde x)=c_1^{-1}B(x)$.
This means that the Fokker--Planck equation~$\mathcal F^\epsilon_B$ with the drift~$B=-2\epsilon W_x/W$,
which belongs to~$\mathfrak F'_C$, is mapped to~\smash{$\mathcal F^\epsilon_{\tilde B}$} by the equivalence transformation
obtained via prolonging the point transformation
$\tilde t=c_1^2t+c_2$, $\tilde x=c_1x+c_3$, $\tilde u=u$ of~$(t,x,u)$ as $\tilde B=c_1^{-1}B$.
This defines the required transformation \smash{$\breve{\mathscr T}\in G^\sim_{\mathcal F'^\epsilon}$},
which does not depend on the choice of~$\tilde B$.
Hence \smash{$\mathfrak F'_{\tilde C}\subseteq\breve{\mathscr T}_*\mathfrak F'_C$}.
Reverting the argumentation we derive the inverse inclusion.
\end{proof}

\begin{lemma}\label{FP:lem:FPSimilarity2}
The Fokker--Planck equations
$\mathcal F^\epsilon_B:=(\Psi_{\lambda,W})_*\mathcal P^\epsilon_C$ and
\smash{$\mathcal F^\epsilon_{\tilde B}:=(\Psi_{\tilde\lambda,\tilde W})_*\mathcal P^\epsilon_C$} from~$\mathfrak F'_C$
are $G^\sim_{\mathcal F'^\epsilon}$-equivalent if and only if
the associated solutions $U(t,x)={\rm e}^{\lambda t}W(x)$ and $\tilde U(\tilde t,\tilde x)=\smash{{\rm e}^{\tilde\lambda t}\tilde W(x)}$
of the equation~$\mathcal P^{-\epsilon}_{-C}$ are $H_C$-equivalent,
where $H_C:=G^{\rm ess}_{\mathcal P^{-\epsilon}_{-C}}\cap\pi_*G^\sim_{\mathcal P'^{-\epsilon}}$.
\end{lemma}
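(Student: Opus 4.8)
The plan is to follow the proof of Corollary~\ref{cor:FPSimilarity1}, the counterpart of this statement for the class~$\mathcal K^\epsilon$, while specializing the computation from the proof of Lemma~\ref{FP:Lemma2b} to the diagonal case $\tilde C=C$. First I would record a usable description of~$H_C$. Because the class~$\mathcal P^{-\epsilon}$ is disjointedly semi-normalized with respect to the linear superposition of solutions (Proposition~\ref{FP:prop:EquivalenceGroupHeatPotential}), the essential point symmetry group~$G^{\rm ess}_{\mathcal P^{-\epsilon}_{-C}}$ is the stabilizer of~$\mathcal P^{-\epsilon}_{-C}$ inside~$\pi_*G^\sim_{\mathcal P^{-\epsilon}}$. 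Intersecting with~$\pi_*G^\sim_{\mathcal P'^{-\epsilon}}$ and using the explicit form of the equivalence group from Corollary~\ref{cor:EquivGroupOfP'eps}, I obtain that~$H_C$ consists precisely of the point transformations $\tilde t=c_1^2t+c_2$, $\tilde x=c_1x+c_3$, $\tilde u=c_4{\rm e}^{c_5t}u$ that are symmetries of~$\mathcal P^{-\epsilon}_{-C}$. These are exactly the transformations at my disposal for comparing the separated solutions $U={\rm e}^{\lambda t}W$ and $\tilde U={\rm e}^{\tilde\lambda t}\tilde W$ of~$\mathcal P^{-\epsilon}_{-C}$ that produce the drifts $B=-2\epsilon W_x/W$ and $\tilde B=-2\epsilon\tilde W_{\tilde x}/\tilde W$.

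For the sufficiency direction I would take $\mathscr S\in H_C$ with $\mathscr S_*U=\tilde U$ and feed its parameters into the construction of Lemma~\ref{FP:Lemma2b}. This yields $\breve{\mathscr T}\in G^\sim_{\mathcal F'^\epsilon}$ with $\varpi_*\breve{\mathscr T}=\varpi_*\mathscr S$ and identity $u$-component; since the drifts are the logarithmic derivatives $B=-2\epsilon W_x/W$ and $\tilde B=-2\epsilon\tilde W_{\tilde x}/\tilde W$, the relation $\mathscr S_*U=\tilde U$ reproduces the drift law $\tilde B=c_1^{-1}B$ exactly as in that proof, so $\breve{\mathscr T}$ maps~$\mathcal F^\epsilon_B$ to~$\mathcal F^\epsilon_{\tilde B}$ and the two equations are $G^\sim_{\mathcal F'^\epsilon}$-equivalent. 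For the necessity direction I would take $\mathscr T\in G^\sim_{\mathcal F'^\epsilon}$ realizing the equivalence; by Proposition~\ref{pro:EquivGroupOfF'eps} it is of the form~\eqref{eq:EquivTransF'eps}, whence $\tilde B=c_1^{-1}B$. Reading the computation of Lemma~\ref{FP:Lemma2b} backwards, this forces $\tilde W(\tilde x)\propto W((\tilde x-c_3)/c_1)$; prolonging $\varpi_*\mathscr T$ by the $u$-component $\tilde u=c_4{\rm e}^{c_5t}u$ with $c_5:=c_1^2\tilde\lambda-\lambda$ then gives a transformation~$\mathscr S\in\pi_*G^\sim_{\mathcal P'^{-\epsilon}}$ with $\mathscr S_*U$ a nonzero constant multiple of~$\tilde U$. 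As~$\tilde U$ solves~$\mathcal P^{-\epsilon}_{-C}$, so does~$\mathscr S_*U$, and the analogue of Lemma~\ref{FP:AuxProp} for~$\mathcal P^{-\epsilon}$ (applicable since $G^\sim_{\mathcal P'^{-\epsilon}}\subseteq G^\sim_{\mathcal P^{-\epsilon}}$) yields $\mathscr S\in G^{\rm ess}_{\mathcal P^{-\epsilon}_{-C}}$; hence $\mathscr S\in H_C$ and $U$, $\tilde U$ are $H_C$-equivalent.

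The hard part will be the bookkeeping that ties the Fokker--Planck parameters to the heat-equation parameters. Since the drift records only the logarithmic derivative~$W_x/W$, each of~$U$ and~$\tilde U$ is pinned down by its drift only up to a multiplicative constant, and the eigenvalues~$\lambda$, $\tilde\lambda$ are invisible at the level of the drift. I must therefore check that the time-exponential parameter~$c_5$ of~$\mathscr S$ can always be chosen to realise the forced eigenvalue correspondence $\tilde\lambda=(\lambda+c_5)/c_1^2$, and, crucially, that the recovered~$\mathscr S$ stays inside the time-independent group~$\pi_*G^\sim_{\mathcal P'^{-\epsilon}}$ rather than merely inside~$\pi_*G^\sim_{\mathcal P^{-\epsilon}}$. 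It is exactly this restriction to the time-independent equivalence groups that replaces~$G^{\rm ess}_{\mathcal P^{-\epsilon}_{-C}}$ by the smaller stabilizer~$H_C$, and tracking it is the one genuinely delicate point; once the choice $c_5=c_1^2\tilde\lambda-\lambda$ is made, invoking Lemma~\ref{FP:AuxProp} closes the argument without further computation.
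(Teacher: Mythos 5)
Your proposal is correct and follows essentially the same route as the paper's proof: both directions rest on the explicit forms of $G^\sim_{\mathcal F'^\epsilon}$ and $G^\sim_{\mathcal P'^{-\epsilon}}$, the recovery of $\tilde W$ from the drift relation $\tilde B=c_1^{-1}B$ up to a constant multiple, the choice of the time-exponential parameter to match the eigenvalues, and the observation (Lemma~\ref{FP:AuxProp} applied to $\mathcal P^{-\epsilon}$) that a projected equivalence transformation mapping a solution of $\mathcal P^{-\epsilon}_{-C}$ to another solution of the same equation lies in $G^{\rm ess}_{\mathcal P^{-\epsilon}_{-C}}$. Your sign $c_5=c_1^2\tilde\lambda-\lambda$ is the correct one for the pushforward convention used, and the leftover multiplicative constant is absorbed into $c_4$, exactly as in the paper.
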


\begin{proof}
Let Fokker--Planck equations~\smash{$\mathcal F^\epsilon_B,\mathcal F^\epsilon_{\tilde B}\in\mathfrak F'_C$}
be $G^\sim_{\mathcal F'^\epsilon}$-equivalent,
that is, there exists an equivalence transformation~$\mathscr T$ of $\mathcal F'^\epsilon$,
which is of the form~\eqref{eq:EquivTransF'eps}, such that $\tilde B=\mathscr T_* B$.
The last equality implies that the associated functions~$U$ and~$\tilde U$ satisfy
$\tilde U(\tilde t,\tilde x)=f(t)U(t,x)$ with a nonzero function~$f$ of~$t$.
This function can be found from the representations with separated variables for~$U$ and~$\tilde U$,
$f(t)={\rm e}^{(\lambda-c_1^2\tilde\lambda)t-\tilde\lambda c_2}$.
The point transformation
\[
\breve{\mathscr T}\colon\quad\tilde t=c_1^2t+c_2,\quad \tilde x=c_1x+c_3, \quad \tilde u={\rm e}^{(\lambda-c_1^2\tilde \lambda)t-\tilde\lambda c_2} u,\quad
\tilde C=\frac1{c_1^2}\big(C+(\lambda-c_1^2\tilde\lambda)\big)
\]
belongs to~$G^\sim_{\mathcal P'^{-\epsilon}}$.
Since its projection~$\pi_*\breve{\mathscr T}$ maps the solution~$U$ of~$\mathcal P'^{-\epsilon}_{-C}$ to the solution~$\tilde U$ thereof,
then $\pi_*\breve{\mathscr T}\in G^{\rm ess}_{\mathcal P'^{-\epsilon}_{-C}}$.
Thus, the solutions $U$ and~$\tilde U$ are $H_C$-equivalent.

Conversely, let $U$ and~\smash{$\tilde U$} be $H_C$-equivalent solutions of the equation~\smash{$\mathcal P'^{-\epsilon}_{-C}$}.
Then there exist constants~$c_1,\dots,c_5$ with $c_1c_4\neq0$ such that
$\tilde U(\tilde t,\tilde x)=c_4{\rm e}^{c_5t}U(t,x)$, where $\tilde t=c_1^2t+c_2$ and $\tilde x=c_1x+c_3$.
The corresponding drifts~$B$ and~$\tilde B$ are related via $\tilde B(\tilde x)=B(x)/c_1$,
and thus the equations~$\mathcal F^\epsilon_B$ and~$\mathcal F^\epsilon_{\tilde B}$ are $G^\sim_{\mathcal F'^\epsilon}$-equivalent.
\end{proof}

Having at our disposal the group classification of the class~$\mathcal P'^\epsilon$
and Lemmas~\ref{FP:Lemma2a}--\ref{FP:lem:FPSimilarity2},
we solve the group classification problems for the classes~$\mathcal F'^\epsilon$ and~$\bar{\mathcal F}'$
with respect to the corresponding equivalence groups using the mapping method.

\begin{theorem}\label{FP:ClassifyFPStat}
The (essential) kernel algebras of the classes~$\mathcal F'^\epsilon$ and~$\bar{\mathcal F}'$
coincide with~$\langle \p_t,u\p_u\rangle$.
A complete list of $G^\sim_{\mathcal F'^\epsilon}$-inequivalent (resp.\ $G^\sim_{\bar{\mathcal F}'}$-inequivalent)
essential Lie-symmetry extensions in the class~$\mathcal F'^\epsilon$ (resp.\ the class~$\bar{\mathcal F}'$)
is exhausted by the cases given in Table~\ref{tab:FPStationaryDriftsGC}.
\begin{table}[!ht]
\footnotesize
\def\arraystretch{1.5}
\begin{center}
\caption{The group classifications of the classes~$\mathcal F'^\epsilon$, $\bar{\mathcal F}'$, $\mathcal K'^\epsilon$ and~$\bar{\mathcal K}'$
up to the $G^\sim_{\mathcal F'^\epsilon}$-, $G^\sim_{\bar{\mathcal F}'}$-,
$G^\sim_{\mathcal K'^\epsilon}$- and $G^\sim_{\bar{\mathcal K}'}$-equivalences, respectively.
\label{tab:FPStationaryDriftsGC}}
\begin{tabular}{|c|l|l|}
\hline
no.  &\hfil $W$ & \hfil Basis elements of~$\mathfrak g^{\rm ess}_B$ besides $\p_t$ and $u\p_u$\\
\hline
0    & $W(x)$ & --\\[1.5ex]
1    & $\sqrt{|x|}\cos(\alpha\ln|x|)$, $\sqrt{|x|}$, $\sqrt{|x|}\ln|x|$,
     & $D_{B,0}(t)$, $D_{B,0}(t^2)$\\
     & $\sqrt{|x|}\left(a_1|x|^\alpha+a_2|x|^{-\alpha}\right)$ & \\
$1'$ & $\sqrt{|x|}\,\mathcal Z_\beta(x)$, $\sqrt{|x|}\,\tilde{\mathcal Z}_\gamma(x)$ & $D_{B,1}(t)$, $D_{B,1}(t^2)$ \\
$1''$& $\sqrt{|x|}\,\mathcal C_\beta(x)$, $\sqrt{|x|}\,\tilde{\mathcal C}_\gamma(x)$ & $D_{B,-1}(t)$, $D_{B,-1}(t^2)$ \\[1.5ex]
1a   & $|x|^{-1/2}\mathop{\rm Re}\Big((a_1-a_2{\rm i})\mathrm W_{-\frac14\kappa{\rm i},\frac12\nu}({\rm i}x^2)\Big)$
     & $D_{B,\kappa}(\cos4t)$, $D_{B,\kappa}(\sin4t)$\\[1.5ex]
1b   & $|x|^{-1/2}\Big(a_1\mathrm W_{\frac14\kappa,\frac12\nu}(x^2)+a_2\mathrm W_{-\frac14\kappa,\frac12\nu}(-x^2)\Big)$
     & $D_{B,\kappa}({\rm e}^{4t})$, $D_{B,\kappa}({\rm e}^{-4t})$\\[1.5ex]
2    & $1$, $x$                          & $P_B(1)$, $P_B(t)$, $D_{B, 0}(t)$, $D_{B, 0}(t^2)$\\
$2'$ & $\cos x$                          & $P_B(1)$, $P_B(t)$, $D_{B,1}(t)$, $D_{B,1}(t^2)$\\
$2''$& ${\rm e}^x$, $\cosh x$, $\sinh x$ & $P_B(1)$, $P_B(t)$, $D_{B,-1}(t)$, $D_{B,-1}(t^2)$\\[1.5ex]
2a   & $|x|^{-1/2}\mathop{\rm Re}\Big((a_1-a_2{\rm i})\mathrm W_{-\frac14\kappa{\rm i},\frac14}({\rm i}x^2)\Big)$
     & $P_B(\sin2t)$, $P_B(\cos2t)$, $D_{B,\kappa}(\cos4t)$, $D_{B,\kappa}(\sin4t)$\\[1.5ex]
2b   & $|x|^{-1/2}\Big(a_1\mathrm W_{\frac14\kappa,\frac14}(x^2)+a_2\mathrm W_{-\frac14\kappa,\frac14}(-x^2)\Big)$
     & $P_B({\rm e}^{2t})$, $P_B({\rm e}^{-2t})$, $D_{B,\kappa}({\rm e}^{4t})$, $D_{B,\kappa}({\rm e}^{-4t})$\\[1.5ex]
2c   & $a_1\mathrm {Ai}(x)+a_2\mathrm {Bi}(x)$ & $\bar P_B(1)$, $\bar P_B(t)$, $D_{B,0}(t)+\frac32\bar P_B(t^2)$, $D_{B,0}(t^2)+\bar P_B(t^3)$\\
\hline
\end{tabular}
\end{center}
{Here $A=\epsilon$ for the classes~$\bar{\mathcal F}'$ and~$\bar{\mathcal K}'$; $B=2\epsilon\breve\epsilon W_x/W$ with 
$\breve\epsilon=-1$ for the classes~$\mathcal F'^\epsilon$ and~$\bar{\mathcal F}'$ and
$\breve\epsilon=1$ for the classes~$\mathcal K'^\epsilon$ and~$\bar{\mathcal K}'$;
$\kappa\in\mathbb R$,
$\nu\in\mathbb R_{\geqslant0}\cup{\rm i}\mathbb R_{>0}$,
$\alpha,\gamma\in\mathbb R_{>0}$, $\beta\in\mathbb R_{\geqslant0}$, $\alpha,\beta,\nu\neq\frac12$;
$(a_1,a_2)\ne(0,0)$ and $(a_1,a_2)$ are defined up to a nonzero multiplier;
$\mathcal Z_\beta$, $\mathcal C_\beta$,
\smash{$\tilde{\mathcal Z}_\gamma$} and \smash{$\tilde{\mathcal C}_\gamma$} are cylinder functions,
see the proof of Theorem~\ref{FP:ClassifyFPStat} for details;
$\operatorname{Ai}$ and $\operatorname{Bi}$ are the Airy functions of the first and the second kind, respectively;
$\mathrm W_{k,m}$ is a Whittaker function;
$D_{B,\kappa}(f)=f\p_t+\frac12f_tx\p_x-\epsilon\big(\frac18f_{tt}x^2+\frac14\epsilon f_t-\kappa f+\frac14f_txB(x)\big)u\p_u$,
$P_B(f)=f\p_x-\frac12\epsilon\big(f_tx+fB(x)\big)u\p_u$, $\bar P_B(f)=P_B(f)-\epsilon Fu\p_u$, and $F$ is a fixed antiderivative of~$f=f(t)$.}
\end{table}
\end{theorem}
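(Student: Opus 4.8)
The plan is to deduce the classification from the already established group classification of the class~$\mathcal P'^\epsilon$ (Theorem~\ref{FP:thm:GroupClassificationOfTimeIndepPotentials}, Table~\ref{tab:FPStationaryPotentialsGC}) by means of the version of the mapping method encoded in Lemmas~\ref{FP:Lemma2a}--\ref{FP:lem:FPSimilarity2}. First I would reduce the problem for~$\bar{\mathcal F}'$ to that for~$\mathcal F'^\epsilon$ exactly as at the start of Section~\ref{sec:GroupClassificationF'}, in full analogy with the reduction of~$\bar{\mathcal F}$ to~$\mathcal F^\epsilon$ in Section~\ref{sec:GroupClassificationsWrtEquivGroupsKF}, using Propositions~\ref{FP:pro:EquivGroupStationaryFP} and~\ref{pro:EquivGroupOfF'eps} to verify that the $G^\sim_{\bar{\mathcal F}'}$- and $G^\sim_{\mathcal F'^\epsilon}$-equivalences are consistent. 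By Lemmas~\ref{FP:Lemma2a} and~\ref{FP:Lemma2b} it then suffices, for~$C$ running through the canonical potentials of Table~\ref{tab:FPStationaryPotentialsGC}, to take a complete set of $H_C$-inequivalent separated solutions $U(t,x)={\rm e}^{\lambda t}W(x)$ of~$\mathcal P^{-\epsilon}_{-C}$ and to push the resulting drifts $B=-2\epsilon W_x/W$ and algebras~$\mathfrak g^{\rm ess}_C$ forward by~$\Psi_{\lambda,W}$.

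The computational core is the family of ordinary differential equations $\mathcal W_{C,\lambda}$: $\epsilon W_{xx}+CW=-\lambda W$, one for each canonical~$C$, whose solutions populate the $W$-column of Table~\ref{tab:FPStationaryDriftsGC}. For $C=C(x)$ (Case~0) this is a generic linear second-order equation and~$W$ stays arbitrary. For $C=\mu x^{-2}$ it is an Euler equation when $\lambda=0$, yielding the power-type solutions of Case~1 according to the sign of~$\tfrac14-\epsilon\mu$, and a Bessel-type equation when $\lambda\ne0$, yielding the cylinder-function solutions of Cases~$1'$ and~$1''$ according to the sign of~$\epsilon\lambda$. For $C=0$ the equation is elementary, producing Cases~2,~$2'$,~$2''$; for $C=\mu x^{-2}\pm\epsilon x^2$ and $C=\pm\epsilon x^2$ it reduces to Whittaker's equation, producing the $\mathrm W_{k,m}$-solutions of Cases~1a,~1b,~2a,~2b; and for $C=x$ a shift of the independent variable turns it into the Airy equation, producing the $\mathrm{Ai}$,~$\mathrm{Bi}$ solutions of Case~2c. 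Thus each potential of Table~\ref{tab:FPStationaryPotentialsGC} splits into one or several drift cases indexed essentially by the sign of the spectral parameter~$\lambda$.

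The hard part will be pinning down the canonical forms and parameter ranges modulo the $H_C$-equivalence of Lemma~\ref{FP:lem:FPSimilarity2}, where $H_C=G^{\rm ess}_{\mathcal P^{-\epsilon}_{-C}}\cap\pi_*G^\sim_{\mathcal P'^{-\epsilon}}$. Concretely I would compute~$H_C$ by intersecting the essential point symmetry group of~$\mathcal P^{-\epsilon}_{-C}$ (obtained by exponentiating the algebra read off from Theorem~\ref{thm:FP:ClassificationsE0AndP}) with the projection of~$G^\sim_{\mathcal P'^{-\epsilon}}$, that is, with the transformations $t\mapsto c_1^{\,2}t+c_2$, $x\mapsto c_1x+c_3$, $u\mapsto c_4{\rm e}^{c_5t}u$, and then track how this group acts on the pair~$(\lambda,W)$. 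The scaling~$c_1$ rescales both~$x$ and~$\lambda$ and, together with the $u$-scaling by~${\rm e}^{c_5t}$, normalizes the sign and modulus of~$\lambda$; the shift~$c_3$, available only for the $x$-translation-invariant potentials such as $C=0$, normalizes phases and removes a redundant constant of integration; and the residual action fixes the ranges $\kappa\in\mathbb R$, $\nu\in\mathbb R_{\geqslant0}\cup{\rm i}\mathbb R_{>0}$, $\alpha,\gamma\in\mathbb R_{>0}$, $\beta\in\mathbb R_{\geqslant0}$ together with the convention that $(a_1,a_2)$ is defined up to a nonzero multiplier. This is exactly where the Whittaker and cylinder indices become canonical and where the exclusions $\alpha,\beta,\nu\ne\tfrac12$ (which would otherwise collapse a case into a lower-symmetry stratum) must be justified; I expect this bookkeeping, rather than the ODE integration itself, to be the genuine obstacle.

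Finally, with representatives fixed, I would obtain the last column of Table~\ref{tab:FPStationaryDriftsGC} by pushing the basis vector fields of~$\mathfrak g^{\rm ess}_C$ from Table~\ref{tab:FPStationaryPotentialsGC} forward by~$\Psi_{\lambda,W}$. Since~$\Psi_{\lambda,W}$ acts as $\tilde u=Uu$ with $U={\rm e}^{\lambda t}W(x)$, the $u\p_u$-coefficient of each pushed-forward field acquires the increment $\xi^tU_t/U+\xi^xU_x/U=\lambda\xi^t-\tfrac12\epsilon\xi^xB$, which is precisely what turns $D(f)$, $P(f)$, $\bar P(f)$ into $D_{B,\kappa}(f)$, $P_B(f)$, $\bar P_B(f)$ with~$\kappa=\epsilon\lambda$. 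The equality of the (essential) kernel algebras with~$\langle\p_t,u\p_u\rangle$ then follows from the corresponding statement for~$\mathcal P'^\epsilon$ in Theorem~\ref{FP:thm:GroupClassificationOfTimeIndepPotentials} together with the weak similarity of the classes, and the argument for~$\mathcal K'^\epsilon$ and~$\bar{\mathcal K}'$ is identical upon replacing~$\breve\epsilon=-1$ by~$\breve\epsilon=1$ and~$\mathcal P^{-\epsilon}_{-C}$ by~$\mathcal P^\epsilon_C$.
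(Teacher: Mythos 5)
Your proposal is correct and follows essentially the same route as the paper's proof: reduction of $\bar{\mathcal F}'$ to $\mathcal F'^\epsilon$ via the consistency of the equivalence-group equivalences, application of Lemmas~\ref{FP:Lemma2a}--\ref{FP:lem:FPSimilarity2} to reduce the problem to classifying separated solutions ${\rm e}^{\lambda t}W(x)$ of $\mathcal P^{-\epsilon}_{-C}$ up to $H_C$-equivalence, case-by-case integration of the ODEs $\epsilon W_{xx}+CW=-\lambda W$ (Euler, Bessel/cylinder, Whittaker, Airy), and push-forward of the vector fields with exactly the increment $\lambda\xi^t-\tfrac12\epsilon\xi^xB$ that produces $D_{B,\kappa}$, $P_B$, $\bar P_B$. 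The paper packages the $H_C$-action as an induced equivalence group $H^\sim_{\mathcal W_C}$ on the class of ODEs $\mathcal W_{C,\lambda}$, but this is only a notational repackaging of the normalization bookkeeping you describe.
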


\begin{proof}
Recall that the class~$\mathcal F'^\epsilon$ is related to its superclass~$\bar{\mathcal F}'$
via a subset~$\mathcal M$ of~\smash{$\mathcal G^{G^\sim_{\bar{\mathcal F}'}}$}.
If the projection of an equivalence transformation of the class~$\bar{\mathcal F}'$
to the space with the coordinates $(t,x,u,B)$ relates two equations from the class~$\mathcal F'^\epsilon$,
then in view of Propositions~\ref{FP:pro:EquivGroupStationaryFP} and~\ref{pro:EquivGroupOfF'eps},
this projection belongs to the group~$G^\sim_{\mathcal F'^\epsilon}$.
This means that the $G^\sim_{\mathcal F'^\epsilon}$- and the $G^\sim_{\bar{\mathcal F}'}$-equivalences are consistent
via~$\mathcal M$.
The essential kernel algebras of the classes~$\bar{\mathcal F}'$ and~$\mathcal F'^\epsilon$ are the same,
$\mathfrak g^{\cap\rm ess}_{\bar{\mathcal F}'}=\mathfrak g^{\cap\rm ess}_{\mathcal F'^\epsilon}=\langle \p_t,u\p_u \rangle$.
These two facts together with Proposition~\ref{pro:glinInE0} imply that
the group classification of~$\bar{\mathcal F}'$ up to the $G^\sim_{\bar{\mathcal F}'}$-equivalence reduces
to the group classification of~$\mathcal F'^\epsilon$ up to the $G^\sim_{\mathcal F'^\epsilon}$-equivalence.

For the latter classification, we again use the procedure described at the end of Section~\ref{FP:sec:MapMethod}.
The application of this procedure to the class~$\mathcal F'^\epsilon$ is based on Lemmas~\ref{FP:Lemma2a}--\ref{FP:lem:FPSimilarity2}.
To each potential~$C$ listed in Table~\ref{tab:FPStationaryPotentialsGC},
we relate the set~$\mathfrak F'_C$ of Fokker--Planck equations that are similar to the equation~$\mathcal P^\epsilon_C$.
This set is defined to be the image of the equation~$\mathcal P^\epsilon_C$ with respect to the family~$\check{\mathfrak T}'_C$ of point transformations.
In view of Lemma~\ref{FP:lem:FPSimilarity2}, $G^\sim_{\mathcal F'^\epsilon}$-inequivalent equations from the set~$\mathfrak F'_C$ are associated
with $H_C$-inequivalent particular solutions
of the equation \smash{$\mathcal P'^{-\epsilon}_{-C}$}: $U_t=-\epsilon U_{xx}-C(x)U$
that are of the form~$U(t,x)={\rm e}^{\lambda t}W(x)$.
The group~$H_C$ from Lemma~\ref{FP:lem:FPSimilarity2} induces
a subgroup~$H^\sim_{\mathcal W_C}$ of the equivalence group of the class~$\mathcal W_C$
of linear ordinary differential equations~$\mathcal W_{C,\lambda}$: $\epsilon W_{xx}+C(x)W=-\lambda W$,
which are parameterized by~$\lambda\in\mathbb R$.
Here~$C$ is a fixed function of~$x$.
In other words, instead of describing $H_C$-inequivalent
special solutions of the equation $\mathcal P'^{-\epsilon}_{-C}$,
we can classify solutions of equations in the class~$\mathcal W_C$ up to the $H^\sim_{\mathcal W_C}$-equivalence.%
\footnote{
Given a class~$\mathcal L$ of (systems of) differential equations, 
a solution~$f$ of a system $\mathcal L_\theta\in\mathcal L$ is called $G^\sim_{\mathcal L}$-equivalent to
a solution~$\tilde f$ of a system $\mathcal L_{\tilde\theta}\in\mathcal L$ if there exists $\mathscr T\in G^\sim_{\mathcal L}$ 
such that $\tilde\theta=\mathscr T_*\theta$ and $(\pi_*\mathscr T)^*\tilde f=f$.
}
Therefore, to obtain a group classification list for the class~$\mathcal F'^\epsilon$ modulo the $G^\sim_{\mathcal F'^\epsilon}$-equivalence,
we should find an exhaustive list of $H^\sim_{\mathcal W_C}$-inequivalent solutions of equations in~$\mathcal W_C$.
A classification list for the class~$\mathcal F'^\epsilon$ is constituted by the Fokker--Planck equations~$\mathcal F^\epsilon_B$ whose drifts
are constructed with the help of the found solutions~$W$, $B=-2\epsilon W_x/W$.\looseness=-1

It is convenient to denote $\kappa:=\epsilon\lambda$ and $\nu:=\frac12\sqrt{1-4\epsilon\mu}$.
Thus,
$\nu\in\mathbb R_{\geqslant0}$ if $4\epsilon\mu\leqslant1$ and
$\nu\in{\rm i}\mathbb R_{>0}$ if $4\epsilon\mu>1$.
Thereafter $a_1$ and~$a_2$ are arbitrary real constants with $(a_1,a_2)\ne(0,0)$,
which are defined up to a nonzero multiplier when they appear in expressions for~$B$.
The numeration below is associated with the corresponding cases of Table~\ref{tab:FPStationaryPotentialsGC},
and the proof goes on a case-by-case basis. 
For basic results on involved special functions, see, e.g., \cite{AbramowitzStegun1964,Kamke1977}. 

\medskip\par\noindent{\bf0.}
If $C$ is a general function of~$x$, then $W$ is such a function as well.

\medskip\par\noindent{\bf1.}
$C(x)=\mu x^{-2}$ with $\mu\ne0$.
Consider first the case $\kappa\neq0$.
The general solution of the equation~$\mathcal W_{C,\lambda}$: $x^2W_{xx}+(\kappa x^2+\epsilon\mu)W=0$ is
$W(x)=\sqrt{|x|}\,\mathsf Z_{|\nu|}\big(\sqrt{|\kappa|}\,x\big)$,
where the cylinder function~$\mathsf Z_{|\nu|}$ is
$\mathcal Z_\nu$, $\mathcal C_\nu$, \smash{$\tilde{\mathcal Z}_{|\nu|}$} or~\smash{$\tilde{\mathcal C}_{|\nu|}$}
if $4\epsilon\mu\leqslant1$ and $\kappa>0$, $4\epsilon\mu\leqslant1$ and $\kappa<0$,
$4\epsilon\mu>1$ and $\kappa>0$ or $4\epsilon\mu>1$ and $\kappa<0$, respectively.
Here~$\mathcal Z_\nu$ and~$\mathcal C_\nu$ are linear combinations
of linearly independent Bessel and of linearly independent modified Bessel functions, respectively.
The cylinder function~$\tilde{\mathcal Z}_{|\nu|}$ is an arbitrary linear combination of the (linearly independent)
modifications of the Hankel functions~\smash{$\mathrm H^{(1)}_\nu$} and~\smash{$\mathrm H^{(2)}_\nu$},
\begin{gather*}
\tilde{\mathcal Z}_{|\nu|}(x)=a_1\tilde{\mathrm H}^{(1)}_\nu(x)+a_2\tilde{\mathrm H}^{(2)}_\nu(x)\\
\phantom{\tilde{\mathcal Z}_{|\nu|}(x)}=\frac{a_1}2\big({\rm e}^{-\frac12\nu\pi}\mathrm H^{(1)}_{\nu}(x)+{\rm e}^{\frac12\nu\pi}\mathrm H^{(2)}_{\nu}(x)\big)
+\frac{a_2}{2{\rm i}}\big({\rm e}^{-\frac12\nu\pi}\mathrm H^{(1)}_{\nu}(x)-{\rm e}^{\frac12\nu\pi}\mathrm H^{(2)}_{\nu}(x)\big),
\end{gather*}
and the cylinder function~$\tilde{\mathcal C}_{|\nu|}$ is an arbitrary linear combination of the modified Bessel function~$\mathrm K_{\nu}$
and the modification~$\tilde {\mathrm I}_\nu$ of the modified Bessel function~$\mathrm I_\nu$ (for $\nu\neq0$),
\[
\tilde{\mathcal C}_{|\nu|}(x)=a_1\tilde {\mathrm I}_\nu(x)+a_2\mathrm K_\nu(x)=
\frac{a_1\pi{\rm i}}{2\sin(\nu\pi)}\big(\mathrm I_{\nu}(x)+\mathrm I_{-\nu}(x)\big)+a_2\mathrm K_{\nu}(x).
\]
It is not possible to define a numerically satisfactory companion to $\mathrm K_\nu$, which remains finite as $\nu\to0$.
All the above functions are real-valued and represent the general solution of~$\mathcal W_{C,\lambda}$
for each related values of parameters, see~\cite{Dunster1990} for more details
on the functions~\smash{$\tilde{\mathrm H}^{(1)}_\nu$}, \smash{$\tilde{\mathrm H}^{(2)}_\nu$}, \smash{$\tilde {\mathrm I}_\nu$} and~$\mathrm K_{\nu}$.
The group~$H^\sim_{\mathcal W_C}$ consists of the transformations
$\tilde x=c_1x$, $\tilde W=c_2W$, $\tilde\lambda=c_1^{-2}\lambda$, where $c_1$ and~$c_2$ are nonzero arbitrary constants.
Therefore, the arguments of the above cylinder functions can be scaled to~$x$
using transformations from~\smash{$H^\sim_{\mathcal W_C}$}.

If $\kappa=0$, then the equation~$\mathcal W_{C,\lambda}$ degenerates to the Euler equation $x^2W_{xx}+\epsilon\mu W=0$.
Depending on the value of $\sgn(1-4\epsilon\mu)$, $-1$, $0$ or~$1$,
its general solution assumes the form
\begin{gather*}
W(x)=\sqrt{|x|}\big(a_1\cos(|\nu|\ln|x|)+a_2\sin(|\nu|\ln|x|)\big),\quad
W(x)=\sqrt{|x|}\big(a_1+a_2\ln|x|\big)\quad\mbox{or}\\
W(x)=\sqrt{|x|}\big(a_1|x|^\nu+a_2|x|^{-\nu}\big),
\end{gather*}
and up to the $H^\sim_{\mathcal W_C}$-equivalence, each of the first two values of~$W$
reduces to an element of $\big\{\sqrt{|x|}\cos(|\nu|\ln|x|)\big\}$ or $\big\{\sqrt{|x|},\sqrt{|x|}\ln{|x|}\big\}$,
respectively, whereas the last value cannot be simplified.
Note that the values $W(x)=|x|^\sigma$ with $\sigma\in\mathbb R\setminus\{0,1\}$
and $W(x)=\sqrt{|x|}\cos(|\nu|\ln|x|)$ with $\nu\in{\rm i}\mathbb R\setminus\{0\}$
give the drifts from Cases~2 and~3 of Theorem~\ref{thm:FPClassificationFP}.

\medskip\par\noindent{\bf1a.}
$C(x)=\mu x^{-2}+\epsilon x^2$ with $\mu\ne0$.
The equation~$\mathcal W_{C,\lambda}$ is \mbox{$x^2W_{xx}+(x^4+\kappa x^2+\epsilon\mu)W=0$}.
As a pair of its real-valued linearly independent solutions, we can take the real and imaginary parts of
$|x|^{-1/2}\mathrm W_{-\frac14\kappa{\rm i},\frac12\nu}({\rm i}x^2)$,
where $\mathrm W_{k,m}$ is a Whittaker function, and $\nu\ne1/2$.
If $\nu\in2\mathbb N_0+1$, the function~$W(x)$ can be represented in terms of regular and irregular Coulomb functions,
\[
W(x)=|x|^{-1/2}\left(a_1 F_{\frac{\nu-1}2}\left(-\kappa,\tfrac12 x^2\right)+a_2 G_{\frac{\nu-1}2}\left(-\kappa,\tfrac12 x^2\right)\right).
\]
The group~$H^\sim_{\mathcal W_C}$, which is constituted by the point transformations
$\tilde x=\pm x$, $\tilde W=c_1W$, $\tilde\lambda=\lambda$, where $c_1$ is a nonzero arbitrary constant,
does not allow us to gauge the parameters of~$W$.

\medskip\par\noindent{\bf1b.}
$C(x)=\mu x^{-2}-\epsilon x^2$ with $\mu\ne0$.
The equation~$\mathcal W_{C,\lambda}$ is $x^2W_{xx}+(-x^4+\kappa x^2+\epsilon\mu)W=0$.
Its (real-valued) general solution can be represented in the form
\[W(x)=|x|^{-1/2}\big(a_1\mathrm W_{\frac14\kappa,\frac12\nu}(x^2)+a_2\mathrm W_{-\frac14\kappa,\frac12\nu}(-x^2)\big)\]
for any value of~$\nu$, and $\nu\ne1/2$.
The group~$H^\sim_{\mathcal W_C}$ is the same as that in Case~1a, and hence no gauging of parameters in~$W$ is possible.

\medskip\par\noindent{\bf2.}
$C=0$. The group~\smash{$H^\sim_{\mathcal W_C}$} consists of the point transformations
$\tilde x=c_1x+c_2$, $\tilde W=c_3W$, $\tilde\lambda=c_1^{-2}\lambda$,
where $c_1$, $c_2$, and $c_3$ are arbitrary constants with $c_1c_3\neq0$.
The general solution of the equation~$\mathcal W_{C,\lambda}$: $W_{xx}+\kappa W=0$ is
$W(x)=a_1x+a_2$ if $\kappa=0$,
$W(x)=a_1{\rm e}^{\sqrt{-\kappa}x}+a_2{\rm e}^{-\sqrt{-\kappa}x}$ if $\kappa<0$ and
$W(x)=a_1\sin(\sqrt\kappa x)+a_2\cos(\sqrt\kappa x)$ if $\kappa>0$.
Up to the $H^\sim_{\mathcal W_C}$-equivalence, each of these values of~$W$
reduces to an element of $\{1,x\}$, of $\{{\rm e}^x,\sinh x,\cosh x\}$ and to $\cos x$, respectively.

\medskip\par\noindent{\bf2a, 2b.}
$C(x)=\pm x^2$. These cases are specifications of Cases~1a and~1b for $\nu=1/2$ with the same group~$H^\sim_{\mathcal W_C}$.

\medskip\par\noindent $\boldsymbol{2\rm c.}$
$C(x)=x$. We replace this potential by the $G^\sim_{\mathcal P'^\epsilon}$-equivalent potential $C(x)=-\epsilon x$.
Then the equation~$\mathcal W_{C,\lambda}$ takes the form  $W_{xx}-(x-\kappa)W=0$,
and its general solution is $W(x)=a_1\operatorname{Ai}(x-\kappa)+a_2\operatorname{Bi}(x-\kappa)$.
Here $\operatorname{Ai}$ and $\operatorname{Bi}$ are the Airy functions of the first and the second kinds.
The group~\smash{$H^\sim_{\mathcal W_C}$} is constituted by the point transformations
$\tilde x=c_1x+c_2$, $\tilde W=c_3W$, $\tilde\lambda=c_1^{-2}\lambda$,
where $c_1$, $c_2$ and~$c_3$ are arbitrary constants with $c_1c_3\neq0$.
Therefore, up to the \smash{$H^\sim_{\mathcal W_C}$}-equivalence, we can set
$W(x)=a_1\operatorname{Ai}(x)+a_2\operatorname{Bi}(x)$.
\end{proof}

\begin{remark}\label{rem:AddEquivalencesFPStat}
It is evident from the classification procedure that there are additional equivalences
between classification cases listed in Table~\ref{tab:FPStationaryDriftsGC},
cf.\ also Theorem~\ref{thm:FPClassificationFP}.
More specifically, the Fokker--Planck equations corresponding to Cases~1, $1'$, $1''$, 1a and~1b
(resp.\ Cases~2, $2'$, $2''$, 2a, 2b and~2c)
with the same value of~$\nu:=\frac12\sqrt{1-4\epsilon\mu}$ (see the proof of Theorem~\ref{FP:ClassifyFPStat})
are pairwise $\mathcal G^\sim_{\mathcal F'^\epsilon}$-equivalent.
It is easy to construct transformations realizing the additional equivalences
between equations related to classification list.
Let
$\mathcal F^\epsilon_{B_1}=(\Psi_{\lambda_1,W_1})_*\mathcal P^\epsilon_C$ and
$\mathcal F^\epsilon_{B_2}=(\Psi_{\lambda_2,W_2})_*\mathcal P^\epsilon_C$ be two such equations,
where $C(x)=\mu x^{-2}$ with $\mu\ne0$ in Cases~1$*$ and $C=0$ in Cases~2$*$.
Then the transformation $\Phi:=\Psi_{\lambda_2,W_2}\circ(\Psi_{\lambda_1,W_1})^{-1}$
maps $\mathcal F^\epsilon_{B_1}$ to~$\mathcal F^\epsilon_{B_2}$,
$\mathcal F^\epsilon_{B_2}=\Phi_*\mathcal F^\epsilon_{B_1}$.
The canonical representatives of the corresponding $\mathcal G^\sim_{\mathcal F'^\epsilon}$-equivalence class
are presented in Theorem~\ref{thm:FPClassificationFP}.
\end{remark}

\begin{remark}
Cases 2, $2'$, $2''$, 2a and~2b of Table~\ref{tab:FPStationaryDriftsGC}
are counterparts of Cases~1, $1'$, $1''$, 1a and~1b
for~$\nu=\frac12$ with further Lie-symmetry extensions (see the proof of Theorem~\ref{FP:ClassifyFPStat})
but Case~2c does not admit such an interpretation.
It is a consequence of the fact
that the set $\mathrm t\big(\mathrm s^{-1}(C)\big)$ with $C=0$ in $\mathcal G^\sim_{\mathcal P'^\epsilon}$
is wider than the similar set with $C(x)=\mu/x^2$, where $\mu\neq0$.
\end{remark}

\subsection[Group classifications of Kolmogorov equations with time-independent coefficients]
{Group classifications of Kolmogorov equations\\ with time-independent coefficients}\label{sec:GroupClassificationK'}

We only formulate the assertions constituting the theoretical basis of the procedure of classifying
reduced Kolmogorov equations with time-independent drifts;
their proofs are analogous to those of Lemmas~\ref{FP:Lemma2a}--\ref{FP:lem:FPSimilarity2}.
For each time-independent potential~$C=C(x)$, by~$\mathfrak T'_C$ and~$\mathfrak K'_C$
we denote the family of the point transformations mapping the equation~$\mathcal P^\epsilon_C$ to such Kolmogorov equations
and the set of corresponding images, respectively,
\begin{gather*}
\mathfrak T'_C=\big\{\Phi_{\lambda,W}\colon\tilde t=t,\ \tilde x=x,\ \tilde u={\rm e}^{\lambda t}u/W(x)\mid
\lambda\in\mathbb R,\,W\ne0\colon\,\epsilon W_{xx}+CW=-\lambda W\big\},
\\
\mathfrak K'_C:=(\mathfrak T'_C)_*\mathcal P^\epsilon_C=\big\{\mathcal K^\epsilon_B\mid
B=2\epsilon W_x/W,\ W=W(x)\ne0\colon\epsilon W_{xx}+CW=-\lambda W,\,\lambda\in\mathbb R \big\}.
\end{gather*}

\begin{lemma}
(i) Given Kolmogorov equations $\mathcal K^\epsilon_B\in\mathfrak K'_C$
and \smash{$\mathcal K^\epsilon_{\tilde B}$} being $G^\sim_{\mathcal K'^\epsilon}$-equivalent to~$\mathcal K^\epsilon_B$,
there exists a function~$\tilde C$ of~$x$ such that
the equation~$\mathcal P^\epsilon_{\tilde C}$ is $G^\sim_{\mathcal P'^\epsilon}$-equivalent to the equation~$\mathcal P^\epsilon_C$,
and $\mathcal K^\epsilon_{\tilde B}\in\mathfrak K_{\tilde C}'$.

(ii) If $\tilde C=\mathscr T_*C$ for some \smash{$\mathscr T\in G^\sim_{\mathcal P'^\epsilon}$},
then the transformation \smash{$\breve{\mathscr T}\in G^\sim_{\mathcal K'^\epsilon}$} with $\varpi_*\breve{\mathscr T}=\varpi_*\mathscr T$
and the identity $u$-component maps $\mathfrak K'_C$ onto~$\mathfrak K_{\tilde C}'$, $\mathfrak K_{\tilde C}'=\breve{\mathscr T}_*\mathfrak K'_C$.

(iii) The Kolmogorov equations
$\mathcal K^\epsilon_B:=(\Phi_{\lambda,W})_*\mathcal P^\epsilon_C$ and
$\mathcal K^\epsilon_{\tilde B}:=(\Phi_{\tilde \lambda,\tilde W})_*\mathcal P^\epsilon_C$ from $\mathfrak K'_C$
are $G^\sim_{\mathcal K'^\epsilon}$-equivalent if and only if
the associated solutions ${\rm e}^{\lambda t}W(x)$ and \smash{${\rm e}^{\tilde\lambda t}\tilde W(x)$}
of the equation~$\mathcal P^\epsilon_C$ are $G^{\rm ess}_{\mathcal P^\epsilon_C}\cap\pi_*G^\sim_{\mathcal P'^\epsilon}$-equivalent.
\end{lemma}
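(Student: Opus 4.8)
The plan is to transcribe the three Fokker--Planck lemmas (Lemmas~\ref{FP:Lemma2a}--\ref{FP:lem:FPSimilarity2}) into the Kolmogorov setting, keeping track of the single structural difference between the two situations. For a reduced Kolmogorov equation the relevant parameter function $U(t,x)={\rm e}^{\lambda t}W(x)$ is a solution of the equation $\mathcal P^\epsilon_C$ itself and enters the transformation $\Phi_{\lambda,W}$ through $\tilde u={\rm e}^{\lambda t}u/W$, whereas for Fokker--Planck it solved the adjoint equation $\mathcal P^{-\epsilon}_{-C}$. Accordingly, throughout the argument the group $H_C=G^{\rm ess}_{\mathcal P^{-\epsilon}_{-C}}\cap\pi_*G^\sim_{\mathcal P'^{-\epsilon}}$ of Lemma~\ref{FP:lem:FPSimilarity2} will be replaced by $G^{\rm ess}_{\mathcal P^\epsilon_C}\cap\pi_*G^\sim_{\mathcal P'^\epsilon}$, and the drift formula $B=-2\epsilon W_x/W$ is replaced by $B=2\epsilon W_x/W$, while the separation of variables $U={\rm e}^{\lambda t}W$ with $\epsilon W_{xx}+CW=-\lambda W$ is identical.

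For part~(i) I would start from an equivalence transformation $\mathscr T\in G^\sim_{\mathcal K'^\epsilon}$ taking $\mathcal K^\epsilon_B$ to $\mathcal K^\epsilon_{\tilde B}$; by Proposition~\ref{pro:EquivGroupOfK'eps} its action on the drift is $(\varpi_*\mathscr T)^*\tilde B=c_1^{-1}B$ for a nonzero constant~$c_1$. I would then verify, exactly as in the proof of Lemma~\ref{FP:Lemma2a}, that the function $\tilde C$ determined by $(\varpi_*\mathscr T)^*\tilde C=c_1^{-2}C$ is again time-independent, makes $\mathcal P^\epsilon_{\tilde C}$ $G^\sim_{\mathcal P'^\epsilon}$-equivalent to $\mathcal P^\epsilon_C$, and places $\mathcal K^\epsilon_{\tilde B}$ in $\mathfrak K'_{\tilde C}$.

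For part~(ii) I would mirror the proof of Lemma~\ref{FP:Lemma2b}: for an arbitrary $\mathcal K^\epsilon_{\tilde B}\in\mathfrak K'_{\tilde C}$ write $\tilde B=2\epsilon\tilde W_{\tilde x}/\tilde W$ with $\tilde U={\rm e}^{\tilde\lambda\tilde t}\tilde W$ a solution of $\mathcal P^\epsilon_{\tilde C}$, use the explicit form~\eqref{eq:EquivtransOfP'eps} of $\mathscr T\in G^\sim_{\mathcal P'^\epsilon}$ to realize $\tilde U$ as the image of a separated-variables solution $U={\rm e}^{\lambda t}W$ of $\mathcal P^\epsilon_C$, and check that the induced point transformation $\tilde t=c_1^2t+c_2$, $\tilde x=c_1x+c_3$, $\tilde u=u$ prolonged to the drift via $\tilde B=c_1^{-1}B$ is the asserted $\breve{\mathscr T}$, which is independent of the choice of $\tilde B$. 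Reverting the chain of implications gives the opposite inclusion and hence $\mathfrak K'_{\tilde C}=\breve{\mathscr T}_*\mathfrak K'_C$. For part~(iii) I would reproduce the proof of Lemma~\ref{FP:lem:FPSimilarity2}: $G^\sim_{\mathcal K'^\epsilon}$-equivalence of two equations in $\mathfrak K'_C$ forces $\tilde B(\tilde x)=c_1^{-1}B(x)$ through a transformation of the form~\eqref{eq:EquivTransF'eps}, which in turn forces the associated solutions to differ by a factor $f(t)$ recovered from their separated forms, namely $\tilde U=f(t)U$; one then checks that the resulting point transformation belongs to $G^\sim_{\mathcal P'^\epsilon}$ and restricts to an element of $G^{\rm ess}_{\mathcal P^\epsilon_C}$, yielding the forward implication, and runs the construction backwards for the converse.

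The main obstacle I expect is bookkeeping rather than conceptual. One must be careful that the defining condition is ``$U$ solves $\mathcal P^\epsilon_C$'' and not the adjoint equation, so that the relevant symmetry group in part~(iii) stays $G^{\rm ess}_{\mathcal P^\epsilon_C}\cap\pi_*G^\sim_{\mathcal P'^\epsilon}$, and that time-independence of the drift is preserved at every step, i.e.\ that the exponential factors ${\rm e}^{\lambda t}$ cancel consistently in each computation of $B=2\epsilon W_x/W$.
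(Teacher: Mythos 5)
Your proposal is correct and coincides with the paper's intent: the paper gives no separate proof of this lemma, stating only that the arguments are analogous to those of Lemmas~\ref{FP:Lemma2a}--\ref{FP:lem:FPSimilarity2}, which is exactly the transcription you carry out (replacing the adjoint equation $\mathcal P^{-\epsilon}_{-C}$ by $\mathcal P^\epsilon_C$, flipping the sign in $B=2\epsilon W_x/W$, and using Proposition~\ref{pro:EquivGroupOfK'eps} in place of Proposition~\ref{pro:EquivGroupOfF'eps}). The only nit is that in part~(iii) the transformation should be taken in the form of Proposition~\ref{pro:EquivGroupOfK'eps} rather than~\eqref{eq:EquivTransF'eps}, which changes nothing in the computation of the drift.
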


The group classification of the class~$\bar {\mathcal K}'$ up to the $G^\sim_{\bar {\mathcal K}'}$-equivalence
reduces to
the group classification of the class~$\mathcal K'^\epsilon$ up to the $G^\sim_{\mathcal K'^\epsilon}$-equivalence,
and the latter classification is very similar to that done in Theorem~\ref{FP:ClassifyFPStat}.
Its result can be summarized by Table~\ref{tab:FPStationaryDriftsGC} as well, cf. Theorem~\ref{thm:FPClassificationFP}.

\begin{theorem}\label{FP:ClassifyKStat}
The essential kernel algebras of the classes~$\mathcal K'^\epsilon$ and~$\bar{\mathcal K}'$
coincide with~$\langle \p_t,u\p_u\rangle$.
A complete list of $G^\sim_{\mathcal K'^\epsilon}$-inequivalent (resp.\ $G^\sim_{\bar{\mathcal K}'}$-inequivalent)
essential Lie-symmetry extensions in the class~$\mathcal K'^\epsilon$ (resp.\ the class~$\bar{\mathcal K}'$)
is exhausted by the cases given in Table~\ref{tab:FPStationaryDriftsGC}.
\end{theorem}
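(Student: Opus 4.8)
The plan is to mirror the proof of Theorem~\ref{FP:ClassifyFPStat} line by line, substituting the Kolmogorov construction for the Fokker--Planck one. First I would reduce the classification of $\bar{\mathcal K}'$ to that of $\mathcal K'^\epsilon$: by Propositions~\ref{FP:cor:EquivGroupK'} and~\ref{pro:EquivGroupOfK'eps}, any equivalence transformation of $\bar{\mathcal K}'$ whose $(t,x,u,B)$-projection relates two equations of $\mathcal K'^\epsilon$ already projects into $G^\sim_{\mathcal K'^\epsilon}$, so the $G^\sim_{\mathcal K'^\epsilon}$- and $G^\sim_{\bar{\mathcal K}'}$-equivalences are consistent via the gauging subset $\mathcal M$ of $\mathcal G^{G^\sim_{\bar{\mathcal K}'}}$. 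Combined with the coincidence $\mathfrak g^{\cap\rm ess}_{\bar{\mathcal K}'}=\mathfrak g^{\cap\rm ess}_{\mathcal K'^\epsilon}=\langle\p_t,u\p_u\rangle$ and Proposition~\ref{pro:glinInE0}, this yields the reduction exactly as in the Fokker--Planck case. The kernel-algebra assertion itself follows from the weak similarity with $\mathcal P'^\epsilon$, whose essential kernel algebra is $\langle\p_t,u\p_u\rangle$ by Theorem~\ref{FP:thm:GroupClassificationOfTimeIndepPotentials}.

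Next I would invoke the mapping-method procedure from the end of Section~\ref{FP:sec:MapMethod}, built on the three-part Lemma stated just before the theorem (the Kolmogorov analogue of Lemmas~\ref{FP:Lemma2a}--\ref{FP:lem:FPSimilarity2}). To each canonical potential $C$ of Table~\ref{tab:FPStationaryPotentialsGC} I attach the set $\mathfrak K'_C=(\mathfrak T'_C)_*\mathcal P^\epsilon_C$ of reduced Kolmogorov equations similar to $\mathcal P^\epsilon_C$. Part~(iii) of the Lemma identifies the $G^\sim_{\mathcal K'^\epsilon}$-inequivalent equations inside $\mathfrak K'_C$ with the separated-variable solutions $\mathrm e^{\lambda t}W(x)$ of $\mathcal P^\epsilon_C$ that are inequivalent under $G^{\rm ess}_{\mathcal P^\epsilon_C}\cap\pi_*G^\sim_{\mathcal P'^\epsilon}$, while parts~(i)--(ii) guarantee that merging the resulting lists over all $C$ produces a complete and non-redundant classification list for $\mathcal K'^\epsilon$.

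The substantive content then collapses to classifying, up to the induced group $H^\sim_{\mathcal W_C}$, the solutions $W$ of the linear ODE $\epsilon W_{xx}+CW=-\lambda W$ with $\lambda\in\mathbb R$, as $C$ ranges over $\{C(x),\ \mu x^{-2},\ \mu x^{-2}\pm\epsilon x^2,\ 0,\ \pm\epsilon x^2,\ x\}$. The decisive observation is that this is the very same ODE, with the very same equivalence group $H^\sim_{\mathcal W_C}$ (namely $\tilde x=c_1x+c_2$, $\tilde W=c_3W$, $\tilde\lambda=c_1^{-2}\lambda$), that already arose in the proof of Theorem~\ref{FP:ClassifyFPStat}. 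Indeed, although the Kolmogorov map uses solutions of $\mathcal P^\epsilon_C$ itself whereas the Fokker--Planck map uses solutions of the adjoint $\mathcal P^{-\epsilon}_{-C}$, separating variables in either situation produces $\epsilon W_{xx}+CW=-\lambda W$, and the equivalence groups of $\mathcal P'^{\epsilon}$ and $\mathcal P'^{-\epsilon}$ in Corollary~\ref{cor:EquivGroupOfP'eps} are insensitive to the sign of~$\epsilon$. Hence the entire case-by-case analysis---Euler and Bessel solutions for $\mu x^{-2}$, Whittaker (Coulomb) solutions for $\mu x^{-2}\pm\epsilon x^2$, elementary solutions for $0$ and $\pm\epsilon x^2$, and Airy solutions for $x$---carries over verbatim, so the admissible $W$ are exactly those tabulated in Table~\ref{tab:FPStationaryDriftsGC}.

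Finally I would record the single genuine difference: the drift is recovered as $B=2\epsilon\breve\epsilon W_x/W$ with $\breve\epsilon=+1$ for $\mathcal K'^\epsilon$ (against $\breve\epsilon=-1$ for $\mathcal F'^\epsilon$), which is precisely the convention already encoded in the legend of Table~\ref{tab:FPStationaryDriftsGC}. The accompanying essential Lie invariance algebras are obtained by pushing the algebras $\mathfrak g^{\rm ess}_C$ of Table~\ref{tab:FPStationaryPotentialsGC} forward through the transformations $\Phi_{\lambda,W}\in\mathfrak T'_C$, yielding the vector fields $D_{B,\kappa}(f)$, $P_B(f)$ and $\bar P_B(f)$ of the legend. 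The main obstacle is therefore not analytic but a matter of careful bookkeeping: one must verify that the sign flip $\breve\epsilon\colon-1\mapsto+1$ together with the replacement of the adjoint heat equation by the direct one leaves both the separated ODE and the group $H^\sim_{\mathcal W_C}$ unchanged, so that none of the function-theoretic work of Theorem~\ref{FP:ClassifyFPStat} has to be repeated.
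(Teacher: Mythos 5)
Your proposal is correct and follows essentially the same route as the paper, which itself proves this theorem only by stating the Kolmogorov analogue of Lemmas~\ref{FP:Lemma2a}--\ref{FP:lem:FPSimilarity2} and observing that the classification is "very similar" to that of Theorem~\ref{FP:ClassifyFPStat}. Your key observation -- that separating variables in both the direct and adjoint heat equations yields the same ODE $\epsilon W_{xx}+CW=-\lambda W$ with the same induced group $H^\sim_{\mathcal W_C}$, so only the sign $\breve\epsilon$ in $B=2\epsilon\breve\epsilon W_x/W$ changes -- is precisely the point the paper relies on.
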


\section{Conclusion}\label{sec:FPConclusion}

The mapping method of group classification is an efficient tool for solving group classification problems.
Its simplest version is related to similar classes of differential equations,
where the respective equivalences are obviously consistent and
there is a clear one-to-one correspondence between the associated kernel algebras and between classification lists.

Another simple version of the method is the reduction of the group classification of a class~$\bar{\mathcal L}$
to that of its subclass~$\mathcal L$ with the help of mapping~$\bar{\mathcal L}$ onto~$\mathcal L$ by
a wide family of admissible transformations from the action groupoid of the equivalence group of~$\bar{\mathcal L}$.
If the stabilizer subgroup of this group with respect to~$\mathcal L$ coincides with the equivalence group of~$\mathcal L$,
then any classification list for~$\mathcal L$ is a classification list for~$\bar{\mathcal L}$.

In the present paper, we have extended the mapping method of group classification to weakly similar classes.
But the weaker a relation between classes is, the less interrelated their group classifications are.
Thus, in general there is no relation between the kernel algebras of (genuinely) weakly similar classes.
This urged us to reformulate the classical statement of the group classification problem in Section~\ref{FP:sec:GroupClassProblem},
replacing the kernel algebra of the class under study
by a family of uniform subalgebras of the maximal Lie invariance algebras of systems from this class
and thus revisiting the entire framework of group classification of classes of differential equations.
Given weakly similar classes~$\mathcal L$ and~$\tilde{\mathcal L}$,
each of such families for~$\mathcal L$ is associated with such a family for~$\tilde{\mathcal L}$, and vice versa.
We have suggested various techniques within the framework of the mapping method of group classification
depending on normalization properties of involved classes of differential equations
and on the equivalences to be used in the course of group classification.

Before discussing the obtained results for Kolmogorov and Fokker--Planck equations,
we note that the classes in the pairs
$(\bar{\mathcal K},\bar{\mathcal F})$, $(\mathcal K^\epsilon,\mathcal F^{-\epsilon})$,
$(\bar{\mathcal K}',\bar{\mathcal F}')$ and $(\mathcal K'^\epsilon,\mathcal F'^{-\epsilon})$
are the adjoint of each other in the sense that the adjoint of any equation in the first class
belongs to the second class, and vice versa.
The adjointness leads to an intimate connection between the equivalence groupoids of the first and the second classes,
between their equivalence groups as well as between the maximal Lie invariance algebras of equations from these classes,
and thus the studies of these objects for the first and the second classes are similar.
This is why we have considered in detail only one of the classes in each pair for particular purposes.
We follow the same convention in the rest of the conclusion as well.

The fact underlying the application of the mapping method to group classification
of the classes~$\bar{\mathcal K}$ and~$\mathcal K^\epsilon$
up to the $G^\sim_{\bar{\mathcal K}}$- and $G^\sim_{\mathcal K^\epsilon}$-equivalences, respectively,
is that these classes are weakly similar to the class~$\mathcal P^\epsilon$ of heat equations with potentials.
Moreover, the group classification of the class~$\bar{\mathcal K}$ is reduced
to the group classification of the class~$\mathcal K^\epsilon$ via mapping~$\bar{\mathcal K}$ onto~$\mathcal K^\epsilon$
by a wide family of admissible transformations from the action groupoid of~$G^\sim_{\bar{\mathcal K}}$.
This reduction is particularly convenient since the class~$\mathcal K^\epsilon$
is weakly similar to the class~$\mathcal P^\epsilon$ in a specific way,
namely, point transformations establishing the similarity can be chosen to have the identity $(t,x)$-components.
Unfortunately, it appeared that the explicit listing of all Lie-symmetry extension cases in the class~$\mathcal K^\epsilon$
up to the $G^\sim_{\mathcal K^\epsilon}$-equivalence is actually a ``no-go'' problem
because it is equivalent to finding all $G_{\mathcal P^\epsilon_C}$-inequivalent solutions of the heat equations~$\mathcal P^\epsilon_C$,
where $C$ runs through the list from Theorem~\ref{thm:FP:ClassificationsE0AndP}.
This claim has additionally been interpreted by means of the group foliation method in Section~\ref{FP:sec:GroupFoliation}.
On the other hand, the mapping method allowed us at least to describe the solution of the group classification problems
for~$\bar{\mathcal K}$ and~$\mathcal K^\epsilon$ in Theorem~\ref{thm:GroupClassificationOfbarKbarFKepsFeps}.

Although the classification procedures for the classes~$\bar{\mathcal K}$ and~$\bar{\mathcal K}'$ are analogous,
the mapping method has demonstrated much better results for the class~$\bar{\mathcal K}'$,
which is weakly similar to the class~$\mathcal P'^\epsilon$.
More specifically, we have reduced the group classification of the class~$\bar{\mathcal K}'$ to that of the class~$\mathcal K'^\epsilon$
via mapping~$\bar{\mathcal K}'$ onto~$\mathcal K'^\epsilon$
by a wide family of admissible transformations from the action groupoid of the equivalence group~$G^\sim_{\bar{\mathcal K}'}$ of~$\bar{\mathcal K}'$.
The class~$\mathcal K'^\epsilon$ is weakly similar to the class~$\mathcal P'^\epsilon$
with respect to point transformations with the identity $(t,x)$-components.
While the technical crux of the mapping method for the class~$\mathcal K^\epsilon$ lies in solving the equations $U_t=\epsilon U_{xx}+C(t,x)U$
for the values of the potential~$C$ collected in Theorem~\ref{thm:FP:ClassificationsE0AndP},
for the class~$\mathcal K'^\epsilon$ the corresponding equations are $\epsilon U_{xx}+(C(x)+\lambda)U=0$,
whose explicit general solutions can be found for all associated potentials and $\lambda\in\mathbb R$
in terms of elementary or special functions.
This allows us to present an explicit classification list for the class~$\mathcal K'^\epsilon$ up to the $G^\sim_{\mathcal K'^\epsilon}$-equivalence
(resp.\ for the class~$\bar{\mathcal K}'$ up to the $G^\sim_{\bar{\mathcal K}'}$-equivalence)
in Table~\ref{tab:FPStationaryDriftsGC}.

However nice this result is, it also indirectly confirms that the difficulty level of problems concerning admissible transformations
in the superclass~$\bar{\mathcal K}$ and Lie symmetries of equations therein is too high.
Firstly, let us discuss Lie symmetries of equations in~$\bar{\mathcal K}$.
There are only four (resp.\ three) families of Lie-symmetry extension cases in the class~and~$\bar{\mathcal K}$ (resp.\ $\bar{\mathcal K}'$)
modulo its equivalence groupoid and they are parameterized by at most one real parameter, but there are already eleven
(in an ``optimistical'' count, which became nineteen in a ``pessimistic'' count, see Table~2)
$G^\sim_{\bar{\mathcal K}'}$-inequivalent Lie-symmetry extension cases in the class~$\bar{\mathcal K}'$.
Furthermore, in some of these cases the corresponding values of arbitrary elements are expressed in terms of special functions
and parameterized by up to three real parameters.
Recall that arbitrary-element values expressed in terms of special functions arose in~\cite{VaneevaPopovychSophocleous2009} as well,
where the mapping method was applied to another pair of weakly similar classes.
Moreover, $G^\sim_{\bar{\mathcal K}'}$-inequivalent Lie-symmetry extension cases are also $G^\sim_{\bar{\mathcal K}}$-equivalent,
and therefore all of them are contained in a classification list of the class~$\bar{\mathcal K}$
with respect to its equivalence group~$G^\sim_{\bar{\mathcal K}}$
but this is only a small part of the list.
In fact, this list is much wider since Lie-symmetry extension cases with time-dependent drifts are therein as well
in view of the weakness of the $G^\sim_{\bar{\mathcal K}}$-equivalence,
and as discussed above, it cannot be presented in an explicit form.

To discuss the classification of admissible transformations of the class~$\mathcal K'^\epsilon$ (resp.\ $\bar{\mathcal K}'$),
let us look first at its counterpart for the class~$\mathcal P'$ in accordance with the mapping technique.
Theorem~\ref{FP:thm:EquivGroupoidHeatStationary} states
that the class~$\mathcal P'$ possesses the minimal self-consistent generating
(up to the $G^\sim_{\mathcal P'}$-equivalence and the linear superposition of solutions)
set~$\mathcal B$ of admissible transformations, which consists of three families.
The admissible transformations from~$\mathcal B$ are precisely those
that induce all independent additional equivalences between the $G^\sim_{\mathcal P'}$-inequivalent Lie-symmetry extension cases
in the class~$\mathcal P'$ that are listed in Table~\ref{tab:FPStationaryPotentialsGC}.
An analogous generating set~$\tilde{\mathcal B}$ for the class~$\mathcal K'^\epsilon$ is constituted by the admissible transformations
inducing all independent additional equivalences between the $G^\sim_{\mathcal K'^\epsilon}$-inequivalent Lie-symmetry extension cases
in the class~$\mathcal K'^\epsilon$ that are collected in Table~\ref{tab:FPStationaryDriftsGC};
cf.\ Remark~\ref{rem:AddEquivalencesFPStat}.
The set~$\tilde{\mathcal B}$ is definitely minimal for the class~$\mathcal K'^\epsilon$ but, as with Lie symmetries,
is significantly more voluminous than its counterpart~$\mathcal B$.
Each element of~$\mathcal B$ is associated with a family of elements of~$\tilde{\mathcal B}$.
Moreover, many elements of~$\tilde{\mathcal B}$ correspond to admissible identity transformations in~$\mathcal P'$.
It is then clear that no generating set of admissible transformations of the class~$\bar{\mathcal K}$
can be presented in an explicit form.

Justifying our interest in the subject of the paper in the introduction, we have mentioned
the class~$\mathcal L$ of reaction--diffusion equations $\mathcal L_{c,g}$: $u_t=cu_x^{-2}u_{xx}+g(u)$ with $c\ne0$,
which was under study in~\cite{OpanasenkoBoykoPopovych2020}.
Using results obtained in the present paper,
we can solve the group classification problem for the class~$\mathcal L$ up to the $G^\sim_{\mathcal L}$-equivalence.
More specifically, we can translate the group classification list for the class~$\mathcal K'^\epsilon$
that is given in Table~\ref{tab:FPStationaryDriftsGC} to that for the class~$\mathcal L$.
Thus, the constant arbitrary element~$c$ of~$\mathcal L$ can be gauged to~$\epsilon$ by scaling equivalence transformations of~$\mathcal L$,
and the gauged subclass of equations with $c=\epsilon$ is similar to the class~$\mathcal K'^\epsilon$
via the hodograph transformation~$\tilde t=t$, $\tilde x=u$, $\tilde u=x$, $g=-B$.
The Lie symmetries of equations from the class~$\mathcal L$ should be classified within the revisited framework of group classification,
see Section~\ref{FP:sec:GroupClassProblem}.
The uniform family $\{\mathfrak g^{\rm lin}_{\mathcal K'^\epsilon_B}\mid B_t=0\}$
of subalgebras of the algebras~$\mathfrak g_{\mathcal K'^\epsilon_B}$ that are related to the linear superposition of solutions
is mapped by the hodograph transformation onto the uniform family
of subalgebras $\mathfrak g^{\rm unf}_{\mathcal L_{\epsilon,g}}=\langle f(t,u)\p_x\rangle$ of the algebras~$\mathfrak g_{\mathcal L_{\epsilon,g}}$,
where for each fixed value of the arbitrary element~$g$, the parameter function $f=f(t,u)$ runs through the solution set of the equation $f_t=\epsilon f_{uu}-g(u)f_u$.
A complete list of $G^\sim_{\mathcal L}$-inequivalent essential Lie-symmetry extensions in~$\mathcal L$
is given by the modification of Table~2 with $g=-B$ and permuted~$x$ and~$u$.

\section*{Acknowledgments}
The authors are grateful to the anonymous reviewer for valuable remarks.
The research of ROP was supported by the Austrian Science Fund (FWF), projects P28770 and P33594,
while SO acknowledges the support of NSERC.
Also ROP expresses his gratitude to Memorial University of Newfoundland for hospitality
during his stay in St.~John's, while SO to the University of Vienna.

\footnotesize

\end{document}